\theoremstyle{plain}
\newtheorem*{theorem*}{Theorem}
\newtheorem*{remark*}{Remark}
\newtheorem*{example*}{Example}
\newtheorem{lemma}{Lemma}
\newtheorem{proposition}[lemma]{Proposition}
\newtheorem{corollary}[lemma]{Corollary}
\newtheorem{theorem}[lemma]{Theorem}
\newtheorem*{conjecture*}{Conjecture}
\newtheorem*{necessarycondition*}{Necessary Condition}
\numberwithin{lemma}{section}
\theoremstyle{definition}
\newtheorem{definition}[lemma]{Definition}
\newtheorem{example}[lemma]{Example}
\theoremstyle{remark}
\newtheorem{remark}[lemma]{Remark}
\newtheorem{notation}[lemma]{Notation}
\newcommand{\bR}{{\mathbb R}}
\newcommand{\abs}[1]{\left|{#1}\right|}
\def\quotient#1#2{%
    \raise1ex\hbox{$#1$}\Big/\lower1ex\hbox{$#2$}%
}
\begin{document}

\date{November 23, 2022}
 \title[Classification of implication-closed ideals in certain
rings of jets]{Classification of implication-closed ideals in certain rings of jets}
  \author{Charles Fefferman$^\dag$ and Ary Shaviv$^\dag$$^\dag$}
 \address{Department of Mathematics, Princeton University, Princeton, New
 Jersey 08544}
 \email{cf@math.princeton.edu; ashaviv@math.princeton.edu}
\thanks{$^\dag$C.F. was supported by the Air Force Office of Scientific Research
(AFOSR),
under award FA9550-18-1-0069 and the National Science Foundation (NSF), under
grant DMS-1700180.}
\thanks{$^\dag$$^\dag$A.S.   was supported
by the Air Force Office of Scientific Research (AFOSR), under award FA9550-18-1-0069.
 }
\subjclass[2010]{Primary 26B05, Secondary 41A05.}

\maketitle

\begin{abstract}
For a set $E\subset\mathbb{R}^n$ that contains the origin we consider $I^m(E)$
-- the set of all $m^{\text{th}}$ degree Taylor approximations (at the origin)
of $C^m$ functions on $\mathbb{R}^n$ that vanish on $E$. This set is a proper ideal in $\mathcal{P}^m(\mathbb{R}^n)$ -- the ring of all $m^{\text{th}}$ degree
Taylor approximations of $C^m$ functions on $\mathbb{R}^n$. In \cite{FS2} we introduced the notion of a \textit{closed} ideal in  $\mathcal{P}^m(\mathbb{R}^n)$,
and proved that any ideal of the form $I^m(E)$ is closed. In this paper we classify (up to a natural equivalence relation) all closed ideals in  $\mathcal{P}^m(\mathbb{R}^n)$ in all cases in which $m+n\leq5$. We also show that in these cases the converse also holds -- all closed proper ideals in $\mathcal{P}^m(\mathbb{R}^n)$
arise as $I^m(E)$ when $m+n\leq5$. In addition, we prove that in these cases any ideal of the form $I^m(E)$ for some $E\subset\mathbb{R}^n$ that contains the origin already arises as $I^m(V)$ for some semi-algebraic $V\subset\mathbb{R}^n$ that contains
the origin. By doing so we prove that a conjecture by N. Zobin holds true in these cases.
\end{abstract}

\tableofcontents

\section{Introduction}

Fix natural numbers $m$ and $n$, and consider the ring $\mathcal{P}^{m}(\mathbb{R}^n)$ (respectively $\mathcal{P}_{0}^{m}(\mathbb{R}^n)$)
of $m$-jets at the origin of $C^m$ functions on $\mathbb{R}^n$ (that vanish
at the origin). We write $J^m(F)$ to denote the $m$-jet at $\vec 0$ of a function
$F\in C^m(\mathbb{R}^n)$, i.e., its $m^{\text{th}}$ degree Taylor approximation about
the origin. Given a subset $E\subset\mathbb{R}^n$ that contains the origin,
define an ideal $I^{m}(E)$ in $\mathcal{P}_{0}^{m}(\mathbb{R}^n)$ by $$I^{m}(E):=\{J^{m}(F)\text{
}|\text{ } F \in C^m(\mathbb{R}^n)\text{ and } F=0 \text{ on
} E\}.$$ Which ideals in $\mathcal{P}_0^m(\mathbb{R}^n)$ arise as $I^m(E)$ for some $E$? In \cite{FS2} we introduced the notion of a \textit{closed} ideal in $\mathcal{P}^m_0(\mathbb{R}^n)$, and we showed that for any $m,n$ and and $E$, the ideal $I^m(E)$ is always closed (Theorem \ref{main_theorem_on_necessary_condition_SECOND_PAPER} below). We do not know whether in general the converse also holds, namely whether for arbitrary $m$ and $n$ every
closed ideal in $\mathcal{P}_0^m(\mathbb{R}^n)$ arises as $I^{m}(E)$ for
some $E$. However, in this paper we provide some evidence that this might be the case: we prove that if either $m=1$, $n=1$ or $m+n\leq5$, then indeed every
closed ideal in $\mathcal{P}_0^m(\mathbb{R}^n)$ arises as $I^{m}(E)$ for
some $E$. We moreover show that for arbitrary $n$, a principal ideal in $\mathcal{P}^2_0(\mathbb{R}^n)$ is closed if and only if it is of the form $I^2(E)$ for some $E$. 
\subsection*{Constructive classification approach} To prove the above: in the rings $\mathcal{P}_{0}^{1}(\mathbb{R}^n),\mathcal{P}_{0}^{m}(\mathbb{R}^1),$ $\mathcal{P}_{0}^{2}(\mathbb{R}^2),\mathcal{P}_{0}^{3}(\mathbb{R}^2)$
and $\mathcal{P}_{0}^{2}(\mathbb{R}^3)$ we provide lists of closed ideals, and prove that under a natural equivalence relation, any closed ideal in these rings is equivalent to one (and only one) ideal on those lists  (we do the same in $\mathcal{P}^2_0(\mathbb{R}^n)$ for principal ideals and some other ideals with few generators). For each ideal $I$ on these lists we exhibit a set $E$ such that
$I=I^m(E)$. The natural equivalence relation arises from the
fact that any $C^m$-diffeomorphism $\phi:\mathbb{R}^{n}\to\mathbb{R}^n$
that
fixes the origin induces an automorphism of $\mathcal{P}_0^m(\mathbb{R}^n)$
defined by $p\mapsto J^{m}(p\circ\phi)$. We say that two ideals $I$ and $I'$
in $\mathcal{P}_0^m(\mathbb{R}^n)$
are \textit{equivalent} if $I'$ is the image of $I$ under such an automorphism.
We in particular prove that any closed ideal in the
rings $\mathcal{P}_{0}^{1}(\mathbb{R}^n),\mathcal{P}_{0}^{m}(\mathbb{R}^1),\mathcal{P}_{0}^{2}(\mathbb{R}^2),\mathcal{P}_{0}^{3}(\mathbb{R}^2)$
and $\mathcal{P}_{0}^{2}(\mathbb{R}^3)$  arises as
$I^m(E)$
for some closed $E$ that contains the origin.

\subsection*{Semi-algebraicity}Recall that semi-algebraic sets
and
maps are those that can be described by finitely many polynomial equations
and inequalities
and boolean operations (for detailed exposition on semi-algebraic geometry
see, for instance, \cite{BCR}).

Note that $I^m(E)$ only depends on the local behaviour of $E$ about the origin. We remark that if $I'$ is the image of $I$ under the automorphism induced by a given $C^m$-diffeomorphism $\phi:\mathbb{R}^{n}\to\mathbb{R}^n$
that
fixes the origin, then $I'$ is also the image of $I$ under the $m^{th}$ degree approximation of $\phi$, which is a \textit{semi-algebraic} $C^m$-diffeomorphism of some neighborhood of the origin. 

Since the sets  $E$ constructed above are all \textit{semi-algebraic},  we in fact prove that each closed ideal in the lists above arises
as $I^{m}(E)$ for a semi-algebraic set $E$ that contains the origin. A conjecture
of N. Zobin (see \cite[Problem 5]{F}) asserts that every ideal $I^{m}(E)$
in $\mathcal{P}_0^m(\mathbb{R}^n)$
already arises as the ideal $I^{m}(V)$ for a semi-algebraic set $V$ that
contains the origin. Since the image of any semi-algebraic set under a semi-algebraic
map is also a semi-algebraic set, the remark above tells us that we in fact proved that the conjecture is true for the cases
in which either $m=1$, $n=1$ or $m+n\leq 5$, and in the case of principal ideals in $\mathcal{P}^2_0(\mathbb{R}^n)$.

\subsection*{Main results in a nutshell}In view of Theorem \ref{main_theorem_on_necessary_condition_SECOND_PAPER}
below, the main results of this paper can be summarized by the following:

\begin{theorem}\label{second_paper_theorem} Let $I\lhd\mathcal{P}_0^m(\mathbb{R})$
be an ideal, and assume one of the following holds:

\begin{enumerate}
\item Either $m=1$, $n=1$ or $m+n\leq5$.
\item $m=2$ and $I$ is principal.
\item $m=2$ and there
exist a partition $n=n_1+n_2$
and two homogenous polynomials of degree 2, $p_1\in\bR[x_1,\dots,x_{n_1}]$
and $p_2\in\bR[x_{n_1+1},\dots,x_{n}]$, such that $I=\langle p_1,p_2\rangle_2$.
\end{enumerate}

Then, the following are equivalent:
\begin{itemize}
\item $I$ is closed.
\item There exists a closed set $\vec 0 \in E\subset\mathbb{R}^n$ such that
$I=I^m(E)$.
\item There exists a closed semi-algebraic set $\vec 0 \in E\subset\mathbb{R}^n$
such that $I=I^m(E)$.
\end{itemize}\end{theorem}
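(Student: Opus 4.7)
The plan is to establish the three-way equivalence via the chain $(3)\Rightarrow(2)\Rightarrow(1)\Rightarrow(3)$. The first arrow is trivial, since every closed semi-algebraic subset of $\bR^n$ is a closed subset of $\bR^n$. The second arrow is exactly Theorem~\ref{main_theorem_on_necessary_condition_SECOND_PAPER}, which asserts that every ideal of the form $I^m(E)$ is closed. Thus the substantive content lies in the implication $(1)\Rightarrow(3)$: given a closed ideal $I$ in one of the specified rings, produce a closed semi-algebraic $E\subset\bR^n$ with $\vec 0\in E$ and $I=I^m(E)$.

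My approach to $(1)\Rightarrow(3)$ in the most complex case, condition (1), is constructive and follows the outline in the introduction. For each of the five rings $\mathcal{P}_0^1(\bR^n)$, $\mathcal{P}_0^m(\bR)$, $\mathcal{P}_0^2(\bR^2)$, $\mathcal{P}_0^3(\bR^2)$, $\mathcal{P}_0^2(\bR^3)$, I would first compile an explicit list of normal-form closed ideals and prove that every closed ideal is equivalent, under the diffeomorphism-induced equivalence relation defined in the introduction, to exactly one ideal on the list. Then, for each normal-form representative $I$, I would exhibit a concrete closed semi-algebraic $E$ -- typically the zero locus of a few explicit polynomials intersected with a small closed ball around the origin -- and verify directly that $I=I^m(E)$. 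For an arbitrary closed ideal $I'$ equivalent to a listed representative $I$ via an automorphism induced by a $C^m$-diffeomorphism $\phi$ fixing the origin, the remark in the introduction lets me replace $\phi$ by its $m$-th degree Taylor polynomial $\psi$, which is a semi-algebraic local diffeomorphism about the origin; then the image $\psi(E)$ (re-intersected with a small ball to preserve closedness near the origin) is again closed semi-algebraic and realizes $I'=I^m(\psi(E))$.

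For cases (2) and (3), which both concern $m=2$, the full classification can be replaced by direct normal-form analysis of the generators. In case (2), a generator of a principal ideal in $\mathcal{P}_0^2(\bR^n)$ has the shape $\ell+q$ with $\ell$ linear and $q$ quadratic; a linear change of coordinates combined with Sylvester's law of inertia for $q$ reduces the generator to one of finitely many normal forms, for each of which the zero set of the generator (intersected with a small ball) gives the desired semi-algebraic $E$. Case (3) factors cleanly: applying Sylvester separately to $p_1$ and $p_2$ in their disjoint variable groups yields finitely many normal-form pairs, and for each pair one writes down an explicit product-type semi-algebraic $E$ (e.g.\ an intersection of two real quadric cones in complementary variables) whose $m$-jet ideal is $\langle p_1,p_2\rangle_2$. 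In both cases the diffeomorphism-transfer argument of the previous paragraph converts the normal-form realizations into realizations for arbitrary $I$ in the stated class.

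The main obstacle is the classification step in case (1) for the rings of largest dimension, namely $\mathcal{P}_0^3(\bR^2)$ and $\mathcal{P}_0^2(\bR^3)$, where the orbit space of ideals under the nonlinear diffeomorphism group is appreciably richer than in the linear or one-dimensional cases and may produce genuine moduli. Establishing that the proposed normal-form list is both exhaustive and irredundant, and simultaneously constructing matching semi-algebraic sets -- which in higher-codimension cases requires care so as not to introduce spurious jets that fall outside $I$ -- is the technically demanding part. Cases (2) and (3) are comparatively routine once the Sylvester-type normal forms are in place, and the remaining subcases of (1) (namely $m=1$ or $n=1$) are reduced to essentially linear algebra and single-variable Taylor analysis, respectively.
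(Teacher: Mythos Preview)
Your overall architecture --- $(3)\Rightarrow(2)$ trivial, $(2)\Rightarrow(1)$ via Theorem~\ref{main_theorem_on_necessary_condition_SECOND_PAPER}, and $(1)\Rightarrow(3)$ by classification into normal forms plus explicit realization --- matches the paper exactly, and your description of how to transport a semi-algebraic $E$ through the Taylor polynomial of the diffeomorphism is correct. For case~(1) the paper does precisely the classification you outline (Theorems~\ref{theorem_on_almost_trivial_cases}--\ref{theorem_on_2_3}), and you correctly identify $\mathcal{P}_0^3(\bR^2)$ and $\mathcal{P}_0^2(\bR^3)$ as the hard subcases; in fact the paper shows there are \emph{no} continuous moduli, only finitely many orbits.

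There is, however, a genuine gap in your treatment of cases~(2) and~(3). You propose that for a principal ideal $\langle p\rangle_2$ the zero set $\{p=0\}$ (intersected with a ball) will serve as $E$, and for case~(3) an intersection of two quadric cones. This fails for non-radical situations: if $p=x_1^2$ in $\bR^n$ with $n\geq 2$, then $\{p=0\}=\{x_1=0\}$ and $I^2(\{x_1=0\})=\langle x_1\rangle_2$, not $\langle x_1^2\rangle_2$. More generally, whenever the quadratic form has a nontrivial kernel $J_0$, the naive zero locus is too thin and its $I^2$ is strictly larger than $\langle p\rangle_2$. The paper's construction (Proposition~\ref{prop_principal_ideals_in_P2Rn}) instead takes $E=\{q=0\}$ with
\[
q=\sum_{i\in J_+}x_i^2-\sum_{i\in J_-}x_i^2-\Big(\sum_{i\in J_0}x_i^2\Big)^2,
\]
and verifying $I^2(E)=\langle p\rangle_2$ requires a nontrivial argument using Rolle's theorem, Lemma~\ref{lemma_vanishes_on_corners_of_rectangles}, and the representation-theoretic Lemma~\ref{rep_theory_lemma} to rule out extra jets. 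Case~(3) (Proposition~\ref{prop_ideals_in_P2Rn_generated_by_2_foreign_jets}) uses the analogous quartic correction in both factors and is substantially harder still, relying on Lemmas~\ref{calc_lemma_1_on_non_isolated_point}--\ref{calc_lemma_9_on_non_isolated_point} to locate points of $E$ with all coordinates nonzero. So Sylvester reduces to finitely many normal forms, but the realizing sets are not the obvious quadric zero loci.
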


We prove Case (1) of Theorem \ref{second_paper_theorem} by proving the following classification results (below $\langle p_1,\dots p_k\rangle_m$ stands for the ideal in $\mathcal{P}_0^m(\mathbb{R}^n)$ generated by the jets $p_1,\dots,p_k$):

\begin{theorem}[Classifications in $\mathcal{P}_0^1(\mathbb{R}^n)$ and $\mathcal{P}_0^m(\mathbb{R}^1)$]\label{theorem_on_almost_trivial_cases} \

\begin{enumerate}
\item Any ideal in $\mathcal{P}_0^1(\mathbb{R}^n)$
is both closed and of the form $I^1(E)$ for some semi-algebraic $E\subset\mathbb{R}^n$  that
contains the origin. 
\item The only closed ideals in $\mathcal{P}_0^m(\mathbb{R})$ are $\{0\}=I^m(\mathbb{R})$
and $\mathcal{P}_0^m(\mathbb{R})=I^m(\{\vec 0\})$.
\end{enumerate}

\end{theorem}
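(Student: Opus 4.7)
For (1), I would exploit the fact that any product of two elements of $\mathcal{P}_0^1(\mathbb{R}^n)$ is a degree-two jet, hence zero in the truncated ring $\mathcal{P}^1(\mathbb{R}^n)$. Consequently every ideal $I \lhd \mathcal{P}_0^1(\mathbb{R}^n)$ is merely a real-linear subspace of the space of linear forms on $\mathbb{R}^n$. For such a subspace $V$, I would take the linear subspace
$$
W := \{\vec x \in \mathbb{R}^n : \ell(\vec x) = 0 \text{ for every } \ell \in V\} \subset \mathbb{R}^n,
$$
which is semi-algebraic and contains the origin. If $F \in C^1(\mathbb{R}^n)$ vanishes on $W$ then its linear part $L := \sum_i \partial_i F(\vec 0)\, x_i$ also vanishes on $W$: for any unit vector $\vec y \in W$, $L(\vec y) = \lim_{t\to 0} F(t\vec y)/t = 0$. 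By finite-dimensional linear duality, the space of linear forms vanishing on $W$ is exactly $V$, so $J^1(F) = L \in V$. The reverse inclusion is immediate since every $\ell \in V$ is its own $1$-jet and vanishes on $W$. Hence $I^1(W) = V$, and closedness of $V$ then follows from Theorem \ref{main_theorem_on_necessary_condition_SECOND_PAPER}.

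For (2), I first classify the ideals in $\mathcal{P}_0^m(\mathbb{R})$. If $I \neq \{0\}$ and $k \in \{1, \ldots, m\}$ is the smallest degree of a monomial appearing with nonzero coefficient in some element of $I$, pick $p = c_k x^k + c_{k+1} x^{k+1} + \cdots + c_m x^m \in I$ with $c_k \neq 0$. Writing $p = c_k x^k \cdot u(x)$ with $u \in \mathcal{P}^m(\mathbb{R})$ a unit (its constant term is $1$), we get $x^k = c_k^{-1} \cdot u^{-1} \cdot p \in I$, so $I = \langle x^k \rangle_m$. Thus the ideals of $\mathcal{P}_0^m(\mathbb{R})$ are exactly $\{0\}$ and $\langle x^k \rangle_m$ for $k = 1, \ldots, m$. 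Next I would show that $I^m(E)$ can only take two values: if $0$ is isolated in $E$, a $C^m$ cutoff $\chi$ supported in a neighborhood of $0$ meeting $E$ only at $0$ realizes every $p \in \mathcal{P}_0^m(\mathbb{R})$ as $J^m(\chi \cdot p)$, so $I^m(E) = \mathcal{P}_0^m(\mathbb{R})$; if $0$ is a limit point of $E \setminus \{0\}$, picking $x_n \in E \setminus \{0\}$ with $x_n \to 0$ and expanding by Taylor, together with an induction on $j$ (successively dividing by $x_n$ and passing to the limit), yields $F^{(j)}(0) = 0$ for $j = 0, 1, \ldots, m$, so $I^m(E) = \{0\}$. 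Both candidates $\{0\} = I^m(\mathbb{R})$ and $\mathcal{P}_0^m(\mathbb{R}) = I^m(\{\vec 0\})$ are closed by Theorem \ref{main_theorem_on_necessary_condition_SECOND_PAPER}.

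The main obstacle is showing that no intermediate ideal $\langle x^k \rangle_m$ with $2 \leq k \leq m$ is closed. Here I would invoke the precise definition of closed ideal from \cite{FS2}, whose ``implication'' axioms are calibrated precisely to encode the Whitney/Taylor phenomenon used above. I expect that for each such intermediate $I = \langle x^k \rangle_m$ one can exhibit an explicit closure-axiom instance (an implication involving $x^k$ on the left and $x^{k-1}$ on the right, naturally arising from a sequence $x_n \to 0$) whose conclusion $x^{k-1}$ is not in $I$. The technical step is to translate the one-variable Taylor argument into the formal implication framework of \cite{FS2}; once this is done, the intermediate ideals are excluded and the proof of (2) is complete.
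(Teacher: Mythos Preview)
Your argument for part (1) is correct and essentially the same as the paper's: both exploit that $\mathcal{P}_0^1(\mathbb{R}^n)$ has trivial multiplication, so ideals are linear subspaces, and both realize such a subspace as $I^1$ of its common zero locus. The paper packages the verification via Lemma~\ref{lemma-on-cooord-change} (a coordinate change reducing to Example~\ref{a_union_of_coordinates}), while you argue directly by linear duality; these are equivalent.

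For part (2), however, you have correctly identified your own gap but not closed it, and the paper's route is both different and much shorter. Your classification of ideals as $\langle x^k\rangle_m$ and your analysis of which ideals arise as $I^m(E)$ are correct but unnecessary: knowing the possible values of $I^m(E)$ does not by itself tell you which ideals are closed, since the equivalence ``closed $\Leftrightarrow$ of the form $I^m(E)$'' is precisely what the paper is working to establish, not something available a priori. What you actually need is a direct proof that $\langle x^k\rangle_m$ is not closed for $2\le k\le m$, and here the paper's argument is a one-liner you missed: in dimension one the sphere is $S^0=\{\pm 1\}$, and the lowest-degree homogeneous part of any nonzero jet in $\mathcal{P}_0^m(\mathbb{R})$ is $c_kx^k$ with $c_k\neq 0$, which does not vanish at $\pm 1$. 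Hence by Lemma~\ref{equiv_conditions_for_forbidden_directions_lemma_SECOND_PAPER} every nonzero ideal $\tilde I\lhd\mathcal{P}_0^m(\mathbb{R})$ satisfies $Allow(\tilde I)=\emptyset$, and Corollary~\ref{cor_no_allowed_directions_SECOND_PAPER} then says the only closed such ideal is $\mathcal{P}_0^m(\mathbb{R})$ itself. This replaces your hoped-for ``implication of $x^{k-1}$ from $x^k$'' with the stronger and immediate statement that \emph{every} jet is implied once $Allow(I)=\emptyset$ (the condition in Definition~\ref{new_def_implied_jet_SECOND_PAPER} becomes vacuous over an empty cone).
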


\begin{theorem}[Classification in $\mathcal{P}_0^2(\mathbb{R}^2)$]\label{theorem_on_2_2} Let $(x,y)$ be a standard coordinate system on
$\mathbb{R}^2$. Then, any closed ideal in $\mathcal{P}_0^2(\mathbb{R}^2)$
is equivalent to
one of the ideals in Table \ref{tab:table2}: 
\begin{table}[h!]
  \begin{center}
    \caption{Closed ideals in $\mathcal{P}_0^2(\mathbb{R}^2)$.}
    \label{tab:table2}
    \begin{tabular}{c|c}    
      \hline
      \textbf{\# of generators} & {\textbf{Ideals}} \\ 
      \hline
      1 & $\{0\},\langle
x\rangle_2,\langle
x^2\rangle_2,\langle
xy\rangle_2$ \\
      \hline
      2 & $\langle
x^2,xy\rangle_2,\mathcal{P}_0^2(\mathbb{R}^{2})$ \\
      \hline

    \end{tabular}
  \end{center}
\end{table}

Moreover, any ideal in Table \ref{tab:table2} arises as $I^2(E)$ for
some semi-algebraic set $E\subset\mathbb{R}^2$ that contains the origin,
and any two different
ideals in Table \ref{tab:table2} are non-equivalent.
\end{theorem}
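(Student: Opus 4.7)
The plan is to combine an enumeration of ideals with explicit constructions. First, I would enumerate all ideals of $\mathcal{P}_0^2(\mathbb{R}^2)$ up to $C^2$-equivalence. Any such ideal $I$ is determined by its image $V \subseteq \mathfrak{m}/\mathfrak{m}^2 \cong \mathbb{R}\langle x, y\rangle$ and its degree-$2$ part $I_2 := I \cap \mathfrak{m}^2 \subseteq \operatorname{Sym}^2(\mathbb{R}^2) \cong \mathbb{R}\langle x^2, xy, y^2\rangle$, subject to the constraint $\mathfrak{m} \cdot V \subseteq I_2$. The equivalence acts on each graded piece via the Jacobian $GL_2(\mathbb{R})$, while the quadratic part of a $C^2$ coordinate change translates the lifts of linear generators and can absorb their quadratic corrections. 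Classifying $GL_2$-orbits on $\operatorname{Sym}^2$ via the rank-$1$ conic in $\mathbb{P}(\operatorname{Sym}^2) \cong \mathbb{P}^2$ gives three orbits of $1$-dim subspaces (distinguished by rank and signature: $x^2$, $xy$, and $x^2 + y^2$) and three orbits of $2$-dim subspaces (corresponding to secant, tangent, and exterior lines to the conic). A case analysis by $\dim V \in \{0,1,2\}$ yields the complete finite list of orbits.

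Next, for each ideal in Table \ref{tab:table2}, I would exhibit a semi-algebraic $E \ni \vec 0$ realizing it. Most cases are direct: $\{0\} = I^2(\mathbb{R}^2)$; $\mathcal{P}_0^2(\mathbb{R}^2) = I^2(\{\vec 0\})$; $\langle x\rangle_2 = I^2(\{x = 0\})$; and $\langle xy\rangle_2 = I^2(\{xy = 0\})$ (the two coordinate axes). For $\langle x^2, xy\rangle_2$, taking $E = \{x = 0\} \cup \{x = y^2\}$ works: the $y$-axis gives $F_y = F_{yy} = 0$, and then the parabola tangent to it gives $F_x = 0$, leaving $F_{xx}$ and $F_{xy}$ free. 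The case $\langle x^2\rangle_2$ is the most delicate; one must exhibit a specific semi-algebraic $E$ producing exactly the constraints $F_x = F_y = F_{xy} = F_{yy} = 0$ while leaving $F_{xx}$ free, which requires a careful configuration going beyond unions of smooth curves through $\vec 0$. Once $E$ is exhibited, Theorem \ref{main_theorem_on_necessary_condition_SECOND_PAPER} yields closedness of the corresponding $I^2(E)$.

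Third, I would rule out the remaining enumerated ideals---the rank-$2$ definite $\langle x^2 + y^2\rangle_2$, the two $2$-dim orbits $\langle x^2, y^2\rangle_2$ and $\langle xy, x^2 - y^2\rangle_2$, the $3$-dim $\mathfrak{m}^2$, and the $4$-dim $\langle x, y^2\rangle_2$---as non-closed. The unifying argument: if $I$ contains two elements whose $C^2$ zero sets near $\vec 0$ meet only at $\vec 0$, then assuming $I = I^2(E)$ forces $E$ locally to be $\{\vec 0\}$, hence $I^2(E) = \mathcal{P}_0^2$, a contradiction. For $\langle x^2 + y^2\rangle_2$, any $F$ with $J^2 F = x^2 + y^2$ is strictly positive on a punctured neighborhood of $\vec 0$, so locally $\{F = 0\} = \{\vec 0\}$. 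For $\langle x^2, y^2\rangle_2$, an $F$ with $J^2 F = x^2$ has $\{F = 0\}$ confined asymptotically to the cone $|x| \leq c|y|$ and similarly $\{G = 0\}$ for $J^2 G = y^2$ lies in $|y| \leq c|x|$; these two cones meet near $\vec 0$ only at $\vec 0$. Parallel arguments dispatch $\langle xy, x^2 - y^2\rangle_2$ (two pairs of transverse branches), $\mathfrak{m}^2$ (contains both $x^2$ and $y^2$), and $\langle x, y^2\rangle_2$ (a smooth curve tangent to the $y$-axis meets a curve tangent to the $x$-axis only at $\vec 0$), in each case invoking the algebraic definition of closed from \cite{FS2} where needed.

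Finally, pairwise non-equivalence in Table \ref{tab:table2} follows from invariants preserved by $C^2$-equivalence: the $\mathbb{R}$-dimensions $0,1,1,2,3,5$ separate five of the six ideals, and within the $1$-dim class the rank of the spanning quadratic form (rank $1$ for $\langle x^2\rangle_2$, rank $2$ for $\langle xy\rangle_2$) distinguishes the remaining two, since $C^2$-equivalence acts on $\mathfrak{m}^2$ through its linear part and rank is $GL_2$-invariant. The principal obstacles are constructing $E$ for $\langle x^2\rangle_2$ in the second step, where standard unions of smooth curves uniformly yield $\langle x^2, xy\rangle_2$ or $\langle xy\rangle_2$ rather than $\langle x^2\rangle_2$, and setting up the intersection/positivity arguments of the third step uniformly for all non-closed candidates.
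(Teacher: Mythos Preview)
Your overall strategy mirrors the paper's, but there is a concrete error in your second step and a logical gap in your third.

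The concrete error: your proposed set $E = \{x=0\} \cup \{x=y^2\}$ for $\langle x^2, xy\rangle_2$ actually realizes $\langle x^2\rangle_2$, not $\langle x^2, xy\rangle_2$. For any $F \in C^2$ vanishing on this $E$ and any small $y \neq 0$, Rolle's theorem on the $x$-segment from $(0,y)$ to $(y^2,y)$ produces $\xi(y) \in (0,y^2)$ with $F_x(\xi(y),y)=0$; expanding $F_x$ to first order at the origin and using $\xi(y)=O(y^2)=o(y)$ forces $F_{xy}(0,0)=0$. So $F_{xy}$ is \emph{not} free, and $xy \notin I^2(E)$. Ironically, you have thereby solved the ``delicate'' case you flagged --- this $E$ does give $\langle x^2\rangle_2$ --- but you are now missing a construction for $\langle x^2, xy\rangle_2$. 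The paper's choice (Example~\ref{example-xx-xy}) is the single half-cusp $\{x^2 = y^3,\ x \geq 0\}$, whose essential feature is that it meets each horizontal line in only one point near the origin, blocking the Rolle argument while still forcing $F_x = F_y = F_{yy} = 0$ via the exponents in $f(t^{3/2},t)=0$.

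The logical gap: in your third step you argue that each of the five non-table ideals cannot equal $I^2(E)$ for any $E$, and conclude they are not closed. But ``not of the form $I^2(E)$'' and ``not closed'' (Definition~\ref{the_closure_of_ideal_definition_SECOND_PAPER}) are not a priori the same; their equivalence in $\mathcal{P}_0^2(\mathbb{R}^2)$ is exactly what the theorem asserts, so this inference is circular. Your cone-confinement observations are really computing that the generators' lowest-degree parts have no common zero on $S^1$, i.e.\ that $Allow(I) = \emptyset$; once phrased that way, Corollary~\ref{cor_no_allowed_directions_SECOND_PAPER} immediately yields ``not closed''. That is precisely the paper's route, and indeed the paper notes (Remark~\ref{remark_in_C2R2_enough_to_have_allowed_direction}) that in $\mathcal{P}_0^2(\mathbb{R}^2)$ every exclusion goes through $Allow(I)=\emptyset$.
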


\begin{theorem}[Classification in $\mathcal{P}_0^3(\mathbb{R}^2)$]\label{theorem_on_3_2} Let $(x,y)$
be a standard coordinate system on
$\mathbb{R}^2$. Then, any closed ideal in $\mathcal{P}_0^3(\mathbb{R}^2)$
is equivalent to
one of the ideals in Table \ref{tab:table3}: 
\begin{table}[h!]
  \begin{center}
    \caption{Closed ideals in $\mathcal{P}_0^3(\mathbb{R}^2)$.}
    \label{tab:table3}
    \begin{tabular}{c|c}    
      \hline
      \textbf{\# of generators} & {\textbf{Ideals}} \\ 
      \hline
      1 & $\{0\},\langle
x\rangle_3,\langle
x^{2}\rangle_3,\langle
x^3\rangle_3,\langle
xy\rangle_3,\langle
yx^2\rangle_3,\langle
xy(x+y)\rangle_3,\langle
x^2-y^3\rangle_3$ \\
      \hline
      2 & $\langle
x^2,xy\rangle_3,\langle
x^2,xy^2\rangle_3,\langle x^2y,xy^2\rangle_3,\langle
x^3,x^2y\rangle_3,\langle
x^2-y^3,xy^2\rangle_3,\mathcal{P}_0^3(\mathbb{R}^{2})$

\\
      \hline

      3 & $\langle
x^3,x^{2}y,xy^{2}\rangle_3$ \\
      \hline

    \end{tabular}
  \end{center}
\end{table}

Moreover, any ideal in Table \ref{tab:table3} arises as $I^3(E)$ for
some semi-algebraic set $E\subset\mathbb{R}^2$ that contains the origin,
and any two different
ideals in Table \ref{tab:table3} are non-equivalent.
\end{theorem}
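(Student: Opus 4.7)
The plan is to enumerate all equivalence classes of ideals in $\mathcal{P}_0^3(\mathbb{R}^2)$ under the action of the group $G$ of $3$-jets of $C^3$-diffeomorphisms of $\mathbb{R}^2$ fixing $\vec 0$ (whose linear part is $GL_2(\mathbb{R})$ and whose unipotent part corrects generators by polynomial terms of degree $\geq 2$ modulo $\langle x,y\rangle_3^4=\{0\}$); then for each equivalence class I either realize it as $I^3(E)$ for some semi-algebraic $E\ni\vec 0$ (so $I$ is closed by Theorem~\ref{main_theorem_on_necessary_condition_SECOND_PAPER} and appears in Table~\ref{tab:table3}) or show via the closedness criterion of \cite{FS2} that $I$ is not closed; finally I distinguish the table entries by $G$-invariants.

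\textbf{Step 1 (enumeration).} For $I\lhd\mathcal{P}_0^3(\mathbb{R}^2)$, let $V_k:=\langle x,y\rangle_3^k/\langle x,y\rangle_3^{k+1}$ and let $I_k\subseteq V_k$ be the image of $I\cap\langle x,y\rangle_3^k$. I stratify by the Hilbert function $k\mapsto\dim I_k$. When $I_1\ne 0$, normalize a linear generator to $x$ via $GL_2$; the residual ideal in $\mathcal{P}_0^3(\mathbb{R}^2)/\langle x\rangle_3\cong\mathcal{P}_0^3(\mathbb{R})$ is one of $\{0\},\langle y^3\rangle,\langle y^2\rangle,\mathcal{P}_0^3(\mathbb{R})$, producing lifts $\langle x\rangle_3,\langle x,y^3\rangle_3,\langle x,y^2\rangle_3,\mathcal{P}_0^3(\mathbb{R}^2)$. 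When $I_1=0$ but $I_2\ne 0$, classify leading quadratic forms under $GL_2$ into $x^2,xy,x^2+y^2$, and for each normalize the degree-$3$ tail under the unipotent part of $G$; the rank-$1$ case $x^2$ produces exactly the two normal forms $x^2$ and $x^2-y^3$ (the cuspidal form being the unique unremovable obstruction to a purely homogeneous principal generator), while adjoining extra order-$3$ generators yields the remaining two-generator classes. When $I_1=I_2=0$, classify real cubic forms under $GL_2$ into $x^3,x^2y,xy(x+y),x(x^2+y^2)$; the three-generator ideal $\langle x^3,x^2y,xy^2\rangle_3$ arises as the codimension-$1$ hyperplane of $V_3$ complementary to $y^3$.

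\textbf{Step 2 (closedness and realization).} For each class from Step 1 I either realize $I=I^3(E)$ with $E$ semi-algebraic (in which case $I$ is closed and appears in Table~\ref{tab:table3}) or invoke the closedness criterion of \cite{FS2} to show $I$ is not closed. The realizations are: $\{0\}=I^3(\mathbb{R}^2)$; unions of lines through $\vec 0$ realize $\langle x\rangle_3,\langle xy\rangle_3,\langle xy(x+y)\rangle_3$; the cuspidal semi-algebraic curve $\{x^2=y^3\}$ realizes $\langle x^2-y^3\rangle_3$; and wedge-shaped semi-algebraic thickenings of the form $\{(x,y):|x|\leq Cy^k\}$ (alone or in modified unions) realize the non-reduced ideals $\langle x^2\rangle_3,\langle x^3\rangle_3,\langle yx^2\rangle_3$ together with the multi-generator entries $\langle x^2,xy\rangle_3,\langle x^2,xy^2\rangle_3,\langle x^2y,xy^2\rangle_3,\langle x^3,x^2y\rangle_3,\langle x^2-y^3,xy^2\rangle_3,\langle x^3,x^2y,xy^2\rangle_3$. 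The verification $I=I^3(E)$ in each case is a direct Taylor-expansion argument: vanishing on a line forces divisibility of the $3$-jet by the corresponding linear factor, vanishing on a wedge of $x$-width $\sim y^k$ around each $(0,y)\ne(0,0)$ forces the corresponding $x$-derivatives of $F$ to vanish at $\vec 0$ by continuity, and vanishing on the cusp imposes $x^2\equiv y^3\pmod{\langle x,y\rangle_3^4}$. The classes to be eliminated as not closed are those with leading form $x^2+y^2$ or containing $x^2+y^2$ as a real-analytic factor (e.g.\ $x(x^2+y^2)$), along with the mixed ideals $\langle x,y^2\rangle_3$ and $\langle x,y^3\rangle_3$; in each such case the closedness criterion of \cite{FS2} is violated because the codimension of $I$ is incompatible with the real geometry of its zero locus.

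\textbf{Step 3 (non-equivalence and main obstacle).} The Hilbert function $k\mapsto\dim I_k$ is a $G$-invariant that separates most pairs in Table~\ref{tab:table3}. The sole sensitive pair is $\langle x^2\rangle_3$ versus $\langle x^2-y^3\rangle_3$, which share the same Hilbert function; they are distinguished by the $G$-invariant isomorphism class of the unital $\mathbb{R}$-algebra $\mathcal{P}^3(\mathbb{R}^2)/I$, because in $\mathcal{P}^3(\mathbb{R}^2)/\langle x^2\rangle_3$ the element $x$ is nilpotent of order $2$, while in $\mathcal{P}^3(\mathbb{R}^2)/\langle x^2-y^3\rangle_3$ the relation $x^2=y^3\ne 0$ forces $x$ to be nilpotent of order $4$. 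I expect the main obstacle to lie in the two-generator portion of Step 2: tailoring each wedge-shaped $E$ so that $I^3(E)$ reproduces exactly the claimed ideal (neither strictly larger nor strictly smaller) and using the unipotent part of $G$ with enough care to ensure that nominally distinct two-generator ideals — in particular the cuspidal $\langle x^2-y^3,xy^2\rangle_3$ versus its homogeneous analogues — do not collapse onto a common equivalence class, since the simultaneous tracking of a quadratic leading form together with its cubic correction through unipotent coordinate changes is the most delicate combinatorial piece of the argument.
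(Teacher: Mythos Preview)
Your overall strategy—enumerate equivalence classes, then realize-or-eliminate—is reasonable and close in spirit to the paper. But Step 2 contains a fatal error that breaks the realization half of the argument.

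\textbf{The wedge construction does not work.} You propose to realize the non-reduced ideals $\langle x^2\rangle_3,\langle x^3\rangle_3,\langle yx^2\rangle_3$ and the two-generator ideals using sets of the form $E=\{(x,y):|x|\le Cy^k\}$. But such a set has nonempty interior: for any $y_0>0$ a full Euclidean neighborhood of $(0,y_0)$ lies in $E$. Hence any $F\in C^3(\mathbb{R}^2)$ vanishing on $E$ vanishes on an open set, so all its partial derivatives vanish there, and by continuity all derivatives of $F$ vanish at $\vec 0$. Thus $I^3(E)=\{0\}$, not the claimed ideal. Your heuristic ``vanishing on a wedge of $x$-width $\sim y^k$ forces the corresponding $x$-derivatives to vanish'' is correct but incomplete: it forces \emph{all} derivatives to vanish, not just the $x$-derivatives. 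The paper instead uses one-dimensional sets—unions of carefully chosen curves such as $\{x(x^2-y^5)=0\}\cap\{x\ge 0\}$ for $\langle x^2\rangle_3$ or $\{x(x^2+y^2)=|y|^{3+\epsilon}\}$ for $\langle x^3,x^2y,xy^2\rangle_3$—and the verification that $I^3(E)$ equals the target ideal requires Rolle-type arguments (finding interior critical points between curve branches), not merely Taylor expansion.

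\textbf{The elimination of non-closed ideals is too vague.} Saying ``the codimension of $I$ is incompatible with the real geometry of its zero locus'' is not an argument. The actual mechanism (used throughout the paper's Lemmas 7.3 and 7.5) is: either $Allow(I)=\emptyset$, forcing $I=\mathcal{P}_0^3(\mathbb{R}^2)$ by Corollary~\ref{cor_no_allowed_directions_SECOND_PAPER}, or one exhibits a jet strongly implied by $I$ in every allowed direction (via Definition~\ref{implied_polynomial_definition_SECOND_PAPER} and Corollary~\ref{cor_strong_directional_implication_imply_implication_SECOND_PAPER}) that does not lie in $I$. For instance, $\langle x(x^2+y^2)\rangle_3$ is eliminated because $\frac{x^2}{x^2+y^2}$ satisfies the required derivative bounds, forcing $x^3\in cl(I)\setminus I$; and several two-dimensional subspaces of cubic forms (e.g.\ $\langle x^2y,x^3-xy^2\rangle_3$) are eliminated the same way. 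Your enumeration in Step 1 glosses over exactly these cases with ``adjoining extra order-3 generators yields the remaining two-generator classes,'' but the paper's case analysis here is the bulk of the work and repeatedly uses closedness as an active hypothesis to prune branches.

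\textbf{Minor:} In Step 3, your nilpotency computation is off: in $\mathcal{P}^3(\mathbb{R}^2)/\langle x^2-y^3\rangle_3$ one has $x^3=x\cdot y^3=0$ (degree $4$ truncates), so $x$ has nilpotency order $3$, not $4$. The invariant still separates the pair, but the paper's argument is cleaner: $\langle x^2\rangle_3$ contains the square of a linear form while $\langle x^2-y^3\rangle_3$ does not.
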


\begin{theorem}[Classification in $\mathcal{P}_0^2(\mathbb{R}^3)$]\label{theorem_on_2_3} Let $(x,y,z)$
be a standard coordinate system on
$\mathbb{R}^3$. Then, any closed ideal in $\mathcal{P}_0^2(\mathbb{R}^3)$
is equivalent to
one of the ideals in Table \ref{tab:table4}: 
\begin{table}[h!]
  \begin{center}
    \caption{Closed ideals in $\mathcal{P}_0^2(\mathbb{R}^3)$.}
    \label{tab:table4}
    \begin{tabular}{c|c}    
      \hline
      \textbf{\# of generators} & {\textbf{Ideals}} \\ 
      \hline
      1 & $\{0\},\langle x \rangle_2,\langle x^2 \rangle_2,\langle x^2+y^2 \rangle_2,\langle
x^2-y^2 \rangle_2,\langle x^2+y^2-z^2 \rangle_2$ \\
      \hline
      2 & $\langle
x,y \rangle_2,\langle x,y^2 \rangle_2,\langle x^2,xy \rangle_2,\langle x^{2} ,y^{2}\rangle_2,\langle xy,y^{2} -x^{2}\rangle_2$
\\

             & $\langle x,yz
\rangle_2,\langle
xy,yz \rangle_2,\langle x^2,yz \rangle_2,\langle
xy,yz-x^2 \rangle_2,$
\\

             & $\langle x^2,xy+xz+yz \rangle_2,\langle xy,z(x+y) \rangle_2,\langle xy+yz,xz+yz \rangle_2$

\\
      \hline 
      3 & $\langle x,y^2,yz
\rangle_2,\langle x^2,y^2,xy \rangle_2,\langle xy,xz,yz \rangle_2,$
\\
   
       & $\langle x^2,xy,yz \rangle_2,\langle
x^2,xy,xz \rangle_2,\langle
x^2,y^2+xz,xy
\rangle_2,\mathcal{P}_0^2(\mathbb{R}^3)$

\\

      \hline 
            4 & $\langle x^2,xy,xz,yz \rangle_2,\langle x^2,y^2,xy,xz \rangle_2$
\\
      \hline 
            5 & $\langle x^2,y^2,xy,xz,yz
\rangle_2$
\\
      \hline

    \end{tabular}
  \end{center}
\end{table}

Moreover, any ideal in Table \ref{tab:table4} arises as $I^2(E)$ for
some semi-algebraic set $E\subset\mathbb{R}^3$ that contains the origin,
and any two different
ideals in Table \ref{tab:table4} are non-equivalent.
\end{theorem}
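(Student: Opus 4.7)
The plan is to classify closed ideals $I\lhd\mathcal{P}_0^2(\mathbb{R}^3)$ by extracting two invariants: a linear part $V_1(I):=\pi_1(I)\subseteq\mathcal{P}_0^1(\mathbb{R}^3)$, where $\pi_1$ projects away the quadratic terms, and a quadratic part $W_2(I):=I\cap\mathcal{M}^2$, where $\mathcal{M}^2$ denotes the homogeneous quadratics. Since a $C^2$-diffeomorphism $\phi=L+Q+\cdots$ fixing the origin acts on jets by $\ell\mapsto\ell\circ L+\ell\circ Q$ on linear pieces and $q\mapsto q\circ L$ on quadratic pieces, the equivalence group is generated by $GL_3(\mathbb{R})$ together with the ``quadratic shifts'' $\ell\mapsto\ell+(\ell\circ Q)$. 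After normalizing $V_1$ by $GL_3(\mathbb{R})$ and then using quadratic shifts to clean up the quadratic parts of chosen lifts of $V_1$, the ideal is determined up to equivalence by the pair $(V_1,W_2)$, subject to the ideal constraint $V_1\cdot\mathcal{P}_0^1\subseteq W_2$.

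I would then split into the four cases $\dim V_1\in\{0,1,2,3\}$. The case $\dim V_1=3$ yields only $I=\mathcal{P}_0^2(\mathbb{R}^3)$. In the case $\dim V_1=2$, normalize $V_1=\mathrm{span}(x,y)$; this forces $W_2\supseteq\mathrm{span}(x^2,xy,y^2,xz,yz)$, so only the inclusion of $z^2$ is free, producing two algebraic candidates of which only $\langle x,y\rangle_2$ should survive the closedness test. In the case $\dim V_1=1$, normalize $V_1=\mathrm{span}(x)$, which reduces the classification of the remaining quadratic content to that of subspaces of $\mathrm{Sym}^2(\mathrm{span}(y,z))$ under the residual $GL_2$-action — a standard signature/rank exercise yielding three one-dimensional, three two-dimensional, and one three-dimensional candidate. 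In the case $\dim V_1=0$, every subspace of $\mathcal{M}^2$ is automatically an ideal (products of quadratics are cubic and hence zero in $\mathcal{P}_0^2$), so I would classify subspaces of $\mathrm{Sym}^2(\mathbb{R}^3)^*$ under $GL_3$: lines are classified by $(\mathrm{rank},|\mathrm{signature}|)$, pencils by the projective pencil of their associated quadrics, and so on. Having assembled the list of normal forms, I would distinguish the entries of Table \ref{tab:table4} by numerical invariants ($\dim I$, $\dim V_1$, $\dim W_2$, signatures of associated quadratic forms, dimensions of common vanishing loci) to establish pairwise non-equivalence.

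I expect the main obstacle to be discarding the algebraically valid normal forms that fail to be closed. The $GL_3$-classification naturally produces candidates such as $\langle x^2+y^2+z^2\rangle_2$, $\langle x,y^2+z^2\rangle_2$, $\langle x,y,z^2\rangle_2$, and $\langle x,y^2,yz,z^2\rangle_2$, each of which must be ruled out using the closedness criterion of \cite{FS2} (or by contrapositive, via Theorem \ref{main_theorem_on_necessary_condition_SECOND_PAPER}); morally these correspond to ideals involving an anisotropic real quadratic form that cannot be realized as the jet datum of a family of functions vanishing on a real set. The second task, producing for each surviving ideal a semi-algebraic set $E\ni\vec 0$ with $I=I^2(E)$, should be handled by taking $E$ to be a union of coordinate subspaces, affine subspaces, or low-degree real algebraic varieties — for example, $\langle x^2+y^2-z^2\rangle_2=I^2(\{x^2+y^2=z^2\})$, $\langle xy,xz,yz\rangle_2$ is realized by the union of the three coordinate axes, and $\langle x,y^2,yz\rangle_2=I^2(\{x=0,\,yz=0\})$. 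Verifying a correct $E$ for each of the thirty-odd entries of Table \ref{tab:table4}, and especially for the more subtle three- and four-generator ideals such as $\langle x^2,y^2+xz,xy\rangle_2$, will require a separate case-by-case Taylor-expansion computation.
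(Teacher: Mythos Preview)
Your organizational scheme differs from the paper's: you propose to classify all subspaces of $\mathrm{Sym}^2(\mathbb{R}^3)^*$ under $GL_3$ first and then discard the non-closed ones, whereas the paper uses $\dim\operatorname{span}_{\mathbb{R}}Allow(I)$ as its primary case-splitting invariant (Lemmas~\ref{lemma_classify_closed_ideals_in_C2R3_1_dim_allow}, \ref{lemma_classify_closed_ideals_in_C2R3_2_dim_allow}, \ref{lemma_classify_closed_ideals_in_C2R3_3_dim_allow}) and argues inside each stratum. Your route front-loads a nontrivial linear-algebra classification (pencils and nets of real conics), while the paper's $Allow$-constraint cuts the subspace down to something like $\operatorname{span}\{x^2,xy,yz,xz\}$ or $\operatorname{span}\{xy,yz,xz\}$ before classifying. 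Either decomposition can in principle work.

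The genuine gap is in your filtering step. You write that the non-closed candidates ``morally correspond to ideals involving an anisotropic real quadratic form,'' and your listed examples ($\langle x^2+y^2+z^2\rangle_2$, $\langle x,y^2+z^2\rangle_2$, etc.) all have $Allow(I)=\emptyset$ and are killed trivially by Corollary~\ref{cor_no_allowed_directions_SECOND_PAPER}. But the hard part of the classification is that many algebraically natural candidates have $Allow(I)\neq\emptyset$ and are \emph{still} not closed: the motivating example $\langle x^2,y^2-xz\rangle_2$ has $Allow=\{(0,0,\pm1)\}$ yet implies $xy$ (Example~\ref{amazing_exaple_for_negligible_part2_SECOND_PAPER}); the paper's case analysis repeatedly produces ideals such as $\langle x^2+yz,xy,xz\rangle_2$, $\langle x^2,yz,x(y+z)\rangle_2$, $\langle x^2y,x^3-xy^2\rangle_3$-type analogues, and many parametric families that must each be shown non-closed by exhibiting a specific implied jet via the negligible-function machinery (Definition~\ref{implied_polynomial_definition_SECOND_PAPER}, Corollary~\ref{cor_strong_directional_implication_imply_implication_SECOND_PAPER}). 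Your proposal contains no mechanism for detecting these. Relatedly, the realization step is much harder than ``unions of coordinate subspaces or low-degree varieties'': for ideals like $\langle x^2,y^2+xz,xy\rangle_2$ or $\langle xz,yz-x^2\rangle_2$ the paper constructs sets with carefully tuned fractional exponents and uses Rolle-type arguments on curves (Examples~\ref{example_for_2_3_xy_xx_yy+xz}, \ref{example_for_2_3_xz_yz-xx}); your suggested $E$'s would not suffice there.
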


\subsection*{The phenomenon of implied jets -- motivating example}The notion of a closed ideal is motivated by the following phenomenon: given jets $p_1,\dots,p_k,p_*\in\mathcal{P}_0^n(\mathbb{R}^n)$, it may happen that \begin{gather}\label{label_for_motivation_part_1}\text{For any set }E\subset\mathbb{R}^n,\text{ if }p_1,\dots,p_k\in I^m(E)\text{ then also }p_*\in I^m(E).\end{gather} In Section \ref{section_settings} we introduce the exact definition of a jet $p_*$ being \textit{implied} by an ideal $I\lhd\mathcal{P}_0^m(\mathbb{R}^n)$. We say that an ideal $I\lhd\mathcal{P}_0^m(\mathbb{R}^n)$ is \textit{implication-closed}, or simply \textit{closed}, provided that any jet that is implied by $I$ also belongs to $I$. In \cite{FS2} we prove that if a jet $p_*$ is implied by an ideal $I\lhd\mathcal{P}_0^m(\mathbb{R}^n)$, then there exist jets $p_1,\dots,p_k\in I$ such that (\ref{label_for_motivation_part_1}) holds. Hence, $I^m(E)$ is always closed, for any set $E$ that contains the origin. In this introduction, we content ourselves with a simple example to illustrate (\ref{label_for_motivation_part_1}).

We work in the ring of 2-jets of $C^2$-smooth functions on $\mathbb{R}^3$, where $(x,y,z)$
is a standard coordinate system on
$\mathbb{R}^3$. We will show that the following holds:\begin{multline}\label{label_for_motivation_part_2}\text{For
any set }E\subset\mathbb{R}^3,\\ \text{ if }p_1:=x^{2}+y^{2}\in I^m(E)\text{ and }p_2:=xz\in I^m(E)\text{ then
also }p_*:=x^{2}\in I^m(E).\end{multline}Note that (\ref{label_for_motivation_part_2}) implies, in particular, that the ideal in $\mathcal{P}_0^2(\mathbb{R}^3)$ generated by $x^2+y^2$ and $xz$ does not arise as $I^2(E)$ for any set $E$ that contains the origin. Here is an elementary proof of (\ref{label_for_motivation_part_2}):

Let $E\subset \mathbb{R}^3$ be such that the assumptions of (\ref{label_for_motivation_part_2}) hold. We first use the jet $p_1$ to control the geometry of $E$. Since $p_1:=x^{2}+y^{2}\in I^m(E)$ there exists $F\in C^2(\mathbb{R}^3)$ such that $J^2(F)=x^2+y^2$ and $F|_E=0$. Explicitly, we have $F(x,y,z)=x^2+y^2+o(x^2+y^2+z^2)$. This implies that on $E$ we have $\abs{(x,y)}=o(\abs{z})$, i.e., $E$ is "tangent" to the $z$-axis. We introduce 2 "cones": $$\Gamma_1:=\{(x,y,z)\in\mathbb{R}^3:\abs{(x,y)}<\abs{z}\}\text{ and }\Gamma_2:=\{(x,y,z)\in\mathbb{R}^3:\abs{(x,y)}<2\abs{z}\}.$$Since $E$ is "tangent" to the $z$-axis, we have \begin{gather}\label{random_label_for_intro_example_pp}E\cap B\subset \Gamma_1\cup\{\vec 0\}\text{ for some small ball }B\text{ about the origin}.\end{gather} Now we use $p_2=xz$. Since $p_2\in
I^m(E)$ there exists $\tilde F\in C^2(\mathbb{R}^3)$ such that $J^2(\tilde F)=0$
and \begin{gather}\label{random_label_for_intro_example_qq}xz+\tilde F=0\text{ on }E.\end{gather}Note that since $J^2(\tilde
F)=0$ we have \begin{gather}\label{random_label_for_intro_example_a}\abs{\partial^\alpha\tilde F(\vec x)}=o(\abs{\vec x}^{2-\abs{\alpha}})\text{ for any multi-index }\abs{\alpha}\leq2.\end{gather}Let $\theta\in C^2(\mathbb{R}^3\setminus\{\vec 0\})$ be a homogeneous function of degree 0 such that $\theta=1$ on $\Gamma_1$ and $\theta=0$ outside $\Gamma_2$. Define $S(\vec x):=\theta(\vec x)\cdot\frac{x}{z}$. One easily checks that \begin{gather}\label{random_label_for_intro_example_b}\abs{\partial^\alpha S(\vec x)}=O(\abs{\vec
x}^{-\abs{\alpha}})\text{ on }\mathbb{R}^3\setminus\{\vec 0\}\text{ for any multi-index }\abs{\alpha}\leq2.\end{gather}Consequently, (\ref{random_label_for_intro_example_a}) and (\ref{random_label_for_intro_example_b}) imply that \begin{gather}\label{random_label_for_intro_example_c}\abs{\partial^\alpha(
S\tilde F)(\vec x)}=o(\abs{\vec
x}^{2-\abs{\alpha}})\text{ on }\mathbb{R}^3\setminus\{\vec 0\}\text{ for any
multi-index }\abs{\alpha}\leq2.\end{gather} Setting $(S\tilde F)(\vec 0):=0$ we get \begin{gather}\label{random_label_for_intro_example_t}S\tilde F\in C^2(\mathbb{R}^3)\text{ and }J^2(S\tilde F)=0.\end{gather}We now multiply (\ref{random_label_for_intro_example_qq}) by $S$. Since $\theta=1$ on $\Gamma_1$ we have that $S=\frac{x}{z}$ on $\Gamma_1$, and so \begin{gather}0=S\cdot(xz+\tilde F)=x^2+S\tilde F\text{ on }E\cap\Gamma_1.\end{gather} Recalling (\ref{random_label_for_intro_example_pp}) and noting that $x^2+S\tilde F=0$ at the origin we conclude that \begin{gather}\label{random_label_for_intro_example_t_2}x^{2}+S\tilde F=0\text{ on }E\cap B\text{ for some small ball }B\text{ about
the origin}.\end{gather}Finally, we set $F^\#=\chi\cdot(x^2+S\tilde F)$, where $\chi\in C^\infty(\mathbb{R}^3)$ is supported in $B$ and $\chi=1$ in a neighborhood of the origin. From (\ref{random_label_for_intro_example_t}) and (\ref{random_label_for_intro_example_t_2}) we conclude that $F^\#\in C^2(\mathbb{R}^3)$, $J^2(F^\#)=x^2$ and $F^\#=0$ on $E$. Therefore, $x^2\in I^2(E)$, and so (\ref{label_for_motivation_part_2}) holds.

We stress that in this proof $p_1$ and $p_2$ play very different roles: we use the fact that $p_1\in I^2(E)$ in order to prove that $E$ lives away from the singularities of $\frac{x}{z}$. Thus, we know that there exists a cutoff function $\theta$ that is supported away from the singularities of $\frac{x}{z}$ and equals 1 on $E$. Then, we use the fact that $p_2\in I^2(E)$ and the existence of $\theta$ in order to construct a function that vanishes on $E$ and has a 2-jet $x^2$. In the terminology we develop in \cite{FS2} and formally introduce below in Section \ref{section_settings}, we only use $p_1$ in order to determine the possible \textit{allowed directions} of $I^2(E)$.

\subsection*{Structure of this paper} In \textbf{Section \ref{section_settings}} we set notation and state all the results about the theory of implication-closed ideals from  \cite{FS2} that we are using in this paper. 

In \textbf{Section \ref{section_easy_examples}} we calculate some simple examples of ideals of the form $I^m(E)$ for concrete sets $E$; these examples will be useful in later sections.

 \textbf{Section \ref{section_m_or_n_is_1}} is dedicated to the rings $\mathcal{P}_0^m(\mathbb{R})$ and $\mathcal{P}_0^1(\mathbb{R}^n)$, namely to the proof of Theorem \ref{theorem_on_almost_trivial_cases}.

\textbf{Section \ref{section_2_2}}
is dedicated to the ring $\mathcal{P}_0^2(\mathbb{R}^2)$,
namely to the proof of Theorem \ref{theorem_on_2_2}. 

In \textbf{Section \ref{section_special_examples_with_m_is_2}} we study principal ideals in $\mathcal{P}_0^2(\mathbb{R}^n)$ and some other ideals in this ring with few generators; the results of this section are used in Section \ref{section_3_2} and Section \ref{section_2_3}.

 \textbf{Section \ref{section_3_2}}
is dedicated to the ring $\mathcal{P}_0^3(\mathbb{R}^2)$,
namely to the proof of Theorem \ref{theorem_on_3_2}. 

Finally, \textbf{Section
\ref{section_2_3}}
is dedicated to the ring $\mathcal{P}_0^2(\mathbb{R}^3)$,
namely to the proof of Theorem \ref{theorem_on_2_3}.

\subsection*{Acknowledgments}We are grateful to Nahum Zobin for his beautiful conjecture. We thank the participants
of the Whitney Extension Problems Workshops over the years, the institutions
that hosted and supported these workshops -- American Institute of Mathematics,
Banff International Research Station, the College of William and Marry, the
Fields Institute, the Technion, Trinity College Dublin and the  University
of Texas at Austin -- and the NSF, ONR, AFOSR and BSF for generously supporting
these workshops. 

\section{The setting}\label{section_settings}

\subsection*{Notation}We work in
$\mathbb{R}^n$ with Euclidean
metric, and most of the notation we use is standard. 

\

\textbf{Functions.} For an open subset $U\subset\mathbb{R}^n$ and $m\in\mathbb{N}\cup\{0\}$
we denote by $C^m(U)$ the space of real valued $m$-times continuously differentiable
functions. 
We use multi-index notation for derivatives: for a multi-index $\alpha:=(\alpha_1,\alpha_2,\dots,\alpha_n)\in(\mathbb{N}\cup\{0\})^n$
we set $\abs{\alpha}:=\alpha_1+\dots+\alpha_n$ and $\alpha!:=\alpha_1!\alpha_2!\dots\alpha_n!$.
For $\vec x=(x_1,x_2,\dots, x_n)\in\mathbb{R}^n$ we set $\vec x^{\alpha}:=x_1^{\alpha_1}x_2^{\alpha_2}\dots
x_n^{\alpha_n}$. If $\abs{\alpha}\leq m$ and $f\in C^m(U)$ we write $$f^{(\alpha)}:=\frac{\partial^{|\alpha|}f}{\partial
x_1^{\alpha_1}\partial x_2^{\alpha_2}\cdots\partial
x_n^{\alpha_n}}$$ when $\abs{\alpha}\neq0$, and $f^{(\alpha)}:=f$ when $\abs{\alpha}=0$.
We sometimes write $\partial^\alpha f$ or $\partial_\alpha f$ instead of
$f^{(\alpha)}$. When it is clear from the context we will sometimes write $f_{xy}$
or $\partial^2_{xy}f$ when $\alpha=(1,1,0,0,\dots,0)$ and $(x,y,z,\dots)$
is a coordinate system on $\mathbb{R}^n$, and other such similar conventional
notation. 

\

\textbf{Asymptotic behaviors.} For $f,g\in C^m(U)$ we
write $$f(\vec x)=o(g(\vec x))$$ if$$\frac{f(\vec
x)}{g(\vec x)}\to0\text{ as }\vec x\to\vec 0.$$ We also write
$$f(\vec x)=O(g(\vec
x))$$ if$$\abs{\frac{f(\vec x)}{g(\vec x)}}\text{
is bounded in some punctured neighborhood of }\vec 0.$$  Finally, we write $$f(\vec
x)=\Theta(g(\vec
x))$$ if$$f(\vec x)=O(g(\vec
x))\text{ and } g(\vec x)=O(f(\vec
x)).$$ 

\

\textbf{Balls, cones, annuli and other geometric objects.}  For $r>0$ we
set $B(r):=\{\vec
x\in\mathbb{R}^n:\abs{\vec
x}<r\}$ and set $B^{\times}(r):=B(r)\setminus\{\vec 0\}$, where $n$ should
be
clear from the context. For two sets $X,Y\subset\mathbb{R}^n$ we
set $$\text{dist}(X,Y):=\inf\{\abs{\vec x-\vec y}:\vec x\in X,\vec y\in Y\}\text{
if }X\neq\emptyset\text{ and }Y\neq\emptyset\text{ };\text{ }\text{dist}(X,Y)=+\infty\text{
otherwise,}$$
and if one of them is a singleton we write $\text{dist}(\vec x,Y)$ instead
of $\text{dist}(\{\vec x\},Y)$, and similarly if the other is a singleton.
We denote
as usual $\bR^n\supset S^{n-1}:=\{\abs{\vec x}=1\}$ and refer to points in
$S^{n-1}$ as \textit{directions}. For $\Omega\subset S^{n-1}$
and $\delta>0$ we denote (the
dome around $\Omega$ of opening $\delta$) $$D(\Omega,\delta):=\{\vec
\omega\in S^{n-1}: \text{dist}(\omega,\Omega)<\delta\}.$$ Note that in particular
if $\Omega=\emptyset$ then $D(\Omega,\delta)=\emptyset$. Given (radius)
$r>0$  we set $$\Gamma(\Omega,\delta,r):=\bigcup\limits_{r'\in(0,r)}r' D(\Omega,\delta)=\{\vec
x\in\mathbb{R}^n:0<\abs{\vec x}<r,\text{dist}(\frac{\vec x}{\abs{\vec x}},\Omega)<\delta\}.$$Note
that in particular
if $\Omega=\emptyset$ then $\Gamma(\Omega,\delta,r)=\emptyset$. 

For
a singleton $\omega\in S^{n-1}$ we write $D(\omega,\delta)$ and $\Gamma(\omega,\delta,r)$
instead of  $D(\{\omega\},\delta)$ and $\Gamma(\{\omega\},\delta,r)$ (respectively).
We call a set of the form $\Gamma(\omega,\delta,r)$ (resp. $\Gamma(\Omega,\delta,r)$)
a cone in the direction $\omega$ (around the set of directions $\Omega\subset
S^{n-1}$), or a conic neighborhood of $\omega$ (of $\Omega$). Note that some
cones are non-convex. Also note that for any $\Omega\subset
S^{n-1}$, $\delta>0$ and $r>0$, we have that $\Gamma(\Omega,\delta,\epsilon)$
is open in $\bR^n$, and $D(\Omega,\delta)$ is open in $S^{n-1}$ (in the
restricted topology). Finally (when $n$ is clear from the context), for $\mathbb{R}\ni
K\geq 1$ and $\mathbb{R}\ni r>0$ we define the annulus$$\text{Ann}_K(r):=\{\vec
x\in\mathbb{R}^n:\frac{r}{K}<\abs{\vec x}<Kr\}.$$

\subsection*{Preliminaries}Below we collect the main definitions and results from \cite{FS2} that we will use.

\begin{definition}[allowed and forbidden directions of jets and ideals; see {\cite[Definition 2.6]{FS2}}]\label{definition_allowed_and_forbidden_directions_SECOND_PAPER}
\
\begin{itemize}
\item Let $p_{1},p_2,\dots,p_L\in\mathcal{P}_0^m(\bR^n)$ be jets and let
$\omega\in S^{n-1}$ be a direction. We say that
$\omega$ is a \textit{forbidden direction} of $p_1,\dots p_L$ if the following
holds:
\begin{multline}\label{equiv_cond_one_for_forbidden}
  \text{There exist }c,\delta,r>0 \text{
such that} \\ \abs{p_1(\vec x)}+\abs{p_2(\vec x)}+\dots+\abs{p_L(\vec x)}>
c\cdot\abs{\vec
{x}}^m \text { for all }\vec x\in\Gamma(\omega,\delta,r).
\end{multline}Otherwise, we say that $\omega$ is an \textit{allowed direction}
of $p_1,\dots p_L$.
\item Let $I\lhd\mathcal{P}_0^{m}(\bR^n)$ be an ideal. A direction $\omega\in
S^{n-1}$
is said to be a \textit{forbidden direction} of $I$, if there exist $p_{1},p_2,\dots,p_L\in
I$ such that $\omega$ is a forbidden direction of $p_{1},p_2,\dots,p_L$.
Otherwise, we say that
$\omega$ is an \textit{allowed direction} of $I$.
\end{itemize}
We denote
the sets of forbidden and allowed directions of $I$ by $Forb(I)$ and $Allow(I)$
respectively. Note that the set $Allow(I)\subset S^{n-1}$ is always closed. We will often use the fact that $\dim\text{span}_{\mathbb{R}}Allow(I)$ is invariant with respect to $C^m$
coordinate changes around the origin (see \cite[Remark 2.16]{FS2}). 
\end{definition}

\begin{definition}[tangent, forbidden and allowed directions of a set; see {\cite[Definition 2.7]{FS2}}]\label{def_tangent_directions_of_a_set}Let
$E\subset\bR^n$ be a closed subset containing the origin
and let $\omega\in S^{n-1}$. We say that \emph{$E$ is tangent to the direction
$\omega$} if \begin{gather}E\cap\Gamma(\omega,\delta,r)\neq\emptyset\text{
for all }\delta>0 \text{ and all }r>0.\end{gather} We denote by $T(E)\subset
S^{n-1}$ the set of all directions to which $E$
is tangent. Finally, for a fixed $m\in\mathbb{N}$, we say that \emph{$\omega$
is a forbidden (resp. allowed) direction
of $E$}, if $\omega$ is a forbidden (allowed) direction of $I^m(E)$. We denote
the sets of forbidden and allowed directions of $E$ by $Forb(E)$ and $Allow(E)$
respectively (here $m$ should be implicitly
understood from the context).\end{definition}

\begin{lemma}[see
{\cite[Lemma 2.8]{FS2}}]\label{tangent_subset_bad_lemma_SECOND_PAPER}Let $\vec 0\in E\subset\bR^n$
be a closed subset and $m\in\mathbb{N}$. Then, $T(E)\subset Allow(E)$.\end{lemma}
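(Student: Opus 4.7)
The plan is to argue by contradiction, using tangency to produce a sequence of points in $E$ along which any forbidden-direction inequality must fail. Assume that some $\omega\in T(E)$ lies in $Forb(E)=Forb(I^m(E))$. Unwinding Definition \ref{definition_allowed_and_forbidden_directions_SECOND_PAPER}, there would then exist jets $p_1,\dots,p_L\in I^m(E)$ together with constants $c,\delta,r>0$ such that
$$\abs{p_1(\vec x)}+\cdots+\abs{p_L(\vec x)}>c\abs{\vec x}^m\text{ for every }\vec x\in\Gamma(\omega,\delta,r).$$
The goal is to contradict this by exhibiting a sequence $\vec x_k\in\Gamma(\omega,\delta,r)$ approaching the origin along which the left-hand side is $o(\abs{\vec x_k}^m)$.

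Next I would bring in the defining property of $I^m(E)$: for each $i$ choose $F_i\in C^m(\mathbb{R}^n)$ with $J^m(F_i)=p_i$ and $F_i\equiv 0$ on $E$. The standard $C^m$ Taylor remainder at the origin (applied to $F_i-p_i$, whose $m$-jet at $\vec 0$ is zero) yields
$$F_i(\vec x)=p_i(\vec x)+o(\abs{\vec x}^m)\text{ as }\vec x\to\vec 0,$$
uniformly in $i$ since there are only finitely many of them. Hence at any point $\vec x\in E$ one has $p_i(\vec x)=-o(\abs{\vec x}^m)$, and in particular $\abs{p_1(\vec x)}+\cdots+\abs{p_L(\vec x)}=o(\abs{\vec x}^m)$ as $E\ni\vec x\to\vec 0$.

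Finally I would invoke tangency: $\omega\in T(E)$ means $E\cap\Gamma(\omega,\delta',r')\neq\emptyset$ for every $\delta',r'>0$. Taking $\delta'=\delta$ and $r'=1/k$, pick $\vec x_k\in E\cap\Gamma(\omega,\delta,1/k)$; by construction $\vec x_k\to\vec 0$ and $\vec x_k\in\Gamma(\omega,\delta,r)$ for all large $k$. On the one hand the forbidden-direction inequality forces $\sum_i\abs{p_i(\vec x_k)}>c\abs{\vec x_k}^m$; on the other hand the previous paragraph forces $\sum_i\abs{p_i(\vec x_k)}=o(\abs{\vec x_k}^m)$. These two statements are incompatible for $k$ large, contradicting $\omega\in Forb(E)$ and proving $\omega\in Allow(E)$.

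I don't expect a real obstacle here; the argument is essentially a single Taylor-remainder computation combined with the definition of tangency. The only point worth a moment's care is the uniform little-$o$ for a finite collection $F_1,\dots,F_L$, which is immediate, and the verification that the chosen $\vec x_k$ actually lies in the cone $\Gamma(\omega,\delta,r)$ (guaranteed once $1/k<r$).
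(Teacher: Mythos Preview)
Your argument is correct. Note, however, that the paper does not actually prove this lemma: it is stated in Section~\ref{section_settings} as a preliminary result imported from \cite{FS2} (specifically \cite[Lemma~2.8]{FS2}), with no proof given here. Your contradiction argument---choosing $F_i\in C^m$ with $J^m(F_i)=p_i$ and $F_i|_E=0$, applying the Taylor remainder to get $p_i(\vec x)=o(|\vec x|^m)$ on $E$, and then using tangency to produce points of $E$ inside the forbidden cone---is exactly the natural proof and is presumably what appears in \cite{FS2}.
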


\begin{definition}[implied jets; see {\cite[Definition 2.10]{FS2}}]\label{new_def_implied_jet_SECOND_PAPER}Let $I\lhd\mathcal{P}_0^{m}(\bR^n)$
be an ideal, denote $\Omega:=Allow(I)$ and let $p\in\mathcal{P}_0^{m}(\bR^n)$
be some polynomial. We
say that\textit{ $I$ implies $p$} (or that $p$ is implied by $I$) if there
exist a constant $A>0$ and $Q_1,Q_2,\dots, Q_L\in I$ such that the following
holds:

For any $\epsilon>0$ there exist $\delta,r>0$ such that for any $0<\rho\leq
r$ there exist functions $F,S_1,S_2,\dots,S_L\in C^m(\text{Ann}_4(\rho))$
satisfying \begin{gather}\label{new_def_of_implied_label_1_SECOND_PAPER}\abs{\partial^\alpha
F(\vec x)}\leq \epsilon \rho^{m-\abs{\alpha}} \text { for all }\vec x\in\text{Ann}_4(\rho)
\text{ and all }\abs{\alpha}\leq m; \end{gather}
\begin{gather}\label{new_def_of_implied_label_2_SECOND_PAPER}\abs{\partial^\alpha S_l(\vec
x)}\leq A \rho^{-\abs{\alpha}} \text { for all }\vec x\in\text{Ann}_4(\rho)
\text{, all }\abs{\alpha}\leq m \text{ and all }1\leq l\leq L; \end{gather}
\begin{multline}\label{new_def_of_implied_label_3_SECOND_PAPER}p(\vec x)=F(\vec x)+S_1(\vec x)Q_1(\vec x)+S_2(\vec
x)Q_2(\vec x)+\dots+S_L(\vec
x)Q_L(\vec x) \\ \text {for all }\vec x\in\text{Ann}_2(\rho)
\text{ such that }\text{dist}(\frac{\vec x}{\abs{\vec x}},\Omega)<\delta.
\end{multline}
\end{definition}

\begin{definition}[the closure of an ideal; see {\cite[Definition 2.12]{FS2}}]\label{the_closure_of_ideal_definition_SECOND_PAPER}Let
$I\lhd\mathcal{P}_0^m(\mathbb{R}^n)$ be an ideal. We define its \textit{implication-closure} (or simply \textit{closure}) $cl(I)$ by $$cl(I):=\{p\in\mathcal{P}_0^m(\mathbb{R}^n)|p\text{
is implied by }I\}.$$ We say that $I$ is \textit{closed} if $I=cl(I)$.\end{definition}

\begin{corollary}[sets with no allowed directions; see {\cite[Corollary 2.17]{FS2}}]\label{cor_no_allowed_directions_SECOND_PAPER}The
only closed ideal $I\lhd\mathcal{P}_0^m(\mathbb{R}^n)$ such that $Allow(I)=\emptyset$
is $I=\mathcal{P}_0^m(\mathbb{R}^n)$. Moreover, if $E\subset\mathbb{R}^n$
is a closed subset containing the origin such that $Allow(I^m(E))=\emptyset$,
then the origin is an isolated
point of $E$ and $I^m(E)=\mathcal{P}_0^m(\mathbb{R}^n)$.\end{corollary}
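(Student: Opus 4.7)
The plan is to handle the two assertions in turn, both by reducing to the definitions and to Lemma \ref{tangent_subset_bad_lemma_SECOND_PAPER}. The first assertion is essentially a vacuity argument built into Definition \ref{new_def_implied_jet_SECOND_PAPER}, and the second just combines the first with the observation that every tangent direction of $E$ is an allowed direction.

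For the first assertion, let $I\lhd\mathcal{P}_0^m(\bR^n)$ be closed with $\Omega:=Allow(I)=\emptyset$. I would show that every $p\in\mathcal{P}_0^m(\bR^n)$ lies in $cl(I)$, which by closedness gives $p\in I$. To verify the definition of $p$ being implied by $I$, observe that the hypothesis $\Omega=\emptyset$ together with the paper's convention $\text{dist}(\vec{x}/\abs{\vec x},\emptyset)=+\infty$ makes the set of $\vec x\in\text{Ann}_2(\rho)$ with $\text{dist}(\vec{x}/\abs{\vec x},\Omega)<\delta$ empty for every $\delta>0$ and every $\rho>0$. Consequently the equation (\ref{new_def_of_implied_label_3_SECOND_PAPER}) is required on the empty set, and so holds trivially. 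Choosing $F\equiv 0$, $S_l\equiv 0$, and any $Q_l\in I$ (for instance $Q_l=0$, which is allowed even when $I=\{0\}$) verifies (\ref{new_def_of_implied_label_1_SECOND_PAPER}) and (\ref{new_def_of_implied_label_2_SECOND_PAPER}) automatically. Hence $p\in cl(I)=I$, so $I=\mathcal{P}_0^m(\bR^n)$.

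For the second assertion, let $E$ be a closed subset of $\bR^n$ containing the origin, with $Allow(I^m(E))=\emptyset$. Theorem \ref{main_theorem_on_necessary_condition_SECOND_PAPER} guarantees that $I^m(E)$ is closed, so by the first assertion $I^m(E)=\mathcal{P}_0^m(\bR^n)$. To prove the origin is isolated in $E$, argue by contradiction: if not, there is a sequence $\vec x_k\in E\setminus\{\vec 0\}$ with $\vec x_k\to\vec 0$. By compactness of $S^{n-1}$ a subsequence of $\vec x_k/\abs{\vec x_k}$ converges to some $\omega\in S^{n-1}$, and the definition of tangent directions immediately yields $\omega\in T(E)$. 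But Lemma \ref{tangent_subset_bad_lemma_SECOND_PAPER} asserts $T(E)\subset Allow(E)=Allow(I^m(E))=\emptyset$, contradicting $\omega\in T(E)$. So $\vec 0$ is an isolated point of $E$.

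There is essentially no serious obstacle here, because the corollary is built from the definitions by design; the only subtle point is noticing that the disjunction in the paper's definition of $\text{dist}$ turns condition (\ref{new_def_of_implied_label_3_SECOND_PAPER}) into a vacuous statement when $\Omega=\emptyset$. Once that is observed, both parts of the statement fall out immediately from Definition \ref{new_def_implied_jet_SECOND_PAPER}, Lemma \ref{tangent_subset_bad_lemma_SECOND_PAPER}, and Theorem \ref{main_theorem_on_necessary_condition_SECOND_PAPER}.
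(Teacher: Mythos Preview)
Your argument is correct. Note, however, that this paper does not itself prove Corollary~\ref{cor_no_allowed_directions_SECOND_PAPER}: it is quoted verbatim from the companion paper \cite{FS2} as part of the preliminaries in Section~\ref{section_settings}, so there is no proof in the present paper to compare against. That said, your proof is exactly the natural one, and it is the argument one would expect to find in \cite{FS2}: the vacuity of condition~(\ref{new_def_of_implied_label_3_SECOND_PAPER}) when $\Omega=\emptyset$ (via the convention $\text{dist}(\cdot,\emptyset)=+\infty$) makes every jet implied, and the second part follows from Lemma~\ref{tangent_subset_bad_lemma_SECOND_PAPER} together with Theorem~\ref{main_theorem_on_necessary_condition_SECOND_PAPER} precisely as you wrote.
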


\begin{theorem}[see
{\cite[Theorem 2.18]{FS2}}]\label{main_theorem_on_necessary_condition_SECOND_PAPER}Fix $m,n\in\mathbb{N}$
and let $E\subset\mathbb{R}^n$ be a closed subset containing the origin.
Then, $I^m(E)\lhd\mathcal{P}_0^{m}(\bR^n)$ is a closed ideal.\end{theorem}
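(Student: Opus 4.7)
The plan is to show that if $p \in \mathcal{P}_0^m(\mathbb{R}^n)$ is implied by $I := I^m(E)$, then there exists $F^* \in C^m(\mathbb{R}^n)$ with $F^* = 0$ on $E$ and $J^m(F^*) = p$, by a dyadic patching of the decompositions supplied by Definition~\ref{new_def_implied_jet_SECOND_PAPER}.

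First I would unpack the definition, extracting $Q_1, \ldots, Q_L \in I^m(E)$ and the constant $A$, and fix $C^m$ extensions $\tilde Q_l$ of each $Q_l$ vanishing on $E$ (these exist by the definition of $I^m(E)$), so that $\partial^\alpha(\tilde Q_l - Q_l)(\vec x) = o(|\vec x|^{m - |\alpha|})$ for all $|\alpha| \leq m$. A compactness argument, based on the observation that for each $p_i \in I^m(E)$ one has $p_i(\vec x) = o(|\vec x|^m)$ on $E$, yields: for every $\delta > 0$ there is $r(\delta) > 0$ with $E \cap B(r(\delta)) \subset \Gamma(\Omega, \delta, \infty) \cup \{\vec 0\}$, where $\Omega := Allow(I)$. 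Hence at fine enough scales $E$ lies inside whatever cone the implication definition produces.

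Next I would set up the patching. Choose scales $\rho_k = 2^{-k}$ and a POU $\chi_k$ subordinate to $\text{Ann}_{3/2}(\rho_k)$; for each $k$ invoke Definition~\ref{new_def_implied_jet_SECOND_PAPER} with $\epsilon_k \to 0$ to obtain $F_k$, $S_{1,k}, \ldots, S_{L,k}$ on $\text{Ann}_4(\rho_k)$ satisfying the stated derivative bounds, with cone opening $\delta_k = \delta(\epsilon_k)$. Take a smooth cutoff $\theta_k$ equal to $1$ on a conic neighborhood of $\Omega$ containing $E \cap \text{Ann}_2(\rho_k)$ and supported in $\Gamma(\Omega, \delta_k, \infty) \cap \text{Ann}_4(\rho_k)$, and set
\begin{equation*}
F^*(\vec x) := \sum_k \chi_k(\vec x) \Bigl[\theta_k(\vec x) \sum_{l=1}^L S_{l,k}(\vec x) \tilde Q_l(\vec x) + (1 - \theta_k(\vec x)) p(\vec x)\Bigr]
\end{equation*}
on a punctured neighborhood of $\vec 0$, with $F^*(\vec 0) := 0$. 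On $E$ each bracket vanishes because $\tilde Q_l = 0$ and $\theta_k = 1$; for the jet, the identity $p = F_k + \sum_l S_{l,k} Q_l$ on the cone rewrites the bracket as $p - F_k + \sum_l S_{l,k}(\tilde Q_l - Q_l)$, so a Leibniz computation using the given derivative estimates would yield $\partial^\alpha(F^* - p)(\vec x) = o(|\vec x|^{m - |\alpha|})$ for all $|\alpha| \leq m$, hence $F^* \in C^m$ near origin with $J^m(F^*) = p$. A final multiplication by a $C^\infty$ cutoff equal to $1$ near origin extends $F^*$ to all of $\mathbb{R}^n$ while preserving its vanishing on $E$ and its jet at the origin.

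The hardest part will be controlling the interaction between $\epsilon_k \to 0$ and $\delta_k = \delta(\epsilon_k)$: the angular cutoff $\theta_k$ has derivative bounds of size $(\rho_k \delta_k)^{-|\alpha|}$, so the Leibniz error carries a factor $\delta_k^{-|\alpha|}$ that would defeat the smallness of $\epsilon_k$ if $\delta(\cdot)$ shrinks too fast with $\epsilon$. I would handle this either by (a) proving a combination lemma that upgrades the definition to a uniform lower bound $\delta_* > 0$ on the cone opening, obtained by gluing a fixed coarse-cone decomposition outside a shrinking cone with the sharp decomposition inside; or (b) replacing the angular cutoff by a smoothed-distance cutoff of the form $\psi(\mathrm{dist}(\vec x, E)/(c|\vec x|))$, whose derivatives are intrinsically of order $|\vec x|^{-|\alpha|}$ with no $\delta_k^{-1}$ penalty, after verifying that for small $c$ the $c|\vec x|$-fattening of $E$ still lies inside $\Gamma(\Omega, \delta_k, \infty)$ at the scales under consideration.
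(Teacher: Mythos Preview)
This theorem is not proved in the present paper: it is imported wholesale from \cite{FS2} (see the citation ``\cite[Theorem 2.18]{FS2}''), so there is no proof here to compare your attempt against. What follows is feedback on the soundness of your outline on its own terms.

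Your overall architecture is the natural one---pull back $C^m$ representatives $\tilde Q_l$ of the $Q_l$, observe that $T(E)\subset\Omega$ forces $E$ into arbitrarily thin cones at small scales, and build $F^*$ by dyadic patching of the annular data supplied by Definition~\ref{new_def_implied_jet_SECOND_PAPER}. You have also put your finger on exactly the right obstruction: the angular cutoff $\theta_k$ has derivatives of size $(\rho_k\delta_k)^{-|\alpha|}$, and the Leibniz rule then multiplies the gain $\epsilon_k$ by $\delta_k^{-m}$, over which you have no control.

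Of your two proposed remedies, option~(b) does not close the gap. For the smoothed-distance cutoff $\psi(\mathrm{dist}(\vec x,E)/(c|\vec x|))$ to be supported in $\Gamma(\Omega,\delta_k,\infty)$ you need the $c|\vec x|$-fattening of $E\cap\mathrm{Ann}(\rho_k)$ to lie in that cone; but the angular thickening produced by a $c|\vec x|$-fattening is of order $c$, so this forces $c\lesssim\delta_k$. Once $c$ depends on $k$ in this way, the derivative bound on the cutoff becomes $(c_k\rho_k)^{-|\alpha|}\sim(\delta_k\rho_k)^{-|\alpha|}$, and you are back where you started. (Regularizing $\mathrm{dist}(\cdot,E)$, which is only Lipschitz, costs you nothing extra but also buys you nothing here.)

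Option~(a) is the viable route, but it is a genuine lemma, not a throwaway remark. What you need is an upgrade of Definition~\ref{new_def_implied_jet_SECOND_PAPER} asserting that the cone opening can be taken \emph{uniform}: there exist $A'$, $Q_1',\dots,Q_{L'}'\in I$ and a fixed $\delta_*>0$ such that for every $\epsilon>0$ one has $r(\epsilon)>0$ and decompositions on $\Gamma(\Omega,\delta_*,\cdot)\cap\mathrm{Ann}_2(\rho)$ for all $\rho\le r(\epsilon)$. Once this is in hand, a \emph{fixed} homogeneous cutoff $\theta^{(*)}$ of opening $\delta_*$ replaces your $\theta_k$, its derivatives contribute only harmless factors $C(\delta_*)\rho_k^{-|\beta|}$, and the rest of your Leibniz estimate goes through. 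The proof of such a lemma is where the work lies; the ``gluing a fixed coarse-cone decomposition outside a shrinking cone with the sharp decomposition inside'' that you suggest runs into the same transition-layer penalty unless one exploits additional structure (for instance the Whitney-negligibility condition (W-NEG) that \cite{FS2} records alongside (NEG), which is precisely designed to extend the error term $F$ globally with the $\epsilon$-bound intact). You should expect the argument in \cite{FS2} to pass through a statement of this kind rather than through the raw Definition~\ref{new_def_implied_jet_SECOND_PAPER}.
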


\begin{definition}[order of vanishing of functions; see {\cite[Definition 2.3]{FS2}}]\label{order-of-vanishing-def_SECOND_PAPER}Let
$n,m\in\mathbb{N}$, and let $f\in C^m(\bR^n)$. \textit{The order of vanishing
(at the origin)} of $f$ is said to be

\begin{itemize}
\item the minimal $1\leq m'\leq m$ such that $J^{m'}(f)\neq0$, if
such $m'$ exists and $f(\vec 0)=0$;
\item more than $m$, if $J^m(f)=0$;
\item 0, if $f(\vec 0)\neq 0$. 
\end{itemize}
\textit{The order of vanishing of a jet} $p\in \mathcal{P}^m(\bR^n)$
is the order of vanishing of the polynomial $p$. 
  
\end{definition}

\begin{definition}[lowest degree homogenous part; see {\cite[Definition 3.1]{FS2}}]Let $p\in\mathcal{P}_0^{m}(\bR^n)$
be a non-zero jet. We may always uniquely write $p=p_k+q$,
with $p_k$ a homogenous polynomial
of degree $k$, where $k$ is the order of vanishing
of $p$ (as $p$ is not the zero jet we have $1\leq k\leq m$ -- see Definition
\ref{order-of-vanishing-def_SECOND_PAPER}), and $q$ is a (possibly zero) polynomial of
order of vanishing more than 
$k$. We call $p_k$ \textit{the lowest degree homogenous part of} $p$.\end{definition}

\begin{lemma}[see {\cite[Lemma 3.2]{FS2}}]\label{equiv_conditions_for_forbidden_directions_lemma_SECOND_PAPER}
Let $p\in\mathcal{P}_0^m(\bR^n)$ be a non-zero jet and let $\omega\in
S^{n-1}$ be a direction. Let $p_k$ be the lowest degree
homogenous part of $p$. Then, the following are equivalent:
\begin{gather}\label{equiv_cond_two_for_forbidden_SECOND_PAPER}
  \text{ There exist a cone }\Gamma(\omega,\delta,r)\text{ and }c>0\text{
such that }\abs{p(\vec x)}> c\cdot\abs{\vec
{x}}^k\text { for all }\vec x\in\Gamma(\omega,\delta,r);
\end{gather}
\begin{gather}\label{equiv_cond_three_for_forbidden_SECOND_PAPER}
  p_k(\omega)\neq 0;
\end{gather}

\end{lemma}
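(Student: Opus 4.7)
The plan is to prove the equivalence by establishing each direction directly, using only homogeneity of $p_k$ and continuity on $S^{n-1}$.

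For $(\ref{equiv_cond_three_for_forbidden_SECOND_PAPER}) \Rightarrow (\ref{equiv_cond_two_for_forbidden_SECOND_PAPER})$, I would write $p = p_k + q$ where, by definition of the lowest degree homogeneous part, $q$ is a polynomial each of whose monomials has degree strictly between $k+1$ and $m$. In particular, there is a constant $C > 0$ (depending on $q$) and some $r_0 > 0$ such that $|q(\vec x)| \leq C |\vec x|^{k+1}$ for all $|\vec x| \leq r_0$. Assuming $p_k(\omega) \neq 0$, continuity of $p_k$ on $S^{n-1}$ gives a $\delta > 0$ with $|p_k(\omega')| \geq \tfrac{1}{2}|p_k(\omega)|$ for all $\omega' \in D(\omega,\delta)$. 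Homogeneity of $p_k$ then yields $|p_k(\vec x)| \geq \tfrac{1}{2}|p_k(\omega)| \cdot |\vec x|^k$ on $\Gamma(\omega,\delta,r_0)$. Choosing $r \leq r_0$ so small that $C r \leq \tfrac{1}{4}|p_k(\omega)|$, the reverse triangle inequality delivers
\[
|p(\vec x)| \geq |p_k(\vec x)| - |q(\vec x)| \geq \tfrac{1}{4}|p_k(\omega)| \cdot |\vec x|^k \quad \text{on } \Gamma(\omega,\delta,r),
\]
which is (\ref{equiv_cond_two_for_forbidden_SECOND_PAPER}) with $c = \tfrac{1}{4}|p_k(\omega)|$.

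For the contrapositive of $(\ref{equiv_cond_two_for_forbidden_SECOND_PAPER}) \Rightarrow (\ref{equiv_cond_three_for_forbidden_SECOND_PAPER})$, I would assume $p_k(\omega) = 0$ and test the inequality along the ray $\{t\omega : t > 0\}$, which trivially lies in every cone $\Gamma(\omega,\delta,r)$ since $\mathrm{dist}(\omega,\omega) = 0$. Then $p(t\omega) = t^k p_k(\omega) + q(t\omega) = q(t\omega) = O(t^{k+1})$, so $|p(t\omega)|/|t\omega|^k \to 0$ as $t \to 0^+$. This prevents any lower bound of the form $|p(\vec x)| > c|\vec x|^k$ on any cone about $\omega$, so (\ref{equiv_cond_two_for_forbidden_SECOND_PAPER}) fails.

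There is no real obstacle here: the argument is a routine application of the separation of scales between the homogeneous leading part $p_k$ (which controls the size on each ray up to a factor $|\vec x|^k$) and the higher-order remainder $q$ (which is absorbed into a small fraction of this leading estimate once $r$ is taken small). The only subtlety worth noting is the use of the fact that $q$ is a polynomial all of whose monomials have degree at least $k+1$, so that the estimate $|q(\vec x)| = O(|\vec x|^{k+1})$ holds uniformly in direction, not merely as a pointwise little-$o$ statement; this uniformity is what lets us choose the cone parameters $\delta$ and $r$ independently.
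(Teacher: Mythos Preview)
Your argument is correct and is the standard one. Note that the paper does not supply its own proof of this lemma: it is quoted verbatim from \cite[Lemma~3.2]{FS2} as a preliminary fact, so there is no in-paper proof to compare against. Your approach---separating $p=p_k+q$, using continuity of $p_k$ on $S^{n-1}$ together with homogeneity to get a uniform lower bound on a cone, and absorbing the higher-order remainder by shrinking $r$---is exactly the natural proof, and is almost certainly what appears in \cite{FS2}.

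One small wording issue: you write that the monomials of $q$ have degree ``strictly between $k+1$ and $m$,'' which literally reads as $k+1<\deg< m$; you mean degree at least $k+1$ and at most $m$. This does not affect the argument.
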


Recall that  $\langle p_1,\dots p_t\rangle_m$ stands
for the ideal in $\mathcal{P}_0^m(\mathbb{R}^n)$ generated by the jets $p_1,\dots,p_k$.

\begin{corollary}[see {\cite[Corollary 3.3]{FS2}}]\label{old_new_cor_on_how_to_calc_allow_with_inclusion_SECOND_PAPER}Let
$I=\langle
p_1,p_2,\dots p_t\rangle_m\lhd\mathcal{P}^{m}(\bR^n)$ be an ideal. Then,
$$Allow(I)\subset\bigcap\limits_{i=1}^t\{\text{the zero set in } S^{n-1}\text{
of the lowest degree homogenous part of }p_i\}.$$Moreover, if $
p_1,p_2,\dots p_t$ are all homogenous, then $$Allow(I)=\bigcap\limits_{i=1}^t\{\text{the
zero set in } S^{n-1}\text{
of }p_i\}.$$\end{corollary}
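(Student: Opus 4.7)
My plan is to handle the two halves of the statement separately. The first (general) inclusion reduces immediately to the one-generator case and then to Lemma \ref{equiv_conditions_for_forbidden_directions_lemma_SECOND_PAPER}, while the reverse inclusion in the homogeneous case will come from a direct evaluation along the ray through $\omega$, exploiting that membership in a jet-ideal permits a ``free'' remainder of order strictly greater than $m$.

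For the first inclusion, let $\omega\in Allow(I)$ and fix any generator $p_i\in I$. Since $\{p_i\}$ is a one-element subfamily of $I$, the direction $\omega$ must be allowed for this single jet in the sense of Definition \ref{definition_allowed_and_forbidden_directions_SECOND_PAPER}. By the equivalence of (\ref{equiv_cond_two_for_forbidden_SECOND_PAPER}) and (\ref{equiv_cond_three_for_forbidden_SECOND_PAPER}) in Lemma \ref{equiv_conditions_for_forbidden_directions_lemma_SECOND_PAPER} applied to $p_i$, this is exactly the statement that the lowest degree homogeneous part of $p_i$ vanishes at $\omega$. Intersecting over $i=1,\dots,t$ yields the claimed containment.

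For the reverse inclusion, assume $p_1,\dots,p_t$ are homogeneous of degrees $d_1,\dots,d_t$ and fix $\omega\in S^{n-1}$ with $p_i(\omega)=0$ for every $i$. I would take an arbitrary finite family $q_1,\dots,q_L\in I$ and write each $q_l=J^m\bigl(\sum_i s_{li}p_i\bigr)$ for some jets $s_{li}$. Since every $p_i$ is homogeneous and vanishes at $\omega$, the polynomial $\sum_i s_{li}(\vec x)\,p_i(\vec x)$ vanishes identically along the ray $\vec x=t\omega$; the $m$-jet truncation discards only homogeneous components of degree strictly greater than $m$, so along this ray
\[
q_l(t\omega) \;=\; \Bigl(\sum_i s_{li}(t\omega)\,p_i(t\omega)\Bigr) \;-\; \sum_{d>m}\bigl(\text{degree-}d\text{ part of } \textstyle\sum_i s_{li}p_i\bigr)(t\omega) \;=\; O(t^{m+1})
\]
as $t\to 0^+$. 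For any $c,\delta,r>0$ the point $t\omega$ lies in $\Gamma(\omega,\delta,r)$ for every $t\in(0,r)$, so taking $t$ sufficiently small gives $\sum_l|q_l(t\omega)|<c\,t^m$, contradicting (\ref{equiv_cond_one_for_forbidden}). Hence $\omega$ is allowed for $\{q_1,\dots,q_L\}$, and since this family was arbitrary, $\omega\in Allow(I)$.

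The only point requiring a moment's care is observing that $m$-jet truncation of the product $\sum_i s_{li}p_i$ only removes homogeneous components of degree $>m$, so evaluating along the ray produces an $O(t^{m+1})$ error rather than something larger. I do not anticipate any further obstacle; beyond that observation the argument is essentially a one-line computation.
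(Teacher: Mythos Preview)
Your argument is correct. The paper does not supply its own proof of this corollary---it is quoted from \cite{FS2} among the preliminaries in Section~\ref{section_settings}---so there is no in-paper proof to compare against. One small remark on the first inclusion: Lemma~\ref{equiv_conditions_for_forbidden_directions_lemma_SECOND_PAPER} is stated with the exponent $k$ (the order of vanishing) rather than $m$ as in Definition~\ref{definition_allowed_and_forbidden_directions_SECOND_PAPER}, so strictly speaking you are using the easy implication $(p_i)_k(\omega)\neq 0\Rightarrow |p_i(\vec x)|>c|\vec x|^k\geq c|\vec x|^m$ (for $|\vec x|\le 1$, since $k\le m$) rather than a literal equivalence; this is harmless and your conclusion stands. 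The second half is clean as written.
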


\begin{definition}[Negligible or Whitney-negligible functions; see {\cite[Lemma 3.8]{FS2}}]Let
$U\subset\mathbb{R}^n$ be open and let $\Omega\subset S^{n-1}$. A function
$F\in C^m(U)$
is called \textit{(m--)negligible for $\Omega$}, or \textit{Whitney--(m--)negligible for $\Omega$}, if it satisfies the following two equivalent conditions (NEG) and (W-NEG):

(NEG) For all $\epsilon>0$ there
exist  $\delta>0$ and $r>0$ such that the following hold:
\begin{gather}\label{negligible_fun_condition_0_SECOND_PAPER}\Gamma(\Omega,\delta,r)\subset
U;\end{gather}
\begin{gather}\label{negligible_fun_condition_a_SECOND_PAPER}\abs{\partial^\alpha
F(\vec x)}\leq\epsilon\abs{\vec x}^{m-\abs{\alpha}}\text { for all
}\vec x\in\Gamma(\Omega,\delta,r)\text{ and all }\abs{\alpha}\leq
m;\end{gather}
\begin{multline}\label{negligible_fun_condition_b_SECOND_PAPER}\abs{\partial^\alpha F(\vec
x)-\sum\limits_{\abs{\gamma}\leq m-\abs{\alpha}}\frac{1}{\gamma!}\partial^{\alpha+\gamma}
F(\vec y)\cdot(\vec x-\vec y)^{\gamma}}\leq \epsilon\abs{\vec x-\vec
y}^{m-\abs{\alpha}} \\ \text { for all }\vec x,\vec y\in\Gamma(\Omega,\delta,r)
\text{ distinct, and all }\abs{\alpha}\leq m.\end{multline}

(W-NEG) For all $\epsilon>0$
there
exist  $\delta>0$, $r>0$ and $F_\epsilon\in C^m(\mathbb{R}^n\setminus\{\vec0\})$
such that the following hold:
\begin{gather}\label{whitney_negligible_fun_condition_0_SECOND_PAPER}\Gamma(\Omega,\delta,r)\subset
U;\end{gather}
\begin{gather}\label{whitney_negligible_fun_condition_1_SECOND_PAPER}\abs{\partial^\alpha
F_\epsilon(\vec x)}\leq\epsilon\abs{\vec x}^{m-\abs{\alpha}}\text { for all
}\vec x\in\mathbb{R}^n\setminus\{\vec0\}\text{ and all }\abs{\alpha}\leq
m;\end{gather}
\begin{gather}\label{whitney_negligible_fun_condition_2_SECOND_PAPER}F_\epsilon(\vec x)=F(\vec
x)\text { for all
}\vec x\in\Gamma(\Omega,\delta,r).\end{gather}  
\end{definition}

\begin{example}[see {\cite[Example 3.9]{FS2}}]\label{amazing_exaple_for_negligible_part1_SECOND_PAPER}Set
$n=3$, $m=2$
and $(x,y,z)$ a standard coordinate system on $\mathbb{R}^3$. Let $U=\mathbb{R}^3\setminus\{z=0\}$,
$\Omega=\{(0,0,\pm1)\}\subset
S^2$ and let $F(x,y,z)=\frac{y^3}{z}\in C^2(U)$.
Then, $F$ is negligible for $\Omega\cap D(\omega,10^{-3})$, for any $\omega\in\Omega$.\end{example}

\begin{definition}[strong directional implication; see {\cite[Definition 3.11]{FS2}}]\label{implied_polynomial_definition_SECOND_PAPER}Let
$I\lhd\mathcal{P}_0^{m}(\bR^n)$
be an ideal and $p\in\mathcal{P}_0^{m}(\bR^n)$ be some polynomial. We
say that\textit{ $I$ strongly implies $p$ in the direction $\omega\in S^{n-1}$
}if there exist $\delta_\omega>0$, $r_\omega>0$, polynomials $Q_1,\dots ,Q_L\in
I$,
functions $ S_1,\dots ,S_L\in C^m(\Gamma(\omega,\delta_\omega,r_\omega))$,
 positive
constants $C_1,\dots ,C_L>0$ and a function $F\in C^m(\Gamma(\omega,\delta_\omega,r_\omega))$
 such that the following hold:
\begin{gather}\label{first_cond_of_implied_poly_in_direction_SECOND_PAPER}F\text{
is negligible for }Allow(I)\cap D(\omega,\delta_\omega);\end{gather}
\begin{gather}\label{second_cond_of_implied_poly_in_direction_SECOND_PAPER}\abs{\partial
^\alpha S_{l}(\vec x)}\leq C_{l}\abs{\vec x}^{-\abs{\alpha}}\text{ for all
}\abs{\alpha}\leq
m, \text{ all }1\leq l \leq L\text{ and all }\vec x\in\Gamma(\omega,\delta_\omega,r_\omega);\end{gather}
\begin{gather}\label{third_cond_of_implied_poly_in_direction_SECOND_PAPER}p(\vec x)=\sum\limits_{l=1}^{L}S_l(\vec
x)\cdot Q_l(\vec x)+F(\vec x)\text { for all }\vec x\in\Gamma(\omega,\delta_\omega,r_\omega).\end{gather}\end{definition}

\begin{corollary}[see {\cite[Corollary 3.18]{FS2}}]\label{cor_strong_directional_implication_imply_implication_SECOND_PAPER}Let
$I\lhd\mathcal{P}_0^{m}(\bR^n)$
be an ideal and $p\in\mathcal{P}_0^{m}(\bR^n)$ be some polynomial. If $I$
strongly implies $p$ in the direction $\omega$ for any $\omega\in Allow(I)$
(as in Definition \ref{implied_polynomial_definition_SECOND_PAPER}),
then $I$ implies $p$ (as in Definition \ref{new_def_implied_jet_SECOND_PAPER}).\end{corollary}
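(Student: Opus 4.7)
The plan is to pass from the local (directional) decompositions provided by strong directional implication to the global one required by Definition \ref{new_def_implied_jet_SECOND_PAPER}, using a smooth partition of unity on $S^{n-1}$ and compactness of $\Omega := Allow(I)$. Since $\Omega\subset S^{n-1}$ is closed and hence compact, the cover $\{D(\omega,\delta_\omega/2)\}_{\omega\in\Omega}$ admits a finite subcover $\{D(\omega_j,\delta_{\omega_j}/2)\}_{j=1}^J$, where $r_{\omega_j}, Q^{\omega_j}_l, S^{\omega_j}_l, C^{\omega_j}_l, F^{\omega_j}$ are the data from Definition \ref{implied_polynomial_definition_SECOND_PAPER}. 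Set $\delta^*:=\min_j\delta_{\omega_j}/2$ and $r^*:=\min_j r_{\omega_j}$, and pick $\chi_j\in C^\infty(S^{n-1})$ with $\mathrm{supp}(\chi_j)\subset D(\omega_j,3\delta_{\omega_j}/4)$ and $\sum_j\chi_j\equiv 1$ on $D(\Omega,\delta^*)$; extend each $\chi_j$ to a degree-zero homogeneous $\tilde\chi_j\in C^\infty(\mathbb{R}^n\setminus\{0\})$, so that $|\partial^\alpha\tilde\chi_j(\vec x)|\leq C_\chi|\vec x|^{-|\alpha|}$. Concatenate the $Q^{\omega_j}$'s into a single list $Q_1,\ldots,Q_L$ and pad, so $p=\sum_{l=1}^L\tilde S_{l,j}Q_l+F^{\omega_j}$ on $\Gamma(\omega_j,\delta_{\omega_j},r_{\omega_j})$ with each $\tilde S_{l,j}$ either an $S^{\omega_j}_{l'}$ or identically zero.

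Given $\epsilon>0$, the negligibility of each $F^{\omega_j}$ for $\Omega\cap D(\omega_j,\delta_{\omega_j})$ yields $\delta\leq\delta^*/2$ and $r\leq r^*/4$ so that $|\partial^\alpha F^{\omega_j}(\vec x)|\leq\epsilon'|\vec x|^{m-|\alpha|}$ on $\Gamma(\Omega\cap D(\omega_j,\delta_{\omega_j}),\delta,r)$ for each $j$, with $\epsilon'$ chosen small enough to absorb the Leibniz constants into $\epsilon$. Let $\psi\in C^\infty(\mathbb{R}^n\setminus\{0\})$ be a degree-zero homogeneous cutoff that equals $1$ on $\Gamma(\Omega,\delta/2,\infty)$ and is supported in $\Gamma(\Omega,\delta,\infty)$, and define
\[ S_l(\vec x):=\psi(\vec x)\sum_{j=1}^J\tilde\chi_j(\vec x)\tilde S_{l,j}(\vec x), \qquad F(\vec x):=\psi(\vec x)\sum_{j=1}^J\tilde\chi_j(\vec x)F^{\omega_j}(\vec x), \]
extended by zero outside $\mathrm{supp}(\psi)$. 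For $\rho\leq r/4$ these lie in $C^m(\mathrm{Ann}_4(\rho))$, and on $\mathrm{Ann}_2(\rho)\cap\Gamma(\Omega,\delta/2,\infty)$ (where $\psi=1$ and $\sum_j\tilde\chi_j=1$) a direct computation confirms the identity $p=F+\sum_l S_l Q_l$.

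Verification of the quantitative bounds is then routine. By the Leibniz rule and degree-zero homogeneity of the cutoffs, $|\partial^\alpha S_l(\vec x)|\leq A|\vec x|^{-|\alpha|}\leq 4^m A\rho^{-|\alpha|}$ on $\mathrm{Ann}_4(\rho)\cap\mathrm{supp}(\psi)$, with a constant $A$ depending only on $C_\chi, J, m$ and the $C^{\omega_j}_l$'s -- in particular independent of $\rho$ and $\epsilon$ -- while $S_l\equiv 0$ outside $\mathrm{supp}(\psi)$. Similarly $|\partial^\alpha F(\vec x)|\leq\epsilon|\vec x|^{m-|\alpha|}\leq 4^m\epsilon\rho^{m-|\alpha|}$ on $\mathrm{Ann}_4(\rho)\cap\mathrm{supp}(\psi)$, and $F\equiv 0$ elsewhere. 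After rescaling the initial $\epsilon$ by $4^m$, this yields exactly conditions (\ref{new_def_of_implied_label_1_SECOND_PAPER})--(\ref{new_def_of_implied_label_3_SECOND_PAPER}) of Definition \ref{new_def_implied_jet_SECOND_PAPER}, so $I$ implies $p$.

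The main obstacle is constant-tracking through the gluing. Two points require care: (i) the cutoffs $\tilde\chi_j$ and $\psi$ grow like $|\vec x|^{-|\alpha|}$ under differentiation, which could naively spoil the $|\vec x|^{-|\alpha|}$ scaling of the $S_l$ (the formal product of two such bounds is $|\vec x|^{-2|\alpha|}$) and the $|\vec x|^{m-|\alpha|}$ scaling of $F$, but the Leibniz rule distributes the order $|\alpha|$ across the factors so the correct scalings survive; (ii) the negligibility bound on each $F^{\omega_j}$ is only valid in a conic neighborhood of the proper subset $\Omega\cap D(\omega_j,\delta_{\omega_j})$ of $\Omega$, so one must ensure that $\mathrm{supp}(\tilde\chi_j)\cap\mathrm{supp}(\psi)\cap\mathrm{Ann}_4(\rho)$ lies inside this neighborhood -- the buffer $\mathrm{supp}(\chi_j)\subset D(\omega_j,3\delta_{\omega_j}/4)$ (rather than the full $D(\omega_j,\delta_{\omega_j})$) provides exactly the room needed for a triangle inequality to confirm this.
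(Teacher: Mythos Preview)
The paper does not contain its own proof of this corollary; it is quoted verbatim from \cite[Corollary~3.18]{FS2} in the preliminaries section, so there is no paper proof to compare against. Your argument is the natural one (compactness of $\Omega$ plus a partition of unity on $S^{n-1}$ to glue the directional decompositions), and the overall structure is sound: the key observations --- that the glued $S_l$'s inherit the $|\vec x|^{-|\alpha|}$ scaling because the Leibniz rule distributes the order of differentiation, that the constant $A$ is fixed before $\epsilon$ since it depends only on the finite data $\{C^{\omega_j}_l, C_\chi, J, m\}$, and that the buffer in $\mathrm{supp}(\chi_j)$ lets the triangle inequality place $\mathrm{supp}(\tilde\chi_j\psi)$ inside the region where the negligibility bound on $F^{\omega_j}$ applies --- are all correctly identified.

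One minor numerical slip: with $\delta^*:=\min_j\delta_{\omega_j}/2$, the triangle inequality only gives $D(\Omega,\delta^*)\subset\bigcup_j D(\omega_j,\delta_{\omega_j})$, not $\bigcup_j D(\omega_j,3\delta_{\omega_j}/4)$, so the partition of unity you ask for need not exist. Taking $\delta^*:=\min_j\delta_{\omega_j}/4$ instead fixes this (then $d(\omega,\omega_{j_0})<\delta_{\omega_{j_0}}/4+\delta_{\omega_{j_0}}/2=3\delta_{\omega_{j_0}}/4$). With that adjustment the argument goes through.
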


\begin{example}[see {\cite[Example 3.19]{FS2}}]\label{amazing_exaple_for_negligible_part2_SECOND_PAPER}Set $n=3$, $m=2$
and $(x,y,z)$ a standard coordinate system on $\mathbb{R}^3$. Let $I\lhd\mathcal{P}_0^2(\mathbb{R}^3)$
be such that $x^2,y^2-xz\in I$. Then, $xy\in cl(I)$. In particular $\langle
x^2,y^2-xz\rangle_2$
is not closed.\end{example}

\section{Some easy examples and useful lemmas}\label{section_easy_examples}

In this section we collect some simple examples and lemmas that will be useful in later sections. Many additional examples will be presented in later sections as well. 

\begin{example}\label{a_union_of_coordinates}Fix $m,n\in\mathbb{N}$ and let $\emptyset\neq A\subset\{1,\dots,
n\}$. Then, $\langle \{x_i\}_{i\in A}
\rangle_m=I^m(\{\sum\limits_{i\in A}x_i^2=0\})$.

Indeed,
by re-indexing we may assume without loss of generality that  $A=\{1,2,\dots,k\}$
for some $1\leq k\leq n$. For any $1\leq i\leq k$ we have the coordinate
function $x_i\in C^m(\bR^n)$ that vanishes on the set $\{\sum\limits_{i=1}^{k}x_i^2=0\}$,
and so $I^m(\{\sum\limits_{i=1}^k x_i^2=0\})\supset\langle \{x_i\}_{i=1}^k
\rangle_m$. Assume $p\in I^m(\{\sum\limits_{i=1}^{k}x_i^2=0\})$. There
exists $f\in C^m(\bR^n)$ such that $f|_{\{\sum\limits_{i=1}^{k}x_i^2=0\}}=0$
and $J^m(f)=p$. We write $$p=p_0(x_{k+1},x_{k+2},\dots,x_n)+\sum\limits_{i=1}^k
x_i\cdot p_i(x_1,x_2,\dots,x_n),$$ for some polynomial $p_0$ in $n-k$ variables
of degree at most $m$, and some polynomials $p_1,p_2,\dots,p_k$ in $n$ variables.
We have for each $n$-$k$-tuple $(x_{k+1},x_{k+2},\dots,x_n)$: $$0=f|_{\{\sum\limits_{i=1}^{k}x_i^2=0\}}=p_0(x_{k+1},x_{k+2},\dots,x_n)+o((\sum\limits_{i=k+1}^n
x_i^2)^{m/2}),$$ which is only possible if $p_0\equiv0$, i.e. $p\in\langle
\{x_i\}_{i=1}^k
\rangle_m$.

\end{example}
\begin{lemma}\label{lemma-on-cooord-change}Fix $n\in\mathbb{N}$, and
let $\mathbb{N}\ni k\leq n$. Assume $p_1,\dots,p_k$ are polynomials in $n$
variables, all vanishing at the origin. Moreover, assume that the $k\times
n$ Jacobian matrix -- the matrix whose $(i,j)$ entire is $\frac{\partial
p_i}{\partial x_j}(\vec0)$ -- has rank $k$.
Then, for any $m\in\mathbb{N}$: $$\langle p_1,\dots, p_k  \rangle_m=I^m(\{\sum\limits_{i=1}^k
p_i^2=0\}).$$

\end{lemma}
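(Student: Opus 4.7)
First observe that $\{\sum_{i=1}^k p_i^2=0\}=\{p_1=p_2=\dots=p_k=0\}=:V$, since $\sum p_i^2$ vanishes iff each real-valued $p_i$ vanishes. The containment $\langle p_1,\dots,p_k\rangle_m\subseteq I^m(V)$ is immediate: each polynomial $p_i$ belongs to $C^m(\mathbb{R}^n)$, vanishes at the origin and on $V$, so its $m$-jet $J^m(p_i)=p_i$ lies in $I^m(V)$, and $I^m(V)$ is an ideal.

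For the reverse containment, the plan is to change coordinates so that $V$ becomes locally a coordinate subspace, and then to invoke Example \ref{a_union_of_coordinates}. By the hypothesis on the Jacobian, after permuting the coordinates $x_1,\dots,x_n$ we may assume $\det(\partial p_i/\partial x_j)_{i,j=1}^k(\vec 0)\neq 0$. Define
\[
\phi:\mathbb{R}^n\to\mathbb{R}^n,\qquad \phi(x):=(p_1(x),\dots,p_k(x),x_{k+1},\dots,x_n).
\]
The map $\phi$ is $C^\infty$, fixes the origin, and by the inverse function theorem is a $C^\infty$-diffeomorphism between a neighborhood $U\ni\vec 0$ and $\phi(U)\ni\vec 0$, with inverse $\psi:\phi(U)\to U$. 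By construction, $\phi(V\cap U)=\{y_1=\dots=y_k=0\}\cap\phi(U)$.

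Given any $F\in C^m(\mathbb{R}^n)$ with $F|_V=0$, set $\tilde F:=F\circ\psi\in C^m(\phi(U))$; this vanishes on $\{y_1=\dots=y_k=0\}\cap\phi(U)$. Pick a cutoff $\chi\in C^\infty(\mathbb{R}^n)$ with $\chi\equiv 1$ on a smaller neighborhood $W\ni\vec 0$ and $\mathrm{supp}\,\chi\subset\phi(U)$, and set $\hat F:=\chi\tilde F\in C^m(\mathbb{R}^n)$. Then $\hat F$ vanishes on all of $\{y_1=\dots=y_k=0\}$: inside $\phi(U)$ because $\tilde F$ does, and outside $\phi(U)$ because $\hat F\equiv 0$ there. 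Applying Example \ref{a_union_of_coordinates} in the $y$-coordinates gives $J^m(\hat F)\in\langle y_1,\dots,y_k\rangle_m$, so we can write
\[
J^m(\hat F)=\sum_{i=1}^k y_i\cdot R_i(y)\qquad\text{in }\mathcal{P}^m_0(\mathbb{R}^n_y),
\]
for some jets $R_1,\dots,R_k$.

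Finally, since $\chi\circ\phi\equiv 1$ on $\phi^{-1}(W)$, we have $F=\hat F\circ\phi$ on a neighborhood of the origin, so the $m$-jets at the origin coincide. Computing in the jet ring $\mathcal{P}^m_0(\mathbb{R}^n)$, using that composition of jets with the origin-fixing map $\phi$ is a well-defined ring homomorphism and that the $y_i$ pull back under $\phi$ to the $p_i$,
\[
J^m(F)=J^m(\hat F\circ\phi)=\sum_{i=1}^k p_i\cdot J^m(R_i\circ\phi)\in\langle p_1,\dots,p_k\rangle_m,
\]
which completes the reverse inclusion. The argument is essentially a reduction to Example \ref{a_union_of_coordinates} via the inverse function theorem; the only mildly delicate point is the cutoff step which guarantees that the transported function vanishes on the \emph{entire} coordinate subspace (and not merely locally), so that Example \ref{a_union_of_coordinates} applies directly.
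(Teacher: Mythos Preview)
Your proof is correct and follows essentially the same approach as the paper: both use the inverse function theorem to straighten $V$ into a coordinate subspace and then invoke Example~\ref{a_union_of_coordinates}. You are simply more explicit about the cutoff (to make the transported function vanish on the entire coordinate subspace) and about the jet-ring pullback $J^m(\hat F\circ\phi)=\sum p_i\cdot J^m(R_i\circ\phi)$, both of which the paper leaves implicit in the phrase ``in the new coordinate system we are reduced to Example~\ref{a_union_of_coordinates}.''
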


\begin{proof}By renaming the coordinates and using basic linear algebra,
we may
assume the Jacobian of the map $(x_1,\dots,x_n)\to(p_1(x_1,\dots,x_n),\dots
,p_k(x_1,\dots,x_n),x_{k+1},\dots ,x_n)$ is invertible at the origin. Then,
by the inverse function theorem, this is an analytic diffeomorphism, and
in particular a $C^m$-diffeomorphism, of a neighborhood of the origin. In
the
new coordinate system $(p_1,\dots, p_k,x_{k+1},\dots, x_n)$ we are reduced
to Example \ref{a_union_of_coordinates}.\end{proof}

\begin{corollary}\label{cor-on-prime-with-a-linear-part}Fix $n\in\mathbb{N}$,
and let $p$ be a polynomial in $n$ variables. If the order of vanishing of
$p$ is 1 (i.e., its constant term is zero and its linear term is non-zero),
then $\langle p \rangle_m=I^m(\{p=0\})$ for any $m\in\mathbb{N}$.\end{corollary}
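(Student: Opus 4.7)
The plan is to observe that this corollary is an immediate special case of Lemma \ref{lemma-on-cooord-change} with $k=1$. Concretely, I would argue as follows: since the order of vanishing of $p$ at the origin equals $1$, by definition $p(\vec 0)=0$ while its linear term is non-zero. The linear term of $p$ is precisely $\sum_{j=1}^n \frac{\partial p}{\partial x_j}(\vec 0)\cdot x_j$, so the hypothesis that this is non-zero is equivalent to saying that at least one partial derivative $\frac{\partial p}{\partial x_j}(\vec 0)$ is non-zero. Consequently, the $1\times n$ Jacobian matrix of $p$ at the origin has rank $1$.

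Having verified the rank hypothesis, I would simply invoke Lemma \ref{lemma-on-cooord-change} with $k=1$ and the single polynomial $p_1:=p$ to conclude that $\langle p\rangle_m = I^m(\{p^2=0\})$. Finally, since $\{p^2=0\}=\{p=0\}$ as subsets of $\mathbb{R}^n$, this gives exactly the desired identity $\langle p\rangle_m = I^m(\{p=0\})$.

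There is no real obstacle here; the only content is the translation between the phrasings ``order of vanishing is $1$'' and ``rank-$1$ Jacobian at the origin'', which is immediate from Definition \ref{order-of-vanishing-def_SECOND_PAPER}. Thus the proof will be a one- or two-line deduction from the lemma, with no additional computation required.
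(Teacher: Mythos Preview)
Your proposal is correct and matches the paper's own proof essentially verbatim: the paper also just invokes Lemma~\ref{lemma-on-cooord-change} with $k=1$ together with the observation $\{p=0\}=\{p^2=0\}$. You simply spell out the translation from ``order of vanishing $1$'' to ``rank-$1$ Jacobian'' a bit more explicitly than the paper does.
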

\begin{proof}This is immediate from Lemma \ref{lemma-on-cooord-change} and
from the fact that $\{p=0\}=\{p^{2}=0\}$.\end{proof}

\begin{example}[straight lines in the plane]\label{claim-on-lines}Let $m,k\in\mathbb{N}$. Then,  \begin{multline}\label{example_on_lines_random_label}\langle
\prod\limits_{i=1}^{k}(a_ix+b_iy)\rangle_{m} = I^m(\{\prod\limits_{i=1}^{k}(a_ix+b_iy)=0\})\text{ for any }\\(a_1,b_1),\dots,(a_k,b_k)\in \mathbb{R}^2\setminus\{(0,0)\}\text{ such that }a_i\cdot b_{i'}\neq a_{i'}\cdot b_i\text{ for any }1\leq i< i'\leq k.\end{multline}

We prove (\ref{example_on_lines_random_label}) by induction on $k$. The base case ($k=1$) is a special case of Corollary \ref{cor-on-prime-with-a-linear-part}. 

We now assume that (\ref{example_on_lines_random_label}) holds for up to $k$, and assume we are given $(a_1,b_1),\dots,(a_k,b_k),(a_{k+1},b_{k+1})\in
\bR^2\setminus\{(0,0)\}$ such that for any $1\leq i< i'\leq k$ we have $a_i\cdot b_{i'}\neq a_{i'}\cdot
b_i$. The function $
\prod\limits_{i=1}^{k+1}(a_ix+b_iy) \in C^m(\mathbb{R}^2)$ vanishes on $\{\prod\limits_{i=1}^{k+1}(a_ix+b_iy)=0\}$, so the inclusion $\subset$ in (\ref{example_on_lines_random_label}) holds. It is left to show the other inclusion $\supset$ also holds. By a linear coordinate change we may assume without loss of
generality that $(a_{k+1},b_{k+1})=(1,0)$, and then by assumption $b_i\neq
0$ for all $i=1,\dots,k$. Let $p\in I^m(\{\prod\limits_{i=1}^{k+1}(a_ix+b_iy)=0\})$.
There exists
$f\in C^m(\bR^2)$ such that $f|_{\{\prod\limits_{i=1}^{k+1}(a_ix+b_iy)=0\}}=0$
and $J^m(f)=p$. In particular $f|_{\{\prod\limits_{i=1}^{k}(a_ix+b_iy)=0\}}=0$,
and so $p\in I^m(\{\prod\limits_{i=1}^{k}(a_ix+b_iy)=0\})$. By induction
hypothesis we have $p\in \langle\prod\limits_{i=1}^{k}(a_ix+b_iy)\rangle_m$.
If $k>m$ then $p=0$ and so $p\in \langle x\cdot\prod\limits_{i=1}^{k}(a_ix+b_iy)\rangle_{m}$,
i.e., (\ref{example_on_lines_random_label}) holds for $k+1$. Otherwise, there exists a polynomial $q(x,y)$ of degree at most $m$-$k$,
such that $p=q(x,y)\cdot\prod\limits_{i=1}^{k}(a_ix+b_iy)$. We thus
have $$f(x,y)=q(x,y)\cdot\prod\limits_{i=1}^{k}(a_ix+b_iy)+o((x^2+y^2)^{m/2}).$$
As $f|_{\{a_{k+1}x+b_{k+1}y=0\}}=f|_{\{x=0\}}=0$, in particular for any $y$
we have
$$0=f(0,y)=q(0,y)\cdot(\prod\limits_{i=1}^{k}b_i)\cdot y^k+o(|y|^m).$$ Recalling
that $q$ is of degree at most $m-k$, and that $b_i\neq
0$ for all $i=1,\dots,k$, we conclude that $q(0,y)$ is identically zero.
This implies that $q$ is divisible by $x$ and so $p\in \langle x\cdot\prod\limits_{i=1}^{k}(a_ix+b_iy)\rangle_{m}$,
i.e., (\ref{example_on_lines_random_label}) holds for $k+1$.
\end{example}

\begin{example}[in the plane]\label{example-xx-xy}Fix $m\geq2$. Then, $\langle
x^2,xy\rangle_m=I^m(\{x^2=y^{2m-1}\}\cap\{x\geq0\}).$

Indeed, first note that the functions $x^2-y^{2m-1}, (x-|y|^{m-\frac{1}{2}})y\in
C^m(\bR^2)$ both vanish on the set $\{x^2=y^{2m-1}\}\cap\{x\geq0\}$, and
so $\langle x^2,xy\rangle_m\subset
I^m(\{x^2=y^{2m-1}\}\cap\{x\geq0\})$.

It is enough to show that if $f\in C^m(\bR^2)$ is such that $J^m(f)=ax+p(y)$
for some $a\in\bR$ and some polynomial $p$ of degree at most $m$, and moreover
$f$ vanishes on $\{x^2=y^{2m-1}\}\cap\{x\geq0\}$, then $a=0$
and $p$ is the zero polynomial. Let $f$ be as assumed. We may write $p(y)=\sum\limits_{i=1}^{m}p_iy^i$,
for some $p_1,\dots,p_m\in\mathbb{R}$ and $$f(x,y)=ax+p(y)+o((x^2+y^2)^{m/2}).$$ In particular, for any $t>0$ we have 
$$0=f(t^{2m-1},t^2)=at^{2m-1}+\sum\limits_{i=1}^{m}p_it^{2i}+o((t^4+t^{2(2m-1)})^{m/2})=at^{2m-1}+\sum\limits_{i=1}^{m}p_it^{2i}+o(t^{2m}).$$
As all the exponents $2,4,\dots,2m$ are even and $2m-1$ is odd, and moreover
all of these are not greater than $2m$, we conclude that $a=0$ and that $p_1=p_2=\cdots= p_m=0$.
\end{example} 

\begin{example}[a non-radical ideal in the plane]\label{example-xx}Fix $m\geq2$. Then, $$\langle x^2\rangle_m= I^m(\{x(x^2-y^{2m-1})=0\}\cap\{x\geq0\}).$$

Indeed, let $\theta(t)$ be a $C^\infty$ function of one variable supported
in $[-1,
1]$, such that $\theta|_{[-\frac{1}{2},\frac{1}{2}]}=1$, and define \begin{gather}\label{theta_nice_usful_function}\tilde \theta(x,y):=\begin{cases}
  \theta(\frac{x}{y}),  &\text{if } y\neq 0 \\
  0, &\text{if } y=0
\end{cases}.\end{gather}The function
$x(x-|y|^{m-1/2}\cdot \tilde \theta(x,y))\in
C^m(\bR^2)$ vanishes on $\{x(x^2-y^{2m-1})=0\}\cap\{x\geq0\}\cap B(10^{-10})$, and its $m$-jet at the origin is $x^2$,
so $$\langle x^2\rangle_m\subset
I^m(\{x(x^2-y^{2m-1})=0\}\cap\{x\geq0\}\cap B(10^{-10}))=I^m(\{x(x^2-y^{2m-1})=0\}\cap\{x\geq0\})\footnote{\label{footnote_on_function_vanishes_on_a_set}In order to show that the jet $x^2$ belongs to $I^m(E)$ for $E=\{x(x^2-y^{2m-1})=0\}\cap\{x\geq0\}$, we constructed a $C^m$ function with jet $x^2$ that vanishes on $E'$, which is the intersection of $E$ with a small ball about the origin. This showed that $x^2\in I^m(E')$ and since $I^m(E)=I^m(E')$ also $x^2\in I^m(E)$. Later in this paper, when we construct other examples of sets $E$ and calculate their corresponding $I^m(E)$, we will sometimes write that a $C^m$ function vanishes on $E$, when we actually mean that it vanishes on the intersection of $E$ with a small ball about the origin.}.$$ As $$\{x(x^2-y^{2m-1})=0\}\cap\{x\geq0\}\supset \{x^2=y^{2m-1}\}\cap\{x\geq0\},$$
we have, by Example \ref{example-xx-xy}, that $I^m(\{x(x^2-y^{2m-1})=0\}\cap\{x\geq0\})\subset\langle
x^2,xy\rangle_m$. We are thus left to show that if $f\in C^m(\bR^2)$ is such
that \begin{gather}\label{rand_label_for_non_radical_example3}f(x,y)=x\cdot p(y)+o((x^2+y^2)^{m/2})\end{gather} for some polynomial $p$ of degree
at most $m-1$ and $f|_{\{x(x^2-y^{2m-1})=0\}\cap\{x\geq0\}}=0$, then $p\equiv0$. 

Let $f$ be as assumed. In particular, for any $t>0$ we have $f(0,t)=f(t^{m-1/2},t)=0$. Thus, by Rolle's Theorem \begin{gather}\label{rand_label_for_non_radical_example}\text{ there exists }0<\delta(t)<t^{m-1/2}\text{ such that }f_{x}(\delta(t),t)=0.\end{gather}We also have by (\ref{rand_label_for_non_radical_example3}) that\begin{gather}\label{rand_label_for_non_radical_example2}f_{x}(x,y)=p(y)+o(x^2+y^2)^{\frac{m-1}{2}}.\end{gather}
Combining (\ref{rand_label_for_non_radical_example}) and (\ref{rand_label_for_non_radical_example2}), on the set $\{(\delta(t),t)\}_{t>0}$
we have $$0=f_{x}(\delta(t),t)=p(t)+o(\delta(t)^2+t^2)^{\frac{m-1}{2}}=p(t)+o(t^{m-1}),$$
which
implies $p\equiv 0$ as $p$ is of degree
at most $m-1$.\end{example}

\begin{lemma}\label{lemma_induction_adding_a_coordinate_jet}Let $E'\subset\mathbb{R}^{n-1}$ be a closed subset containing the origin,
and define $$E:=\{(x_1,\dots x_n)\in\mathbb{R}^n|(x_1,\dots x_{n-1})\in E',x_{n}=0\}.$$
If $I^m(E')=\langle p_1,\dots p_l\rangle_m\lhd \mathcal{P}_0^{m}(\mathbb{R}^{n-1})$ for some
$p_1,\dots, p_l\in\mathbb{R}[x_1,\dots,x_{n-1}]$, then $$I^m(E)=\langle
p_1,\dots p_l,x_n\rangle_m\lhd \mathcal{P}_0^m(\mathbb{R}^n).$$ \end{lemma}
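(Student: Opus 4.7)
The plan is to prove the two inclusions separately; both are essentially elementary manipulations with Taylor jets, with no serious obstacle. The main content is a clean ``restriction to the hyperplane $x_n=0$'' argument.

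First I would handle the easy inclusion $\langle p_1,\dots,p_l,x_n\rangle_m \subset I^m(E)$. Since each $p_i$ lies in $I^m(E')$, pick $F_i\in C^m(\mathbb{R}^{n-1})$ with $J^m(F_i)=p_i$ and $F_i|_{E'}=0$. Then $\tilde F_i(x_1,\dots,x_n):=F_i(x_1,\dots,x_{n-1})$ is in $C^m(\mathbb{R}^n)$, has $m$-jet $p_i$ at the origin, and vanishes on $E$ by construction. The coordinate function $x_n$ itself has $m$-jet $x_n$ and vanishes on $E$. Since $I^m(E)$ is an ideal in $\mathcal{P}_0^m(\mathbb{R}^n)$, the ideal generated by $p_1,\dots,p_l,x_n$ is contained in $I^m(E)$.

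For the reverse inclusion, take any $q\in I^m(E)$ and choose $F\in C^m(\mathbb{R}^n)$ with $J^m(F)=q$ and $F|_E=0$. Split $q$ uniquely as
\begin{equation*}
q(x_1,\dots,x_n)=q_0(x_1,\dots,x_{n-1})+x_n\cdot q_1(x_1,\dots,x_n),
\end{equation*}
with $q_0\in\mathbb{R}[x_1,\dots,x_{n-1}]$ of degree $\le m$ and $q_1\in\mathbb{R}[x_1,\dots,x_n]$ of degree $\le m-1$. Now define $F'(x_1,\dots,x_{n-1}):=F(x_1,\dots,x_{n-1},0)$. Then $F'\in C^m(\mathbb{R}^{n-1})$, it vanishes on $E'$ because $(x_1,\dots,x_{n-1})\in E'$ forces $(x_1,\dots,x_{n-1},0)\in E$, and its $m$-jet at the origin equals $q_0$ (setting $x_n=0$ in the Taylor expansion of $F$ produces the Taylor expansion of $F'$). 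Hence $q_0\in I^m(E')=\langle p_1,\dots,p_l\rangle_m$ in $\mathcal{P}_0^m(\mathbb{R}^{n-1})$.

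Finally, writing $q_0=J^m\bigl(\sum_{i=1}^l s_i p_i\bigr)$ for some polynomials $s_i\in\mathbb{R}[x_1,\dots,x_{n-1}]$ of degree $\le m$ and then lifting this identity trivially to $\mathcal{P}_0^m(\mathbb{R}^n)$, we obtain
\begin{equation*}
q \;=\; \sum_{i=1}^l s_i p_i \;+\; x_n\cdot q_1 \quad\text{in }\mathcal{P}_0^m(\mathbb{R}^n),
\end{equation*}
so $q\in\langle p_1,\dots,p_l,x_n\rangle_m$, completing the proof. If there is any obstacle, it is just the bookkeeping that the truncation of $q$ at $x_n=0$ really is the $m$-jet of $F'$, which is immediate from the chain rule applied to the inclusion $\mathbb{R}^{n-1}\hookrightarrow\mathbb{R}^n,\ (x_1,\dots,x_{n-1})\mapsto(x_1,\dots,x_{n-1},0)$.
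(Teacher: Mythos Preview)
Your proof is correct and follows essentially the same approach as the paper's. The paper packages the argument by defining a projection map $T:I^m(E)\to I^m(E')$, $p\mapsto p|_{x_n=0}$, showing $T$ is onto with image $\langle p_1,\dots,p_l\rangle_m$ and noting $x_n\in I^m(E)$; your explicit two-inclusion argument unwinds exactly the same content.
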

\begin{proof}If $f\in C^m(\mathbb{R}^n)$ vanishes on $E$, then $f|_{\bR^{n-1}}\in
C^m(\mathbb{R}^{n-1})$ vanishes on $E'$, and so there there exists a projection
map $T:I^m(E)\to I^m(E')$ explicitly given by $$p(x_1,\dots,x_{n-1}, x_n)\mapsto
p(x_1,\dots,x_{n-1}, 0).$$ In particular  $T(I^m(E))\subset\langle p_1,\dots
p_l\rangle_m$. Also note that any $f\in C^{m}(\mathbb{R}^{n-1})$ that vanishes on
$E'$ can be regarded as a $C^m$ function on $\mathbb{R}^n$ (a function that does
not depend on $x_n$) and moreover this function vanishes on $E$. Hence, the
map $T$ is onto. We conclude that \begin{gather}\label{rand_label_some_induction_step_lemma1}T(I^m(E))=\langle p_1,\dots p_l\rangle_m.\end{gather} Note that
the coordinate function $x_n$ is a $C^m$ function on $\bR^n$ that vanishes
on $E$, and so  \begin{gather}\label{rand_label_some_induction_step_lemma2}x_n\in I^m(E).\end{gather} Now (\ref{rand_label_some_induction_step_lemma1}) and (\ref{rand_label_some_induction_step_lemma2}) imply that $I^m(E)=\langle
p_1,\dots p_l,x_n\rangle_m$. \end{proof}

\section{The easy cases: $n=1$ or $m=1$}\label{section_m_or_n_is_1}

\begin{proposition}\label{prop_n_is_one} Fix $m\in\mathbb{N}$
and let $I\lhd\mathcal{P}_0^m(\mathbb{R})$
be an ideal. Then, the following are equivalent:
\begin{itemize}
\item $I$ is closed.
\item There exists a closed set $\vec 0 \in E\subset\mathbb{R}$ such that
$I=I^m(E)$.
\item There exists a closed semi-algebraic set $\vec 0 \in E\subset\mathbb{R}$
such that $I=I^m(E)$.
\end{itemize}\end{proposition}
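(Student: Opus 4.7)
The plan is to classify all ideals of $\mathcal{P}_0^m(\mathbb{R})$ and check which are closed and which arise from (semi-algebraic) sets. The implications \textit{closed semi-algebraic $E$ realization} $\Rightarrow$ \textit{closed $E$ realization} $\Rightarrow$ \textit{$I$ closed} are automatic: the first is tautological, the second is Theorem \ref{main_theorem_on_necessary_condition_SECOND_PAPER}. So the real content is to produce a closed semi-algebraic $E$ whenever $I$ is closed.

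First I would observe that since $\mathcal{P}_0^m(\mathbb{R})$ is (as a commutative ring) the maximal ideal of the principal local ring $\mathbb{R}[x]/(x^{m+1})$, every ideal is of the form $\langle x^k\rangle_m$ for some $k\in\{1,2,\dots,m,m+1\}$, where by convention $\langle x^{m+1}\rangle_m=\{0\}$ and $\langle x\rangle_m=\mathcal{P}_0^m(\mathbb{R})$. This reduces the problem to examining these finitely many ideals one by one.

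For the two extreme cases I would directly exhibit semi-algebraic sets: $\{0\}=I^m(\mathbb{R})$ (since any $F\in C^m(\mathbb{R})$ that vanishes identically has vanishing $m$-jet), and $\mathcal{P}_0^m(\mathbb{R})=I^m(\{0\})$ (since any $p\in\mathcal{P}_0^m(\mathbb{R})$ is itself a $C^m$ function vanishing at the origin and equal to its own $m$-jet). Both $\mathbb{R}$ and $\{0\}$ are closed semi-algebraic subsets containing the origin.

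Finally, for the intermediate ideals $\langle x^k\rangle_m$ with $2\le k\le m$, I would show they are not closed, so there is nothing further to realize. The lowest degree homogeneous part of the generator $x^k$ is $x^k$ itself, and its zero set on $S^0=\{+1,-1\}$ is empty. By Corollary \ref{old_new_cor_on_how_to_calc_allow_with_inclusion_SECOND_PAPER} (applied to the homogeneous generator $x^k$) this gives $Allow(\langle x^k\rangle_m)=\emptyset$. But for $k\ge 2$ the element $x$ does not lie in $\langle x^k\rangle_m$, so $\langle x^k\rangle_m\neq\mathcal{P}_0^m(\mathbb{R})$; Corollary \ref{cor_no_allowed_directions_SECOND_PAPER} then forces $\langle x^k\rangle_m$ to be non-closed. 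Since there is no real obstacle here beyond invoking the right corollaries on a $1$-dimensional sphere of just two points, the proof is essentially a bookkeeping argument once the one-variable ideal structure is written down.
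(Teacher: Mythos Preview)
Your proof is correct and follows essentially the same approach as the paper: show that the two extreme ideals $\{0\}$ and $\mathcal{P}_0^m(\mathbb{R})$ arise from the semi-algebraic sets $\mathbb{R}$ and $\{0\}$, and that any other ideal has $Allow(I)=\emptyset$ and hence is not closed by Corollary~\ref{cor_no_allowed_directions_SECOND_PAPER}. The only cosmetic difference is that you first classify all ideals as $\langle x^k\rangle_m$, whereas the paper observes more directly that any nonzero jet in one variable already has lowest-degree homogeneous part nonvanishing on $S^0=\{\pm1\}$, so the explicit classification is not needed.
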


\begin{proof} 

We will show that the only closed
ideals in $\mathcal{P}_0^m(\mathbb{R})$ are $\{\vec 0\}=I^{m}(\mathbb{R})$ and $\mathcal{P}_0^m(\mathbb{R})=I^m(\{\vec 0\})$.
As each of these arises as $I^m(E)$ for some semi-algebraic set $E$, the
proposition holds by Theorem \ref{main_theorem_on_necessary_condition_SECOND_PAPER}. Indeed,
let $\{\mathcal{P}_0^m(\mathbb{R}^n),\{\vec 0\}\}\not\ni
\tilde I\lhd \mathcal{P}_0^m(\bR)$ be an ideal. As $\tilde I\neq\{\vec 0\}$,  $\tilde
I$ contains a non-zero jet.
The lowest degree homogenous part of this jet is a 
non-constant
homogenous polynomial in one variable, and so it does not vanish at $\pm1$
(recall (\ref{equiv_cond_three_for_forbidden_SECOND_PAPER}) and Definition \ref{definition_allowed_and_forbidden_directions_SECOND_PAPER}).
We conclude that $Allow(\tilde
I)=\emptyset$, and so $\tilde I$ is not closed by Corollary \ref{cor_no_allowed_directions_SECOND_PAPER}.\end{proof}

\begin{proposition}\label{prop_m_is_one} Fix $n\in\mathbb{N}$
and let $I\lhd\mathcal{P}_0^1(\mathbb{R}^n)$
be an ideal. Then, the following are equivalent:
\begin{itemize}
\item $I$ is closed.
\item There exists a closed set $\vec 0 \in E\subset\mathbb{R}^n$ such that
$I=I^1(E)$.
\item There exists a closed semi-algebraic set $\vec 0 \in E\subset\mathbb{R}^n$
such that $I=I^1(E)$.
\end{itemize}\end{proposition}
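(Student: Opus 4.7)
The plan is to show that every ideal in $\mathcal{P}_0^1(\mathbb{R}^n)$ is automatically of the form $I^1(E)$ for some closed semi-algebraic $E\ni\vec 0$; in view of Theorem \ref{main_theorem_on_necessary_condition_SECOND_PAPER}, the three listed conditions will then collapse to a single property enjoyed by every ideal.

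First I would observe that $\mathcal{P}_0^1(\mathbb{R}^n)$ carries a trivial multiplicative structure: any two jets $p,q\in\mathcal{P}_0^1(\mathbb{R}^n)$ are homogeneous linear forms, so $pq$ is a polynomial of degree at least $2$ and hence $J^1(pq)=0$. Consequently, a subset $I\subset\mathcal{P}_0^1(\mathbb{R}^n)$ is an ideal (i.e.\ a $\mathcal{P}^1(\mathbb{R}^n)$-submodule of $\mathcal{P}_0^1(\mathbb{R}^n)$) if and only if it is an $\mathbb{R}$-linear subspace of the $n$-dimensional vector space of linear forms, since the external $\mathcal{P}^1(\mathbb{R}^n)$-action factors through $\mathcal{P}^1(\mathbb{R}^n)/\mathcal{P}_0^1(\mathbb{R}^n)\cong\mathbb{R}$.

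Given such an ideal $I\lhd\mathcal{P}_0^1(\mathbb{R}^n)$, I would take $E:=\{\vec x\in\mathbb{R}^n:\ell(\vec x)=0\text{ for all }\ell\in I\}$, a linear subspace of $\mathbb{R}^n$, which is closed, semi-algebraic, and contains $\vec 0$. I would then verify that $I^1(E)=I$. The inclusion $I\subset I^1(E)$ is immediate since each $\ell\in I$ is itself a $C^1$ function vanishing on $E$ whose $1$-jet at the origin is $\ell$. For the reverse inclusion, if $F\in C^1(\mathbb{R}^n)$ vanishes on $E$ and $\vec v\in E$, then $t\mapsto F(t\vec v)$ is identically zero for small $t$, so $dF_{\vec 0}(\vec v)=0$; hence $J^1(F)=dF_{\vec 0}$ annihilates $E$, and by finite-dimensional duality any linear form annihilating $E$ must lie in $I$ (as $I$ is the annihilator of its own annihilator in $(\mathbb{R}^n)^*$).

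Having established that every ideal in $\mathcal{P}_0^1(\mathbb{R}^n)$ arises as $I^1(E)$ for a closed semi-algebraic set $E\ni\vec 0$, Theorem \ref{main_theorem_on_necessary_condition_SECOND_PAPER} gives that every such ideal is closed. All three statements of the proposition therefore hold simultaneously for every $I$, and the equivalence is trivial. I do not expect a real obstacle here: the absence of any non-trivial multiplicative structure in degree $1$ reduces the whole classification problem to a single duality between subspaces of $\mathbb{R}^n$ and subspaces of $(\mathbb{R}^n)^*$.
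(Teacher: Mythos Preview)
Your proof is correct and follows the same overall strategy as the paper: recognise that every ideal in $\mathcal{P}_0^1(\mathbb{R}^n)$ is an $\mathbb{R}$-linear subspace of the space of linear forms, exhibit its common zero locus as the set $E$, and then appeal to Theorem~\ref{main_theorem_on_necessary_condition_SECOND_PAPER}. The set you construct coincides with the paper's $\{\sum_i p_i^2=0\}$ for a basis $p_1,\dots,p_k$ of $I$.

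The one point of divergence is the verification of $I^1(E)=I$. The paper cites Lemma~\ref{lemma-on-cooord-change}, which performs a coordinate change (via the inverse function theorem) to reduce to the computation $I^1(\{x_1=\cdots=x_k=0\})=\langle x_1,\dots,x_k\rangle_1$ of Example~\ref{a_union_of_coordinates}. Your argument is more direct and elementary: you compute the differential of $F$ along a line in $E$ and invoke the double-annihilator identity for subspaces of a finite-dimensional vector space. This avoids any appeal to the inverse function theorem and makes the linear-algebraic content of the $m=1$ case fully transparent; the paper's route, on the other hand, reuses machinery (Lemma~\ref{lemma-on-cooord-change}) that is needed anyway for higher $m$.
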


\begin{proof} 
Let $I\lhd \mathcal{P}_0^1(\mathbb R ^n)$. If $I=\{\vec 0\}$
then $I=I^1(\mathbb R^n)$. Otherwise, fixing coordinate functions $x_1,\dots,x_n$
we have that $I$ is a linear subspace of $\text{span}_{\mathbb R}\{x_1,\dots,x_n\}$.
Let $\{p_1,\dots,p_k\}$ be a basis of $I$. Then, the $k\times n$ Jacobian
matrix whose entries are $\frac{\partial
p_i}{\partial x_j}(\vec0)$ is of rank $k$, and by Lemma \ref{lemma-on-cooord-change} we have that
$I=I^1(\{\sum\limits_{i=1}^k
p_i^2=0\})$. We conclude that any ideal in $\mathcal{P}_0^1(\mathbb{R}^n)$
is of the form $I^1(E)$ for some semi-algebraic $E$. In particular, by Theorem
\ref{main_theorem_on_necessary_condition_SECOND_PAPER} any ideal in $\mathcal{P}_0^1(\mathbb{R}^n)$
is closed.\end{proof}

\section{The $C^2(\mathbb{R}^2)$ case}\label{section_2_2}
\begin{proposition}\label{prop_C2R2_case_new_version} Let $I\lhd\mathcal{P}_0^2(\mathbb{R}^{2})$
be an ideal. Then, the following are equivalent:
\begin{itemize}
\item $I$ is closed.
\item There exists a closed set $\vec 0 \in E\subset\mathbb{R}^2$ such that
$I=I^2(E)$.
\item There exists a closed semi-algebraic set $\vec 0 \in E\subset\mathbb{R}^2$
such that $I=I^2(E)$.
\end{itemize}\end{proposition}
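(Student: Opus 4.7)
The plan is to deduce Proposition \ref{prop_C2R2_case_new_version} as an almost immediate consequence of the classification Theorem \ref{theorem_on_2_2} combined with the general necessary condition Theorem \ref{main_theorem_on_necessary_condition_SECOND_PAPER}. First I would handle the two easy implications. The implication ``there exists a closed semi-algebraic $E\subset \mathbb{R}^2$ with $\vec 0\in E$ and $I=I^2(E)$'' $\Rightarrow$ ``there exists a closed $E\subset\mathbb{R}^2$ with $\vec 0\in E$ and $I=I^2(E)$'' is trivial, and the implication ``there exists a closed $E\subset\mathbb{R}^2$ with $\vec 0\in E$ and $I=I^2(E)$'' $\Rightarrow$ ``$I$ is closed'' is exactly Theorem \ref{main_theorem_on_necessary_condition_SECOND_PAPER} specialized to $m=n=2$.

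The content is therefore the remaining implication: if $I\lhd\mathcal{P}_0^2(\mathbb{R}^2)$ is closed, then $I=I^2(E)$ for some closed semi-algebraic $\vec 0\in E\subset\mathbb{R}^2$. For this I would invoke Theorem \ref{theorem_on_2_2}: such an $I$ is equivalent to one of the ideals $I_0$ listed in Table \ref{tab:table2}, via the automorphism $p\mapsto J^2(p\circ\phi)$ induced by some $C^2$-diffeomorphism $\phi:\mathbb{R}^2\to\mathbb{R}^2$ fixing the origin, and the same theorem furnishes a closed semi-algebraic set $\vec 0\in E_0\subset\mathbb{R}^2$ with $I_0=I^2(E_0)$. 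To get semi-algebraicity of the resulting set for $I$ itself, I would appeal to the observation made in the introduction that the equivalence is equally realized by the $2$-jet $\widetilde \phi$ of $\phi$ at the origin, which is a polynomial map of degree at most $2$ and hence a semi-algebraic $C^2$-diffeomorphism of some neighborhood $U$ of $\vec 0$. Setting $E:=\widetilde\phi(E_0\cap U)$ (intersected if necessary with a small closed ball about the origin to keep it closed), $E$ is a semi-algebraic closed subset of $\mathbb{R}^2$ containing the origin, and since $I^2(E)$ depends only on the germ of $E$ at $\vec 0$, we get $I=I^2(E)$, completing the equivalence cycle.

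The main (really the only) obstacle in this proof is that the entire argument is bootstrapped from Theorem \ref{theorem_on_2_2}; once that classification is in hand, Proposition \ref{prop_C2R2_case_new_version} is a short repackaging. All the substantive work therefore lies in Section \ref{section_2_2}, namely in enumerating the closed ideals of $\mathcal{P}_0^2(\mathbb{R}^2)$ up to equivalence, verifying non-equivalence of distinct entries of Table \ref{tab:table2}, and exhibiting for each entry a concrete semi-algebraic set realizing it as $I^2(E)$; the latter can be carried out using the tools of Section \ref{section_easy_examples} (Example \ref{a_union_of_coordinates}, Lemma \ref{lemma-on-cooord-change}, Example \ref{example-xx-xy}, Example \ref{example-xx}, etc.), which already supply explicit semi-algebraic realizations for ideals of the form $\langle x\rangle_2$, $\langle xy\rangle_2$, $\langle x^2\rangle_2$, and $\langle x^2,xy\rangle_2$.
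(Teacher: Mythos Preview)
Your plan has the logical dependence backwards. In the paper, Theorem \ref{theorem_on_2_2} is a statement announced in the introduction whose proof \emph{is} the content of Section \ref{section_2_2}; in particular, it is established precisely by the proof of Proposition \ref{prop_C2R2_case_new_version}. So you cannot ``invoke Theorem \ref{theorem_on_2_2}'' to prove Proposition \ref{prop_C2R2_case_new_version} --- that would be circular. You recognize this yourself in your final paragraph, but your actual proof plan still treats Theorem \ref{theorem_on_2_2} as a black box input, which it is not at this point in the paper.

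The paper's proof of Proposition \ref{prop_C2R2_case_new_version} carries out the classification directly. After noting (via Theorem \ref{main_theorem_on_necessary_condition_SECOND_PAPER}) that it suffices to show every closed ideal is, up to a $C^2$ semi-algebraic coordinate change, one of six explicit ideals (each already realized as $I^2(E)$ for a semi-algebraic $E$ by the examples of Section \ref{section_easy_examples}), it performs a full case analysis: principal versus non-principal; for principal ideals, order of vanishing of the generator and signature of the quadratic form; for non-principal ideals, whether a generator of order of vanishing $1$ exists, and if not, the dimension of $I$ as a subspace of $\mathrm{span}\{x^2,xy,y^2\}$. The only obstruction used to rule out candidates is $Allow(I)=\emptyset$ combined with Corollary \ref{cor_no_allowed_directions_SECOND_PAPER}. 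All coordinate changes used are linear or polynomial, hence semi-algebraic, so no separate $2$-jet-of-$\phi$ argument is needed. If your proposal were rewritten to sketch this case analysis rather than to defer to Theorem \ref{theorem_on_2_2}, it would match the paper.
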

\begin{proof}
By Theorem \ref{main_theorem_on_necessary_condition_SECOND_PAPER} it is enough to show that any closed ideal $I\lhd\mathcal{P}_0^2(\mathbb{R}^2)$ is, up to a $C^2$ semi-algebraic coordinate change around the origin, one of the following\footnote{In remark \ref{remark_on_distinctness_for_2_2_case} we will see that the ideals in this list are pairwise distinct even if $C^2$ coordinate changes around the origin are allowed. Similar remarks will follow Lemmas \ref{lemma_classify_closed_principal_in_C3R2_case_new_version}, \ref{lemma_classify_closed_non_principal_in_C3R2_case_new_version},
\ref{lemma_C2R3_case__with_jet_of_oov_1}, \ref{lemma_classify_closed_ideals_in_C2R3_1_dim_allow},\ref{lemma_classify_closed_ideals_in_C2R3_2_dim_allow}
and \ref{lemma_classify_closed_ideals_in_C2R3_3_dim_allow}.}:

\begin{gather}\label{model_for_2_2_1}\{\vec 0\}=I^2(\mathbb{R}^2)\end{gather}
\begin{gather}\label{model_for_2_2_2}\mathcal{P}_0^2(\mathbb{R}^2)=I^2(\{\vec0\})\end{gather}
\begin{gather}\label{model_for_2_2_3}\langle x\rangle_2=I^2(\{x=0\}) \text{  -- see Example \ref{a_union_of_coordinates}} \end{gather}
\begin{gather}\label{model_for_2_2_4}\langle x^2\rangle_2= I^2(\{x(x^2-y^{3})=0\}\cap\{x\geq0\}) \text{ -- see Example \ref{example-xx}}\end{gather}
\begin{gather}\label{model_for_2_2_5}\langle xy\rangle_2= I^2(\{xy=0\})
\text{ -- see Example \ref{claim-on-lines}}
\end{gather}
\begin{gather}\label{model_for_2_2_6}\langle x^{2},xy\rangle_2= I^2(\{x^2-y^{3}=0\}\cap\{x\geq0\})
\text{ -- see Example \ref{example-xx-xy}}
\end{gather}

\

Let us start: let $I\lhd \mathcal{P}_0^2(\mathbb{R}^2)$ be a closed ideal.

\

\textbf{A. Assume $I$ is principal.} In that case $I=\langle p \rangle_2$ for some jet $p$. If $p=0$ then we are in case (\ref{model_for_2_2_1}). If $p$ is of order of vanishing 1 then by applying a $C^2$
semi-algebraic coordinate change around the origin we may assume $p=x$ and we are in case (\ref{model_for_2_2_3}). Otherwise, $p$ is a non-zero quadratic form, so by applying a linear coordinate change and possibly replacing $p$ by $-p$ we may assume $p(x,y)\in\{x^2,x^2-y^2,x^2+y^2\}$. If $p(x,y)=x^2$ we are in case (\ref{model_for_2_2_4}). If $p(x,y)=x^2-y^2=(x+y)(x-y)$ then by applying a coordinate change $(u,v)=(x+y,x-y)$ we get $p(u,v)=uv$ and we are in case (\ref{model_for_2_2_5}). Finally, if $p(x,y)=x^2+y^2$ then $Allow(I)=\emptyset$, which is a contradiction to $I$ being closed by Corollary \ref{cor_no_allowed_directions_SECOND_PAPER}.

\

\textbf{B. Assume $I$ in not principal.} We analyze two sub-cases:

\

\textbf{B.1. Assume $I$ contains a jet of order of vanishing 1.} In that case by applying a linear coordinate change we may assume $p(x,y)=x+q(x,y)\in I$ for some quadratic form $q$. As $I$ is an ideal this implies that also $x^2,xy\in I$ so there exists some $a\in \mathbb{R}$ such that $x+ay^2\in I$. Applying the coordinate change $(X,Y)=(x+ay^2,y)$ we get $X,X^2,XY\in I$. If there exists $AY+BY^2\in I$ for some $A,B\in\mathbb{R}$ and $A\neq 0$ then $I=\mathcal{P}_0^2(\mathbb{R}^2)$ and we are in case (\ref{model_for_2_2_2}). If no such jet exists but there exists $BY^2\in I$ with $B\neq0$ then we have both $X,Y^2\in I$, so  $Allow(I)=\emptyset$,
which is a contradiction to $I$ being closed by
Corollary \ref{cor_no_allowed_directions_SECOND_PAPER}. If there does not exist $AY+BY^2\in I$ with $A^2+B^2\neq 0$ then $I=\langle X\rangle_2$, which is a contradiction to $I$ being not principal.

\

\textbf{B.2. Assume $I$ contains no jet of order of vanishing 1.} In that case $I$ is a linear subspace of $\text{span}_{\mathbb{R}}\{x^2,xy,y^2\}$.

\

\textbf{B.2.1. $\dim I\in\{0,1\}$. }In that case $I$ is principal, which is a contradiction.

\

\textbf{B.2.2. $\dim I = 3$. }In that case $x^2+y^2\in I$. Then $Allow(I)=\emptyset$, which is a contradiction to $I$ being closed by
Corollary \ref{cor_no_allowed_directions_SECOND_PAPER}.

\

\textbf{B.2.3. $\dim I = 2$. }In that case there exist 2 quadratic forms $p(x,y),q(x,y)$ such that $I=\langle p,q\rangle_2$. By applying a linear coordinate change and possibly replacing $p$ by $-p$ we may assume that $p(x,y)\in\{x^2,x^2-y^2,x^2+y^2\}$.
\begin{itemize}
\item If $p(x,y)=x^2+y^2$ then $Allow(I)=\emptyset$,
which is a contradiction to $I$ being closed by
Corollary \ref{cor_no_allowed_directions_SECOND_PAPER}.
\item If $p(x,y)=x^2$ then $I=\langle x^2,ax^2+bxy+cy^2\rangle_2$ for some
$a,b,c\in\mathbb{R}$
where $b^2+c^2\neq0$. In that case also $bxy+cy^2\in I$. If $b=0$ then $x^2+y^2\in
I$. This implies  $Allow(I)=\emptyset$,
which is a contradiction to $I$ being closed by
Corollary \ref{cor_no_allowed_directions_SECOND_PAPER}. If $c=0$ then $I=\langle
x^2,xy\rangle_2$ and we are in case (\ref{model_for_2_2_6}). Otherwise,
also $xy+\frac{c}{b}y^2\in I$, with $c\neq0$. If $\frac{b}{c}>0$ then we
have that $\frac{b}{c}x^2+xy+\frac{c}{b}y^2\in I$, which is a positive definite
quadratic form. If $\frac{b}{c}<0$ then $-\frac{b}{c}x^2-xy-\frac{c}{b}y^2\in I$, which is
a positive definite
quadratic form. In both cases $I$ contains a positive definite quadratic form and so  $Allow(I)=\emptyset$,
which is a contradiction to $I$ being closed by
Corollary \ref{cor_no_allowed_directions_SECOND_PAPER}. 
\item If $p(x,y)=xy$ then  $I=\langle xy, ax^2+bxy+cy^2\rangle_2$ for some
$a,b,c\in\bR$ where $a^2+c^2\neq0$. In that case also $I=\langle xy, ax^2+cy^2\rangle_2$,
for the same
$a,c\in\bR$. If either
$a=0$ or $c=0$ then either $I=\langle xy, y^2\rangle_2$ or $I=\langle xy,
x^2\rangle_2$, and (by possibly replacing $x\leftrightarrow y$) we are in case (\ref{model_for_2_2_6}). Otherwise, $ac\neq0$ and so $I=\langle xy, x^2+\frac{c}{a}y^2\rangle_2$.  If $\frac{c}{a}>0$ then $x^2+\frac{c}{a}y^2$ is a positive definite quadratic form. If $\frac{c}{a}<0$ then $I=\langle xy, (x+\sqrt{\frac{-c}{a}}y)(x-\sqrt{\frac{-c}{a}}y)\rangle_2$. In both cases $Allow(I)=\emptyset$,
which is a contradiction to $I$ being closed by
Corollary \ref{cor_no_allowed_directions_SECOND_PAPER}.\end{itemize}\end{proof}

\begin{remark}\label{remark_in_C2R2_enough_to_have_allowed_direction}In the proof of Proposition \ref{prop_C2R2_case_new_version} the only way we conclude that an ideal $I\neq\mathcal{P}_0^2(\mathbb{R}^2)$ is not closed is by showing that $Allow(I)=\emptyset$,
which implies it is not closed by
Corollary \ref{cor_no_allowed_directions_SECOND_PAPER}. So we also proved that an ideal $\mathcal{P}_0^2(\mathbb{R}^2)\neq I\lhd\mathcal{P}_0^2(\mathbb{R}^2)$ is closed if and only if $Allow(I)\neq\emptyset$. We will use this fact in the proof of Lemma \ref{lemma_C2R3_case__with_jet_of_oov_1}. A similar statement is not true in general, e.g., the principal ideal $I:=\langle x(x^2+y^2)\rangle_3\lhd\mathcal{P}_0^3(\mathbb{R}^2)$ is not closed by Lemma \ref{lemma_linear_times_definite_in_closed_imply_many_jets}, although $Allow(I)\neq\emptyset$. In $\mathcal{P}_0^2(\mathbb{R}^3)$ we have that $\langle x^2,y^2-xz\rangle_2$ is not closed by Example \ref{amazing_exaple_for_negligible_part2_SECOND_PAPER}, although $Allow(I)\neq\emptyset$. \end{remark}

\begin{remark}\label{remark_on_2_dim_subspaces_in_the_plane}In the last part of the proof of Proposition \ref{prop_C2R2_case_new_version} we also proved that any 2-dimensional subspace of $\text{span}_{\mathbb{R}}\{x^2,xy,y^2\}$ either contains a positive definite form, or has the form, up to a linear coordinate change, $\text{span}_{\mathbb{R}}\{xy,(x+\alpha y)(x-\alpha y)\}$ for some $\alpha\in\mathbb{R}$. Note that if $\alpha\neq0$ then by further scaling $y$ we get get that any 2-dimensional subspace of $\text{span}_{\mathbb{R}}\{x^2,xy,y^2\}$
either contains a positive define form, has the form, up to a linear
coordinate change, $\text{span}_{\mathbb{R}}\{xy,x^{2}\}$ or has the form, up to a linear
coordinate change, $\text{span}_{\mathbb{R}}\{xy,(x+ y)(x- y)\}$. We will use this fact in the proof of Lemma \ref{lemma_classify_closed_non_principal_in_C3R2_case_new_version}. \end{remark}

\begin{remark}\label{remark_on_distinctness_for_2_2_case}The ideals (\ref{model_for_2_2_1})-(\ref{model_for_2_2_6}) are pairwise distinct, i.e., there does not exist a $C^2$ coordinate change around the origin such that a given ideal can take two different forms from this list. To see this first note that (\ref{model_for_2_2_1}) and (\ref{model_for_2_2_2}) are obviously different from the rest. (\ref{model_for_2_2_6}) is distinct as it is the only remaining ideal of dimension 2. (\ref{model_for_2_2_5}) is a also
distinct as it is the only remaining ideal of dimension 1 with two linearly independent allowed directions. Finally, (\ref{model_for_2_2_3}) and (\ref{model_for_2_2_4}) are also different as the first contains a jet of order of vanishing 1, while the latter does not. \end{remark}

\section{Principal ideals in $\mathcal{P}_0^2(\mathbb{R}^n)$ and some other ideals with few generators}\label{section_special_examples_with_m_is_2}

We start with two lemmas that will be very useful for later calculations, and another example we will later need for the proof of Proposition \ref{prop_principal_ideals_in_P2Rn}.

\begin{lemma}\label{lemma_vanishes_on_corners_of_rectangles}Consider $\mathbb{R}^2$
with standard coordinates $x,y$, and let $f\in C^2(\mathbb{R}^2)$. Assume that for some $x_1<x_2$ and some $y_1<y_2$ we have $f(x_i,y_j)=0$ for all $1\leq i,j\leq 2$, i.e., $f$ vanishes on the vertices of a rectangle $A$ whose edges are parallel
to the axes. Then, there exist $x_1\leq x_0\leq x_2$ and $y_1\leq y_0\leq y_2$ such that $f_{xy}(x_0,y_0)=0$, i.e., the mixed partial derivative $\partial_{xy}$ of $f$ vanishes at some point inside the rectangle $A$. \end{lemma}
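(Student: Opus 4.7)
The plan is to apply Rolle's theorem twice, a standard trick for analyzing mixed partial derivatives on a rectangle.

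First I would define the auxiliary function $g(x) := f(x, y_2) - f(x, y_1)$, which is $C^2$ on $\mathbb{R}$. The hypothesis that $f$ vanishes at all four corners gives immediately $g(x_1) = f(x_1,y_2) - f(x_1,y_1) = 0$ and similarly $g(x_2) = 0$. By Rolle's theorem applied to $g$ on $[x_1, x_2]$, there exists $x_0 \in (x_1, x_2)$ with $g'(x_0) = 0$, which unpacks to
\begin{equation*}
f_x(x_0, y_2) = f_x(x_0, y_1).
\end{equation*}

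Next I would fix this $x_0$ and consider $h(y) := f_x(x_0, y)$, which is $C^1$ in $y$ since $f \in C^2$. By the previous step $h(y_1) = h(y_2)$, so Rolle's theorem applied to $h$ on $[y_1, y_2]$ yields $y_0 \in (y_1, y_2)$ with $h'(y_0) = 0$, i.e. $f_{xy}(x_0, y_0) = 0$. Since $(x_0, y_0) \in (x_1, x_2) \times (y_1, y_2)$, this point lies (in fact strictly) inside the rectangle $A$, as required.

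There is no real obstacle here: the only points to watch are that $f \in C^2(\mathbb{R}^2)$ makes $g$ differentiable in $x$ and $h$ differentiable in $y$ (with $h'(y) = f_{xy}(x_0, y)$ well defined and continuous), and that the endpoints strictly satisfy $x_1 < x_2$ and $y_1 < y_2$ so Rolle's theorem applies in both steps. The conclusion obtained is actually slightly stronger than stated — the point $(x_0,y_0)$ lies in the open rectangle — which fits within the closed-rectangle statement of the lemma.
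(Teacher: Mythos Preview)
Your proof is correct. The paper takes a different but equally elementary route: it uses the identity
\[
\int_{x_1}^{x_2}\int_{y_1}^{y_2} f_{xy}(x,y)\,dy\,dx = f(x_2,y_2)-f(x_1,y_2)-f(x_2,y_1)+f(x_1,y_1),
\]
whose right-hand side vanishes by hypothesis, and then appeals to continuity of $f_{xy}$ to conclude that $f_{xy}$ has a zero in the closed rectangle. Your double application of Rolle's theorem is a bit more hands-on and, as you note, actually yields a point in the \emph{open} rectangle; the paper's integral approach is a one-liner but only immediately gives a zero in the closed rectangle. Either argument serves the lemma's later uses perfectly well.
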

\begin{proof}This result follows easily from the standard identity:   
$$\int\limits_{x_1}^{x_2}\int\limits_{y_{1}}^{y_2} f_{xy}(x,y)dydx=f(x_2,y_2)-f(x_1,y_2)-f(x_{2},y_{1})+f(x_1,y_1).$$\end{proof}

\begin{lemma}\label{rep_theory_lemma}Let $V\subset\mathbb{R}^n$ be a subspace closed
under all permutations of coordinates, and containing a vector $\vec v\neq\vec0$
whose all coordinates are equal. Then, $V\in\big\{\text{span}_\mathbb{R}\{\vec
v\},\mathbb{R}^n\big\}$.\end{lemma}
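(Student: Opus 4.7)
The plan is to prove this by a direct elementary argument using the permutation action (essentially an elementary incarnation of the fact that the permutation representation of $S_n$ on $\mathbb{R}^n$ decomposes as the trivial plus the standard representation, both irreducible). There will be no real obstacle; the only care needed is to make sure one produces an element of $V$ of the form $e_i-e_j$ starting from an arbitrary non-multiple of $\vec v$.

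First I would reduce to the case where $V$ strictly contains $\text{span}_{\mathbb{R}}\{\vec v\}$, since otherwise the conclusion is immediate. Pick any $\vec w\in V\setminus\text{span}_{\mathbb{R}}\{\vec v\}$. After subtracting a suitable multiple of $\vec v$ (and staying inside $V$), I may assume $\vec w$ is not the zero vector and is not a scalar multiple of $\vec v$; hence $\vec w$ has two coordinates $w_i\neq w_j$ for some $1\le i<j\le n$.

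Next I would apply the transposition $\tau=(i\,j)\in S_n$ to $\vec w$. Since $V$ is permutation-invariant, $\tau(\vec w)\in V$, so also $\vec w-\tau(\vec w)\in V$. A direct computation gives
\begin{equation*}
\vec w-\tau(\vec w)=(w_i-w_j)(e_i-e_j),
\end{equation*}
where $e_1,\ldots,e_n$ is the standard basis. Since $w_i-w_j\neq 0$, dividing by this scalar yields $e_i-e_j\in V$. Then, applying all permutations of coordinates (again using the invariance of $V$), I conclude that $e_k-e_l\in V$ for every $1\le k,l\le n$.

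Finally, I would observe that the vectors $\{e_k-e_l\}_{k,l}$ span the hyperplane $H:=\{(x_1,\dots,x_n)\in\mathbb{R}^n:x_1+\cdots+x_n=0\}$, which has dimension $n-1$, while $\vec v\notin H$ since the sum of its coordinates is $n\cdot v_1\neq 0$. Hence $V\supset H+\text{span}_{\mathbb{R}}\{\vec v\}=\mathbb{R}^n$, which forces $V=\mathbb{R}^n$ and completes the proof.
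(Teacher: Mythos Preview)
Your proof is correct and follows essentially the same approach as the paper: both arguments pick a vector in $V$ with two unequal coordinates, subtract a transposition-permuted copy to obtain a scalar multiple of some $e_i-e_j$, and then use permutation invariance together with $\vec v$ to conclude $V=\mathbb{R}^n$. The only cosmetic differences are that the paper first permutes so that $i=1$, $j=2$, and counts dimensions directly rather than invoking the hyperplane $H$; your remark about subtracting a multiple of $\vec v$ is unnecessary (since $\vec w\notin\text{span}_\mathbb{R}\{\vec v\}$ already forces two unequal coordinates) but harmless.
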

\begin{proof} If $V=\text{span}_\mathbb{R}\{\vec
v\}$ we are done. Otherwise, $V$ contains a non-zero vector that has two coordinates that are different. Moreover, since $V$ is closed with respect to coordinate permutations, there exists $V\ni \vec w=(w_1,w_2,\dots,w_n)$ such that $w_1\neq w_2$. Again, as $V$ is closed with respect to coordinate permutations we have both $(w_1,w_2,w_3,w_4,\dots,w_n),(w_2,w_1,w_3,w_4,\dots,w_n)\in V$, and so also their (non-zero) difference $(w_1-w_2,-(w_1-w_2),0,0,\dots,0)\in V$. Scaling this vector we get that $(1,-1,0,0,\dots,0)\in V$ and since $V$ is closed with respect to coordinate
permutations also the $n-1$ linearly independent vectors $(1,-1,0,0,\dots,0),(0,1,-1,0,0,\dots,0),(0,0,1,-1,0,0,\dots,0),\dots,(0,0,\dots,0,1,-1)\in V$.  Recalling that $\vec v \in V$ we get that $\dim V=n$ and so $V=\mathbb{R}^n$ and the lemma holds\footnote{In representation theoretic language we proved that the ($n-1$ dimensional) standard representation of $S_n$ is irreducible.}. \end{proof} 

\begin{example}\label{non-real-radical-example}$\langle
x^2\rangle_2=I^2(\{x^2=y^4\})$.

Indeed, note that $\{x^2=y^4\}=\{y^{2}-x=0\}\cup\{y^{2}+x=0\}$, and so by
Corollary \ref{cor-on-prime-with-a-linear-part}: $$I^2(\{x^2=y^4\})\subset
I^2(\{y^2-x=0\})\cap I^2(\{y^2+x=0\})=\langle y^2-x\rangle_2\cap\langle y^2+x\rangle_2.$$
Let $p(x,y)\in I^2(\{x^2=y^4\})$. There exist two polynomials $$q_i(x,y)=a_ix+b_iy+c_{i}\text{
};\text{
} i=1,2,$$such that as 2-jets we have $$p(x,y)=q_1(x,y)\cdot(y^2-x)=q_2(x,y)\cdot(y^2+x).$$
Observing the coefficients of the $x$ and the $y^2$ monomials, we see that $c_1=c_2=0$ and so $I^2(\{x^2=y^4\})\subset\langle
xy,x^2\rangle_2$. 
Since the function $x^2-y^4$ vanishes on $\{x^2=y^4\}$ we have $I^2(\{x^2=y^4\})\supset\langle
x^2\rangle_2$. Thus, it is only left to show that $xy\notin I^2(\{x^2=y^4\})$.
We show this by proving that if $f\in C^2(\bR^2)$ vanishes on $\{x^2=y^4\}$,
then its second mixed partial derivative $f_{xy}$ vanishes at the origin.
In particular $J^2(f)$ cannot be $xy$. Indeed, for any $\epsilon>0$ we have
$f(\pm\epsilon^4,\pm\epsilon^2)=0$, so by Lemma \ref{lemma_vanishes_on_corners_of_rectangles}
there exist $x_\epsilon,y_\epsilon\in\bR$ such that $\abs{x_\epsilon}\leq\epsilon^4$,
$\abs{y_\epsilon}\leq\epsilon^2$ and $f_{xy}(x_\epsilon,y_\epsilon)=0$. As
$\epsilon$ is arbitrary small and $f_{xy}$ is continuous, we must have $f_{xy}(\vec
0)=0$. We conclude that $\langle
x^2\rangle_2=I^2(\{x^2=y^4\})$.

\end{example}

Before we proceed, let us fix some notation that will be useful in the proofs
of Proposition \ref{prop_principal_ideals_in_P2Rn} and Proposition \ref{prop_ideals_in_P2Rn_generated_by_2_foreign_jets}.

\begin{notation}\label{notation_restrictions_of_sets_and_functions}Let $E\subset\bR^n$
be a closed subset containing the origin. For any subset $J\subset\{1,\dots
n\}$ denote $$E^{J}:=E\cap\big(\bigcap\limits_{k\in\{1,\dots,n\}\setminus
J}\{x_k=0\}\big).$$We consider $E^{J}$ as a subset of $\bR^{|J|}\cong\bR^n\cap\big(\bigcap\limits_{k\in\{1,\dots,n\}\setminus
J}\{x_k=0\}\big)$. If $J=\{i_1,\dots i_{|J|}\}$ we also denote $E^{i_1i_2\dots
i_{|J|}}:=E^J$. Note that $E^{J}$ is a closed subset of $\bR^{|J|}$ containing
the origin. Let $f\in C^2(\bR^n)$ be such that $f|_{E}=0$. For any subset
$J\subset\{1,\dots n\}$ denote $$f^{J}:=f|_{\bigcap\limits_{k\in\{1,\dots,n\}\setminus
J}\{x_k=0\}}.$$ If $J=\{i_1,\dots i_{|J|}\}$ we also denote $f^{i_1i_2\dots
i_{|J|}}:=f^J$. Note that  $f^{J}\in C^2(\bR^{|J|})$ and $f^{J}|_{E^{J}}=0$.
Also, note that $$J^2(f^{J})=J^2(f)|_{\bigcap\limits_{k\in\{1,\dots,n\}\setminus
J}\{x_k=0\}},$$ i.e., there exists a natural map $T^{J}:I^2(E)\to I^2(E^J)$.\end{notation}

\begin{proposition}\label{prop_principal_ideals_in_P2Rn} Fix $n\in\mathbb{N}$ and let $I\lhd\mathcal{P}_0^2(\mathbb{R}^n)$ be a principal ideal. Then, the following are equivalent:
\begin{itemize}
\item $I$ is closed.
\item There exists a closed set $\vec 0 \in E\subset\mathbb{R}^n$ such that $I=I^2(E)$.
\item There exists a closed semi-algebraic set $\vec 0 \in E\subset\mathbb{R}^n$
such that $I=I^2(E)$.
\end{itemize}\end{proposition}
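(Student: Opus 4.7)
The plan is to prove $(1)\Rightarrow (3)$, since $(3)\Rightarrow (2)$ is immediate and $(2)\Rightarrow (1)$ is Theorem \ref{main_theorem_on_necessary_condition_SECOND_PAPER}. I will split into cases by the order of vanishing of the generator $p$ of $I=\langle p\rangle_2$. If $p=0$, take $E=\mathbb{R}^n$. If $p$ has order of vanishing $1$, Corollary \ref{cor-on-prime-with-a-linear-part} already gives $\langle p\rangle_2=I^2(\{p=0\})$ with $\{p=0\}$ semi-algebraic. The only real work is when $p$ is a nonzero quadratic form.

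After an orthogonal linear coordinate change, and possibly replacing $p$ by $-p$ (same ideal), I may normalise to $p=\sum_{i=1}^{a} x_i^2 - \sum_{j=a+1}^{a+b} x_j^2$ with $a\geq 1$, $a\geq b\geq 0$, and $a+b\leq n$. I first dispose of the positive-definite case $a=n,\ b=0$: there $Allow(\langle p\rangle_2)=\emptyset$ by Corollary \ref{old_new_cor_on_how_to_calc_allow_with_inclusion_SECOND_PAPER}, so by Corollary \ref{cor_no_allowed_directions_SECOND_PAPER} the closedness of $I$ would force $I=\mathcal{P}_0^2(\mathbb{R}^n)$; but $\langle p\rangle_2$ contains no linear polynomial, so this sub-case cannot occur under the hypothesis.

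In each remaining sub-case I will exhibit an explicit semi-algebraic cuspidal $E$ and show $I^2(E)=\langle p\rangle_2$. The prototype is $p=x_1^2$ with $n\geq 2$, for which I take
\[
E=\{x_1=0\}\cup\{x_1\geq 0,\ x_1^2=(x_2^2+\cdots+x_n^2)^{3}\}.
\]
A direct computation shows that $(x_2^2+\cdots+x_n^2)^{3/2}$ is $C^2$ with vanishing $2$-jet, so $F(x)=x_1^2-x_1(x_2^2+\cdots+x_n^2)^{3/2}$ lies in $C^2(\mathbb{R}^n)$, has $J^2(F)=p$, and vanishes on $E$, giving $p\in I^2(E)$. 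Conversely, vanishing of $f\in C^2$ on the hyperplane $\{x_1=0\}$ forces $J^2(f)\in\langle x_1\rangle_2$ by Example \ref{a_union_of_coordinates}; writing the remainder $R=f-J^2(f)$ and using $R|_{\{x_1=0\}}=0$ to factor $R=x_1 R_1$, then testing $f|_E=0$ on the cuspidal surface while letting the radial coordinate tend to zero, I expect to force all linear and ``off-diagonal'' quadratic coefficients of $J^2(f)$ to vanish, leaving only $\mathbb{R}\, x_1^2$. For positive semi-definite $p$ of rank $a>1$ I will use a symmetric cuspidal-torus type set such as $\{x_1^2+\cdots+x_a^2=(x_{a+1}^2+\cdots+x_n^2)^2\}$, whose $O(a)\times O(n-a)$-invariance forces $I^2(E)$ to be a sum of isotypic components; each non-trivial component can then be eliminated using restrictions to coordinate $2$-planes together with Example \ref{non-real-radical-example} or Lemma \ref{lemma_vanishes_on_corners_of_rectangles}. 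For indefinite $p$ (i.e.\ $b\geq 1$) I plan to take $E=\{p=0\}$; restricting $f\in I^2(E)$ to each coordinate $2$-plane reduces to Example \ref{claim-on-lines} or to the 2D analysis already done in Section \ref{section_2_2}, and combining all such restrictions will pin down $J^2(f)\in\mathbb{R} p$.

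The main obstacle is the bookkeeping of the quadratic-form case: each sub-case needs its own tailored semi-algebraic $E$ and its own Taylor computation on the cuspidal part. Two ingredients will appear repeatedly: (a) the fact that radial power functions such as $(x_2^2+\cdots+x_n^2)^{3/2}$ are $C^2$ with vanishing $2$-jet but nothing more, which is what allows a cuspidal surface to ``absorb'' the jet $p$ without absorbing anything else; and (b) the interplay of Lemma \ref{lemma_vanishes_on_corners_of_rectangles} (applied on shrinking axis-parallel rectangles whose vertices lie in $E$) with the symmetry argument of Lemma \ref{rep_theory_lemma}, which together collapse the invariant subspaces of $I^2(E)$ down to $\mathbb{R} p$.
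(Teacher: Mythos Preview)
Your plan is essentially the paper's: the same two tools---Lemma~\ref{lemma_vanishes_on_corners_of_rectangles} for killing mixed partials and Lemma~\ref{rep_theory_lemma} for collapsing the diagonal---carry the whole proof. The organizational difference is that the paper does \emph{not} split the quadratic-form case into rank-$1$ semi-definite, higher-rank semi-definite, and indefinite sub-cases. It uses a single semi-algebraic set
\[
E=\{q=0\},\qquad q(x)=\sum_{i\in J_+}x_i^2-\sum_{i\in J_-}x_i^2-\Bigl(\sum_{i\in J_0}x_i^2\Bigr)^{2},
\]
valid for every non-definite signature at once. Your ``cuspidal-torus'' set for $a>1$ is exactly this $E$, and your $\{p=0\}$ for the indefinite case agrees with it when $J_0=\emptyset$; the paper keeps the quartic perturbation even in the indefinite degenerate case so that the restrictions $E^{ij}$ of Notation~\ref{notation_restrictions_of_sets_and_functions} always land on Example~\ref{non-real-radical-example} rather than on a bare hyperplane.

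Two soft spots in your sketch are worth flagging. First, in the rank-$1$ case, ``testing $f|_E=0$ on the cuspidal surface'' cannot work by direct substitution: on $\{x_1=\rho^3\}$ (with $\rho^2=x_2^2+\cdots+x_n^2$) every term of $J^2(f)\in\langle x_1\rangle_2$ is $O(\rho^3)=o(\rho^2)$, so the Taylor remainder $o(\rho^2)$ swallows it and you get no constraint. You must instead apply Rolle in the $x_1$-direction, comparing $f(0,x')=0$ with $f(\rho^3,x')=0$ to obtain $f_{x_1}(\tilde x_1,x')=0$ at some $\tilde x_1\in(0,\rho^3)$, exactly as in Example~\ref{example-xx}; only then do the coefficients $c_1,a_{12},\dots,a_{1n}$ drop out. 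Second, in the indefinite case, ``restricting to coordinate $2$-planes'' is not enough: for $p=x^2+y^2-z^2$ the $(x,y)$-plane meets $\{p=0\}$ only at the origin, so the $xy$-coefficient of $J^2(f)$ is unconstrained by any $2$-plane restriction. You really need the rectangle lemma here too (e.g.\ on the four points $(\pm\epsilon,\pm\epsilon,\sqrt{2}\epsilon)\in E$), and then Lemma~\ref{rep_theory_lemma} together with a final evaluation on a line in $E$ to pin down $I^2(E)$ to $\mathbb{R}p$; the paper runs precisely this argument, organised through the maps $T^{J_+}$, $T^{J_-}$ and a contradiction on $E^{J_+\cup J_-}$.
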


\begin{proof} 

Let $I=\langle p\rangle_2\lhd\mathcal{P}_0^2(\mathbb{R}^n)$ for some generator $p$. By Theorem \ref{main_theorem_on_necessary_condition_SECOND_PAPER} it is enough to prove that if $I$ is closed then there exists a closed semi-algebraic set $\vec 0 \in E\subset\mathbb{R}^n$
such that $I=I^2(E)$. If $p\equiv0$ then $I=I^2(\bR^n)$ and the proposition holds. If the order of vanishing of $p$ is 1, then by Corollary \ref{cor-on-prime-with-a-linear-part},
$I=I^2(\{p=0\})$ and again the proposition holds. We are left to check the case where $p$ is a quadratic form on $\bR^n$. If $p$ is a definite form (either positive
or definite), then $Allow(I)=\emptyset$,
and so $I$ is not closed by
Corollary \ref{cor_no_allowed_directions_SECOND_PAPER}.

\

For the remaining cases, without loss of generality (by possibly applying
a linear coordinate change and replacing the generator $p$ by the generator
$-p$) we may assume that $n\geq3$ (the lower dimensional cases were proven
separately) and that there exists a (pairwise disjoint) partition $$\{1,2\dots,n\}=J_+\cup
J_-\cup J_0$$ with $J_+\neq\emptyset$ and $J_-\cup J_0\neq\emptyset$, such
that $i<j<k$ for all $i\in J_+,j\in J_-,k\in J_0$ (if $J_-=\emptyset$ we only demand $i<k$ and similarly if $J_0=\emptyset$ we only demand $i<j$), and $$p(x_1,\dots x_n)=\sum\limits_{i\in
J_+}x_i^2-\sum\limits_{i\in
J_-}x_i^2.$$ Define $$q(x_{1},\dots x_n):=\sum\limits_{i\in
J_+}x_i^2-\sum\limits_{i\in
J_-}x_i^2-\big(\sum\limits_{i\in
J_0}x_i^2\big)^2.$$ We will show that $I=I^2(\{q=0\})$.

\

Before we start set $E:=\{q=0\}$ and recall Notation \ref{notation_restrictions_of_sets_and_functions}.
Fix some $f\in C^2(\bR^n)$ such that $f|_E=0$. First let us show that \begin{gather}\label{rand_label_for_princ_ideal_in_C2Rn}\frac{\partial f}{\partial x_i}(\vec
0)=\frac{\partial f}{\partial x_j}(\vec
0)=\frac{\partial^2f}{\partial x_i\partial
x_j}(\vec 0)=0\text{ for all }1\leq i\neq j\leq n.\end{gather}To show that (\ref{rand_label_for_princ_ideal_in_C2Rn}) holds, it is enough to show that for any fixed $1\leq i\neq j\leq n$ and any arbitrary small $\epsilon>0$ there exists a point $(x_1,x_2,\dots,x_n)\in E\cap B(\epsilon)$,
such that both $x_i\neq0$ and $x_j\neq0$. Then, as $E$ is symmetric
with respect to $x_i\mapsto -x_i$ and $x_j\mapsto -x_j$, we can apply Rolle's
Theorem to get $\frac{\partial f}{\partial x_i}(\vec
0)=\frac{\partial f}{\partial x_j}(\vec
0)=0$ and apply Lemma \ref{lemma_vanishes_on_corners_of_rectangles} to get
$\frac{\partial^2f}{\partial x_i\partial
x_j}(\vec 0)=0$. We will do this by analyzing 6 possible cases, corresponding
to the 6 possible unordered pairs of sets chosen from $J_+,J_-,J_0$:

\begin{enumerate}[label=(\roman*)]
\item $i\in J_+,j\in J_+$: in this case there exists $k\in J_-\cup J_0$.
\begin{enumerate}
\item If there exists $k\in J_0$ then for any $\epsilon>0$ we have $\big(\epsilon,\epsilon,(2\epsilon^2)^{1/4}\big)\in E^{ijk}$.
\item If there exists $k\in J_-$ then for any $\epsilon>0$ we have $\big(\epsilon,\epsilon,(2\epsilon^2)^{1/2}\big)\in
E^{ijk}$.

\end{enumerate}
\item $i\in J_-,j\in J_-$: in this case there exists $k\in J_+$. Then, for
any $\epsilon>0$ we have $(\sqrt{2}\epsilon,\epsilon,\epsilon)\in
E^{kij}$.

\item $i\in J_0,j\in J_0$: in this case there exists $k\in J_+$. Then, for
any $\epsilon>0$ we have $(2\epsilon^2,\epsilon,\epsilon)\in
E^{kij}$.

\item $i\in J_-,j\in J_0$: in this case there exists $k\in J_+$. Then, for
any $0<\epsilon<1$ we have $(\epsilon,\sqrt{\epsilon^2-\epsilon^4},\epsilon)\in
E^{kij}$.
\item $i\in J_+,j\in J_-$: in this case there exists $\{i,j\}\not\ni k\in\{1,\dots,n\}$.
\begin{enumerate}
\item If there exists $k\in J_0$ then for any $\epsilon>0$ we have $(\epsilon,\sqrt{\epsilon^2-\epsilon^4},\epsilon)\in
E^{ijk}$.
\item If there exists $k\in J_-$ then for any $\epsilon>0$ we have $(\sqrt{2}\epsilon,\epsilon,\epsilon)\in
E^{ijk}$.

\item If there exists $k\in J_+$ then for any $\epsilon>0$ we have $(\sqrt{3}\epsilon,\epsilon,2\epsilon)\in
E^{ikj}$.

\end{enumerate}
\item $i\in J_+,j\in J_0$: in this case $E^{ij}=\{x_i^2=x_j^4\}$, so $J^2(f^{ij})\in
I^2(E^{ij})=\langle x_i^2\rangle_2$ by Example \ref{non-real-radical-example}.
So in particular $\frac{\partial^2f}{\partial
x_i\partial x_j}(\vec 0)=\frac{\partial f}{\partial x_i}(\vec 0)=\frac{\partial
f}{\partial
x_j}(\vec 0)=0$. Note that moreover this shows that $\frac{\partial^2f}{\partial
x_j^2}(\vec 0)=0$. Since $J_+\neq\emptyset$ we in fact showed that $\frac{\partial^2f}{\partial
x_j^2}(\vec 0)=0$ for any $j\in J_0$.
\end{enumerate}   
We thus proved that 
(\ref{rand_label_for_princ_ideal_in_C2Rn})  holds and moreover that $\frac{\partial^2f}{\partial
x_j^2}(\vec 0)=0$ for any $j\in J_0$, and so \begin{gather}\label{rand3_label_for_princ_ideal_in_C2Rn}I^2(\{q=0\})\subset \text{span}_\bR\{x_1^2,\dots,x_{|J_+|+|J_-|}^2\}.\end{gather}Recall that for any subset $J\subset\{1,\dots n\}$ there exists a map $T^{J}:I^2(E)\to
I^2(E^J)$, and note that \begin{gather}\label{rand2_label_for_princ_ideal_in_C2Rn}J^2(q)=\sum\limits_{i\in
J_+}x_i^2-\sum\limits_{i\in
J_-}x_i^2\in I^2(\{q=0\})=I^{2}(E).\end{gather}Thus, $T^{J_{+}}(I^2(E))$ is a linear
subspace of $\text{span}_\bR\{x_1^2,\dots,x_{|J_+|}^2\}$ by (\ref{rand3_label_for_princ_ideal_in_C2Rn}). Moreover, it is
closed under all permutations
of coordinates (this follows immediately from the definition of $E$), and
by (\ref{rand2_label_for_princ_ideal_in_C2Rn}) it contains the non-zero vector $\sum\limits_{i\in
J_+}x_i^2$ whose all coordinates
are equal. Thus, by Lemma \ref{rep_theory_lemma}\begin{gather}\text{either }T^{J_{+}}(I^2(E))=\text{span}_\bR\{\sum\limits_{i\in
J_+}x_i^2\}\text{ or }T^{J_{+}}(I^2(E))=\text{span}_\bR\{x_1^2,\dots,x_{|J_+|}^2\}.\end{gather} If $|J_+|=1$ these are the same. However, if $|J_+|>1$ then $\text{span}_\bR\{x_1^2,\dots,x_{|J_+|}^2\}$ is not
invariant with respect to rotations in $\bR^{|J_+|}$, whereas $E^{J_+}$ is
invariant, which is impossible. We conclude that \begin{gather}\label{rand8_label_for_princ_ideal_in_C2Rn}T^{J_{+}}(I^2(E))=\text{span}_\bR\{\sum\limits_{i\in
J_+}x_i^2\}.\end{gather} If $J_-=\emptyset$ then (\ref{rand3_label_for_princ_ideal_in_C2Rn}) together with (\ref{rand8_label_for_princ_ideal_in_C2Rn}) shows that $\langle p \rangle_2=I^2(\{q=0\})$ and the proposition is proven. Assume $J_-\neq\emptyset$. The same argument that lead to (\ref{rand8_label_for_princ_ideal_in_C2Rn}) shows that\begin{gather}\label{rand9_label_for_princ_ideal_in_C2Rn}T^{J_{-}}(I^2(E))=\text{span}_\bR\{\sum\limits_{i\in
J_-}x_i^2\}.\end{gather}
Combining (\ref{rand2_label_for_princ_ideal_in_C2Rn}),(\ref{rand8_label_for_princ_ideal_in_C2Rn}) and (\ref{rand9_label_for_princ_ideal_in_C2Rn}) we conclude that either $I^2(E)=\text{span}_\bR\{\sum\limits_{i\in
J_+}x_i^2-\sum\limits_{i\in
J_-}x_i^2\}$, or $I^2(E)=\text{span}_\bR\{\sum\limits_{i\in
J_+}x_i^2,\sum\limits_{i\in
J_-}x_i^2\}$. In the first case $I^2(E)=\langle p\rangle_2$ and the proposition is proven. Assume that $I^2(E)=\text{span}_\bR\{\sum\limits_{i\in
J_+}x_i^2,\sum\limits_{i\in
J_-}x_i^2\}$. 
In that case $$\sum\limits_{i\in
J_+}x_i^2+\sum\limits_{i\in
J_-}x_i^2\in T^{J_+\cup J_-}(I^2(E))\subset I^2(E^{J_+\cup J_-}),$$ and so
$Allow(I^{2}(E^{J_+\cup J_-}))=\emptyset$, which implies (by Corollary \ref{cor_no_allowed_directions_SECOND_PAPER})
that the origin is an isolated point of $E^{J_+\cup J_-}\subset\bR^{|J_+|+|J_-|}$.
This is a contradiction of course as the set $$E^{J_+\cup J_-}=\{\sum\limits_{i\in
J_+}x_i^2-\sum\limits_{i\in
J_-}x_i^2=0\}\subset\bR^{|J_+|+ |J_-|}$$ contains a line passing through
the origin whenever both $J_+\neq\emptyset$ and $J_-\neq\emptyset$: if $i\in
J_+$ and $j\in J_-$ then the line $x_i=x_j$ and all other coordinates are
zero is contained in $E^{J_+\cup J_-}$.  \end{proof}

\begin{remark}\label{remark_on_prop_principal_ideals_in_P2Rn}The proof of Proposition \ref{prop_principal_ideals_in_P2Rn} also shows that if $I\lhd\mathcal{P}_0^2(\mathbb{R}^n)$ is a principal ideal generated by a homogenous polynomial of degree 2, then $I$ is closed if and only if $Allow(I)\neq\emptyset$. \end{remark}

Lemmas \ref{calc_lemma_1_on_non_isolated_point}-\ref{calc_lemma_9_on_non_isolated_point}
below are some technical computations that will be used in the proof of Proposition
\ref{prop_ideals_in_P2Rn_generated_by_2_foreign_jets}.

\begin{lemma}\label{calc_lemma_1_on_non_isolated_point}Let $E\subset\bR^3$
be given by
$$E:=\{(x,y,z)\in\bR^3|x^2-y^2-z^4=z^2-(x^2+y^2)^2=0\}.$$
Then, for any $\epsilon>0$ there exists $(x,y,z)\in E\cap B(\epsilon)$ such that $xyz\neq
0$. 
\end{lemma}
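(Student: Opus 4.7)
The plan is to parametrize the solution set explicitly. From the second defining equation $z^2 = (x^2+y^2)^2$ I deduce that any point of $E$ satisfies $z = \pm(x^2+y^2)$. Substituting this into the first equation $x^2 - y^2 = z^4$ yields the single constraint
\[
x^2 - y^2 = (x^2+y^2)^4.
\]
So it suffices to produce, for every $\epsilon>0$, a pair $(x,y)$ with $x,y\neq 0$ satisfying this equation and with $x^2+y^2$ small enough that the corresponding $z=\pm(x^2+y^2)$ is also small.

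First I introduce polar coordinates $x=r\cos\theta$, $y=r\sin\theta$ with $r>0$. The equation rewrites as
\[
r^2\cos(2\theta) = r^8,\qquad\text{i.e.,}\qquad \cos(2\theta) = r^6.
\]
For any sufficiently small $r>0$ the value $r^6$ lies in $(0,1)$, so there is a unique $\theta=\theta(r)\in(0,\pi/4)$ with $\cos(2\theta)=r^6$; explicitly $\theta(r)=\tfrac12\arccos(r^6)$, and $\theta(r)\to\pi/4$ as $r\to 0^+$. In particular, for $r$ small, $\theta(r)\in(0,\pi/4)$ so both $\cos\theta$ and $\sin\theta$ are strictly positive, giving $x,y>0$.

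Setting $z := x^2+y^2 = r^2 > 0$, the triple $(x,y,z)$ lies in $E$ by construction and has $xyz\neq 0$. Finally, $|(x,y,z)|^2 = r^2 + r^4 \le 2r^2$ for $r\le 1$, so by taking $r>0$ small enough (say $r<\epsilon/2$) I ensure $(x,y,z)\in E\cap B(\epsilon)$. No step poses a real obstacle; the only thing to be a bit careful about is checking that the chosen $\theta(r)$ avoids the coordinate axes, which follows from $\theta(r)\in(0,\pi/4)$ for all small $r>0$.
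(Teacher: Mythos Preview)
Your proof is correct and more elementary than the paper's. Both arguments begin the same way—reducing to the planar constraint $x^2-y^2=(x^2+y^2)^4$ via $z=\pm(x^2+y^2)$—but then diverge. The paper observes that $g(x,y)=(x-y)(x+y)-(x^2+y^2)^4$ has a nondegenerate saddle at the origin (since $\partial_u\partial_v g(\vec 0)\neq 0$ in the coordinates $(u,v)=(x-y,x+y)$), invokes the Morse Lemma to produce a continuous curve in $\{g=0\}$ through the origin, and then separately checks that only finitely many points of $E$ satisfy $xyz=0$. Your polar-coordinate trick bypasses all of this: the substitution $\cos(2\theta)=r^6$ gives explicit points with $x,y>0$ directly, and $z=r^2>0$ is read off immediately, so no Morse Lemma and no case analysis on the coordinate planes are needed. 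The paper's approach has the advantage of being a template reused in several of the companion lemmas (Lemmas~\ref{calc_lemma_4_on_non_isolated_point}, \ref{calc_lemma_8_on_non_isolated_point}, \ref{calc_lemma_9_on_non_isolated_point}), while yours is self-contained and shorter for this particular case.
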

\begin{proof}We can rewrite $E=\{(x,y,z)\in\bR^3|x^2-y^2-(x^2+y^2)^4=0;z^2=(x^2+y^2)^{2}\}$.
The first is $g(x,y):=(x-y)(x+y)-(x^2+y^2)^4=0$,
so in the coordinate system $(u,v):=(x-y,x+y)$ we have $\frac{\partial^2g}{\partial
u \partial v}(\vec 0)\neq 0$. By the Morse Lemma (see \cite{P}) there exists a continuous (in fact $C^\infty$) coordinate change around the origin in the $u-v$ plane, which is the $x-y$
plane, $(u,v)\mapsto(\tilde u,\tilde v)$ such that $g(\tilde u,\tilde v)=\tilde
u\tilde v$. So we can rewrite $$E=\{(x(\tilde u,\tilde v),y(\tilde u,\tilde v),z)\in\bR^3|\tilde
u\tilde v=0,z^2=(x(\tilde u,\tilde v)^2+y(\tilde u,\tilde v)^2)^{2}\},$$
and so the origin is not an isolated point of $E$, e.g., for any $\epsilon>0$
$$(x(\tilde u,\tilde v),y(\tilde
u,\tilde v),z)=\big(x(0,\epsilon),y(0,\epsilon),x(0,\epsilon)^2+y(0,\epsilon)^2\big)\in E.$$As the origin is not an isolated point of $E$, it is enough to show that there are finitely many points $(x,y,z)\in E$ such that $xyz=0$. Indeed, if $y=0$ then $(x,y,z)\in
E$ implies  $x^2=z^4$ and $z^2=x^4=z^8$, so $x,z\in\{0,\pm1\}$. Similarly, if $x=0$ then $(x,y,z)\in
E$ implies  $-y^2-z^4=0$ and so $y=z=0$. Finally, if $z=0$ then $(x,y,z)\in
E$ implies
 $x^2+y^2=0$ and so $x=y=0$.\end{proof}

\begin{lemma}\label{calc_lemma_2_on_non_isolated_point}Let $E\subset\bR^3$
be given by
$$E:=\{(x,y,z)\in\bR^3|x^2-(y^2+z^2)^{2}=y^2-(x^2+z^2)^{2}=0\}.$$
Then, for any $\epsilon>0$ there exists $(x,y,z)\in E\cap B(\epsilon)$ such
that $xyz\neq
0$. 
\end{lemma}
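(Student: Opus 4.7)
The plan is to exhibit, for all sufficiently small $t > 0$, an explicit point of $E$ whose coordinates are all nonzero and which tends to the origin as $t \to 0^+$. I will exploit the symmetry $x \leftrightarrow y$ in the defining equations and look for solutions with $x = y > 0$.

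Writing the two defining equations as $x^2 = (y^2+z^2)^2$ and $y^2 = (x^2+z^2)^2$, I would first restrict attention to the branch $x = y^2+z^2 \geq 0$ and $y = x^2+z^2 \geq 0$. Setting $x = y$ then forces $x = x^2 + z^2$, i.e., $z^2 = x(1-x)$. So for each $t \in (0,1)$ the triple $(t,\, t,\, \sqrt{t(1-t)})$ is a candidate point. I would then just verify directly that $y^2 + z^2 = t^2 + t(1-t) = t = x$ and $x^2 + z^2 = t^2 + t(1-t) = t = y$, whence $(t,t,\sqrt{t(1-t)}) \in E$.

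Since $xyz = t^2\sqrt{t(1-t)} \neq 0$ for every $t \in (0,1)$, and the Euclidean norm of this point satisfies $\abs{(t,t,\sqrt{t(1-t)})}^2 = 2t^2 + t(1-t) = t + t^2$, which tends to $0$ as $t \to 0^+$, I can for any given $\epsilon > 0$ choose $t > 0$ small enough to land inside $B(\epsilon)$. This produces the required point in $E \cap B(\epsilon)$ with all coordinates nonzero.

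The argument is almost a one-liner once the ansatz $x = y$ is written down; the only conceptual step is noticing that the symmetry of the defining ideal under the involution swapping $x$ and $y$ suggests restricting to this diagonal, which collapses the two constraints into a single scalar equation whose small positive solutions automatically force $z \neq 0$. I do not anticipate any obstacle: unlike Lemma \ref{calc_lemma_1_on_non_isolated_point}, no appeal to the Morse lemma is needed here because the explicit parametrization works directly.
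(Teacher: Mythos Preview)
Your proof is correct and is essentially identical to the paper's: both exhibit the explicit family $(t,t,\sqrt{t-t^2})$ for $0<t<1$ (the paper writes $(\epsilon,\epsilon,\sqrt{\epsilon-\epsilon^2})$ without derivation). Your added explanation of the $x\leftrightarrow y$ symmetry motivating the ansatz $x=y$ is a nice touch but not strictly needed.
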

\begin{proof}For any $0<\epsilon<1$, $(\epsilon,\epsilon,\sqrt{\epsilon-\epsilon^2})\in
E$.\end{proof}

\begin{lemma}\label{calc_lemma_3_on_non_isolated_point}Let $E\subset\bR^4$
be given by
$$E:=\{(x,y,z,w)\in\bR^4|x^2+y^2-(z^2+w^{2})^{2}=z^{2}-(x^{2}+y^2+w^{2})^{2}=0\}.$$
Then, for any $\epsilon>0$ there exists $(x,y,z,w)\in E\cap B(\epsilon)$ such
that $xyzw\neq
0$. 
\end{lemma}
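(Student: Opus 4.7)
The plan is to parametrize $E$ in a way that decouples the two defining equations and then find a one-parameter family of points in $E$ accumulating at the origin with all coordinates non-zero. Writing $r := \sqrt{x^2+y^2}$ and taking $x=y=r/\sqrt{2}$ so that $x,y > 0$, the first defining equation $x^2+y^2 = (z^2+w^2)^2$ becomes
\[
r \;=\; z^2 + w^2,
\]
while the second, $z^2 = (x^2+y^2+w^2)^2$, taken with the positive root $z > 0$, becomes
\[
z \;=\; r^2 + w^2.
\]
Substituting the second into the first yields the single scalar equation
\[
\phi_w(r) \;:=\; r \;-\; (r^2 + w^2)^2 \;-\; w^2 \;=\; 0,
\]
parametrized by $w$.

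Next I would produce a small positive root of $\phi_w$ by the intermediate value theorem. Clearly $\phi_w(0) = -w^2 - w^4 < 0$ for every $w \neq 0$. For $w > 0$ small, evaluating at $r = w$ gives $\phi_w(w) = w - (2w^2)^2 - w^2 = w - w^2 - 4w^4 > 0$. Hence there exists $r = r(w) \in (0, w)$ with $\phi_w(r) = 0$.

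Given such a pair $(r,w)$, set
\[
(x,y,z,w) \;:=\; \bigl(r/\sqrt{2},\; r/\sqrt{2},\; r^2 + w^2,\; w\bigr).
\]
By construction $x^2 + y^2 = r^2 = (z^2+w^2)^2$ (using $z^2+w^2 = r$ from $\phi_w(r)=0$) and $z^2 = (r^2+w^2)^2 = (x^2+y^2+w^2)^2$, so this point lies in $E$, and all four of its coordinates are strictly positive, so $xyzw \neq 0$. Since $r < w$, the norm satisfies $|(x,y,z,w)|^2 = r^2 + (r^2+w^2)^2 + w^2 \to 0$ as $w \to 0^+$, so by choosing $w$ sufficiently small the point lies in $B(\epsilon)$ for any prescribed $\epsilon > 0$. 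This completes the argument.

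No step looks like a serious obstacle; the only thing one needs to be careful about is the sign choice $z > 0$ and the verification that the IVT solution lies in a range where all the resulting coordinates are small. Both are handled by the bound $r(w) < w$.
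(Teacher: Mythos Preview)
Your proof is correct and follows essentially the same route as the paper: both reduce to three variables by setting $x=y=r/\sqrt{2}$ (the paper writes $t$ for your $r$) and then find a solution to the resulting pair $r=z^2+w^2$, $z=r^2+w^2$. The only difference is that the paper, invoking its Lemma~\ref{calc_lemma_2_on_non_isolated_point}, exploits the $r\leftrightarrow z$ symmetry to write down the explicit point $(r,z,w)=(\epsilon,\epsilon,\sqrt{\epsilon-\epsilon^2})$, whereas you obtain $r(w)$ non-constructively via the intermediate value theorem; both are fine.
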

\begin{proof}Set $t^2=x^2+y^2$ we may rewrite $E$ as $$E=\{(x,y,z,w)\in\bR^4|t^2=x^2+y^2;t^2-(z^2+w^2)^{2}=z^2-(t^{2}+w^{2})^{2}=0\},$$and
the Lemma now easily follows from Lemma \ref{calc_lemma_2_on_non_isolated_point}:
for any $(t,z,w)\in\bR^3$ such that $t^2-(z^2+w^2)^{2}=z^2-(t^{2}+w^{2})^{2}=0$
and $xyt\neq0$, we have $\big(\frac{t}{\sqrt{2}},\frac{t}{\sqrt{2}},z,w\big)\in
E$.
\end{proof}

\begin{lemma}\label{calc_lemma_4_on_non_isolated_point}Let $E\subset\bR^4$
be given by
$$E:=\{(x,y,z,w)\in\bR^3|x^2+y^2-(z^2+w^{2})^{2}=z^{2}-w^2-(x^{2}+y^2)^{2}=0\}.$$
Then, for any $\epsilon>0$ there exists $(x,y,z,w)\in E\cap B(\epsilon)$
such
that $xyzw\neq
0$. 
\end{lemma}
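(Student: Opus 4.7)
The plan is to mimic the reduction already used in Lemma \ref{calc_lemma_3_on_non_isolated_point} and then invoke the Morse--lemma trick from Lemma \ref{calc_lemma_1_on_non_isolated_point}. Set $t := \sqrt{x^2+y^2}\ge 0$. The first defining equation of $E$ becomes $t^2 = (z^2+w^2)^2$; since both sides are non-negative this is equivalent to $t = z^2+w^2$. Substituting into the second equation yields $z^2-w^2 = (x^2+y^2)^2 = t^4 = (z^2+w^2)^4$. Thus, after the reduction, $E$ is the set of $(x,y,z,w)$ with $x^2+y^2 = (z^2+w^2)^2$ and $g(z,w):=z^2-w^2-(z^2+w^2)^4 = 0$. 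Note that once $(z,w)$ is fixed with $g(z,w)=0$, we may choose any $(x,y)$ on the circle of radius $z^2+w^2$ in the $xy$-plane; taking for instance $x=y=(z^2+w^2)/\sqrt{2}$ produces $xy\ne 0$ as long as $z^2+w^2>0$.

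So it suffices to show that the origin is a non-isolated point of $\{g=0\}\subset\bR^2$ and that only finitely many nearby zeros of $g$ satisfy $zw=0$. The non-isolation is handled exactly as in Lemma \ref{calc_lemma_1_on_non_isolated_point}: writing $z^2-w^2=(z-w)(z+w)$, the function $g$, viewed in the coordinates $u:=z-w$, $v:=z+w$, has nonvanishing mixed partial derivative at the origin, so by the Morse Lemma (\cite{P}) there is a smooth local coordinate change $(u,v)\mapsto (\tilde u,\tilde v)$ fixing the origin with $g = \tilde u\tilde v$. Hence $\{g=0\}$ is locally the union of two smooth transversal curves through the origin, and in particular the origin is an accumulation point of $\{g=0\}$.

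Next, I rule out the exceptional points where $zw=0$: if $z=0$ then $g(0,w)=-w^2-w^8=0$ forces $w=0$; if $w=0$ then $g(z,0)=z^2-z^8=0$ forces $z\in\{0,\pm 1\}$. So in any small enough ball about the origin in the $(z,w)$-plane, only the origin itself satisfies both $g=0$ and $zw=0$. Combining this with the non-isolation above yields points $(z,w)$ arbitrarily close to $\vec 0$ with $g(z,w)=0$ and $zw\ne 0$; taking $x=y=(z^2+w^2)/\sqrt{2}$ furnishes the desired $(x,y,z,w)\in E\cap B(\epsilon)$ with $xyzw\ne 0$.

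The only potentially delicate step is verifying that the Morse-type decomposition survives the coupling with $x^2+y^2=(z^2+w^2)^2$, but this coupling is harmless: for every $(z,w)$ in the zero set of $g$ the fiber over $(z,w)$ in $E$ is a full circle in the $xy$-plane, so the $xy$-components can be freely chosen to avoid $x=0$ or $y=0$. Thus no obstacle beyond the routine Morse-Lemma application arises.
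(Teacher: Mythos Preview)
Your proof is correct and follows essentially the same approach as the paper's: both substitute $x^2+y^2=(z^2+w^2)^2$ into the second equation to reduce to the planar curve $g(z,w)=z^2-w^2-(z^2+w^2)^4=0$, apply the Morse Lemma in the coordinates $(u,v)=(z-w,z+w)$ to show the origin is non-isolated, rule out $zw=0$ by the same case analysis, and then take $x=y=(z^2+w^2)/\sqrt{2}$.
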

\begin{proof}We can rewrite $$E=\{(x,y,z,w)\in\bR^4|x^2+y^2=(z^2+w^2)^2;z^2-w^{2}-(z^2+w^2)^{4}=0\}.$$
The second condition may be written as $g(z,w):=(z-w)(z+w)-(z^2+w^2)^4=0$,
so in the coordinate system $(u,v):=(z-w,z+w)$ we have $\frac{\partial^2g}{\partial
u \partial v}(\vec 0)\neq 0$. By the Morse Lemma (see \cite{P}) there exists a continuous coordinate change around the origin in the $u-v$ plane, which is the $z-w$
plane, $(u,v)\mapsto(\tilde u,\tilde v)$ such that $g(\tilde u,\tilde v)=\tilde
u\tilde v$. So we can rewrite $$E=\{(x,y,\tilde z(\tilde u,\tilde v),\tilde w(\tilde u,\tilde v))\in\bR^4|\tilde
u\tilde v=0,x^2+y^2=(z(\tilde u,\tilde v)^2+w(\tilde u,\tilde v)^2)^{2}\},$$
and so the origin is not an isolated point of $E$, e.g., for any $\epsilon>0$
$$\gamma(\epsilon):=\big(\frac{z(0,\epsilon)^2+w(0,\epsilon)^2}{\sqrt{2}},\frac{z(0,\epsilon)^2+w(0,\epsilon)^2}{\sqrt{2}},z(0,\epsilon),w(0,\epsilon)\big)\in
E.$$Note that for any $\epsilon>0$ the point $\gamma(\epsilon)$ has coordinates $x$ and $y$ non-zero, so it is enough to show no point $\vec 0\neq(x,y,z,w)\in E\cap B(\frac{1}{2})$ satisfies $zw=0$. Indeed, if $z=0$ then
$(x,y,z,w)\in E$ implies  $-w^2-(x^{2}+y^2)^{2}=0$ and so $x=y=w=0$. Similarly, if $w=0$
then $(x,y,z,w)\in E$ implies  $z^2-z^8=0$ and so $z\in\{0,\pm1\}$. However, $z\notin\{\pm1\}$ as then $(x,y,z,w)\notin B(\frac{1}{2})$. So $z=0$ which we just saw implies that $x=y=w=0$.\end{proof}

\begin{lemma}\label{calc_lemma_5_on_non_isolated_point}Let $E\subset\bR^4$
be given by
$$E:=\{(x,y,z,w)\in\bR^4|x^2-y^2-(z^2+w^{2})^{2}=z^{2}-(x^{2}+y^2+w^{2})^{2}=0\}.$$
Then, for any $\epsilon>0$ there exists $(x,y,z,w)\in E\cap B(\epsilon)$
such
that $xyzw\neq
0$.
\end{lemma}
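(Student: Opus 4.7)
The plan is to exhibit an explicit one-parameter family $\gamma(\delta)\in E$, indexed by small $\delta>0$, all four of whose coordinates are nonzero and whose norm tends to zero with $\delta$. The key idea is to impose the symmetry $z=w$, which collapses the two defining equations of $E$ into a tractable linear system in $x^{2}$ and $y^{2}$.

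Setting $z=w=\delta$ turns the first defining equation into $x^{2}-y^{2}=(2\delta^{2})^{2}=4\delta^{4}$, and the second into $\delta^{2}=(x^{2}+y^{2}+\delta^{2})^{2}$, whose positive square root rearranges to $x^{2}+y^{2}=\delta-\delta^{2}$. Solving this $2\times 2$ linear system in $x^{2}$ and $y^{2}$ gives
\begin{equation*}
x^{2}=\tfrac{1}{2}\bigl(\delta-\delta^{2}+4\delta^{4}\bigr),\qquad y^{2}=\tfrac{1}{2}\bigl(\delta-\delta^{2}-4\delta^{4}\bigr).
\end{equation*}
Both expressions are strictly positive for all sufficiently small $\delta>0$, since $\delta-\delta^{2}$ dominates $4\delta^{4}$ in that regime. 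Taking positive square roots, I would then set $\gamma(\delta):=\bigl(x(\delta),y(\delta),\delta,\delta\bigr)$ and verify directly from the two formulas above that $\gamma(\delta)\in E$ and $x(\delta)y(\delta)z(\delta)w(\delta)>0$. For the size control, a direct computation gives $|\gamma(\delta)|^{2}=(x^{2}+y^{2})+2\delta^{2}=\delta+\delta^{2}$, so $\gamma(\delta)\in B(\epsilon)$ as soon as $\delta<\epsilon^{2}/2$.

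There is no genuine obstacle once the right ansatz is chosen, so in effect the only non-routine step is identifying the symmetry. More naive approaches, such as polar coordinates in the $(x,y)$-plane together with a power scaling $w=\delta^{a}$, do also work but require a delicate balance of exponents between the two fourth-order obstructions $(z^{2}+w^{2})^{2}$ and $(x^{2}+y^{2}+w^{2})^{2}$, and an invocation of the intermediate value theorem to produce the angular coordinate. The symmetric choice $z=w$ is cleaner because it forces those two obstructions to become squares of expressions that are manifestly comparable to the quadratic data on the left-hand sides, reducing the whole problem to elementary arithmetic.
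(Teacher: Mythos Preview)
Your proof is correct, and it takes a genuinely different route from the paper's. The paper imposes the symmetry $x=\sqrt{2}\,y$, which reduces the system to $z^{2}=(3y^{2}+w^{2})^{2}$ together with $y^{2}-\bigl((3y^{2}+w^{2})^{2}+w^{2}\bigr)^{2}=0$; it then substitutes $Y=y^{2}$, $W=w^{2}$ and invokes the implicit function theorem on $Y-((3Y+W)^{2}+W)^{2}=0$ to produce small positive $(y',w')$, setting $x'=\sqrt{2}\,y'$ and $z'=3y'^{2}+w'^{2}$. Your choice $z=w=\delta$ instead collapses the two equations into a \emph{linear} system in $x^{2}$ and $y^{2}$, yielding the fully explicit point $\gamma(\delta)$ with no appeal to the implicit function theorem. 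This is strictly more elementary and gives sharper quantitative control (you know $|\gamma(\delta)|^{2}=\delta+\delta^{2}$ exactly), at the modest cost of having to spot the right symmetry. Both approaches exploit a one-parameter reduction, but yours lands on the one that makes the residual problem algebraically trivial.
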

\begin{proof}Note that when $x=\sqrt{2}y$ we get that $(x,y,z,w)\in E$ is
equivalent to the conditions $$z^2=(3y^2+w^2)^2\text{ and }y^2-((3y^2+w^2)^2+w^2)^2=0.$$
The second condition is $$g(y,w):=y^2-((3y^2+w^2)^2+w^2)^2=0.$$
By the implicit function theorem the set $\gamma:=\{Y-((3Y+W)^2+W)^2=0\}$ is an analytic
curve around the origin, and $Y$ is an analytic function of $W$ in an neighborhood
of the origin. Moreover, $Y$ is always positive in such a punctured neighborhood, so
there exists arbitrary small $Y$ and $W$ both positive such that $(Y,W)\in \gamma$. This implies that there exists arbitrary small positive $y'$ and
$w'$ such that $g(y',w')=0$. Finally, set $x'=\sqrt{2}y'$ and $z'=3y'^{2}+w'^2$
to get $(x',y',z',w')\in E$ arbitrary close to the origin with $x'y'z'w'\neq
0$.\end{proof} 

\begin{lemma}\label{calc_lemma_6_on_non_isolated_point}Let $E\subset\bR^4$
be given by
$$E:=\{(x,y,z,w)\in\bR^4|x^2-(y^{2}+z^2+w^{2})^{2}=y^{2}-(x^{2}+z^2+w^{2})^{2}=0\}.$$
Then, for any $\epsilon>0$ there exists $(x,y,z,w)\in E\cap B(\epsilon)$
such
that $xyzw\neq
0$. 
\end{lemma}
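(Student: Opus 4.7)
My plan is to exploit the symmetry of the two defining equations under the swap $x \leftrightarrow y$ and look for solutions along the diagonal $x = y$. Once we impose $x = y$, the two equations collapse to a single one and we can solve it explicitly.

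More precisely, setting $x = y = t$ for some small $t > 0$ that we will choose, both defining equations become the single requirement $t = t^2 + z^2 + w^2$ (after taking positive square roots, which is allowed when both sides of each equation are positive). Equivalently, $z^2 + w^2 = t(1-t)$. For $0 < t < 1$ the right-hand side is strictly positive, so there are infinitely many choices of $(z,w)$ with $z, w \neq 0$ solving this. The simplest is $z = w = \sqrt{t(1-t)/2}$, which is well defined and strictly positive for $0 < t < 1$.

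Concretely, I will show that for every $0 < t < 1$ the point
\[
(x,y,z,w) \;=\; \Bigl(t,\;t,\;\sqrt{t(1-t)/2},\;\sqrt{t(1-t)/2}\Bigr)
\]
lies in $E$ and has $xyzw \neq 0$. A direct substitution gives $y^2 + z^2 + w^2 = t^2 + t(1-t) = t = x$, so $(y^2+z^2+w^2)^2 = t^2 = x^2$; the other equation follows by the $x \leftrightarrow y$ symmetry. Since $\abs{(x,y,z,w)} = \sqrt{2t^2 + t(1-t)} = \sqrt{t^2 + t} \to 0$ as $t \to 0^+$, the family meets every ball $B(\epsilon)$ about the origin, which is exactly the statement of the lemma.

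I do not anticipate any real obstacle here; unlike the preceding Lemmas \ref{calc_lemma_1_on_non_isolated_point}, \ref{calc_lemma_4_on_non_isolated_point} and \ref{calc_lemma_5_on_non_isolated_point}, which needed either the Morse lemma or the implicit function theorem because one of the defining equations was only tangentially cut out, in this case both equations become solvable explicitly on the diagonal $x = y$, so no analytic machinery is required.
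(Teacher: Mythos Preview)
Your proof is correct and arrives at exactly the same explicit point as the paper: the paper reduces to Lemma~\ref{calc_lemma_2_on_non_isolated_point} by writing $t^2=z^2+w^2$ and then splitting $t$ as $z=w=t/\sqrt{2}$, which (since Lemma~\ref{calc_lemma_2_on_non_isolated_point} produces $(\epsilon,\epsilon,\sqrt{\epsilon-\epsilon^2})$) yields precisely your point $(t,t,\sqrt{t(1-t)/2},\sqrt{t(1-t)/2})$. The only difference is that you carry out the computation directly rather than invoking the earlier lemma.
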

\begin{proof}Set $t^2=z^2+w^2$ we may rewrite $E$ as $$E=\{(x,y,z,w)\in\bR^4|t^2=z^2+w^2;x^2-(y^{2}+t^{2})^{2}=y^{2}-(x^{2}+t^{2})^{2}=0\},$$and
the Lemma now easily follows from Lemma \ref{calc_lemma_2_on_non_isolated_point}:
for any $(x,y,t)\in\bR^3$ such that $x^2-(y^{2}+t^{2})^{2}=y^{2}-(x^{2}+t^{2})^{2}=0$
and $xyt\neq0$, we have $\big(x,y,\frac{t}{\sqrt{2}},\frac{t}{\sqrt{2}}\big)\in E$.\end{proof}

\begin{lemma}\label{calc_lemma_7_on_non_isolated_point}Let $E\subset\bR^4$
be given by
$$E:=\{(x,y,z,w)\in\bR^4|x^2+y^{2}-z^{2}-w^{4}=w^{2}-(x^{2}+y^2+z^{2})^{2}=0\}.$$
Then, for any $\epsilon>0$ there exists $(x,y,z,w)\in E\cap B(\epsilon)$
such
that $xyzw\neq
0$. 
\end{lemma}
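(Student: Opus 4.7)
The plan is to eliminate one of the four variables and reduce the two defining equations of $E$ to a one-parameter family of solutions. Observe that the second equation $w^2=(x^2+y^2+z^2)^2$ forces $w=\pm(x^2+y^2+z^2)$, so $w$ is determined up to sign by $x,y,z$. Setting $w=x^2+y^2+z^2$ and substituting into the first equation yields
$$x^2+y^2-z^2=(x^2+y^2+z^2)^4.$$
It is then convenient to introduce the auxiliary quantity $t:=x^2+y^2\geq 0$ and note that the substitution collapses the first equation into the single scalar relation $t-z^2=(t+z^2)^4$ between $t\geq 0$ and $z^2\geq 0$.

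Next I would solve this scalar relation by a direct change of variables: set $u:=t+z^2$, so that the relation becomes $t-z^2=u^4$. Combined with $t+z^2=u$, this yields the explicit formulas $t=(u+u^4)/2$ and $z^2=(u-u^4)/2$, both of which are strictly positive for every $u\in(0,1)$. Thus for any such $u$ I obtain a genuine point of $E$ by setting $x=y=\sqrt{t/2}$, $z=\sqrt{(u-u^4)/2}$, and $w=u$. A brief check confirms both defining equations of $E$, and all four coordinates are nonzero. Since the entire construction depends analytically on $u$ and collapses to the origin as $u\to 0^{+}$, the resulting point lies in $E\cap B(\epsilon)$ once $u$ is chosen sufficiently small.

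I do not anticipate any serious obstacle here: the only delicate point is verifying that both $t$ and $z^2$ remain strictly positive simultaneously, which is automatic from the inequality $u^4<u$ on $(0,1)$. All other considerations are routine substitution. This mirrors the technique used in Lemmas \ref{calc_lemma_3_on_non_isolated_point}-\ref{calc_lemma_6_on_non_isolated_point}, the simplification being that here the elimination of $w$ is immediate (no need to invoke the Morse Lemma or the implicit function theorem), so the argument reduces to pure algebra.
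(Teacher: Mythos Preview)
Your proof is correct and follows essentially the same approach as the paper: both eliminate $w$ via $w=x^2+y^2+z^2$, impose a symmetry to reduce to a single scalar relation, and produce an explicit one-parameter family of points. The only cosmetic difference is that the paper sets $y=z$ and parametrizes by $x$ (obtaining $y=z=\sqrt{(\sqrt{x}-x^2)/2}$ and $w=\sqrt{x}$), whereas you set $x=y$ and parametrize by $u=w$; your introduction of $t$ and $u$ makes the algebra slightly more transparent, but the substance is identical.
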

\begin{proof}We may rewrite $$E:=\{(x,y,z,w)\in\bR^4|x^2+y^{2}-z^{2}-(x^{2}+y^2+z^{2})^{4}=0;w^{2}=(x^{2}+y^2+z^{2})^{2}\},$$so
in particular in the case of $y=z>0$ we get that if $w=(x^2+2y^2)$ and
$x^2=(x^2+2y^2)^4$ then $(x,y,z,w)\in E$. We note that last equation always
holds if $2y^2=\sqrt[4]{x^2}-x^2$, or $y=\sqrt{\frac{\sqrt[4]{x^2}-x^2}{2}}$.
We conclude that for any $0<\epsilon<1$ $$\big(\epsilon,\sqrt{\frac{\sqrt[4]{\epsilon^2}-\epsilon^2}{2}},\sqrt{\frac{\sqrt[4]{\epsilon^2}-\epsilon^2}{2}},\sqrt[2]{\epsilon}\big)\in
E,$$which proves the Lemma. \end{proof}

\begin{lemma}\label{calc_lemma_8_on_non_isolated_point}Let $E\subset\bR^4$
be given by
$$E:=\{(x,y,z,w)\in\bR^4|x^2-y^{2}-(z^2+w^{2})^{2}=z^{2}-w^{2}-(x^{2}+y^{2})^{2}=0\}.$$
Then, for any $\epsilon>0$ there exists $(x,y,z,w)\in E\cap B(\epsilon)$
such
that $xyzw\neq
0$.  
\end{lemma}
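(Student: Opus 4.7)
The plan is to construct an explicit one-parameter family in $E$ accumulating at the origin with $xyzw > 0$, in the spirit of the substitution used in Lemma~\ref{calc_lemma_5_on_non_isolated_point}. First I would impose the ansatz $x = \sqrt{2}\,y$, which reduces the first defining equation $x^2 - y^2 = (z^2+w^2)^2$ to $y^2 = (z^2+w^2)^2$. Taking the positive root $y = z^2 + w^2 > 0$, I get $x^2 + y^2 = 3y^2 = 3(z^2+w^2)^2$, so the second defining equation $z^2 - w^2 = (x^2+y^2)^2$ becomes the single identity in two unknowns
\[
z^2 - w^2 = 9(z^2+w^2)^4.
\]

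Now I would parametrize by $s := z^2 + w^2 > 0$. Together with the reduced equation this gives a linear system in $(z^2, w^2)$, namely $z^2 + w^2 = s$ and $z^2 - w^2 = 9 s^4$, with the unique solution $z^2 = s/2 + \tfrac{9}{2} s^4$ and $w^2 = s/2 - \tfrac{9}{2} s^4$. Both are strictly positive for all sufficiently small $s > 0$ (concretely, whenever $s^3 < 1/9$), so setting
\[
(x,y,z,w) \;=\; \Bigl(\sqrt{2}\,s,\; s,\; \sqrt{s/2 + \tfrac{9}{2} s^4},\; \sqrt{s/2 - \tfrac{9}{2} s^4}\Bigr)
\]
produces a one-parameter family of points in $E$ with all four coordinates strictly positive, tending to the origin as $s \to 0^+$. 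A direct substitution into the two defining equations confirms membership in $E$.

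Unlike the analogous argument in Lemma~\ref{calc_lemma_5_on_non_isolated_point}, no implicit function theorem is needed here, because the symmetry between the two constraints allows the reduced equation to be solved explicitly after the $x = \sqrt{2}\,y$ substitution. I do not anticipate a serious obstacle; the only minor care is to verify that the ansatz $x = \sqrt{2}\,y$ is genuinely consistent with both equations (as opposed to forcing a contradiction with the inequality $z^2 > w^2$ required for the second equation to have a positive right-hand side), which is built into the choice of signs in the square roots.
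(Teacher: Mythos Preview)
Your proof is correct, and it takes a genuinely different route from the paper. The paper exploits the symmetry of the two defining equations by imposing the diagonal ansatz $(x,y)=(z,w)$, which collapses both constraints to the single equation $g(x,y):=x^2-y^2-(x^2+y^2)^2=0$; it then invokes the Morse Lemma to straighten $g$ near the origin and argues separately that no nearby solution on this diagonal can have a zero coordinate. Your approach instead borrows the $x=\sqrt{2}\,y$ substitution from Lemma~\ref{calc_lemma_5_on_non_isolated_point}, which decouples the system enough to solve it in closed form: you obtain $z^2$ and $w^2$ explicitly as functions of the single parameter $s=z^2+w^2$, with positivity for small $s$ immediate.

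The advantage of your argument is that it is entirely elementary---no Morse Lemma, no implicit function theorem, no separate check that the coordinates are nonzero (positivity of all four is manifest from the formulas). The paper's diagonal ansatz is perhaps more conceptually natural given the visible $(x,y)\leftrightarrow(z,w)$ symmetry, but it pays for that naturalness with heavier machinery. Your closing remark that the symmetry ``allows the reduced equation to be solved explicitly'' is exactly right and is what makes this case cleaner than Lemma~\ref{calc_lemma_5_on_non_isolated_point}.
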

\begin{proof}Note that when $(x,y)=(z,w)$ we get that $(x,y,z,w)\in E$ is
equivalent to the condition $$g(x,y):=x^2-y^2-(x^2+y^2)^2=0.$$ So in the
coordinate system $(u,v):=(x-y,x+y)$ we have $\frac{\partial^2g}{\partial
u \partial v}(\vec 0)\neq 0$. By the Morse Lemma (see \cite{P}) there exists a continuous coordinate change around the origin in the $u-v$ plane, which is the $x-y$
plane, $(u,v)\mapsto(\tilde u,\tilde v)$ such that $g(\tilde u,\tilde v)=\tilde
u\tilde v$. Writing $x(\tilde u,\tilde v),y(\tilde u,\tilde v)$ we get that $g(x(\tilde u,0),y(\tilde u,0))=0$ for any $\tilde u$. Also note that $(x(\tilde u,0),y(\tilde u,0)\neq(0,0)$, so for any $\epsilon>0$ there exists  $\vec0\neq(x,y,z,w)=(x(\tilde u,0),y(\tilde u,0),x(\tilde u,0),y(\tilde u,0))\in E\cap B(\epsilon)$. It is now enough to show that no point $\vec 0\neq(x,y,z,w)\in
E\cap\{x=z\}\cap\{y=w\}\cap B(\frac{1}{2})$ satisfies $xy=0$. Indeed, if $y=w=0$ then $(x,y,z,w)\in
E$ implies $x^2-x^{4}=0$ and so $x\in\{0,\pm1\}$, which is impossible.  If $x=z=0$ then $(x,y,z,w)\in
E$ implies $-y^2-y^{4}=0$ and so $y=w=0$, which is impossible.\end{proof}

\begin{lemma}\label{calc_lemma_9_on_non_isolated_point}Let $E\subset\bR^4$
be given by
$$E:=\{(x,y,z,w)\in\bR^4|x^2-y^{2}-z^{2}-w^{4}=w^{2}-(x^{2}+y^2+z^{2})^{2}=0\}.$$
Then, for any $\epsilon>0$ there exists $(x,y,z,w)\in E\cap B(\epsilon)$
such
that $xyzw\neq
0$. 
\end{lemma}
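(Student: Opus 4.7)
The plan is to restrict the system to the diagonal slice $y = z$ and explicitly parametrize the remaining curve by $w$, exactly as in the proof of Lemma \ref{calc_lemma_7_on_non_isolated_point}. On the slice $\{y = z\}$, the two defining equations of $E$ become
\begin{gather*}
x^2 - 2y^2 - w^4 = 0, \qquad w^2 - (x^2 + 2y^2)^2 = 0.
\end{gather*}
Selecting the positive branch $w = x^2 + 2y^2$ of the second equation reduces the system to the linear-in-$(x^2, y^2)$ pair
\begin{gather*}
x^2 + 2y^2 = w, \qquad x^2 - 2y^2 = w^4.
\end{gather*}

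Next I would solve this $2\times 2$ system: adding and subtracting gives $x^2 = (w + w^4)/2$ and $4y^2 = w - w^4$. For any $0 < w < 1$, both right-hand sides are strictly positive, so one can set $x := \sqrt{(w+w^4)/2}$ and $y := z := \sqrt{(w-w^4)/4}$, obtaining a genuine point $(x, y, z, w) \in E$ with all four coordinates strictly positive, hence $xyzw \neq 0$. It remains only to verify that such a point lies in $B(\epsilon)$ for $w$ small enough. Since $x, y, z \to 0$ like $\sqrt{w}$ as $w \to 0^+$, we have $|(x,y,z,w)| \to 0$ as $w \to 0^+$, so for any given $\epsilon > 0$ we can pick $w \in (0,1)$ small enough that $(x,y,z,w) \in B(\epsilon)$.

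There is no real obstacle here; the only minor point is choosing the right slice so that the constraints decouple into a solvable system. The diagonal $y = z$ works because it collapses the quartic term $(y^2+z^2+w^4)$ on the right-hand side of the first equation into an expression depending on a single new variable $y^2$, and then the positive branch of $w$ in the second equation turns the nonlinear system into an elementary linear one in $(x^2, y^2)$ parametrized by $w$. This is the same strategy already used successfully in Lemmas \ref{calc_lemma_3_on_non_isolated_point}, \ref{calc_lemma_6_on_non_isolated_point} and \ref{calc_lemma_7_on_non_isolated_point}.
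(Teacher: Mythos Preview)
Your proof is correct. Both you and the paper begin by restricting to the diagonal $y=z$, but you then diverge. The paper substitutes $w^2=(x^2+2y^2)^2$ into the first equation to obtain $g(x,y):=x^2-2y^2-(x^2+2y^2)^4=0$, observes that $g$ has a nondegenerate critical point at the origin, and invokes the Morse Lemma to conclude that $\{g=0\}$ is a pair of curves through the origin; it must then separately argue that these curves avoid the coordinate hyperplanes near $\vec 0$. Your approach instead parametrizes by $w$: choosing the branch $x^2+2y^2=w$ linearizes the system in $(x^2,y^2)$ and yields the explicit point $\big(\sqrt{(w+w^4)/2},\,\sqrt{(w-w^4)/4},\,\sqrt{(w-w^4)/4},\,w\big)\in E$ for $0<w<1$, with all coordinates manifestly nonzero. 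This is more elementary and self-contained---it avoids the Morse Lemma entirely and makes the nonvanishing of $xyzw$ immediate---and is indeed the same style of argument the paper itself uses for Lemma~\ref{calc_lemma_7_on_non_isolated_point}.
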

\begin{proof}We may rewrite $$E:=\{(x,y,z,w)\in\bR^4|x^2-y^{2}-z^{2}-(x^{2}+y^2+z^{2})^{4}=0;w^{2}=(x^{2}+y^2+z^{2})^{2}\},$$so
in particular in the case of $y=z$ we get that if $w^2=(x^2+2y^2)^{2}$
and also
$x^2-2y^2=(x^2+2y^2)^4$ then $(x,y,z,w)\in E$. We can rewrite the last equation
as $$g(x,y)=x^2-2y^2-(x^2+2y^2)^4=0,$$ so in the
coordinate system $(u,v):=(x-\sqrt{2}y,x+\sqrt{2}y)$ we have $\frac{\partial^2g}{\partial
u \partial v}(\vec 0)\neq 0$. By the Morse Lemma (see \cite{P}) there exists a continuous coordinate change around the origin in the $u-v$ plane, which is the $x-y$
plane, $(u,v)\mapsto(\tilde u,\tilde v)$ such that $g(\tilde u,\tilde v)=\tilde
u\tilde v$. In particular
we have $$\tilde E:=\{(x(\tilde u,\tilde
v),y(\tilde u,\tilde v),y(\tilde u,\tilde v),x(\tilde u,\tilde
v)^2+2y(\tilde u,\tilde v)^2)\in\bR^4|\tilde
u\tilde v=0\}\subset E,$$
So the origin is not an isolated point of $E\cap\{y=z\}$. It
is now enough to show no point $\vec 0\neq(x,y,z,w)\in
E\cap\{y=z\}\cap B(\frac{1}{2})$ satisfies $xyw=0$. Indeed, if $y=z=0$ then $(x,y,z,w)\in
E$ implies $g(x,0)=x^2-x^8=0$, so $x=0$. We then also have $w^2=0$.  Similarly, if $x=0$ then $(x,y,z,w)\in
E$ implies $g(0,y)=-2y^2-(2y^2)^4=0$ and so $y=z=0$ which we already saw implies $z=w=0$. Finally, if $w=0$ then $(x,y,z,w)\in
E$ implies $(x^2+y^2+z^2)^2=0$ and so $x=y=z=0$.\end{proof}

We are now ready to state and prove Proposition \ref{prop_ideals_in_P2Rn_generated_by_2_foreign_jets}. 

\begin{proposition}\label{prop_ideals_in_P2Rn_generated_by_2_foreign_jets}Fix $n\in\mathbb{N}$
and let $I\lhd\mathcal{P}_0^2(\mathbb{R}^n)$ be an ideal such that there exist a partition $n=n_1+n_2$
and two homogenous polynomials of degree 2, $p_1\in\bR[x_1,\dots,x_{n_1}]$
and $p_2\in\bR[x_{n_1+1},\dots,x_{n}]$, such that $I=\langle p_1,p_2\rangle_2$. Then, the following are equivalent:
\begin{itemize}
\item $I$ is closed.
\item There exists a closed set $\vec 0 \in E\subset\mathbb{R}^n$ such that
$I=I^2(E)$.
\item There exists a closed semi-algebraic set $\vec 0 \in E\subset\mathbb{R}^n$
such that $I=I^2(E)$.
\item $Allow(I)\neq\emptyset$.
\end{itemize}\end{proposition}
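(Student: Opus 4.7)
The three implications $(iii)\Rightarrow(ii)$, $(ii)\Rightarrow(i)$ and $(i)\Rightarrow(iv)$ are routine: the first is trivial; the second is Theorem \ref{main_theorem_on_necessary_condition_SECOND_PAPER}; for the third, every element of $\langle p_1,p_2\rangle_2$ is an $\mathbb{R}$-linear combination of $p_1$ and $p_2$ (since the $p_i$ are homogeneous of degree $2$, any product in $\mathcal{P}_0^2(\mathbb{R}^n)$ with a jet of positive order of vanishing truncates to zero), so $I\neq \mathcal{P}_0^2(\mathbb{R}^n)$, and Corollary \ref{cor_no_allowed_directions_SECOND_PAPER} forces $Allow(I)\neq\emptyset$ whenever $I$ is closed.

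The content lies in $(iv)\Rightarrow(iii)$. Assume $Allow(I)\neq\emptyset$. Performing independent linear (hence semi-algebraic) changes of coordinates within $(x_1,\dots,x_{n_1})$ and within $(x_{n_1+1},\dots,x_n)$, we may simultaneously diagonalize $p_1$ and $p_2$, obtaining partitions $\{1,\dots,n_1\}=A_+\sqcup A_-\sqcup A_0$ and $\{1,\dots,n_2\}=B_+\sqcup B_-\sqcup B_0$ so that, after renaming the second group as $(y_1,\dots,y_{n_2})$,
\[
p_1=\sum_{i\in A_+}x_i^2-\sum_{i\in A_-}x_i^2,\qquad p_2=\sum_{j\in B_+}y_j^2-\sum_{j\in B_-}y_j^2.
\]
By Corollary \ref{old_new_cor_on_how_to_calc_allow_with_inclusion_SECOND_PAPER}, $Allow(I)\neq\emptyset$ amounts to the existence of $(\omega_1,\omega_2)\in S^{n-1}$ with $p_1(\omega_1)=p_2(\omega_2)=0$; since either $\omega_i$ may be zero this is equivalent to saying that at least one of $p_1,p_2$ is non-definite. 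If $p_1\equiv 0$ or $p_2\equiv 0$ the ideal is principal and we apply Proposition \ref{prop_principal_ideals_in_P2Rn}; otherwise, in analogy with the proof of Proposition \ref{prop_principal_ideals_in_P2Rn}, the plan is to set
\[
E:=\{q_1=q_2=0\}\subset\mathbb{R}^n,\qquad q_1:=p_1-R_1^2,\qquad q_2:=p_2-R_2^2,
\]
where $R_1,R_2$ are sums of squares of coordinates chosen so that $J^2(q_i)=p_i$ and $E$ contains points arbitrarily close to the origin with any prescribed pair of coordinates both nonzero. The natural recipe is to include the $A_0$-coordinates in $R_1$ supplemented when necessary by $\vec y$-coordinates (in particular when $p_1$ is semidefinite), and symmetrically for $R_2$; the required non-isolation statements in each admissible configuration are precisely what Lemmas \ref{calc_lemma_1_on_non_isolated_point}--\ref{calc_lemma_9_on_non_isolated_point} supply, and they are each enabled by $Allow(I)\neq\emptyset$.

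Granted such an $E$, the inclusion $\langle p_1,p_2\rangle_2\subset I^2(E)$ is immediate since $q_1,q_2\in C^{\infty}(\mathbb{R}^n)$ vanish on $E$ and have $2$-jets $p_1,p_2$. For the reverse inclusion, let $f\in C^2(\mathbb{R}^n)$ vanish on $E$. The set $E$ is invariant under each reflection $x_i\mapsto -x_i$, $y_j\mapsto -y_j$ (its defining polynomials are even in each coordinate), so combining the density statements from the cited lemmas with Rolle's theorem and Lemma \ref{lemma_vanishes_on_corners_of_rectangles} yields $\partial_k f(\vec 0)=0$ and $\partial_{k,l}^2 f(\vec 0)=0$ for all distinct coordinates $k,l$. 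The restriction maps $T^{J}$ of Notation \ref{notation_restrictions_of_sets_and_functions}, applied to subspaces involving the $A_0\cup B_0$-coordinates together with Example \ref{non-real-radical-example}, force $\partial_j^2 f(\vec 0)=0$ for $j\in A_0\cup B_0$; applied separately to $A_+$, $A_-$, $B_+$, $B_-$ together with the representation-theoretic Lemma \ref{rep_theory_lemma} (exactly as in the proof of Proposition \ref{prop_principal_ideals_in_P2Rn}), they collapse each block to a single scalar, yielding $J^2(f)=c_1 p_1+c_2 p_2\in\langle p_1,p_2\rangle_2$. The potentially extra linear relation that would make some $T^{J}(I^2(E))$ two-dimensional is ruled out just as in Proposition \ref{prop_principal_ideals_in_P2Rn}, by the non-isolation of the origin in an appropriate restriction of $E$ -- again furnished by Lemmas \ref{calc_lemma_1_on_non_isolated_point}--\ref{calc_lemma_9_on_non_isolated_point}.

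The main obstacle is the combinatorial bookkeeping: the nine Lemmas \ref{calc_lemma_1_on_non_isolated_point}--\ref{calc_lemma_9_on_non_isolated_point} correspond to the essentially distinct configurations of signatures $(|A_+|,|A_-|,|A_0|,|B_+|,|B_-|,|B_0|)$ that can arise under $Allow(I)\neq\emptyset$ with both $p_1,p_2$ nonzero, and each requires a tailored choice of $R_1,R_2$ together with a verification that the corresponding $E$ is rich enough at the origin to execute the Rolle-type arguments and the restriction-plus-symmetrization argument above.
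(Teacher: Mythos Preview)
Your overall architecture matches the paper's proof: reduce to $(iv)\Rightarrow(iii)$, diagonalize the two forms, set $E=\{q_1=q_2=0\}$ with $q_\alpha=p_\alpha-R_\alpha^2$, then combine reflection symmetry with Rolle's theorem, Lemma~\ref{lemma_vanishes_on_corners_of_rectangles}, and Lemma~\ref{rep_theory_lemma} to identify $I^2(E)$.

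However, you misdescribe both the construction of $R_1,R_2$ and the role of Lemmas~\ref{calc_lemma_1_on_non_isolated_point}--\ref{calc_lemma_9_on_non_isolated_point}. The paper makes a \emph{single, uniform} choice: $R_\alpha$ is the sum of squares of \emph{all} coordinates outside $J_+^\alpha\cup J_-^\alpha$ (so $R_1$ always contains every $\vec y$-coordinate together with the kernel coordinates, not ``supplemented when necessary''). The nine lemmas are \emph{not} indexed by signature configurations and do \emph{not} each demand a tailored $E$; rather, for this one fixed $E$, each lemma handles a different \emph{type of index subset} $J\subset\{1,\dots,n\}$ according to which of $J_+^1,J_-^1,J_+^2,J_-^2,J_0$ the indices of $J$ lie in, and shows that $E^{J}$ contains points near $\vec 0$ with all coordinates nonzero. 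The combinatorial core (Table~\ref{tab:table1}) is to check that \emph{every} unordered pair $i\neq j$ can be completed to a triple or quadruple matching one of the nine cases, thereby producing a point of $E$ near $\vec 0$ with $x_ix_j\neq 0$. Also, the vanishing of $\partial_k^2 f(\vec 0)$ for $k\in J_0$ is not obtained via Example~\ref{non-real-radical-example} in this proof; it comes from a direct asymptotic evaluation of $f^{ijk}$ along the explicit curve $(\epsilon,\epsilon,\sqrt{\epsilon-\epsilon^2})$ in $E^{ijk}$ with $i\in J_+^1$, $j\in J_+^2$, $k\in J_0$.
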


\begin{proof}If $I$ is principal the proposition follows from Proposition \ref{prop_principal_ideals_in_P2Rn} and Remark \ref{remark_on_prop_principal_ideals_in_P2Rn}. So we may assume both $n_1,n_2>0$ and both $p_1$ and $p_2$ are non-zero quadratic forms. By Corollary \ref{cor_no_allowed_directions_SECOND_PAPER} and Theorem \ref{main_theorem_on_necessary_condition_SECOND_PAPER} it is enough to
prove that if $Allow(I)\neq\emptyset$ then there exists a closed semi-algebraic set
$\vec 0 \in E\subset\mathbb{R}^n$
such that $I=I^2(E)$. So we further assume that $Allow(I)\neq\emptyset$ and we will show that there exists a closed semi-algebraic set
$\vec 0 \in E\subset\mathbb{R}^n$
such that $I=I^2(E)$.

By applying a linear coordinate change on the first $n_1$ coordinates and
independently a linear coordinate change on the last $n_2$ coordinates, and
moreover possibly replacing $p_1$ by $-p_1$ and possible replacing $p_2$ by $-p_2$, and finally
reordering the coordinates, we may assume that there exists a disjoint partition
\begin{gather}\label{rand_label_for_foreign_prop1}\{1,2\dots,n\}=J^1_+\cup
J^1_-\cup J^2_+\cup
J^2_-\cup J_0\end{gather}
with
\begin{gather}\label{rand_label_for_foreign_prop2}J^{1}_+,J^{2}_+\neq\emptyset\end{gather}
such that $i<j<k<l<m$ for all $i\in J^1_+,j\in J^1_-,k\in J^2_+,l\in J^2_-,m\in
J_0$ (if $J_-^1=\emptyset$ we omit the index $j$ in the inequality and similarly if either $J_-^2$ or $J_0$ is empty), and \begin{gather}\label{rand_label_for_foreign_prop3}p_{\alpha}(x_1,\dots x_n)=\sum\limits_{i\in
J^{\alpha}_+}x_i^2-\sum\limits_{i\in
J^{\alpha}_-}x_i^2\text{ for all }\alpha\in\{1,2\}.\end{gather} First note that if $J^1_-\cup J^2_- \cup J_0=\emptyset$ then $I$ contains
a positive define quadratic form. In that case $Allow(I)=\emptyset$,
which is a contradiction. So we may assume from now on that
\begin{gather}\label{rand_label_for_foreign_prop4}J^1_-\cup J^2_-
\cup J_0\neq\emptyset.\end{gather}
Define

\begin{gather}\label{rand_label_for_foreign_prop5}q_\alpha(x_{1},\dots x_n):=\sum\limits_{i\in
J^\alpha_+}x_i^2-\sum\limits_{i\in
J^\alpha_-}x_i^2-\big(\sum\limits_{i\in\{1,\dots,n\}\setminus( J^\alpha_+\cup
J^\alpha_-)}x_i^2\big)^2\text{ for all }\alpha\in\{1,2\}.\end{gather}We will show that $I=I^2(\{q_1=q_2=0\})$. 

\

Before we start set $E:=\{q_1=q_2=0\}$. We obviously have

\begin{gather}\label{rand_label_for_foreign_prop17}\sum\limits_{i\in
J^\alpha_+}x_i^2-\sum\limits_{i\in
J^\alpha_-}x_i^2=J^2(q_\alpha)=p_\alpha\in
I^2(E)\text{ for all }\alpha\in\{1,2\}.\end{gather}
Now fix some $f\in C^2(\bR^n)$ such that $f|_E=0$. First let us show that
\begin{gather}\label{rand_label_for_foreign_prop6}\frac{\partial
f}{\partial x_i}(\vec
0)=\frac{\partial f}{\partial x_j}(\vec
0)=\frac{\partial^2f}{\partial x_i\partial
x_j}(\vec 0)=0\text{ for all }1\leq i\neq j\leq n.\end{gather}
To show that
(\ref{rand_label_for_foreign_prop6}) holds, it is enough to show that
\begin{multline}\label{rand_label_for_foreign_prop7}\text{for all } 1\leq
i\neq j\leq n \text{ and any }\epsilon>0\text{ there exits }(x_1,x_2,\dots,x_n)\in
E\cap B(\epsilon)\\\text{ such that }x_i\cdot x_j\neq0.\end{multline}Indeed, if (\ref{rand_label_for_foreign_prop7}) holds then as $E$ is symmetric
with respect to $x_i\mapsto -x_i$ and $x_j\mapsto -x_j$, we can apply Rolle's
Theorem to get $\frac{\partial f}{\partial x_i}(\vec
0)=\frac{\partial f}{\partial x_j}(\vec
0)=0$ and apply Lemma \ref{lemma_vanishes_on_corners_of_rectangles} to get
$\frac{\partial^2f}{\partial x_i\partial
x_j}(\vec 0)=0$, i.e., (\ref{rand_label_for_foreign_prop6})
holds. 

Any row (Case) in Table \ref{tab:table1} below represents a subset of indices
$J\subset\{1,\dots,n\}$. For instance, Case 1 represents a subset $J=\{i,j,k\}$
for some $i\in J_+^\alpha$, $j\in J_-^\alpha$ and $k\in J_+^\beta$ (note
that in Cases 1 and 2 $J$ has cardinality 3, whereas in the remaining cases
$J$ has cardinality 4). Recalling Notation \ref{notation_restrictions_of_sets_and_functions}, the corresponding lemma in the same row shows that
$E^J\subset\bR^{|J|}$ has the following property: for any $\epsilon>0$ there exists $\vec x\in E^J\cap B(\epsilon)$
such
that no coordinate of $\vec x$ is zero. So in order to show that
(\ref{rand_label_for_foreign_prop7}) holds, it is enough to show that any (unordered) pair of indices
 $i\neq j\in\{1,\dots n\}$ can be completed to an (unordered) triple $\{i,j,k\}$
or to an (unordered) quadruple $\{i,j,k,l\}$, such that this triple or quadruple
appears as a subset $J$ in one of the rows of Table \ref{tab:table1}. 

\begin{table}[h!]
  \begin{center}
    \caption{Subsets of indices and corresponding Lemmas.}
    \label{tab:table1}
    \begin{tabular}{c|c|c|c|c|c}

      \hline
      \textbf{Case \#} & \multicolumn{4}{c|}{\textbf{Indices types}} & \textbf{Lemma
\#}\\ 
      \hline
    1   & $J_+^\alpha$ & $J_-^\alpha$ & $J_+^\beta$ &  & \ref{calc_lemma_1_on_non_isolated_point}
\\ \hline
2 & $J_+^\alpha$ & $J_+^\beta$ & $J_0$ & & \ref{calc_lemma_2_on_non_isolated_point}
\\ \hline
3 & $J_+^\alpha$ & $J_+^\alpha$ & $J_+^\beta$ & $J_0$ & \ref{calc_lemma_3_on_non_isolated_point}
\\ \hline
4 & $J_+^\alpha$ & $J_+^\alpha$ & $J_+^\beta$ & $J_-^\beta$ & \ref{calc_lemma_4_on_non_isolated_point}
\\ \hline
5 & $J_+^\alpha$ & $J_-^\alpha$ & $J_+^\beta$ & $J_0$ & \ref{calc_lemma_5_on_non_isolated_point}
\\ \hline
6 & $J_+^\alpha$ & $J_+^\beta$ & $J_0$ & $J_0$ & \ref{calc_lemma_6_on_non_isolated_point}
\\ \hline
7 & $J_+^\alpha$ & $J_+^\alpha$ & $J_+^\beta$ & $J_-^\alpha$ & \ref{calc_lemma_7_on_non_isolated_point}
\\ \hline
8 & $J_+^\alpha$ & $J_-^\alpha$ & $J_+^\beta$ & $J_-^\beta$ & \ref{calc_lemma_8_on_non_isolated_point}
\\ \hline
9 & $J_+^\alpha$ & $J_-^\alpha$ & $J_-^\alpha$ & $J_+^\beta$ & \ref{calc_lemma_9_on_non_isolated_point}
\\ \hline

    \end{tabular}
  \end{center}
\end{table}

Indeed, fix some (unordered) pair of indices  $i\neq j\in\{1,\dots n\}$.
\begin{itemize}
\item If one of these indices lies in $J_0$ then without loss of generality $i\in J_0$ and we have $j\in J_0\cup J_+^\alpha\cup J_-^\alpha$ for some $\alpha\in\{1,2\}$.

\begin{itemize}
\item If $j\in J_0$ then as by (\ref{rand_label_for_foreign_prop2}) $J_+^1\neq\emptyset$ and $J_+^1\neq\emptyset$,
we can find $k\in J_+^1$ and $l\in J_+^2$ and we are in Case 6. 
\item If $j\in J_+^\alpha$ then denote $\{\beta\}=\{1,2\}\setminus\{\alpha\}$.
As $J_+^\beta\neq\emptyset$ we can find $k\in J_+^\beta$ and we are in Case
2.

\item If $j\in J_-^\alpha$ then denote $\{\beta\}=\{1,2\}\setminus\{\alpha\}$.
As both $J_+^1\neq\emptyset$ and $J_+^2\neq\emptyset$
we can find $k\in J_+^\alpha$ and $l\in J_+^\beta$ and we are in Case 5.
\end{itemize}

\item If one of these indices lies in $J_+^\alpha$ for some $\alpha\in\{1,2\}$
then without loss of generality $i\in J_+^\alpha$. We denote $\{\beta\}=\{1,2\}\setminus\{\alpha\}$ and we have $j\in J_0\cup
J_+^\alpha\cup J_-^\alpha\cup J_+^\beta\cup J_-^\beta$. The case $j\in J_0$
was already treated above, so it is left to check the cases in which $j\in 
J_+^\alpha\cup J_-^\alpha\cup J_+^\beta\cup J_-^\beta$.

\begin{itemize}

\item If $j\in J_+^\alpha$ then as $J_+^\beta\neq\emptyset$ we can find $k\in
J_+^\beta$. Since $J_-^\alpha\cup J_-^\beta\cup J_0\neq\emptyset$ by (\ref{rand_label_for_foreign_prop4}), we can
either find $l\in J_-^\alpha$ and we are in Case 7, find  $l\in J_-^\beta$
and we are in Case 4, or find $l\in J_0$ and we are in Case 3.
 
\item If $j\in J_-^\alpha$ then as $J_+^\beta\neq\emptyset$ we can find $k\in
J_+^\beta$ and we are in Case
1.

\item If $j\in J_+^\beta$ then since $J_-^\alpha\cup J_-^\beta\cup J_0\neq\emptyset$
we can
either find $k\in J_-^\alpha\cup J_-^\beta$ and we are in case 1 (possibly
after renaming $\alpha\leftrightarrow\beta$), or find $k\in J_0$ and we are
in Case 2.
\item If $j\in J_-^\beta$ then as  $J_+^\beta\neq\emptyset$ we can find $k\in
J_+^\beta$ and we are (up to renaming $\alpha\leftrightarrow\beta$) in Case
1.
\end{itemize}

\item If one of these indices lies in $J_-^\alpha$ for some $\alpha\in\{1,2\}$
the without loss of generality
 $i\in J_-^\alpha$. We denote $\{\beta\}=\{1,2\}\setminus\{\alpha\}$ and we have $j\in J_0\cup
J_+^\alpha\cup J_-^\alpha\cup J_+^\beta\cup J_-^\beta$. The case $j\in J_0\cup
J_+^\alpha\cup J_+^\beta$
was already treated above (after renaming $i\leftrightarrow j$), so it is left to check the cases in which $j\in 
 J_-^\alpha\cup J_-^\beta$.

\begin{itemize}

\item If $j\in J_-^\alpha$ then as both $J_+^\alpha\neq\emptyset$ and $J_+^\beta\neq\emptyset$
we can find $k\in J_+^\alpha$ and $l\in J_+^\beta$ and we are in Case 9.

\item If $j\in J_-^\beta$ then as both $J_+^\alpha\neq\emptyset$ and $J_+^\beta\neq\emptyset$
we can find $k\in J_+^\alpha$ and $l\in J_+^\beta$ and we are in Case 8.

\end{itemize}

\end{itemize}
 We thus showed that any (unordered) pair of indices
 $i\neq j\in\{1,\dots n\}$ can be completed to an (unordered) triple $\{i,j,k\}$
or to an (unordered) quadruple $\{i,j,k,l\}$, such that this triple or quadruple
appears as a subset $J$ in one of the rows of Table \ref{tab:table1}. So we proved that (\ref{rand_label_for_foreign_prop7}) holds, and so that (\ref{rand_label_for_foreign_prop6}) holds. 

We conclude that 

\begin{gather}\label{rand_label_for_foreign_prop16}I^2(E)\subset\langle x_1^2,x_2^2,\dots,x_n^2
\rangle_{2}\end{gather} and in particular
\begin{gather}\label{rand_label_for_foreign_prop8}T^{J^\alpha_+}(I^2(E))\subset
\langle\{x_i^2\}_{i\in J^\alpha_+}\rangle_2;T^{J^\alpha_-}(I^2(E))\subset
\langle\{x_i^2\}_{i\in J^\alpha_-}\rangle_2\text{ for all }\alpha\in\{1,2\}.\end{gather}
Note that the functions $q_1,q_2\in C^2(\bR^n)$ both vanish on $E$, and so $J^2(q_\alpha)=p_\alpha\in
I^2(E)$ for $\alpha\in\{1,2\}$, and in particular \begin{gather}\label{rand_label_for_foreign_prop9}\langle\sum\limits_{i\in
J^\alpha_+}x_i^2\rangle_2\subset T^{J^\alpha_+}(I^2(E));\langle\sum\limits_{i\in
J^\alpha_-}x_i^2\rangle_2\subset T^{J^\alpha_-}(I^2(E)\text{ for all }\alpha\in\{1,2\}.\end{gather}
By (\ref{rand_label_for_foreign_prop8}), $T^{J^\alpha_+}(I^2(E))$ is a linear
subspace of $\text{span}_\bR\{x_i^2\}_{i\in J^\alpha_+}$, it is closed under all permutations
of coordinates (this follows immediately from the definition of $E$), and
by (\ref{rand_label_for_foreign_prop9}), it contains a non-zero vector $\sum\limits_{i\in
J^\alpha_+}x_i^2$ whose all coordinates
are equal. So, by Lemma \ref{rep_theory_lemma}, either $T^{J^\alpha_+}(I^2(E))=
\langle\sum\limits_{i\in
J^\alpha_+}x_i^2\rangle_2$, or $T^{J_{+}}(I^2(E))=\text{span}_\bR\{x_i^2\}_{i\in
J^\alpha_+}$.
If $|J^\alpha_+|=1$ these are the same. However, if $|J^\alpha_+|>1$ the
latter is not
invariant with respect to rotations in $\bR^{|J^\alpha_+|}$, whereas $E^{J^\alpha_+}$
is
invariant, which is impossible. We conclude that \begin{gather}\label{rand_label_for_foreign_prop10}T^{J^\alpha_+}(I^2(E))=
\langle\sum\limits_{i\in
J^\alpha_+}x_i^2\rangle_2\text{ for all }\alpha\in\{1,2\}.\end{gather} Similar argument shows that \begin{gather}\label{rand_label_for_foreign_prop11}T^{J^\alpha_-}(I^2(E))=
\langle\sum\limits_{i\in
J^\alpha_-}x_i^2\rangle_2\text{
for all }\alpha\in\{1,2\}.\end{gather} Recall again that $J^2(q_\alpha)=p_\alpha\in
I^2(E)$, and we conclude that \begin{gather}\label{rand_label_for_foreign_prop12}T^{J^\alpha_+\cup
J^\alpha_-}(I^2(E))\in\big\{\langle\sum\limits_{i\in
J^\alpha_+}x_i^2-\sum\limits_{i\in
J^\alpha_-}x_i^2\rangle_2,\langle\sum\limits_{i\in
J^\alpha_+}x_i^2,\sum\limits_{i\in
J^\alpha_-}x_i^2\rangle_2\big\}\text{
for all }\alpha\in\{1,2\}.\end{gather}
If $J^\alpha_-=\emptyset$ these are the same.

Assume towards contradiction that $J^\alpha_-\neq\emptyset$ and that  $T^{J^\alpha_+\cup
J^\alpha_-}(I^2(E))=\langle\sum\limits_{i\in
J^\alpha_+}x_i^2,\sum\limits_{i\in
J^\alpha_-}x_i^2\rangle_2$. We denote $\{\beta\}:=\{1,2\}\setminus\{\alpha\}$.
In that case we can find $i\in J^\alpha_+$, $j\in J^\alpha_-$ and $k\in J^\beta_+$.
Then, $x_i^2+x_j^2+x_k^2\in I^2(E^{ijk})$ and so $Allow(I^2(E^{ijk}))=\emptyset$ and the origin must be an isolated
point of $E^{ijk}$ by Corollary \ref{cor_no_allowed_directions_SECOND_PAPER}. However, $$E^{ijk}=\{(x_i,x_j,x_k)\in\bR^3|x_i^2-x_j^2-x_k^4=x_k^2-(x_i^2+x_j^2)^{2}=0\},$$and
so by Lemma \ref{calc_lemma_1_on_non_isolated_point} the origin is not an
isolated point of $E^{ijk}$, which is a contradiction. We conclude that
\begin{gather}\label{rand_label_for_foreign_prop13}T^{J^\alpha_+\cup
J^\alpha_-}(I^2(E))=\langle\sum\limits_{i\in
J^\alpha_+}x_i^2-\sum\limits_{i\in
J^\alpha_-}x_i^2\rangle_2=\langle T^{J^\alpha_+\cup
J^\alpha_-}(p_\alpha)\rangle_2\text{
for all }\alpha\in\{1,2\}.\end{gather}
Let us continue with the fixed $f\in C^2(\bR^n)$ such that $f|_E=0$. We will next show that \begin{gather}\label{rand_label_for_foreign_prop14}\frac{\partial^2 f}{\partial
x_k^2}(\vec 0)=0\text{
for all }k\in J_0.\end{gather} Assume towards contradiction that $\frac{\partial^2 f}{\partial
x_k^2}(\vec 0)\neq0$ for some $k\in J_0$. By (\ref{rand_label_for_foreign_prop16}), and  as $p_1,p_2\in I^2(E)$, we can assume without
loss of generality that there exist $i\in J^1_+,j\in J^2_+$ and $C\neq0$ such that
$$f^{ijk}(x_i,x_j,x_k)=Cx_{k}^2+o(x_i^2+x_j^2+x_k^2).$$ In this case $$E^{ijk}=\{(x_i,x_j,x_k)\in\bR^3|x_i^2-(x_j^2+x_k^2)^{2}=x_{j}^{2}-(x_i^2+x_k^2)^{2}=0\},$$
and recall $f^{ijk}|_{E^{ijk}}=0$. Note  that for any $0<\epsilon<1$, we have $(\epsilon,\epsilon,\sqrt{\epsilon-\epsilon^2})\in
E^{ijk}$ and so $$0=f^{ijk}(\epsilon,\epsilon,\sqrt{\epsilon-\epsilon^2})=C(\epsilon-\epsilon^2)+o(\epsilon),$$which
is a contradiction.

We conclude that (\ref{rand_label_for_foreign_prop14}) holds, which together with (\ref{rand_label_for_foreign_prop16}) imply that \begin{gather}\label{rand_label_for_foreign_prop15}T^{J_0}(I^m(E))=\{\vec 0\}.\end{gather}
Combining (\ref{rand_label_for_foreign_prop17}),(\ref{rand_label_for_foreign_prop16}),(\ref{rand_label_for_foreign_prop13}) and (\ref{rand_label_for_foreign_prop15}) we conclude that $I^2(E)=\langle
p_1,p_2\rangle_2$.\end{proof}

\section{The $C^3(\mathbb{R}^2)$ case}\label{section_3_2}

We start with two simple but useful Lemmas.

\begin{lemma}\label{nice_easy_lemma} (i) The $m^{\text{th}}$
jet of any $C^m$ function in two variables that vanishes on at least $m+1$
distinct lines passing through the origin is zero. 
(ii) Let $p\in\mathcal{P}_0^m(\mathbb{R}^2)$ be a non-zero homogenous
polynomial in two variables of degree $k\leq m$. Then, $Allow(p)=\{\omega\in S^1|p(\omega)=0\}$ consists
of at most $2k$ points. Moreover, $Allow(p)$ consists of an even number of points.\end{lemma}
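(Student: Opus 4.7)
The plan is to decompose $J^m(f)=\sum_{k=0}^{m}P_k$ into its homogeneous components, with $P_k$ of degree $k$, and argue that each $P_k$ vanishes identically. For a line through the origin, parametrize it as $t\mapsto t\omega$ for some $\omega\in S^1$. Since $f$ vanishes on the line, the definition of the $m$-jet gives
\begin{equation*}
0=f(t\omega)=J^m(f)(t\omega)+o(|t|^m)=\sum_{k=0}^{m}P_k(\omega)\,t^k+o(|t|^m)\quad\text{as }t\to0.
\end{equation*}
Comparing powers of $t$ shows $P_k(\omega)=0$ for every $0\leq k\leq m$. Applying this to one direction $\omega_i\in S^1$ per line, I obtain that each $P_k$ vanishes at $m+1$ pairwise non-proportional directions, i.e.\ on $m+1$ distinct real lines through the origin.

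The key algebraic fact to invoke is that a non-zero homogeneous polynomial of degree $k$ in two variables factors over $\mathbb{C}$ into $k$ linear forms, and hence can vanish on at most $k$ distinct real lines through the origin. Since $k\leq m<m+1$, this forces $P_k\equiv 0$ for every $k$, and therefore $J^m(f)=0$.

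\textbf{Part (ii).} The identification $Allow(p)=\{\omega\in S^1\mid p(\omega)=0\}$ is immediate from Lemma \ref{equiv_conditions_for_forbidden_directions_lemma_SECOND_PAPER}: since $p$ is homogeneous of degree $k$, it coincides with its own lowest degree homogeneous part, so $\omega$ is forbidden iff $p(\omega)\neq 0$. For the count, I would factor $p$ over $\mathbb{R}$ as $p=c\prod_{j=1}^{r}L_j^{a_j}\prod_{l=1}^{s}Q_l^{b_l}$, where the $L_j$ are pairwise non-proportional real linear forms, the $Q_l$ are irreducible real quadratic forms (hence definite, so without real zeros away from the origin), and $\sum_j a_j+2\sum_l b_l=k$. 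Only the real linear factors contribute zeros on $S^1$, and each cuts out one line through the origin, meeting $S^1$ in two antipodal points. Hence $|Allow(p)|=2r\leq 2\sum_j a_j\leq 2k$, and evenness follows from the symmetry $p(-\omega)=(-1)^k p(\omega)$, which shows zeros come in antipodal pairs.

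Both parts rest on the same elementary algebraic fact about bivariate homogeneous polynomials, so no real obstacle is expected; the only bookkeeping issue is keeping straight the correspondence between lines through the origin and pairs of antipodal points on $S^1$, which is what simultaneously yields the $2k$ bound and the parity statement.
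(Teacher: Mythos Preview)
Your proof is correct. The approaches differ from the paper's in both parts. For (i), the paper simply invokes Example~\ref{claim-on-lines}: if $f$ vanishes on $m+1$ distinct lines through the origin, then $J^m(f)$ lies in the ideal generated by the product of the corresponding $m+1$ linear forms, which is the zero ideal in $\mathcal{P}_0^m(\mathbb{R}^2)$. Your argument is more self-contained---you expand along each line, separate homogeneous components, and appeal directly to the linear-factor count for bivariate homogeneous polynomials---so you avoid the inductive machinery of Example~\ref{claim-on-lines}. For (ii), the paper derives the bound from (i) by contradiction (more than $2k$ zeros on $S^1$ means more than $k$ lines, forcing $p=0$ via part (i) with $m=k$), whereas you factor $p$ over $\mathbb{R}$ and count directly. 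Your route gives the sharper identity $|Allow(p)|=2r$ with $r$ the number of distinct real linear factors; the paper's route highlights that (ii) is essentially a reformulation of (i). Both are equally valid and about the same length.
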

\begin{proof}(i) follows immediately from Example \ref{claim-on-lines}. Let $p$ be as in (ii) and assume towards contradiction that $Allow(p)$ consists of more than
$2k$ points. As $p$ is homogenous, if $\omega\in S^1$ is such that $p(\omega)=0$, then $p(\vec x)=0$ for any $\vec x$ that lies on the line containing
the origin and $\omega$. Thus, there exist at least $k+1$ distinct lines
passing through the origin such that $p$
vanishes on each of these lines. As $p\in C^k(\bR^2)$ we get a contradiction
by (i). The "moreover" part
follows from the fact as $p$ is homogenous, $p(\omega)=0\iff p(-\omega)=0.$\end{proof}

\begin{lemma}\label{lemma_linear_times_definite_in_closed_imply_many_jets}Let $I\lhd\mathcal{P}_0^3(\mathbb{R}^2)$ be a closed ideal and assume $l(x,y)\cdot q(x,y)\in I$ where $l(x,y)$ is a linear form and $q(x,y)$ is a definite quadratic form. Then, $l(x,y)\cdot x^2,l(x,y)\cdot y^2,l(x,y)\cdot xy\in I$.\end{lemma}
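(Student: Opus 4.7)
The plan is to use the machinery of strong directional implication (Definition \ref{implied_polynomial_definition_SECOND_PAPER}) together with Corollary \ref{cor_strong_directional_implication_imply_implication_SECOND_PAPER}. Since $I$ is assumed closed, it suffices to show that each of the jets $l\cdot x^2,\, l\cdot y^2,\, l\cdot xy$ is implied by $I$, which in turn follows if $I$ strongly implies each of them in every allowed direction $\omega \in Allow(I)$.

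The key observation is that $q$ being definite means $q(\vec x) \neq 0$ for all $\vec x \neq \vec 0$, so the rational functions
$$S_1(\vec x) := \frac{x^2}{q(\vec x)}, \qquad S_2(\vec x) := \frac{y^2}{q(\vec x)}, \qquad S_3(\vec x) := \frac{xy}{q(\vec x)}$$
are $C^\infty$ on $\mathbb{R}^2 \setminus \{\vec 0\}$ and homogeneous of degree $0$. Because they are homogeneous of degree $0$, each $\partial^\alpha S_i$ is homogeneous of degree $-|\alpha|$; combined with continuity (hence boundedness) on the unit circle $S^1$, this gives the required estimate $|\partial^\alpha S_i(\vec x)| \leq C_{i,\alpha}\,|\vec x|^{-|\alpha|}$ for all $\vec x \neq \vec 0$ and all $|\alpha| \leq 3$. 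This holds globally on the punctured plane, so in particular on any conic neighborhood $\Gamma(\omega, \delta_\omega, r_\omega)$ of any direction $\omega$.

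Next, the identities
$$l(x,y) \cdot x^2 = S_1(\vec x) \cdot \bigl( l(x,y)\cdot q(x,y) \bigr), \quad l\cdot y^2 = S_2 \cdot (l\cdot q), \quad l \cdot xy = S_3 \cdot (l\cdot q)$$
hold pointwise on $\mathbb{R}^2 \setminus \{\vec 0\}$. Since $l \cdot q \in I$, each of these identities realizes the conditions of Definition \ref{implied_polynomial_definition_SECOND_PAPER} with a single generator $Q_1 = l\cdot q$, multiplier $S_i$ as above, and the negligible remainder $F \equiv 0$ (which trivially satisfies the negligibility condition for any set of directions). Hence $I$ strongly implies $l\cdot x^2$, $l\cdot y^2$, and $l\cdot xy$ in every direction $\omega$, a fortiori in every direction of $Allow(I)$. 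Corollary \ref{cor_strong_directional_implication_imply_implication_SECOND_PAPER} then gives that $I$ implies each of these jets, and closedness of $I$ places them inside $I$.

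There is no real obstacle: the only subtlety is that one must have $q$ non-vanishing on $\mathbb{R}^2 \setminus \{\vec 0\}$ to guarantee smoothness of the multipliers $S_i$ there, and this is precisely what definiteness of $q$ provides. The argument in fact works in any dimension and for any $m$, as long as $q$ is a definite form and $p \cdot q \in I$ for the target jet $p$.
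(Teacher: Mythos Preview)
Your proof is correct and follows essentially the same approach as the paper: both use the multipliers $S_i = x^2/q,\; y^2/q,\; xy/q$, verify the derivative bounds (you spell out the homogeneity argument more explicitly), and apply strong directional implication with $Q_1 = l\cdot q$ and $F\equiv 0$ together with Corollary \ref{cor_strong_directional_implication_imply_implication_SECOND_PAPER} and closedness of $I$.
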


\begin{proof}Let $S_1(x,y):=\frac{x^2}{q(x,y)}$, one can easily check that in a punctured neighborhood of the origin  $\abs{\partial
^\alpha S_{1}(\vec x)}=O(\abs{\vec x}^{-\abs{\alpha}})$ for all $\abs{\alpha}\leq
3$, i.e., $S_1$ satisfies (\ref{second_cond_of_implied_poly_in_direction_SECOND_PAPER}) above (for any $\omega$ and any $\delta_\omega$, with some small fixed $r_\omega>0$). Thus, we have that the jet $\big(\frac{x^2}{q(x,y)}\big)\cdot \big(l(x,y)\cdot q(x,y)\big)=l(x,y)\cdot x^2$ is strongly implied by $I$ in any direction, and in particular in any allowed direction of $I$. As $I$ is closed we get $l(x,y)\cdot x^2\in I$ by Corollary \ref{cor_strong_directional_implication_imply_implication_SECOND_PAPER}. The same proof with straightforward adjustments shows that also $l(x,y)\cdot y^2,l(x,y)\cdot
xy\in I$.\ \end{proof}

We now classify all closed ideals in $\mathcal{P}_0^3(\mathbb{R}^2)$ up to $C^3$ semi-algebraic coordinate changes around the origin. We deal separately with principal ideals (in Lemma \ref{lemma_classify_closed_principal_in_C3R2_case_new_version}) and non-principal ideals (in Lemma
\ref{lemma_classify_closed_non_principal_in_C3R2_case_new_version}).

\begin{lemma}\label{lemma_classify_closed_principal_in_C3R2_case_new_version} Any principal closed ideal $I\lhd\mathcal{P}_0^3(\mathbb{R}^2)$ is, up to a $C^3$ semi-algebraic
coordinate change around the origin, one of the following\footnote{In Proposition \ref{prop_C3R2_case_new_version} we will see that all the ideals that appear in the statement of Lemma \ref{lemma_classify_closed_principal_in_C3R2_case_new_version} are closed. The same applies for the ideals in the statements of Lemma \ref{lemma_classify_closed_non_principal_in_C3R2_case_new_version} (also by Proposition \ref{prop_C3R2_case_new_version}) and Lemmas \ref{lemma_classify_closed_ideals_in_C2R3_1_dim_allow}, \ref{lemma_classify_closed_ideals_in_C2R3_2_dim_allow} and \ref{lemma_classify_closed_ideals_in_C2R3_3_dim_allow} (by Proposition \ref{prop_C2R3_case_new_version}).}:
\begin{gather}\label{model_for_3_2_1}\{\vec 0\}=I^3(\mathbb{R}^2)\end{gather}
\begin{gather}\label{model_for_3_2_3}\langle x\rangle_3=I^2(\{x=0\}) \text{
 -- see Example \ref{a_union_of_coordinates}}
\end{gather}
\begin{gather}\label{model_for_3_2_4}\langle x^2\rangle_3= I^3(\{x(x^2-y^{5})=0\}\cap\{x\geq0\})
\text{ -- see Example \ref{example-xx}}\end{gather}
\begin{gather}\label{model_for_3_2_5}\langle xy\rangle_3= I^3(\{xy=0\})
\text{ -- see Example \ref{claim-on-lines}}
\end{gather}
\begin{gather}\label{model_for_3_2_6}\langle xy(x+y)\rangle_3= I^3(\{xy(x+y)=0\}) \text{ -- see Example \ref{claim-on-lines}}
\end{gather}
\begin{gather}\label{model_for_3_2_7}\langle x^2-y^3\rangle_3
\end{gather}
\begin{gather}\label{model_for_3_2_9}\langle yx^{2}\rangle_3
\end{gather}
\begin{gather}\label{model_for_3_2_10}\langle x^{3}\rangle_3
\end{gather}

\end{lemma}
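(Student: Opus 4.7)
Let $I=\langle p\rangle_3\lhd\mathcal{P}_0^3(\mathbb{R}^2)$ be a closed principal ideal; if $p=0$ then $I=\{0\}$, so assume $p\neq 0$ and let $k\in\{1,2,3\}$ be its order of vanishing and $p_k$ its lowest-degree homogeneous part. The plan is to split into cases by $k$ and in each case exhibit a $C^3$ semi-algebraic diffeomorphism of a neighborhood of the origin that sends $p$ to a scalar multiple of one of the listed normal forms. Since the induced automorphism of $\mathcal{P}_0^3(\mathbb{R}^2)$ depends only on the $3$-jet of the diffeomorphism, any smooth coordinate change may be replaced by its degree-$3$ Taylor polynomial, which is polynomial (hence semi-algebraic) and remains a local diffeomorphism as long as its linear part is invertible.

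\emph{Case $k=1$.} After a rotation we may assume $\partial_x p(\vec 0)\neq 0$; then $(x,y)\mapsto(p(x,y),y)$ is a polynomial local diffeomorphism by the inverse function theorem, and in the new coordinates $p$ becomes the first coordinate, so $I\sim\langle x\rangle_3$.

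\emph{Case $k=2$.} By a linear change and a possible sign-flip of $p$ we may assume $p_2\in\{x^2+y^2,\,x^2-y^2,\,x^2\}$. The definite case $p_2=x^2+y^2$ is ruled out: Corollary \ref{old_new_cor_on_how_to_calc_allow_with_inclusion_SECOND_PAPER} gives $Allow(I)\subset\{p_2=0\}\cap S^1=\emptyset$, and since $I\neq\mathcal{P}_0^3(\mathbb{R}^2)$ (as $p$ has no linear term), Corollary \ref{cor_no_allowed_directions_SECOND_PAPER} then contradicts closedness. If $p_2=x^2-y^2$, the linear change $(u,v)=(x+y,x-y)$ yields $p_2=uv$, and then the quadratic perturbation
\[
u=U+\alpha U^2+\beta UV+\gamma V^2,\qquad v=V+\delta U^2+\epsilon UV+\zeta V^2
\]
contributes exactly $\delta U^3+(\epsilon+\alpha)U^2V+(\zeta+\beta)UV^2+\gamma V^3$ to the cubic part of $uv$; choosing these four parameters to cancel any prescribed cubic in $p$ (the two extra degrees of freedom are inconsequential) gives $I\sim\langle xy\rangle_3$. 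If $p_2=x^2$, a perturbation of the first coordinate alone $x=X+\alpha X^2+\beta XY+\gamma Y^2,\ y=Y$ kills the $X^3,X^2Y,XY^2$ cubic terms of $p$ but leaves the $Y^3$ coefficient untouched, reducing $p$ to $X^2+cY^3$ for some $c\in\mathbb{R}$. If $c=0$ then $I\sim\langle x^2\rangle_3$; if $c\neq 0$, a real cube-root rescaling $Y\mapsto\lambda Y$ with $\lambda^3=-1/c$ yields $X^2-Y^3$, giving $I\sim\langle x^2-y^3\rangle_3$.

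\emph{Case $k=3$.} Here $p=p_3$ is a nonzero real cubic form in two variables; it factors over $\mathbb{R}$ into one of four types: three distinct real linear factors, a doubled linear factor times a distinct linear factor, a triple linear factor, or a linear factor times an irreducible (hence definite) real quadratic. In the first three situations, a linear change aligning the factor-lines with coordinate axes followed by rescalings (using real cube roots, which always exist) reduces $p$ to $xy(x+y)$, $x^2 y$, or $x^3$ respectively. In the fourth situation, write $p=l\cdot q$ with $l$ linear and $q$ definite; since $I$ is closed and $lq\in I$, Lemma \ref{lemma_linear_times_definite_in_closed_imply_many_jets} yields $lx^2,lxy,ly^2\in I$, which span a three-dimensional subspace of cubic jets. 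But the ideal $\langle p_3\rangle_3$ generated by a jet of order of vanishing $3$ equals $\mathbb{R}\cdot p_3$ (any multiplication by a jet of positive order of vanishing produces something of order $\ge 4$, which is truncated to zero in $\mathcal{P}_0^3$), hence is one-dimensional — a contradiction. So this case cannot occur.

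\emph{Main obstacle.} The routine coordinate-change computations for $k\le 2$ are straightforward linear-algebra manipulations, and the inverse-function-theorem step for $k=1$ is standard. The two genuinely content-bearing parts are (i) the $k=2$, $p_2=x^2$ normal-form argument, which must be carried out in the correct order (perturb $x$ first, then rescale $Y$) to confirm that $\langle x^2-y^3\rangle_3$ is genuinely a separate normal form from $\langle x^2\rangle_3$; and (ii) the use of Lemma \ref{lemma_linear_times_definite_in_closed_imply_many_jets} to exclude the linear-times-definite-quadratic factorization in case $k=3$, which is the only place where closedness enters the argument in a non-trivial way via a strong directional implication.
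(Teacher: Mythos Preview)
Your proof is correct and follows essentially the same strategy as the paper's: case-split by order of vanishing, normalize the leading homogeneous part by a linear change, then kill the higher-order terms by polynomial coordinate perturbations, with the definite case ruled out via $Allow(I)=\emptyset$ and the linear-times-definite-quadratic case ruled out via Lemma~\ref{lemma_linear_times_definite_in_closed_imply_many_jets}. Your execution is in places slightly more streamlined (for $p_2=x^2$ you use a single quadratic perturbation of $x$ to kill three cubic terms at once, whereas the paper first invokes $x^3,x^2y\in I$ and then does a case analysis on the remaining coefficients; for $k=3$ you organize by real factorization type while the paper organizes by $|Allow(p)|$), but these are cosmetic differences and the key ideas coincide.
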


\begin{proof}

Let $I\lhd \mathcal{P}_0^3(\mathbb{R}^2)$ be a principal closed ideal. So $I=\langle p\rangle_3$ for some jet $p$. If $p=0$ then we are in case (\ref{model_for_3_2_1}). If
$p$ is of order of vanishing 1 then by applying a $C^3$ semi-algebraic
coordinate change around the origin we may assume $p=x$ and we are in case (\ref{model_for_3_2_3}).

\

\textbf{A. Assume
$p$ is of order of vanishing 2.} In that case by applying a linear coordinate change and possibly replacing $p$ by $-p$, we may assume that there exists a homogenous polynomial of degree 3 $q(x,y)$, where $q$ may be the zero polynomial, such that $p(x,y)=x^2+y^2+q(x,y)$, $p(x,y)=xy+q(x,y)$ or $p(x,y)=x^2+q(x,y)$.

\

\textbf{A.1. Assume $p(x,y)=x^2+y^2+q(x,y)$.} In that case $Allow(I)=\emptyset$, which is a contradiction to $I$ being closed by
Corollary \ref{cor_no_allowed_directions_SECOND_PAPER}.

\

\textbf{A.2. Assume $p(x,y)=xy+q(x,y)$.} In that case also $xy^2,x^2y\in I$, so without loss of generality $q(x,y)=ax^3+by^3$ for some $a,b\in\mathbb{R}$. We have $p(x,y)=x(y+ax^2)+by^3$, so in the coordinate system $(u,v)=(x,y+ax^2)$ whose inverse is $(x,y)=(u,v-au^2)$ we get that $p(u,v)=uv+b(v-au^{2})^3=uv+bv^3=v(u+bv^{2})$ (as a 3-jet). Finally, in the coordinate system $(\tilde u,\tilde v)=(u+bv^2,v)$
we get $p(\tilde u\tilde v)=\tilde u\tilde v$ and we are in case (\ref{model_for_3_2_5}).

\

\textbf{A.3. Assume $p(x,y)=x^2+q(x,y)$.} In that case also $x^3,x^2y\in
I$, so without loss of generality $p(x,y)=x^2+axy^2+by^3$ for some $a,b\in\mathbb{R}$. If $a=0$ and $b=0$ then $p=x^2$ and we are in case (\ref{model_for_3_2_4}). If $a=0$ and $b\neq 0$ then by scaling $y\mapsto-b^{1/3}y$ we get $p=x^2-y^3$ and we are in case (\ref{model_for_3_2_7}). Otherwise $a\neq0$, and scaling $y\mapsto\abs{a}^{1/2}y$ and possibly applying $x\mapsto -x$ we get $p=x^2-xy^2+cy^3$ for some $c\in\mathbb{R}$. Now apply the coordinate change $(X,Y)=(x-\frac{1}{2}y^{2},-y)$ we get $p=X^2-cY^3$ (as a 3-jet). If $c=0$ then we are in case (\ref{model_for_3_2_4}). Otherwise by scaling $Y\mapsto c^{1/3}Y$ we get $p=X^2-Y^3$
and we are in case (\ref{model_for_3_2_7}). 

\

\textbf{B. Assume
$p$ is of order of vanishing 3.} In that case  by Lemma \ref{nice_easy_lemma}(ii) we have either $Allow(p)=\emptyset$ or $\abs{Allow(p)}\in\{2,4,6\}$. If $Allow(I)=\emptyset$,
we get a contradiction to $I$ being closed by
Corollary \ref{cor_no_allowed_directions_SECOND_PAPER}.

\

\textbf{B.1. Assume $\abs{Allow(p)}=6$.} In that case $p$ vanishes on exactly 3 distinct lines passing through the origin, so by applying a linear coordinate change we may assume $p$ vanishes on $\{x=0\}\cup\{y=0\}\cup\{ax+by=0\}$ for some $a,b\in\mathbb{R}^\times$. Let $(\alpha,\beta)\in S^1$ be such that $\alpha\neq0$, $\beta\neq0$ and $\alpha a+\beta b\neq0$. Define $\tilde p(x,y):=p(x,y)-\frac{p(\alpha,\beta)}{\alpha \beta(\alpha a+\beta b)}xy(ax+by).$ Then, $\tilde p$ is a homogenous polynomial of degree 3 (or the zero polynomial), and it vanishes on 4 distinct lines passing through the origin: $\{x=0\}$, $\{y=0\}$, $\{ax+by=0\}$ and the line that passes through $(\alpha,\beta)$. By Lemma \ref{nice_easy_lemma}(i), $\tilde p(x,y)$ is the zero polynomial, and $p(x,y)=\frac{p(\alpha,\beta)}{\alpha
\beta(\alpha a+\beta b)}xy(ax+by)$ which implies $I=\langle xy(ax+by)\rangle_3$. Now, in the coordinate system $(X,Y)=(ax,by)$ we have $I=\langle XY(X+Y)\rangle_3$, i.e., we are in case (\ref{model_for_3_2_6}).

\

\textbf{B.2. Assume $\abs{Allow(p)}=4$.} In that case $p$ vanishes on exactly
2 distinct lines passing through the origin, so by applying a linear coordinate
change we may assume $p$ vanishes on $\{x=0\}\cup\{y=0\}$. So $p$ is divisible by both $x$ and $y$, i.e., $p(x,y)=xy(ax+by)$
for some $a,b\in\mathbb{R}$ such that $a^2+b^2\neq0$. If $ab\neq0$ then $p$ also vanishes on the line $ax+by=0$ which is a contradiction. So by possibly renaming $x\leftrightarrow y$ we get $I=\langle yx^{2}\rangle_3$ and we are in case (\ref{model_for_3_2_9}).  

\

\textbf{B.3. Assume $\abs{Allow(p)}=2$.} In that case $p$ vanishes on exactly
1 line passing through the origin, so by applying a linear coordinate
change we may assume $p$ vanishes on $\{x=0\}$. So $p$ is divisible
by $x$, i.e., $p(x,y)=x\cdot q(x,y)$
for some non-zero quadratic form $q(x,y)$. We thus have 3 options:

\begin{itemize}
\item If $q$ is an indefinite form then by applying a linear coordinate change it has the from $\tilde x\tilde y$, which implies $\abs{Allow(I)}\geq4$, and we get a contradiction.
\item If $q$ is a semi-definite (that is not definite) form then $q(x,y)=c(ax+by)^{2}$ for some $a,b,c\in\mathbb{R}$ such that $c\neq 0$ and $a^2+b^2\neq0$. If $b\neq 0$ then $p$ also vanishes on the line $\{y=-\frac{a}{b}x\}$, which is a contradiction. We thus have $I=\langle x^3\rangle_3$ and we are in case (\ref{model_for_3_2_10}). 
\item If $q$ is definite then we get a contradiction to $I$ being closed by Lemma \ref{lemma_linear_times_definite_in_closed_imply_many_jets}. 
\end{itemize}\end{proof}

\begin{remark}\label{remark_on_distinctness_for_3_2_case_principal}The ideals
(\ref{model_for_3_2_1})-(\ref{model_for_3_2_10}) are pairwise distinct, i.e.,
there does not exist a $C^3$ coordinate change around the origin such that
a given ideal can take two different forms from this list. (\ref{model_for_3_2_1}) is obviously distinct, and (\ref{model_for_3_2_3}) is distinct too as it is the only ideal in this list that contains a jet of order of vanishing 1. (\ref{model_for_3_2_4}) is distinct as it is the only ideal in this list that contains a square of a jet of order of vanishing 1. (\ref{model_for_3_2_5}) and (\ref{model_for_3_2_7}) are the remaining ideals in this list that contain a jet of order of vanishing 2, however the first has 2 linearly independent allowed directions, while the latter does not. Finally, (\ref{model_for_3_2_6}), (\ref{model_for_3_2_9}) and (\ref{model_for_3_2_10}) are the remaining ideals, and they satisfy $\abs{Allow(I)}=6,4,2$ (respectively), and so are also distinct.\end{remark}

\begin{lemma}\label{lemma_classify_closed_non_principal_in_C3R2_case_new_version}Any closed non-principal ideal $I\lhd\mathcal{P}_0^3(\mathbb{R}^2)$ is, up to a $C^3$
semi-algebraic
coordinate change around the origin, one of the following:

\begin{gather}\label{non_pricipal_model_for_3_2_1}\mathcal{P}_0^3(\mathbb{R}^2)=I^3(\{\vec0\})\end{gather}
\begin{gather}\label{non_pricipal_model_for_3_2_2}\langle x^{2},xy\rangle_3= I^3(\{x^2-y^{5}=0\}\cap\{x\geq0\})
\text{ -- see Example \ref{example-xx-xy}}\end{gather}
\begin{gather}\label{non_pricipal_model_for_3_2_3_new_as_missed}\langle
x^2,xy^2\rangle_3\end{gather}
\begin{gather}\label{non_pricipal_model_for_3_2_3}\langle
x^2-y^{3},xy^{2}\rangle_3\end{gather}
\begin{gather}\label{non_pricipal_model_for_3_2_4}\langle x^3,x^2y,xy^2\rangle_3\end{gather}
\begin{gather}\label{non_pricipal_model_for_3_2_5_a}\langle
x^3,x^{2}y\rangle_3\end{gather}
\begin{gather}\label{non_pricipal_model_for_3_2_5_b}\langle
x^2y,xy^{2}\rangle_3\end{gather}
\end{lemma}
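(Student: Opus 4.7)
The plan is a case analysis based on the subspace $V\subset\text{span}_{\mathbb R}\{x^2,xy,y^2\}$ of lowest-degree homogeneous parts of order-of-vanishing-$2$ jets in $I$, together with a finer analysis of the pure-cubic part $I\cap\text{span}\{x^3,x^2y,xy^2,y^3\}$. The key technical tools are Corollary \ref{cor_no_allowed_directions_SECOND_PAPER} (the only closed ideal with $Allow(I)=\emptyset$ is $\mathcal{P}_0^3(\mathbb R^2)$), Corollary \ref{old_new_cor_on_how_to_calc_allow_with_inclusion_SECOND_PAPER} (for computing $Allow$ from zeros of lowest-degree homogeneous parts), Lemma \ref{lemma_linear_times_definite_in_closed_imply_many_jets} (a linear-times-definite cubic in a closed ideal forces three cubic monomials into the ideal), and Corollary \ref{cor_strong_directional_implication_imply_implication_SECOND_PAPER} (strong directional implication in every allowed direction implies $p\in cl(I)$).

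If $\dim V\geq 2$, Remark \ref{remark_on_2_dim_subspaces_in_the_plane} reduces (up to a linear change of coordinates) to one of three model subspaces: in the ones containing a definite form or equal to $\text{span}\{xy, x^2-y^2\}$, multiplying the two quadratic generators by $x$ and $y$ produces every homogeneous cubic, hence $Allow(I)=\emptyset$ and $I$ equals case (\ref{non_pricipal_model_for_3_2_1}). The remaining model $V=\text{span}\{xy,x^2\}$ yields representative generators $p_1=xy+c_1y^3$ and $p_2=x^2+c_2y^3$ after reducing modulo $x^3,x^2y,xy^2\in I$ (the latter arising as $xp_i, yp_i$); the quadratic coordinate change $(X,Y)=(x+c_1y^2,y)$ kills $c_1$, and a strong-directional-implication construction at $(0,\pm 1)$ with multipliers of the form $S_1=c_2^{-1}\theta(x/y)\cdot(x/y)$, $S_2=c_2^{-1}\theta(x/y)$ (modeled on the motivating intro example) shows that $c_2\neq 0$ forces $y^3\in cl(I)$, hence $Allow(cl(I))=\emptyset$, contradicting closedness. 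So $c_2=0$ and $I$ is case (\ref{non_pricipal_model_for_3_2_2}).

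If $\dim V=1$, the generating quadratic form is linearly equivalent to $xy$ or $x^2$. For $V=\text{span}\{xy\}$, a coordinate change $(X,Y)=(x+by^2,y)$ normalizes the generator to $xy$ itself, and every non-principal extension either contains a cubic of the form $l\cdot q$ with $q$ definite (eliminated by Lemma \ref{lemma_linear_times_definite_in_closed_imply_many_jets}, which drives $Allow$ to $\emptyset$) or admits a similar bounded-cutoff construction exhibiting $x^2\in cl(I)$ or $y^2\in cl(I)$. For $V=\text{span}\{x^2\}$, the coordinate change $(X,Y)=(x+\alpha y^2,y)$ kills any $xy^2$-term in the quadratic generator, and after rescaling $y$ the generator becomes either $x^2$ or $x^2-y^3$; the only candidate extensions by a pure cubic (modulo $x^3,x^2y\in I$) are $xy^2$ and $y^3$, with $y^3\in I$ ruled out by $Allow(I)=\emptyset$. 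This yields precisely cases (\ref{non_pricipal_model_for_3_2_3_new_as_missed}) and (\ref{non_pricipal_model_for_3_2_3}).

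Finally, if $V=0$, then $I$ is a linear subspace of the $4$-dimensional space of homogeneous cubics. Dimension $1$ is excluded (principal); dimension $4$ gives $Allow(I)=\emptyset$; dimension $3$ is forced by Corollary \ref{old_new_cor_on_how_to_calc_allow_with_inclusion_SECOND_PAPER} to be the hyperplane $\{q:q(\omega)=0\}$ for some $\omega\in S^1$, which after a linear change is case (\ref{non_pricipal_model_for_3_2_4}). The hardest subcase is $\dim I=2$: either $Allow(I)$ consists of two $\pm$-pairs of directions, in which case a linear change sends these to $(\pm1,0)$ and $(0,\pm1)$ and $I=\text{span}\{x^2y,xy^2\}$ is case (\ref{non_pricipal_model_for_3_2_5_b}), or $Allow(I)$ is a single $\pm$-pair, WLOG $(0,\pm1)$, and $I=x\cdot W$ for some $2$-dimensional $W\subset\text{span}\{x^2,xy,y^2\}$. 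The subspace $W$ has five orbits under the stabilizer of $(0,\pm1)$ in $GL_2$, classified by a defining quadratic form $Q$ complementary to $W$; the orbit $Q=y^2$ gives case (\ref{non_pricipal_model_for_3_2_5_a}), the orbits $Q\in\{xy,\,y^2-x^2\}$ are ruled out because the corresponding $I$ contains the linear-times-definite cubic $x\cdot(x^2+y^2)$ (so Lemma \ref{lemma_linear_times_definite_in_closed_imply_many_jets} forces $\dim I\geq 3$), and the last and subtlest orbit $Q=x^2+y^2$ (giving $I=\text{span}\{x^2y,x^3-xy^2\}$, whose pencil consists entirely of indefinite linear-times-quadratic elements) will be eliminated by the explicit construction $S_1=0$, $S_2=\theta\cdot(x/y-x^2/y^2)$, $F=\theta\cdot x^4/y$, all bounded-or-negligible for $\{(0,\pm1)\}$ by the mechanism of Example \ref{amazing_exaple_for_negligible_part1_SECOND_PAPER}, witnessing strong directional implication of $x^3$ and thus $x^3\in cl(I)\setminus I$.
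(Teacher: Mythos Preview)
Your proposal is correct in outline and reaches the same classification, but it is organized differently from the paper and has a few technical slips worth noting.

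\textbf{Organizational difference.} The paper stratifies first by the \emph{minimal order of vanishing} in $I$ (Cases A, B, C), and inside Case B by which of $x^2+y^2$, $xy$, $x^2$ appears as the lowest part of a chosen order-$2$ jet. You stratify instead by $\dim V$, where $V$ is the span of \emph{all} quadratic lowest parts. These agree in the end: the paper's B.2 (``$xy+q\in I$'') secretly crosses from $\dim V=1$ to $\dim V=2$ midway, when it derives $x^2\in I$ via the $x/y$ multiplier; your scheme handles that outcome directly in the $V=\mathrm{span}\{x^2,xy\}$ branch and correctly concludes that the $\dim V=1$, $V=\mathrm{span}\{xy\}$ branch is empty. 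Your organization is arguably cleaner, at the cost of having to argue (which you leave implicit) that in the $V=\mathrm{span}\{x^2,xy\}$ branch the ideal is \emph{exactly} $\mathrm{span}\{p_1,p_2,x^3,x^2y,xy^2\}$ --- i.e., that $y^3\notin I$ (else $Allow=\emptyset$) and no $y^2$ or linear part appears (else $\dim V$ grows or $I=\mathcal P_0^3$).

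\textbf{The pure-cubic $\dim I=2$ case.} Here your route really differs. The paper writes $I=x\cdot W$ after fixing one allowed direction, then applies Remark \ref{remark_on_2_dim_subspaces_in_the_plane} to $W$ \emph{via a further linear change that may move the allowed direction}, producing the three families $(ax+by)\cdot\{xy,x^2\}$, $(ax+by)\cdot\{xy,x^2-y^2\}$, and ``$W$ contains a definite form'', with subsequent case-work on $(a,b)$. You instead keep $Allow(I)=\{(0,\pm1)\}$ fixed and classify $W$ under its stabilizer (lower-triangular matrices). Your count of five orbits is correct; note however that one of them (your $Q=x^2$, i.e.\ $W=\mathrm{span}\{xy,y^2\}$) has $|Allow(I)|=4$, so only four orbits are live in your single-pair branch. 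Both approaches land on the same four candidates: $\langle x^3,x^2y\rangle_3$ survives, the two containing $x(x^2+y^2)$ die by Lemma \ref{lemma_linear_times_definite_in_closed_imply_many_jets}, and $\langle x^2y,\,x^3-xy^2\rangle_3$ dies because $x^3\in cl(I)\setminus I$.

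\textbf{Technical slips to fix.} (i) A single change $(X,Y)=(x+by^2,y)$ does not normalize $xy+ax^3+by^3$ to $xy$; you need two successive changes as in the paper (or one change in each variable). (ii) Your explicit multipliers are off: for $V=\mathrm{span}\{x^2,xy\}$ with $c_2\neq0$ one wants $Y^3=c_2^{-1}(X^2+c_2Y^3)-c_2^{-1}(X/Y)\cdot XY$, i.e.\ $S_1=-c_2^{-1}\theta\cdot X/Y$, $S_2=c_2^{-1}\theta$, $F=0$ (your signs give $2c_2^{-1}X^2$ left over). For the subtle cubic orbit $I=\mathrm{span}\{x^2y,x^3-xy^2\}$ the paper's one-line identity $x^3=(x/y)\cdot x^2y$ already does the job; your proposed $S_2,F$ do not combine to $x^3$. (iii) In $\dim V=1$ you should say why the definite orbit $V=\mathrm{span}\{x^2+y^2\}$ is excluded (it forces $Allow(I)=\emptyset$).
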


\begin{proof}Let $I\lhd \mathcal{P}_0^3(\mathbb{R}^2)$ be a non-principal closed
ideal. If $I=\mathcal{P}_0^3(\mathbb{R}^2)$ then we are in case (\ref{non_pricipal_model_for_3_2_1}). So from now on we assume $I\neq\mathcal{P}_0^3(\mathbb{R}^2)$. We analyse by the minimal order of vanishing of jets in $I$.     

\

\textbf{A. Assume that the minimal order of vanishing of jets in $I$ is 1.} By applying a semi-algebraic coordinate change around the origin we may assume $x\in I$. As $I$ is not principal there exists $ay+by^2+cy^3\in I$ for some $a,b,c\in\mathbb{R}$ with $a^2+b^2+c^2\neq0$. In that case $Allow(I)=\emptyset$ though $I\neq\mathcal{P}_0^3(\mathbb{R}^2)$, which is a contradiction to $I$ being closed by Corollary \ref{cor_no_allowed_directions_SECOND_PAPER}. 

\

\textbf{B. Assume that the minimal order of vanishing of jets in $I$ is 2.}
In that case by applying a linear coordinate
change we may assume that there exists
a homogenous polynomial of degree 3 $q(x,y)$, where $q$ may be the zero polynomial,
such that $x^2+y^2+q(x,y)\in I$, $xy+q(x,y)\in I$ or $x^2+q(x,y)\in I$.

\

\textbf{B.1. Assume $x^2+y^2+q(x,y)\in I$.} In that case $Allow(I)=\emptyset$,
which is a contradiction to $I$ being closed by
Corollary \ref{cor_no_allowed_directions_SECOND_PAPER}. 

\

\textbf{B.2. Assume $xy+q(x,y)\in I$.} In that case also $xy^2,x^2y\in
I$, so without loss of generality $q(x,y)=ax^3+by^3$ for some $a,b\in\mathbb{R}$.
We have $x(y+ax^2)+by^3\in I$, so in the coordinate system $(u,v)=(x,y+ax^2)$
whose inverse is $(x,y)=(u,v-au^2)$ we get that $uv+b(v-au^{2})^3=uv+bv^3=v(u+bv^{2})\in I$. Now, in the coordinate system $(\tilde u,\tilde v)=(u+bv^2,v)$
we get $\tilde u\tilde v\in I$. We conclude that by applying a semi-algebraic coordinate change we are allowed to assume that $q(x,y)=0$ and $xy,xy^2,x^2y\in I$. As $I$ is not principal there exist $a,b,c,d\in\mathbb{R}$ such that $ax^2+by^2+cx^3+dy^3\in I$ and $a^2+b^2+c^2+d^2\neq 0$.

Assume there exist $a,b,c,d\in\mathbb{R}$ such that
$ax^2+by^2+cx^3+dy^3\in I$ and $a^2+b^2\neq 0$. Without loss of generality $a\neq0$ and $x^2+by^2+cx^3+dy^3\in I$ (with some other coefficients $b,c,d$). If $b>0$ then $x^2+by^2+cx^3+dy^3\in I$ implies $Allow(I)=\emptyset$ which is a contradiction to $I$ being closed by
Corollary \ref{cor_no_allowed_directions_SECOND_PAPER}. If $b<0$ then again $xy,(x+\sqrt{-b}y)\cdot(x-\sqrt{-b}y)+cx^3+dy^3\in I$  implies $Allow(I)=\emptyset$ which is a contradiction to $I$ being closed
by
Corollary \ref{cor_no_allowed_directions_SECOND_PAPER}. So we must have $b=0$ (for any such $x^2+by^2+cx^3+dy^3\in I$) and $x^{3},xy^{2},x^{2}+dy^{3}\in
I$ which implies that $Allow(I)\subset \{(x,y)=(0,\pm 1)\}$. Thus, $\frac{x}{y}$ satisfies (\ref{second_cond_of_implied_poly_in_direction_SECOND_PAPER}) in any allowed direction, so the jet $x^2=xy\cdot\frac{x}{y}\in I$ (by Corollary \ref{cor_strong_directional_implication_imply_implication_SECOND_PAPER}) and so also $dy^3\in I$. If $d\neq0$, then  $Allow(I)=\emptyset$ which is a contradiction to $I$ being closed
by
Corollary \ref{cor_no_allowed_directions_SECOND_PAPER}. So we must have $d=0$ (for any such $x^2+dy^3\in I$). Assume towards contradiction that $I$ contains a jet with a non-zero $y^2$ term. By assumption this jet is of order of vanishing 2, so there exist $a,b,c,d,e,g\in\mathbb{R}$ such that $y^2+axy+bx^2+cx^3+dx^2y+exy^2+gy^3\in I$. As $xy,x^2,x^{2}y,x^{3},xy^{2}\in I$ we also have $y^2+gy^3\in
I$ and so  $Allow(I)=\emptyset$ which is
a contradiction to $I$ being closed
by
Corollary \ref{cor_no_allowed_directions_SECOND_PAPER}. A similar argument shows that there are no jets in $I$ with a non-zero $y^3$ term. So $I=\langle x^2,xy\rangle_3$ and we are in case (\ref{non_pricipal_model_for_3_2_2}).

It is left to check the case $a=b=0$ (for any $ax^2+by^2+cx^3+dy^3\in I$), and so without loss of generality (by possibly renaming $x\leftrightarrow y$)  $x^3+dy^3\in I$ (with some new $d\in\mathbb{R}$). If $d\neq0$ then $xy,x^3+dy^3\in I$ implies $Allow(I)=\emptyset$ which is a contradiction to $I$ being closed
by
Corollary \ref{cor_no_allowed_directions_SECOND_PAPER}. Otherwise $d=0$ (for any such $x^3+dy^3\in I$) and $xy^2,x^3\in I$ which implies that $Allow(I)\subset \{(x,y)=(0,\pm 1)\}$. Thus,  $\frac{x}{y}$
satisfies (\ref{second_cond_of_implied_poly_in_direction_SECOND_PAPER}) in
any allowed direction, so the jet
$x^2=xy\cdot\frac{x}{y}\in I$ (by Corollary
\ref{cor_strong_directional_implication_imply_implication_SECOND_PAPER}), which is a contradiction to the assumption $a=b=0$ for any jet $ax^2+by^2+cx^3+dy^3\in I$.

\

\textbf{B.3. Assume $x^2+q(x,y)\in I$.} We then have $x^3,x^2y\in I$, so we can assume $q(x,y)=axy^2+by^3$ and so $x^2+axy^2+by^3\in I$. In the coordinate system $(X,Y)=(x+\frac{1}{2}ay^2,y)$ we have $x^2+axy^2+by^3=X^{2}+bY^3$ (as 3-jets) and so we may assume $x^2+by^3\in I$ and still $x^3,x^2y\in I$. Since by assumption every jet in $I$ has order of vanishing at least 2, modulo $x^3,x^2y,x^2+by^3$ (that we  saw all belong to $I$) any jet in $I$ has the form $\alpha xy+\beta y^2+\gamma xy^2+\delta y^3$ for some $\alpha,\beta,\gamma,\delta\in\mathbb{R}$. If there exists a jet of this form in $I$ with $\alpha\neq0$ then we are in case B.2 that we already analyzed. So we may assume that modulo $x^3,x^2y,x^2+by^3$ any jet in $I$ has the form $\beta y^2+\gamma xy^2+\delta
y^3$ for some $\beta,\gamma,\delta\in\mathbb{R}$. If there exists
a jet of this form in $I$ with $\beta^2+\delta^2\neq0$ then  $Allow(I)=\emptyset$
which is a contradiction to $I$ being closed
by
Corollary \ref{cor_no_allowed_directions_SECOND_PAPER}. So modulo $x^3,x^2y,x^2+by^3$
any
jet in $I$ has the form $\gamma xy^2$ for some $\gamma\in\mathbb{R}$. We conclude that $I\in\{\langle x^2+by^3\rangle_2,\langle x^2+by^3,xy^{2}\rangle_2\}$. The first is principal and so excluded, and so $I=\langle x^2+by^3,xy^{2}\rangle_2$. If $b=0$ we are in case (\ref{non_pricipal_model_for_3_2_3_new_as_missed}). Otherwise, by scaling $y\mapsto-b^{1/3}y$ we have $I=\langle
x^2-y^{3},xy^{2}\rangle_3$, i.e., we are in case (\ref{non_pricipal_model_for_3_2_3})

\

\textbf{C. Assume that the minimal order of vanishing of jets in $I$ is 3.}
In that case $I$ is a linear subspace of $\text{span}_\mathbb{R}\{x^3,x^2y,xy^2,y^3\}$. As $I$ is not principal we have $\dim I\in\{2,3,4\}$.

\

\textbf{C.1. Assume $\dim I=4$.} In that case $x^{3},y^{3}\in I$ and so  $Allow(I)=\emptyset$
which is a contradiction to $I$ being closed
by
Corollary \ref{cor_no_allowed_directions_SECOND_PAPER}.

\

\textbf{C.2. Assume $\dim I=3$.} Since $I$ is closed, $Allow(I)\neq\emptyset$. Apply a linear coordinate change to assume $(x,y)=(0,1)\in Allow(I)$. Then, any jet in $I$ vanishes on $(0,1)$ and so $I=\langle x^3,x^2y,xy^2\rangle_3$, and we are in case (\ref{non_pricipal_model_for_3_2_4}).

\

\textbf{C.3. Assume $\dim I=2$.} Since $I$ is closed, $Allow(I)\neq\emptyset$.
Apply a linear coordinate change to assume $(x,y)=(0,1)\in Allow(I)$. Then,
any jet in $I$ vanishes on $(0,1)$ and so $I$ is a 2-dimensional subspace of $\text{span}_{\mathbb{R}}\{x^3,x^2y,xy^2\}$. In Remark \ref{remark_on_2_dim_subspaces_in_the_plane} we saw that any 2-dimensional subspace of $\text{span}_{\mathbb{R}}\{x^2,xy,y^2\}$
either contains a positive define form, has the form, up to a linear
coordinate change, $\text{span}_{\mathbb{R}}\{xy,x^{2}\}$ or has the form, up to a linear
coordinate change, $\text{span}_{\mathbb{R}}\{xy,(x+ y)(x- y)\}$. 

Thus, by applying a linear
coordinate change we may assume that either (i)
$I=\text{span} _\bR\{(ax+by)xy,(ax+by)(x-y)(x+ y)\}$ for some $a,b\in\bR$ such that $a^2+b^2\neq0$; (ii)
$I=\text{span} _\bR\{(ax+by)xy,(ax+by)x^{2}\}$ for  some $a,b\in\bR$ such that $a^2+b^2\neq0$; or (iii) $I$ contains a jet of the form $x\cdot q(x,y)$ for some positive definite
quadratic form $q(x,y)$. In case (iii) we have $x^3,x^2,xy^2\in I$ by Lemma \ref{lemma_linear_times_definite_in_closed_imply_many_jets}, which is a contradiction to $\dim I=2$. In case (ii) if $b=0$ then $I=\langle x^3,x^2y\rangle_3$ and we are in case (\ref{non_pricipal_model_for_3_2_5_a}); if $a=0$ then $I=\langle x^2y,xy^2\rangle_3$ and
we are in case (\ref{non_pricipal_model_for_3_2_5_b}). If $ab\neq0$ then by scaling $x$ and $y$ we may assume $I=\langle (x+y)xy,(x+y)x^{2}\rangle_3$. Now, in the coordinate system $(X,Y)=(x,x+y)$ we have $I=\langle YX(Y-X),YX^{2}\rangle_3=\langle X^2Y,Y^2X\rangle_3$ and again we are in case (\ref{non_pricipal_model_for_3_2_5_b}). In case (i), if $ab=0$ then by possibly renaming $x\leftrightarrow y$ we may assume $b=0$ and $I=\langle x^2y,x^3-xy^2\rangle_3$. Let us show that this ideal is not closed: indeed, $Allow(I)=\{(0,\pm1)\}$.
In cones in the directions $(0,\pm1)$ we have that $S_1(x,y):=\frac{x}{y}$
satisfies the bounds in (\ref{second_cond_of_implied_poly_in_direction_SECOND_PAPER})
in any allowed direction and so $x^3=(\frac{x}{y})\cdot(x^{2}y)$ is implied by $I$ (by Corollary
\ref{cor_strong_directional_implication_imply_implication_SECOND_PAPER}). Therefore, $x^{3}\in cl(I)\setminus I$
and $I$ is not closed. Otherwise, $ab\neq 0$, and $Allow(I)=\{ax+by=0\}\cap S^1$. So, by a similar argument we now have $(ax+by)xy\cdot\frac{ax+by}{y}=(ax+by)^{2}x\in I$ and also $(ax+by)xy\cdot\frac{ax+by}{x}=(ax+by)^{2}y\in
I$. Since $\dim I=2$ we have $I=\langle (ax+by)^3,(ax+by)^2x\rangle_3$, so in the coordinate system $(u,v)=(ax+by,x)$ we have $I=\langle u^2v,u^3\rangle_3$, and we are again
in case (\ref{non_pricipal_model_for_3_2_5_a}). \end{proof}

\begin{remark}\label{remark_on_distinctness_for_3_2_case_non_principal}The ideals
(\ref{non_pricipal_model_for_3_2_1})-(\ref{non_pricipal_model_for_3_2_5_b}) are pairwise distinct, i.e.,
there does not exist a $C^3$ coordinate change around the origin such that
a given ideal can take two different forms from this list. (\ref{non_pricipal_model_for_3_2_1})
is obviously distinct, and (\ref{non_pricipal_model_for_3_2_4}) is distinct too as it
is the only ideal in this list that has dimension 3. (\ref{non_pricipal_model_for_3_2_2}) is the only ideal that is generated by jets of order of vanishing 2, so it is also distinct. (\ref{non_pricipal_model_for_3_2_3_new_as_missed}) and (\ref{non_pricipal_model_for_3_2_3}) are the only remaining ideals with a jet of order of vanishing 2, however the first contains a square of a jet of order of vanishing 1, while the latter does not, and so they are distinct. Finally, (\ref{non_pricipal_model_for_3_2_5_b}) and (\ref{non_pricipal_model_for_3_2_5_a}) are distinct as the first has two linearly independent allowed directions, while the latter does not.\end{remark}

\begin{proposition}\label{prop_C3R2_case_new_version} Let $I\lhd\mathcal{P}_0^3(\mathbb{R}^{2})$
be an ideal. Then, the following are equivalent:
\begin{itemize}
\item $I$ is closed.
\item There exists a closed set $\vec 0 \in E\subset\mathbb{R}^2$ such that
$I=I^3(E)$.
\item There exists a closed semi-algebraic set $\vec 0 \in E\subset\mathbb{R}^2$
such that $I=I^3(E)$.
\end{itemize}\end{proposition}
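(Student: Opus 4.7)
The implications $(3)\Rightarrow(2)$ and $(2)\Rightarrow(1)$ are immediate --- the former because a closed semi-algebraic set is a closed set, and the latter by Theorem \ref{main_theorem_on_necessary_condition_SECOND_PAPER}. The substance of the proposition is in $(1)\Rightarrow(3)$: every closed ideal $I\lhd\mathcal{P}_0^3(\mathbb{R}^2)$ is of the form $I^3(E)$ for some closed semi-algebraic $E\ni \vec 0$.

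The plan is to reduce this to a finite check using the two classification lemmas just proved. As explained in the introduction, any $C^3$-diffeomorphism of $\mathbb{R}^2$ fixing the origin induces the same automorphism of $\mathcal{P}_0^3(\mathbb{R}^2)$ as its third-degree Taylor polynomial, which is a semi-algebraic $C^3$-map on a neighborhood of the origin; and the image of a closed semi-algebraic set under a semi-algebraic $C^3$-diffeomorphism is again closed semi-algebraic. Hence the property ``arises as $I^3(E)$ for a closed semi-algebraic $E$'' is invariant under the equivalence relation, and combining Lemmas \ref{lemma_classify_closed_principal_in_C3R2_case_new_version} and \ref{lemma_classify_closed_non_principal_in_C3R2_case_new_version} it suffices to exhibit one closed semi-algebraic $E\subset\mathbb{R}^2$ realizing each model ideal on the two lists. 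For many of these models this is already done by the parenthetical remarks within the lemma statements, via Examples \ref{a_union_of_coordinates}, \ref{claim-on-lines}, \ref{example-xx-xy}, and \ref{example-xx}.

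The models still requiring an explicit realization are the principal ideals $\langle x^2-y^3\rangle_3$, $\langle yx^2\rangle_3$, $\langle x^3\rangle_3$ and the non-principal ideals $\langle x^2,xy^2\rangle_3$, $\langle x^2-y^3,xy^2\rangle_3$, $\langle x^3,x^2y,xy^2\rangle_3$, $\langle x^3,x^2y\rangle_3$, and $\langle x^2y,xy^2\rangle_3$. For each, I plan to take $E$ to be an explicit semi-algebraic set built from one or more single-sided algebraic branches of the form $\{x^p=y^q\}\cap\{x\geq 0\}$, optionally unioned with coordinate half-lines such as $\{x=0,\,y\geq 0\}$, following the template of Examples \ref{example-xx} and \ref{example-xx-xy}. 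The inclusion $I\subseteq I^3(E)$ will be verified by writing down an explicit $C^3$ function that vanishes on $E$ and has the required $3$-jet; these constructions can absorb singular factors like $y^2/x$ by the homogeneous cutoff $\tilde\theta(x,y)$ introduced in (\ref{theta_nice_usful_function}), exactly as was done in Example \ref{example-xx}. For the reverse inclusion $I^3(E)\subseteq I$, I plan to substitute a polynomial parametrization of each branch of $E$ into a candidate jet $p$, compare orders of $t$ to the $o(|\vec x|^3)$ requirement, intersect the resulting constraints across branches, and --- where still necessary --- invoke Rolle's theorem or Lemma \ref{lemma_vanishes_on_corners_of_rectangles} applied to small rectangles with vertices on $E$ to kill the remaining unwanted monomials.

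The hard part will be this last step for the ideals with mixed orders of vanishing, especially $\langle x^2-y^3, xy^2\rangle_3$: here the Taylor-expansion-plus-Rolle argument must force exactly the two generators into $I^3(E)$ without also forcing in any additional jet such as $xy$ or $y^3$, and without collapsing to a larger model like $\langle x^2-y^3,xy^2,x^3\rangle_3$. The natural candidate is $E=\{x^2=y^3\}\cap\{x\geq 0\}$ or a slight enlargement by a coordinate half-line, but matching the target ideal exactly will require careful bookkeeping of which monomials can and cannot be realized as $3$-jets of $C^3$ functions vanishing on $E$. Similar fine-tuning will be needed for $\langle x^3,x^2y\rangle_3$ and its ``mirror'' $\langle x^2y,xy^2\rangle_3$, where the allowed directions coincide but the generators differ, so the parametrizations must be chosen to produce precisely the correct linear dependencies among the surviving monomials.
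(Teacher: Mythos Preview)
Your proposal is correct and takes essentially the same approach as the paper: reduce via Lemmas \ref{lemma_classify_closed_principal_in_C3R2_case_new_version} and \ref{lemma_classify_closed_non_principal_in_C3R2_case_new_version} to the same eight remaining model ideals, then realize each by an explicit closed semi-algebraic set using parametrization-and-Rolle arguments (the paper does this in Examples \ref{example_3_jets_x^2-y^3_principal}--\ref{example_lxx_lxy_in_P3R2_b}, and your guess $E=\{x^2=y^3\}\cap\{x\geq 0\}$ for $\langle x^2-y^3,xy^2\rangle_3$ is exactly right). The one place where the paper departs from your branch-union template is $\langle x^3,x^2y,xy^2\rangle_3$, which it realizes via the single curve $\{x(x^2+y^2)=|y|^{3+\epsilon}\}$ and Lemma \ref{lemma_linear_times_definite_in_closed_imply_many_jets}, rather than a union of half-branches.
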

\begin{proof}
By Theorem \ref{main_theorem_on_necessary_condition_SECOND_PAPER}, Lemma \ref{lemma_classify_closed_principal_in_C3R2_case_new_version} and Lemma \ref{lemma_classify_closed_non_principal_in_C3R2_case_new_version} it is enough to show
that each of the ideals in (\ref{model_for_3_2_7}), (\ref{model_for_3_2_9}), (\ref{model_for_3_2_10}), (\ref{non_pricipal_model_for_3_2_3_new_as_missed}), (\ref{non_pricipal_model_for_3_2_3}), (\ref{non_pricipal_model_for_3_2_4}), (\ref{non_pricipal_model_for_3_2_5_a}) and (\ref{non_pricipal_model_for_3_2_5_b}) arises as $I^3(E)$ for some closed semi-algebraic set $E\subset\mathbb{R}^2$ containing the origin. This is established in Examples \ref{example_3_jets_x^2-y^3_principal}, \ref{first-example-in-P3},\ref{second-example-in-P3}, \ref{example_xx_xyy}, \ref{example_xx-yyy_xyy_in_P3R2},\ref{xxx-xxy-xyy-example-in-P3}, \ref{example_lxx_lxy_in_P3R2_a} and \ref{example_lxx_lxy_in_P3R2_b} below.
\end{proof}

\begin{example}\label{example_3_jets_x^2-y^3_principal}$\langle
x^2-y^3 \rangle_3=I^3(\{x^2=y^3\})$.

Indeed, first note that clearly $x^2-y^3\in I^3(\{x^2=y^3\})$ , and so also $x^3,x^2y\in I^3(\{x^2=y^3\})$. Let $f\in C^3(\bR^2)$ be such that $f|_{\{x^2=y^3\}}=0$, and let $J^3(f)=p(x,y)$. We can then write $f(x,y)=p(x,y)+o((x^2+y^2)^{3/2})$.
We need to show that $p(x,y)\in\langle x^2-y^{3}\rangle_3 $. So it is enough to show that if $$p(x,y)=ax+by+cx^2+dxy+ey^2+gxy^2$$ for
some $a,b,c,d,e,g\in\bR$, then $a=b=c=d=e=g=0$. First note that for any $t\in\bR$
we have $$0=f(t^3,t^2)=at^3+bt^2+ct^6+dt^5+et^4+gt^7+o(t^6),$$which implies
$a=b=c=d=e=0$. So $f(x,y)=gxy^2+o((x^2+y^2)^{3/2})$, and
we need to show that $g=0$. Note that for
any $\epsilon>0$ we have $f(\epsilon^{3/2},\epsilon)=f(-\epsilon^{3/2},\epsilon)=0$,
 and so by Rolle's Theorem there exists $-\epsilon^{3/2}<\delta(\epsilon)<\epsilon^{3/2}$
such that $f_{x}(\delta(\epsilon),\epsilon)=0$. On the other hand we
have
$f_{x}(x,y)=gy^{2}+o(x^2+y^2)$.
As $\abs{\delta(\epsilon)}<\epsilon^{3/2}$, on the set $\{(\delta(\epsilon),\epsilon)\}_{\epsilon>0}$
we have $$0=f_{x}(\delta(\epsilon),\epsilon)=g\epsilon^2+o(\delta(\epsilon)^2+\epsilon^2)=g\epsilon^2+o(\epsilon^2),$$which
is only possible if $g=0$. We conclude that indeed $\langle
x^2-y^3 \rangle_3=I^3(\{x^2=y^3\})$.\end{example}

\begin{example}\label{first-example-in-P3}$\langle
yx^2 \rangle_3=I^3(\{y(x^2-y^4)=0\})$.

Indeed, first note that $y(x^2-y^4)\in C^3(\bR^2)$ vanishes on $\{y(x^2-y^4)=0\}$,
and so we have $yx^2\in I^3(\{y(x^2-y^4)=0\})$. Recall that $I^2(\{y=0\})=\langle
y \rangle_2$ (Corollary \ref{cor-on-prime-with-a-linear-part}), and moreover
$I^2(\{x^2=y^4\})=\langle x^2 \rangle_2$ (Example \ref{non-real-radical-example}).
Thus, the 2-jet of any jet in $I^3(\{y(x^2-y^4)=0\})$ must be divisible by
both $y$ and by $x^2$, i.e., must be zero. We conclude that any non-zero
jet in $I^3(\{y(x^2-y^4)=0\})$ is of order of vanishing 3. It is thus enough to show that if $f\in C^3(\bR^2)$ is such that $f(x,y)=ax^3+bxy^2+cy^3+o((x^2+y^2)^{3/2})$ for some $a,b,c\in\mathbb{R}$ and $f|_{\{y(x^2-y^4)=0\}}=0$, then $a=b=c=0$. Let $f$ be as assumed. 
For any $x\in \bR$ we have $$0=f(x,0)=ax^3+o(x^3),$$ thus $a=0$ and so $f(x,y)=bxy^2+cy^3+((x^2+y^2)^{3/2})$.
Assume towards a contradiction that $c\neq0$. Then, $(x,y)=(0,1)\in Forb(I)$. However, clearly $(x,y)=(0,1)\in T(\{y(x^2-y^4)=0\})$, which is a contradiction by Lemma \ref{tangent_subset_bad_lemma_SECOND_PAPER}. Therefore, $f(x,y)=bxy^2+((x^2+y^2)^{3/2})$
and we need to show that $b=0$. Note that for any $\epsilon>0$ we have $f(\epsilon^2,\epsilon)=f(-\epsilon^2,\epsilon)=0$.
By Rolle's Theorem there exists $-\epsilon^2<\delta(\epsilon)<\epsilon^2$
such that $f_x(\delta(\epsilon),\epsilon)=0$. On the other hand we have $f_x(x,y)=by^2+o(x^2+y^2)$.
As $\abs{\delta(\epsilon)}<\epsilon^2$, on the set $\{(\delta(\epsilon),\epsilon)\}_{\epsilon>0}$
we have: $$0=f_x(\delta(\epsilon),\epsilon)=b\epsilon^2+o(\delta(\epsilon)^2+\epsilon^2)=b\epsilon^2+o(\epsilon^2),$$which
is only possible if $b=0$. We conclude that indeed $\langle
yx^2 \rangle_3=I^3(\{y(x^2-y^4)=0\})$. \end{example}

\begin{example}\label{second-example-in-P3}
$\langle
x^3 \rangle_3=I^3(\{x(x^2-y^4)=0\})$.

Indeed, first note that $x(x^2-y^4)\in C^3(\bR^2)$ vanishes on $\{x(x^2-y^4)=0\}$,
and so we have $x^3\in I^3(\{x(x^2-y^4)=0\})$. Let $f\in C^3(\bR^2)$ be such that
$f_{\{x(x^2-y^4)=0\}}=0$. In particular $f_{\{x^2-y^4=0\}}=0$ and so by Example
\ref{non-real-radical-example} the 2-jet of $f$ is divisible by $x^2$. Thus,
it is enough to show that if $$f(x,y)=ax^2y+bxy^2+cy^3+dx^2+o((x^2+y^2)^{3/2})$$
for some $a,b,c,d\in\bR$, then $a=b=c=d=0$. Let $f$ be as assumed. For any $y\in \bR$ we have $$0=f(0,y)=cy^3+o(y^3),$$
thus $c=0$ and $f(x,y)=ax^2y+bxy^2+dx^2+o((x^2+y^2)^{3/2})$. Note that for
any $\epsilon>0$ we have $f(\epsilon^2,\epsilon)=f(-\epsilon^2,\epsilon)=f(0,\epsilon)=0$.
By applying Rolle's Theorem twice we find that there exists $-\epsilon^2<\delta(\epsilon)<\epsilon^2$
such that $f_{xx}(\delta(\epsilon),\epsilon)=0$. On the other hand we found that
$f_{xx}(x,y)=2ay+2d+o((x^2+y^2)^{1/2})$.
As $\abs{\delta(\epsilon)}<\epsilon^2$, on the set $\{(\delta(\epsilon),\epsilon)\}_{\epsilon>0}$
we have: $$0=f_{xx}(\delta(\epsilon),\epsilon)=2a\epsilon+2d+o(\delta(\epsilon)^2+\epsilon^2)^{1/2}=2a\epsilon+2d+o(\epsilon),$$which
is only possible if $a=d=0$ and so $f(x,y)=bxy^2+o((x^2+y^2)^{3/2})$. Similarly,
as for
any $\epsilon>0$ we have $f(\epsilon^2,\epsilon)=f(-\epsilon^2,\epsilon)=0$,
 by Rolle's theorem there exists $-\epsilon^2<\tilde\delta(\epsilon)<\epsilon^2$
such that $f_{x}(\tilde\delta(\epsilon),\epsilon)=0$. On the other hand we
have
$f_{x}(x,y)=by^{2}+o(x^2+y^2)$.
As $\abs{\tilde\delta(\epsilon)}<\epsilon^2$, on the set $\{(\tilde\delta(\epsilon),\epsilon)\}_{\epsilon>0}$
we have: $$0=f_{x}(\tilde\delta(\epsilon),\epsilon)=b\epsilon^2+o(\tilde\delta(\epsilon)^2+\epsilon^2)=b\epsilon^2+o(\epsilon^2),$$which
is only possible if $b=0$. We conclude that indeed $\langle
x^3 \rangle_3=I^3(\{x(x^2-y^4)=0\})$. \end{example}

\begin{example}\label{example_xx_xyy}$\langle x^2,xy^2\rangle_3=I^3(\{(x=\abs{y}^{7/4})\wedge y>0\})$.

Indeed, denote $E:=\{(x=\abs{y}^{7/4})\wedge
y>0\}$.
Then, the functions
$$y^{2}(x-\abs{y}^{7/4}),x^2-\abs{y}^{7/2}\in
C^3(\bR^2)$$ both vanish on $E$, and so $x^2,xy^{2},x^{2}y,x^{3}\in\ I^3(E)$. So it is enough to show that if $f(x,y)\in C^3(\bR^2)$
has the form $$f(x,y)=ax+by+cxy+dy^2+ey^3+o((x^2+y^2)^{3/2})$$ for some
$a,b,c,d,e\in\bR$
and $f|_E=0$, then $a=b=c=d=e=0$. Let $f$ be as assumed. For any $t>0$ we have $$0=f(t^{7/4},t)=at^{7/4}+bt+ct^{11/4}+dt^{2}+et^3+o((t^{7/2}+t^2)^{3/2})$$ $$=at^{7/4}+bt+ct^{11/4}+dt^{2}+et^3+o(t^{3}),$$which
is only possible if $a=b=c=d=e=0$, so indeed $\langle x^2,xy^2\rangle_3=I^3(\{(x=\abs{y}^{7/4})\wedge
y>0\})$.
\end{example}

\begin{example}\label{example_xx-yyy_xyy_in_P3R2}$\langle
x^2-y^{3},xy^{2}\rangle_3=I^3(\{x^2=y^{3}\}\cap\{x\geq0\})$.

Indeed, denote $E:=\{x^2=y^{3}\}\cap\{x\geq0\}$ and recall that by Example
\ref{example-xx-xy} we have that $I^2(\{x^2=y^{3}\}\cap\{x\geq0\})=\langle
x^2,xy\rangle_2$. As any $C^3$-smooth function is also $C^2$-smooth, we have
that any jet in $I^3(E)$ is of the form $$ax^2+bxy+cx^3+dx^2y+exy^2+gy^3,$$
for some $a,b,c,d,e,g\in\bR$. Also note that the functions $$x^2-y^3,y^2(x-\abs{y}^{3/2})\in
C^3(\bR^2)$$ both vanish on $E$ and so $x^2-y^3,xy^2\in\ I^3(E)$ and also
$x^3,x^2y\in I^3(E)$. So it is enough to show that if $f(x,y)\in C^3(\bR^2)$
has the form $$f(x,y)=bxy+gy^{3}+o((x^2+y^2)^{3/2})$$ for some $b,g\in\bR$
and $f|_E=0$, then $b=g=0$. Let $f$ be as assumed. For any $t>0$ we have $$0=f(t^3,t^2)=bt^5+gt^6+o((t^6+t^4)^{3/2})=bt^5+gt^6+o(t^{6}),$$which
is only possible if $b=g=0$, and indeed $\langle
x^2-y^{3},xy^{2}\rangle_3=I^3(\{x^2=y^{3}\}\cap\{x\geq0\})$.\end{example}

\begin{example}\label{xxx-xxy-xyy-example-in-P3} Set $\epsilon=10^{-9}$.
Then, $\langle x^3,x^2y,xy^2 \rangle_3=I^3(\{x(x^2+y^2)-\abs{y}^{3+\epsilon}=0\})$.

Indeed, we first denote $E:=\{x(x^2+y^2)-\abs{y}^{3+\epsilon}=0\}$. Let us analyze the curve
$E$ near the origin.
Whenever $y\neq0$ we may write $x=\abs{y}^{1+\epsilon}\cdot z$, for some
function $z(x,y)$. Substituting for $x$ we get $$\abs{y}^{1+\epsilon}\cdot
z\cdot(\abs{y}^{2+2\epsilon}\cdot z^2+y^2)-\abs{y}^{3+\epsilon}=0,$$ which
implies by dividing by $\abs{y}^{3+\epsilon}$ $$z+\abs{y}^{2\epsilon}z^3-1=0,$$and
if we define $w:=\abs{y}^{2\epsilon}$ we get $$z+wz^3-1=0.$$ The implicit
function theorem and the fact that $(z,w)=(1,0)$ is a solution of this equation
now imply that in a neighborhood of $(z,w)=(1,0)$ we have $$z=1+w\cdot\varphi(w),$$where $\varphi(w)$ is a real analytic
function. Substituting back we get $$x=\abs{y}^{1+\epsilon}(1+\abs{y}^{2\epsilon}\varphi(\abs{y}^{2\epsilon})).$$
We conclude that \begin{multline}\label{rand_label_on_surce_example_in_C3R2}\text{for any }y\neq0\text{ (small enough) there exists }\phi(y)\in\mathbb{R},\\\text{such that }(\phi(y),y)\in
E\text{ and }\phi(y)=\Theta(\abs{y}^{1+\epsilon}).\end{multline}
As  $x(x^2+y^2)-\abs{y}^{3+\epsilon}\in C^3(\bR^2)$ vanishes on the set $E$,
we have  $x(x^2+y^2)\in I^3(E)$. By Lemma \ref{lemma_linear_times_definite_in_closed_imply_many_jets} and Theorem \ref{main_theorem_on_necessary_condition_SECOND_PAPER} we also have $x^3,x^2y,xy^2\in I^3(E)$. It
is thus enough to show that if $f\in C^3(\bR^3)$ is such that $f|_E=0$ and
$$f(x,y)=ay^3+bx^2+cxy+dy^2+ex+gy+o((x^2+y^2)^{3/2})$$ for some
$a,b,c,d,e,g\in\bR$, then $a=b=c=d=e=g=0$. Let $f$ be as assumed. By (\ref{rand_label_on_surce_example_in_C3R2}), for any small enough $y\neq0$
we have $$0=f(\phi(y),y)=ay^3+b\phi(y)^2+c\phi(y)y+dy^2+e\phi(y)+gy+o((\phi(y)^2+y^2)^{3/2})$$
$$=ay^3+b\Theta(\abs{y}^{1+\epsilon})^2+c\Theta(\abs{y}^{1+\epsilon})y+dy^2+e\Theta(\abs{y}^{1+\epsilon})+gy+o((\Theta(\abs{y}^{1+\epsilon})^2+y^2)^{3/2})$$
$$=ay^3+b\Theta(\abs{y}^{2+2\epsilon})+c\Theta(\abs{y}^{2+\epsilon})+dy^2+e\Theta(\abs{y}^{1+\epsilon})+gy+o(y^3),$$
which implies $a=b=c=d=e=g=0$, and so $\langle x^3,x^2y,xy^2 \rangle_3=I^3(\{x(x^2+y^2)-\abs{y}^{3+\epsilon}=0\})$.
\end{example}

\begin{example}\label{example_lxx_lxy_in_P3R2_a}$\langle
x^3,x^{2}y\rangle_3=I^3(\{x(x^2-y^{3})=0\}\cap\{x\geq0\})$.

Indeed, denote $E:=\{x(x^2-y^{3})=0\}\cap\{x\geq0\}$ and recall that by Example
\ref{example-xx} we have $I^2(E)=\langle x^2\rangle_2$. As any $C^3$-smooth
function is also $C^2$-smooth, we have
that any jet in $I^3(E)$ is of the form $$ax^2+bxy^{2}+cx^2y+dy^3+ex^{3},$$
for some $a,b,c,d,e\in\bR$. Let $\tilde \theta(x,y)$ be as in (\ref{theta_nice_usful_function}). Then, the functions
$$x(x^2-y^3),xy(x-|y|^{3/2}\cdot \tilde\theta(x,y))\in
C^3(\bR^2)$$ both vanish on $E$ (see footnote \ref{footnote_on_function_vanishes_on_a_set}), and so $x^3,x^{2}y\in\ I^3(E)$. So it is enough to show that if $f(x,y)\in C^3(\bR^2)$
has the form $$f(x,y)=ax^{2}+bxy^{2}+dy^{3}+o((x^2+y^2)^{3/2})$$ for some
$a,b,d\in\bR$
and $f|_E=0$, then $a=b=d=0$. Let $f$ be as assumed. For any $y>0$ we have $$0=f(0,y)=dy^3+o((0+y^2)^{3/2})=dy^3+o(y^{3}),$$which
is only possible if $d=0$, and therefore $f(x,y)=ax^2+bxy^2+o((x^2+y^2)^{3/2})$.
Now, for any $t>0$ we have $$0=f(t^3,t^2)=at^6+bt^7+o((t^6+t^4)^{3/2})=at^6+bt^7+o(t^{6}),$$which
is only possible if $a=0$, and therefore $f(x,y)=bxy^2+o((x^2+y^2)^{3/2})$.
As for any $\epsilon>0$ we have $f(0,\epsilon)=f(\epsilon^{3/2},\epsilon)=0$,
by Rolle's Theorem there exists $0<\delta(\epsilon)<\epsilon^{3/2}$
such that $f_{x}(\delta(\epsilon),\epsilon)=0$. On the other hand we
have $f_x(x,y)=by^{2}+o(x^2+y^2)$. So in particular for any $\epsilon>0$
we have $$0=f_x(\delta(\epsilon),\epsilon)=b\epsilon^2+o(\delta(\epsilon)^2+\epsilon^2)=b\epsilon^2+o(\epsilon^2),$$which
is only possible if $b=0$, and indeed $\langle
x^3,x^{2}y\rangle_3=I^3(\{x(x^2-y^{3})=0\}\cap\{x\geq0\})$.
\end{example}

\begin{example}\label{example_lxx_lxy_in_P3R2_b}$\langle
x^2y,xy^{2}\rangle_3=I^3(\{y(x^2-y^{3})=0\}\cap\{x\geq0\})$.

Indeed, denote  $E:=\{y(x^2-y^{3})=0\}\cap\{x\geq0\}$.
As $$E\supset \{x^2=y^{3}\}\cap\{x\geq0\},$$ we have \begin{gather}\label{rand_label_for_last_example_in_C3R2_numer_1}I^3(E)\subset
I^3(\{x^2=y^{3}\}\cap\{x\geq0\})=\langle
x^2-y^{3},xy^{2}\rangle_3,\end{gather}where the last equality follows from
Example
\ref{example_xx-yyy_xyy_in_P3R2}. An easy modification of the proof of Corollary \ref{cor-on-prime-with-a-linear-part} shows that \begin{gather}\label{rand_label_for_last_example_in_C3R2_numer_2}I^3(\{y=0\}\cap\{x\geq0\})=\langle y\rangle_3.\end{gather}  Since
$$E\supset \{y=0\}\cap\{x\geq0\},$$ we have by (\ref{rand_label_for_last_example_in_C3R2_numer_2})
that \begin{gather}\label{rand_label_for_last_example_in_C3R2_numer_3}I^3(E)\subset
I^3(\{y=0\}\cap\{x\geq0\})=\langle y\rangle_3.\end{gather} Now (\ref{rand_label_for_last_example_in_C3R2_numer_1})
and (\ref{rand_label_for_last_example_in_C3R2_numer_3}) imply that $$I^3(E)\subset\langle
x^2-y^{3},xy^{2}\rangle_3\cap\langle y\rangle_3=\langle
x^2y,xy^{2}\rangle_3.$$ 
The functions
$$y(x^2-y^3),y^{2}(x-|y|^{3/2})\in
C^3(\bR^2)$$ both vanish on $E$, and so indeed $I^3(E)=\langle
x^2y,xy^{2}\rangle_3$.\end{example}

\section{The $C^2(\mathbb{R}^3)$ case}\label{section_2_3}

\begin{lemma}\label{lemma_C2R3_case__with_jet_of_oov_1} Let $I\lhd\mathcal{P}_0^2(\mathbb{R}^{3})$
be an ideal, and assume $I$ contains a jet of order of vanishing 1. Then, the following are equivalent:
\begin{itemize}
\item $I$ is closed.
\item There exists a closed set $\vec 0 \in E\subset\mathbb{R}^3$ such that
$I=I^2(E)$.
\item There exists a closed semi-algebraic set $\vec 0 \in E\subset\mathbb{R}^3$
such that $I=I^2(E)$.
\end{itemize}\end{lemma}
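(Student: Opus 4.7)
The plan is to reduce the problem, via a semi-algebraic $C^2$ coordinate change near the origin, to the two-variable classification of Proposition \ref{prop_C2R2_case_new_version}. Two of the three implications are automatic: the third trivially implies the second, and the second implies the first by Theorem \ref{main_theorem_on_necessary_condition_SECOND_PAPER}. So the content is to realize each closed $I\lhd\mathcal{P}_0^2(\mathbb{R}^3)$ containing a jet of order of vanishing $1$ as $I^2(E)$ for a closed semi-algebraic $E\subset\mathbb{R}^3$ containing the origin. I would begin by normalizing. Pick $p=L+Q\in I$ with $L$ a nonzero linear form and $Q$ quadratic; a linear coordinate change sends $L$ to $x$, so $p=x+Q(x,y,z)$. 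The polynomial (hence semi-algebraic) map $\phi(x,y,z):=(x-Q(x,y,z),\,y,\,z)$ is a $C^2$-diffeomorphism near the origin (its Jacobian there is the identity), and an easy $2$-jet calculation yields $J^2(p\circ\phi)=x$. Passing to the equivalent ideal under the induced automorphism, I may assume $x\in I$, hence $\langle x\rangle_2=\mathbb{R}\text{-span}\{x,x^2,xy,xz\}\subset I$.

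Next, I would exploit the splitting $I=\langle x\rangle_2+I_0$, where $I_0:=I\cap\mathbb{R}[y,z]_{\le 2}$ is an ideal of $\mathcal{P}_0^2(\mathbb{R}^2)$ in the variables $y,z$. Indeed, every $h\in I$ decomposes uniquely as $h(x,y,z)=x\cdot\ell(x,y,z)+h(0,y,z)$, and since $\langle x\rangle_2\subset I$, the residual $h(0,y,z)$ itself lies in $I$, hence in $I_0$. Granting that $I_0$ is closed in $\mathcal{P}_0^2(\mathbb{R}^2)$, Proposition \ref{prop_C2R2_case_new_version} furnishes a closed semi-algebraic $E'\subset\mathbb{R}^2$ with $I_0=I^2(E')$, and Lemma \ref{lemma_induction_adding_a_coordinate_jet} applied to $E:=\{0\}\times E'\subset\mathbb{R}^3$ gives $I^2(E)=\langle x\rangle_2+I_0=I$, finishing the proof.

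The main obstacle is therefore to show that the closedness of $I$ forces the closedness of $I_0$. My approach is the allowed-directions dictionary: I will establish $Allow(I)=Allow(I_0)$ under the natural embedding $S^1\cong\{x=0\}\cap S^2\hookrightarrow S^2$. The inclusion $Allow(I)\subset\{x=0\}\cap S^2$ follows from $x\in I$ via Lemma \ref{equiv_conditions_for_forbidden_directions_lemma_SECOND_PAPER}. For the equality itself, any $\mathbb{R}^2$-witness for allowedness of a finite family in $I_0$ inside $\Gamma_{\mathbb{R}^2}(\omega,\delta,r)$ lifts to an $\mathbb{R}^3$-witness via $(y,z)\mapsto(0,y,z)$; conversely, any point $(x_0,y_0,z_0)$ in a narrow $\mathbb{R}^3$-cone around an equatorial $\omega=(0,\omega_2,\omega_3)$ has $|x_0|\le\delta|\vec x|$, so its projection onto the $yz$-plane lies in a comparable $\mathbb{R}^2$-cone — the distortion of both length and direction being of order $\delta^2$ — and, since elements of $I_0$ do not depend on $x$, the values are preserved, delivering the $\mathbb{R}^2$-witness. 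Once $Allow(I)=Allow(I_0)$ is in hand, the conclusion is immediate: if $I=\mathcal{P}_0^2(\mathbb{R}^3)$ then trivially $I_0=\mathcal{P}_0^2(\mathbb{R}^2)$ is closed; otherwise $I$ closed forces $Allow(I)\ne\emptyset$ by Corollary \ref{cor_no_allowed_directions_SECOND_PAPER}, hence $Allow(I_0)\ne\emptyset$, and Remark \ref{remark_in_C2R2_enough_to_have_allowed_direction} closes the argument. The only genuinely technical piece is the cone-projection estimate, and that is a one-line elementary computation.
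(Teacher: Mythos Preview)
Your proof is correct and follows essentially the same route as the paper's: normalize so that $x\in I$, pass to the two-variable ideal $I_0=I\cap\mathbb{R}[y,z]$ (the paper writes it as $I'=\langle p_1,\dots,p_l\rangle_2$ via a choice of generators, which is the same thing), show $Allow(I_0)\neq\emptyset$ from $Allow(I)\neq\emptyset$ using that the $p_i$ are independent of $x$, invoke Remark \ref{remark_in_C2R2_enough_to_have_allowed_direction} and Proposition \ref{prop_C2R2_case_new_version} to get $E'$, and finish with Lemma \ref{lemma_induction_adding_a_coordinate_jet}. Your cone-projection paragraph is a slightly more explicit version of the paper's one-line ``recalling that $p_1,\dots,p_l$ do not depend on the $x$ coordinate'', and you prove the full equality $Allow(I)=Allow(I_0)$ whereas the paper only uses (and states) the direction needed, but the substance is identical.
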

\begin{proof}
Let $I\lhd\mathcal{P}_0^2(\mathbb{R}^{3})$ be an ideal that contains a jet of order
of vanishing 1. If $I$ is principal then the proposition is a special case of Proposition \ref{prop_principal_ideals_in_P2Rn} and we are done. So we assume $I$ is not principal. By Theorem \ref{main_theorem_on_necessary_condition_SECOND_PAPER} it is enough to show that if $I$ is closed then there exists a closed semi-algebraic set $\vec 0 \in E\subset\mathbb{R}^3$
such that $I=I^3(E)$.  So we assume $I$ is closed. If $I=\mathcal{P}_0^2(\mathbb{R}^3)=I^3(\{\vec 0\})$ we are done. We thus further assume from now on that $I\neq \mathcal{P}_0^2(\mathbb{R}^3)$ and so $Allow(I)\neq\emptyset$ by
Corollary \ref{cor_no_allowed_directions_SECOND_PAPER}.

By applying a $C^2$ semi-algebraic coordinate change around the origin we may assume $x\in I$, where $x,y,z$ is a (standard) coordinate system of $\mathbb{R}^3$. So we can write \begin{gather}\label{rand_label1_lemma_C2R3_case__with_jet_of_oov_1}I=\langle p_1,p_2,\dots p_l,x\rangle_2\text{ for some }p_1(y,z),p_2(y,z),\dots p_l(y,z).\end{gather} In particular $\omega=(\omega_x,\omega_y,\omega_z)\in Allow(I)$ only if $\omega_x=0$. As $Allow(I)\neq\emptyset$ we conclude that there exists $\omega\in Allow(I)\cap\{y-z\text{ plane}\}$, or explicitly
\begin{multline}\label{rand_label2_lemma_C2R3_case__with_jet_of_oov_1}\text{there exists }\omega\in\{y-z\text{ plane}\}\text{ such that } \text{there does not exist }c,\delta,r>0 \text{
such that} \\ \abs{p_1(x,y,z)}+\abs{p_2(x,y,z)}+\dots+\abs{p_l(x,y,z)}>
c\cdot(x^2+y^2+z^2)^{m/2}\\ \text { for all }(x,y,z)\in\Gamma(\omega,\delta,r).\end{multline}
Define $I'\lhd\mathcal{P}_0^2(\mathbb{R}^2)$ by $I':=\langle p_1,\dots p_l\rangle_2$, where we identify $\mathbb{R}^2$ with the $y-z$ plane. Note that by our assumption $I'\neq \mathcal{P}_0^2(\mathbb{R}^2)$, as $I\neq \mathcal{P}_0^2(\mathbb{R}^3)$. Recalling that $p_1,\dots,p_l$ do not depend on the $x$ coordinate, (\ref{rand_label2_lemma_C2R3_case__with_jet_of_oov_1}) implies that $Allow(I')\neq\emptyset$. So by Remark \ref{remark_in_C2R2_enough_to_have_allowed_direction} and Proposition \ref{prop_C2R2_case_new_version} there exists a semi-algebraic set $E'\subset\mathbb{R}^2$ such that \begin{gather}\label{rand_label3_lemma_C2R3_case__with_jet_of_oov_1}I'=I^2(E')=\langle p_1,\dots p_l\rangle_2\lhd\mathcal{P}_0^2(\mathbb{R}^2).\end{gather}Finally, defining the semi-algebraic set \begin{gather}\label{rand_label4_lemma_C2R3_case__with_jet_of_oov_1}E:=\{(x,y,z)\in\mathbb{R}^3|(y,z)\in E', x=0\},\end{gather}we get by (\ref{rand_label1_lemma_C2R3_case__with_jet_of_oov_1}), (\ref{rand_label3_lemma_C2R3_case__with_jet_of_oov_1}), (\ref{rand_label4_lemma_C2R3_case__with_jet_of_oov_1}) and Lemma \ref{lemma_induction_adding_a_coordinate_jet} that $I=I^2(E)$.\end{proof}

\begin{remark}The proofs of Proposition \ref{prop_C2R2_case_new_version} and of Lemma \ref{lemma_C2R3_case__with_jet_of_oov_1} easily imply that any closed ideal in $\mathcal{P}_0^2(\mathbb{R}^{3})$
that contains a jet of order of vanishing 1 is, up to a $C^2$ semi-algebraic coordinate change around the origin, one of the ideals (\ref{relative_of_model_for_2_2_1_prime})-(\ref{relative_of_model_for_2_2_6}) below. Moreover, these ideals are pairwise distinct, i.e.,
there does not exist a $C^2$ coordinate change around the origin such that
a given ideal can take two different forms from this list: (\ref{relative_of_model_for_2_2_1_prime}) and (\ref{relative_of_model_for_2_2_2})
are obviously different from the rest. (\ref{relative_of_model_for_2_2_6}) is distinct
as it is the only remaining ideal of dimension 6. (\ref{relative_of_model_for_2_2_3})
is a also
distinct as it is the only remaining ideal that contains 2 linearly independent jets of order of vanishing 1. Finally, (\ref{relative_of_model_for_2_2_4}) and (\ref{relative_of_model_for_2_2_5}) are distinct as the first does not have two linearly
independent allowed directions, while the latter does.

\begin{gather}\label{relative_of_model_for_2_2_1_prime}\langle x\rangle_2\end{gather}
\begin{gather}\label{relative_of_model_for_2_2_2}\mathcal{P}_0^2(\mathbb{R}^3)\end{gather}
\begin{gather}\label{relative_of_model_for_2_2_3}\langle x, y\rangle_2 \end{gather}
\begin{gather}\label{relative_of_model_for_2_2_4}\langle x, y^2\rangle_2\end{gather}
\begin{gather}\label{relative_of_model_for_2_2_5}\langle x, yz\rangle_2
\end{gather}
\begin{gather}\label{relative_of_model_for_2_2_6}\langle x, y^{2},yz\rangle_2
\end{gather}

 \end{remark}

\begin{lemma}\label{lemma_classify_closed_ideals_in_C2R3_1_dim_allow}Let $I\lhd\mathcal{P}_0^2(\mathbb{R}^{3})$
be a closed non-principal ideal. Assume $I$ does not contain any jet of order of vanishing 1 and that $\dim\text{span}_{\mathbb{R}}Allow(I)=1$. Then, $I$ is, up to a
linear
coordinate change, one of the following:

\begin{gather}\label{model_for_2_3_with_1_dim_allow_1}\langle x^2,y^2\rangle_2\end{gather}
\begin{gather}\label{model_for_2_3_with_1_dim_allow_2} \langle xy,y^{2} -x^{2}\rangle_2 \end{gather}
\begin{gather}\label{model_for_2_3_with_1_dim_allow_3}\langle
x^2,y^2,xy\rangle_2\end{gather}
\begin{gather}\label{model_for_2_3_with_1_dim_allow_3'_the_one_we_missed}\langle
x^2,y^2+xz,xy\rangle_2\end{gather}
\begin{gather}\label{model_for_2_3_with_1_dim_allow_4}\langle
x^2,y^2,xy,xz\rangle_2\end{gather}
\begin{gather}\label{model_for_2_3_with_1_dim_allow_5}\langle
x^2,y^2,xy,xz,yz\rangle_2\end{gather}

\end{lemma}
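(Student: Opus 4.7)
The plan is as follows. First I would set up the problem algebraically. Since $I$ contains no order-of-vanishing-$1$ jet, and in the 2-jet ring any product of an order-2 jet with any non-constant jet is zero, the subset $I$ is automatically a linear subspace of the six-dimensional space of homogeneous quadratic forms, with $\dim I \geq 2$ by non-principality. By Corollary \ref{old_new_cor_on_how_to_calc_allow_with_inclusion_SECOND_PAPER}, $Allow(I) \subset S^2$ is the common zero set of the generators, hence invariant under $\vec\omega \mapsto -\vec\omega$. The hypothesis $\dim \text{span}_{\mathbb{R}} Allow(I) = 1$ then forces $Allow(I) = \{\pm \omega\}$, and after a linear change of coordinates I may assume $\omega = (0,0,1)$. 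Since every $p \in I$ vanishes at $\omega$, $p$ has no $z^2$-coefficient, so $I \subset V := \text{span}_{\mathbb{R}}\{x^2,xy,xz,y^2,yz\}$. Introduce the decomposition $V = U \oplus W$ with $U := \text{span}_{\mathbb{R}}\{x^2,xy,y^2\}$ and $W := \text{span}_{\mathbb{R}}\{xz,yz\}$, and let $\pi_W:V \to W$ be the projection along $U$.

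The main technical step, from which almost everything else will follow, is the claim that \emph{if $xz + u \in I$ for some $u \in U$, then $x^2, xy \in I$} (and symmetrically with $y$ in place of $x$). I would establish this by the strong-directional-implication criterion of Definition \ref{implied_polynomial_definition_SECOND_PAPER} together with Corollary \ref{cor_strong_directional_implication_imply_implication_SECOND_PAPER}. In a cone $\Gamma((0,0,\pm 1), \delta, r)$ of small opening $\delta$, the rational functions $S_1 := x/z$ and $S_2 := y/z$ satisfy the derivative bounds in (\ref{second_cond_of_implied_poly_in_direction_SECOND_PAPER}) because $|z| \gtrsim |\vec x|$ there; the products $S_1(xz+u) = x^2 + xu/z$ and $S_2(xz+u) = xy + yu/z$ recover $x^2$ and $xy$ modulo the correction terms $xu/z$ and $yu/z$. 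Using $|x|,|y| \leq \delta|\vec x|$ and the fact that $u$ is a quadratic form in $x,y$, a direct computation gives $|\partial^\alpha(xu/z)| \leq C_\alpha\, \delta^{\max(1,\,3-|\alpha|)}\, |\vec x|^{2-|\alpha|}$ for $|\alpha| \leq 2$. Multiplying by a degree-$0$ homogeneous cutoff supported in a slightly wider cone around $(0,0,\pm 1)$ then extends this to a function on $\mathbb{R}^3\setminus\{\vec 0\}$ with the global bound $\epsilon|\vec x|^{2-|\alpha|}$ for $\delta$ sufficiently small, realising condition (W-NEG). Hence $x^2$ and $xy$ are strongly directionally implied by $I$ at each of $\pm(0,0,1)$, and so by closedness they lie in $I$.

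With the key lemma in hand, I would enumerate by $d := \dim \pi_W(I) \in \{0,1,2\}$. If $d = 2$, then $I$ contains $xz + u_1$ and $yz + u_2$; the key lemma together with its $x \leftrightarrow y$ variant gives $U \subset I$, hence $u_1,u_2 \in I$, hence $xz,yz \in I$, so $I = V$, which is (\ref{model_for_2_3_with_1_dim_allow_5}). If $d = 1$, a linear change in the $(x,y)$-plane normalises $\pi_W(I) = \text{span}\{xz\}$, so $xz + u_0 \in I$ for some $u_0 \in U$; the key lemma gives $x^2,xy \in I$, so I may take $u_0 = cy^2$. Either $y^2 \in I$, in which case $xz \in I$ as well and $I = \text{span}\{x^2,xy,y^2,xz\}$ is (\ref{model_for_2_3_with_1_dim_allow_4}); or $y^2 \notin I$ and $I = \text{span}\{x^2,xy,xz+cy^2\}$, where the subcase $c = 0$ is excluded because it would put $(0,\pm 1,0) \in Allow(I)$, and any $c \neq 0$ is reduced to $c = 1$ by rescaling $y$ and, if necessary, reflecting $z$, giving (\ref{model_for_2_3_with_1_dim_allow_3'_the_one_we_missed}). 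If $d = 0$, then $I \subset U$; the case $\dim I = 3$ is $I = U$, yielding (\ref{model_for_2_3_with_1_dim_allow_3}), and the case $\dim I = 2$ is handled by Remark \ref{remark_on_2_dim_subspaces_in_the_plane}, whose three normal forms are $\text{span}\{x^2,y^2\}$ (from the positive-definite case), $\text{span}\{xy, x^2\}$ (excluded since $(0,\pm 1,0) \in Allow(I)$), and $\text{span}\{xy, (x+y)(x-y)\}$, giving (\ref{model_for_2_3_with_1_dim_allow_1}) and (\ref{model_for_2_3_with_1_dim_allow_2}).

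The main obstacle is the key lemma above: while the mechanism is the same cutoff-and-correction technique used in Example \ref{amazing_exaple_for_negligible_part2_SECOND_PAPER}, the new feature is the uniform $\delta$-bookkeeping needed to handle an arbitrary $u \in U$ and to verify that $xu/z$ admits a Whitney extension satisfying (W-NEG). Once this implication is in place, the classification reduces to straightforward linear algebra governed by $\pi_W$ together with a direct appeal to Remark \ref{remark_on_2_dim_subspaces_in_the_plane}.
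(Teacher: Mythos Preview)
Your approach is correct and takes a genuinely different, considerably more efficient route than the paper. The paper organizes the case analysis by $\dim I\in\{2,3,4,5\}$ and then brute-forces through generators; the $\dim I=2$ case alone occupies pages of nested sub-cases (your observation that no element of $I$ can factor as $(Ax+By)(\tilde Ax+\tilde By+\tilde Cz)$ with $\tilde C\neq 0$ is proved and re-used repeatedly, and many sub-cases are eliminated by exhibiting extra allowed directions via explicit roots of auxiliary cubics). Your key implication lemma abstracts the common mechanism behind all of the paper's ad hoc ``$x^2=\tfrac{x}{z}\cdot(xz)$'' moves, and your organization by $d=\dim\pi_W(I)$ immediately collapses the structure: $d=2$ forces $\dim I=5$, and $d=1$ forces $\dim I\geq 3$, so the entire $\dim I=2$ morass reduces to $I\subset U$ and a direct appeal to Remark~\ref{remark_on_2_dim_subspaces_in_the_plane}. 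What the paper's approach buys is that each step is completely elementary and self-contained; what yours buys is a transparent conceptual picture and a proof an order of magnitude shorter.

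One small point to tighten: in your $d=0$, $\dim I=2$ analysis you assert that the ``positive-definite case'' of Remark~\ref{remark_on_2_dim_subspaces_in_the_plane} yields $\mathrm{span}\{x^2,y^2\}$. The remark itself does not say this --- it only isolates the two non-positive-definite normal forms --- so you should add the one-line justification: if $p\in I$ is positive definite, simultaneously diagonalize the second generator $q$ with respect to $p$ (a $p$-orthonormal basis makes $p=x^2+y^2$ and $q=\lambda_1 x^2+\lambda_2 y^2$ with $\lambda_1\neq\lambda_2$), whence $I=\mathrm{span}\{x^2,y^2\}$. With that added, the argument is complete.
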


\begin{proof}As $\dim\text{span}_{\mathbb{R}}Allow(I)=1$, by applying a linear
coordinate change we may assume $Allow(I)=\{(0,0,\pm1)\}$ and
so $I$ is a linear subspace of $\text{span}_{\mathbb{R}}\{x^2,y^{2},xy,yz,xz\}$. As $I$ is not principal we have $\dim I\in\{2,3,4,5\}$. If $\dim I=5$ we are in case (\ref{model_for_2_3_with_1_dim_allow_5}).
We analyze the cases $\dim I\in\{2,3,4\}$ separately.  

\

\textbf{\uline{Assume $\dim I=2$.}} We start with the following observation: $I$ does not contain any jet of the form $(Ax+By)(\tilde Ax+\tilde By+\tilde Cz)$ with $A^2+B^2\neq0$ and $\tilde C\neq0$. Indeed, if $I$ contains such a jet, recall that $I$ is closed and $Allow(I)=(0,0,\pm1)$, and we get from Corollary \ref{cor_strong_directional_implication_imply_implication_SECOND_PAPER} that $$\tilde p\cdot(Ax+By)=\frac{\tilde p}{\tilde Ax+\tilde By+\tilde Cz}\cdot(Ax+By)(\tilde Ax+\tilde By+\tilde Cz)\in I$$ for any $\tilde p\in\{x,y\}$. This implies that $\dim I\geq3$, which is a contradiction, so we established the observation.

\

We also establish the following:\begin{multline}\label{new_small_observation_to_a_less_than_one_etc}\text{If }I=\langle x(x+y),y(y+ax)\rangle_2\text{ for some }a\in\mathbb{R} \text{ then } I\\ \begin{cases}
  \text{is up to a linear coordinate change of the form (\ref{model_for_2_3_with_1_dim_allow_2})}  & \text{if } a<1 \\
  \text{does not satisfy } \dim Allow(I)=1 & \text{if } a=1 \\
  \text{is up to a linear coordinate change of the form (\ref{model_for_2_3_with_1_dim_allow_1})} & \text{if } a>1 \end{cases}.\end{multline}We apply the coordinate change $(X,Y)=(x,x+y)$ and so $I=\langle XY,(Y-X)(Y-X+aX)\rangle_2=\langle XY, Y^2+(a-1)X^2\rangle_2$. If $a=1$ then by Corollary \ref{old_new_cor_on_how_to_calc_allow_with_inclusion_SECOND_PAPER} we have $(0,0,1),(1,0,0)\in Allow(I)$, and so (\ref{new_small_observation_to_a_less_than_one_etc}) holds. If $a>1$ by scaling $X$ we have $I=\langle XY,X^2+Y^2\rangle_2$. Now, in the coordinate system $(\tilde X,\tilde Y)=(X-Y,X+Y)$ we have $I=\langle \tilde X^2, \tilde Y^2 \rangle_2$, so again (\ref{new_small_observation_to_a_less_than_one_etc}) holds. Finally, if $a<1$ by scaling $X$ we have $I=\langle XY,Y^{2}-X^2\rangle_2$ and again (\ref{new_small_observation_to_a_less_than_one_etc})
holds. 

\

We now go back to our classification. We have $$I=\langle ax^2+by^2+m_1,a'x^2+b'y^2+m_2 \rangle_2,$$where $a,b,a',b'\in\mathbb{R}$ and $m_1,m_2\in\text{span}_{\mathbb{R}}\{xy,yz,xz\}$. Since $Allow(I)=\{(0,0,\pm1)\}$ without loss of generality $a\neq0$ and so by scaling $x$ we get $$I=\langle x^2+by^2+m_1,b'y^2+m_2 \rangle_2,$$with new $b,b'\in\mathbb{R}$ and $m_1,m_2\in\text{span}_{\mathbb{R}}\{xy,yz,xz\}$. We separately  analyze the cases (A) $b'\neq0$; (B) $b'=0$. We start with the latter.
 
\

\textbf{Case (B).} Assume $b'=0$. In this case we must have $b\neq0$ as $Allow(I)=\{(0,0,\pm1)\}$. We will later refer to the sub-case in which $b>0$ as (B$^+$) and the sub-case in which $b<0$ as (B$^-$). By scaling $y$ we have $$I=\langle x^2\pm y^2+axy+bxz+cyz,a'xy+b'xz+c'yz \rangle_2,$$ with some new $a,b,c,a',b',c'\in\mathbb{R}$ such that $a'^2+b'^2+c'^2\neq0$. If $a'=0$ then $z(b'x+c'y)\in I$ which is impossible thanks to the observation above. So we may assume $a'\neq0$, and then by multiplying the second generator by $a'^{-1}$ and replacing the first generator we have $$I=\langle x^2\pm y^2+z(bx+cy),xy+z(b'x+c'y)
\rangle_2,$$with some new $b,c,b',c'\in\mathbb{R}$. Note that if $b'=0$ then $y(x+c'z)\in I$, which implies (by the observation above) that $c'=0$. Vise-versa, if $c'=0$ then
$x(y+b'z)\in I$, which implies (by the observation above) that $b'=0$. So we either have $b'=c'=0$ or $b'c'\neq0$. 

\

\textbf{Case (B).1.} Assume $b'=c'=0$. In that case we have $I=\langle x^2\pm y^2+z(bx+cy),xy
\rangle_2$. If $b\neq 0$ then any
point
in $S^2$ that satisfies $(y=0\text{ } \&\text{ } x=-bz)$
lies in $Allow(I)$,
which is a contradiction to our assumption that $\dim\text{span}_{\mathbb{R}}Allow(I)=1$.
 If $c\neq 0$ then any
point
in $S^2$ that satisfies $(x=0\text{ } \&\text{ } y=\mp cz)$
lies in $Allow(I)$,
which is a contradiction to our assumption that $\dim\text{span}_{\mathbb{R}}Allow(I)=1$. Therefore, $b=c=0$ and $I=\langle x^2\pm
y^2,xy
\rangle_2$. The "minus" case is case (\ref{model_for_2_3_with_1_dim_allow_2}). In the "plus" case we have  $I=\langle x^2+
y^2,xy
\rangle_2=\langle x^2+y^2+2xy,x^2+y^2-2xy\rangle_2=\langle (x+y)^2,(x-y)^2\rangle_2$, so in the coordinate system $(X,Y)=(x+y,x-y)$ we have $I=\langle X^2,Y^2\rangle_2$ and we are in
case (\ref{model_for_2_3_with_1_dim_allow_1}). Case (B).1. is complete.

\

\textbf{Case (B).2.} Assume $b'c'\neq0$. In that case we scale $z$ and get $$I=\langle x^2\pm y^2+z(bx+cy),xy+z(x+c'y)
\rangle_2,$$ with some new $b,c,c'\in\mathbb{R}$ such that $c'\neq0$. 

\

\textbf{Case (B).2.1.} Assume $c\neq0$. In that case we write explicitly $$I=\langle x^2+sy^2+z(bx+cy),xy+z(x+c'y)\rangle_2,\text{ where }c\neq0,c'\neq0\text{ and }s\in\{\pm1\}.$$Note that by applying the linear coordinate change
$x\mapsto-x$ we get $\begin{cases}
  s\mapsto s \\ b\mapsto-b \\ c\mapsto c \\ c'\mapsto -c'
\end{cases}$. So without loss of generality by possibly applying this linear coordinate change we may assume $sc'>0$. We analyze 3 sub-cases: 

\

\textbf{Case (B).2.1.1.} Assume the following holds: $$(\star)\text{ }s=-1\text{ and }b=c.$$ Explicitly we now have, $I=\langle x^2-y^2+bz(x+y),xy+z(x+c'y)\rangle_2$. Note that $c'\neq1$, as $sc'>0$. So any
point
in $S^2$ that satisfies $(x=-y\text{ } \&\text{ } z=\frac{1}{1-c'}x)$
lies in $Allow(I)$,
which is a contradiction to our assumption that $\dim\text{span}_{\mathbb{R}}Allow(I)=1$. Case (B).2.1.1. is complete. 

\

\textbf{Case (B).2.1.2.} Assume the following holds: $$(\star\star)\text{ }s=+1\text{
and }c'>0\text{ and }bc'=c.$$ Explicitly we now have, $I=\langle x^2+y^2+bz(x+c'y),xy+z(x+c'y)\rangle_2$, with $b\neq0$ and $c'>0$. We can also write $I=\langle x^2+y^2+2xy+(b+2)z(x+c'y),xy+z(x+c'y)\rangle_2$ or alternatively $I=\langle(x+y)^2+(b+2)z(x+c'y),xy+z(x+c'y)\rangle_2$. 

\

\textbf{Case (B).2.1.2.1.} Assume $b=-2$. We then have $I=\langle(x+y)^2,xy+z(x+c'y)\rangle_2$ with $c'>0$. If $c'=1$ then in the coordinate system $(X,y,Z)=(x+y,y,y+z)$ we have $I=\langle X^2,y(X-y)+zX\rangle_2=\langle X^2,y^{2}-X(y+z)\rangle_2=\langle X^2,y^{2}-XZ\rangle_2$, which
is a contradiction to $I$ being closed by Example \ref{amazing_exaple_for_negligible_part2_SECOND_PAPER}. We conclude that $0<c'\neq 1$. In this case note that in the coordinate system $(X,Y,z)=(x+y,x+c'y,z)$ we have $xy=(1-c')^{-2}(X-Y)(Y-c'X)$ so $I=\langle X^2,\gamma(X-Y)(Y-c'X)+zY\rangle_2$ with $0<c'\neq1$ and $\gamma:=(1-c')^{-2}>0$. We can also write $I=\langle X^2,\gamma XY-\gamma Y^2-c'\gamma X^2+c'\gamma XY+zY\rangle_2=\langle X^2,Y^2-(1+c')XY-\gamma^{-1}zY\rangle_2=\langle X^2,Y\big(Y-(1+c')X-\gamma^{-1}z\big)\rangle_2$, which is impossible by the observation above. Case (B).2.1.2.1. is complete. 

\

\textbf{Case (B).2.1.2.2.} Assume $b\neq-2$. If $c'=1$ then $(x+y)\big(x+y+(b+2)z\big)\in I$, which is impossible by the observation above. Therefore, $c'\neq1$. By scaling $z$ we get $I=\langle(x+y)^2+z(x+c'y),xy+bz(x+c'y)\rangle_2$ with some $0<c'\neq1$ and some new $b\neq0$.  Again, in the coordinate
system $(X,Y,z)=(x+y,x+c'y,z)$ we have $xy=(1-c')^{-2}(X-Y)(Y-c'X)$ and so \begin{gather}\label{label_for_very_long_classification_red_star_of_david}I=\langle
X^2+zY,\gamma(X-Y)(Y-c'X)+bzY\rangle_2\text{ with }c'\notin\{0,1\}\text{ and }\gamma:=(1-c')^{-2}>0.\end{gather} In this case we have
(by assumption) $Allow(I)\subset\{(0,0,\pm1)\}$ and so (c.f., Example \ref{amazing_exaple_for_negligible_part1_SECOND_PAPER})
the function $\tilde F=-\frac{X^3}{z}$ in negligible for $Allow(I)$. So by
Definition \ref{implied_polynomial_definition_SECOND_PAPER}, setting $L=1$, $S_1=\frac{X}{z}$
and $Q_1=X^2+zY$ we have that $XY=S_1\cdot Q_1+\tilde F=(\frac{X}{z})\cdot(X^2+zY)-\frac{X^3}{z}$
is implied by $I$, so $XY\in I$. So we have $X^2+zY,\gamma(X-Y)(Y-c'X)+bzY,XY\in I$, with $\gamma\neq0$, which is a contradiction to $\dim I=2$ (observing the coefficient of $Y^2$ in the second jet we see that these 3 jets are linearly independent). We conclude that no ideal of the form (\ref{label_for_very_long_classification_red_star_of_david}) is both closed and satisfies $\dim\text{span}_{\mathbb{R}}Allow(I)=1$ and $\dim I=2$. Case (B).2.1.2.2. is complete and so also Case (B).2.1.2. is complete.

\

\textbf{Case (B).2.1.3.} Assume that neither $(\star)$ nor $(\star\star)$ holds. Recall we have $$I=\langle x^2+sy^2+z(bx+cy),xy+z(x+c'y)\rangle_2,\text{ where }c\neq0,c'\neq0\text{
and }s\in\{\pm1\},$$and furthermore $sc'>0$. Define a polynomial $$p_{s,b,c,c'}(\alpha):=\alpha(b+c\alpha)-(1+s\alpha^2)(1+c'\alpha).$$The leading coefficient of $p_{s,b,c,c'}$ is negative, it is a degree 3 polynomial, and moreover $p_{s,b,c,c'}(0)=-1$. Thus, there exists a negative root to $p_{s,b,c,c'}$. We first establish that the following holds:$$(\star\star\star)\text{ There exists }\alpha_0\neq-\frac{b}{c}\text{ such that }p_{s,b,c,c'}(\alpha_0)=0.$$Indeed, note that $p_{s,b,c,c'}(-\frac{b}{c})=-(1+s(\frac{-b}{c})^2)(1+c'(\frac{-b}{c}))$. 

If $(1+s(\frac{-b}{c})^2)=0$ then $s=-1$ and $\abs{b}=\abs{c}$. So either $b=c$ or $b=-c$. If $b=-c$ then $\frac{-b}{c}\geq0$, and so $(\star\star\star)$ holds as we saw that there exists a negative root to
$p_{s,b,c,c'}$. If $b=c$ then $(\star)$ holds, which is a contradiction.

If $(1+c'(\frac{-b}{c}))=0$ then $c=c'b$. If moreover $bc\leq0$ then $(\star\star\star)$
holds as we saw that there exists a negative root to
$p_{s,b,c,c'}$. Otherwise $b$ and $c$ have the same sign (and recall both $c\neq0$ and $c'\neq0$), we get that $c'>0$. Since $sc'>0$ we get $s=+1$, and so $(\star\star)$ holds, which is a contradiction.

If both $(1+s(\frac{-b}{c})^2)\neq0$ and $(1+c'(\frac{-b}{c}))\neq0$ then $(\star\star\star)$
clearly holds too. 

We conclude that $(\star\star\star)$ holds and fix $\alpha_0$ as in $(\star\star\star)$. Now, any
point
in $S^2$ that satisfies $(y=\alpha_0x\text{ } \&\text{ } z=-\frac{1+s\alpha_0^2}{b+c\alpha_0}x)$
lies in $Allow(I)$,
which is a contradiction to our assumption that $\dim\text{span}_{\mathbb{R}}Allow(I)=1$. In order to see this we need to show that the 2 generators of $I$ vanish
at such a point. Indeed, $$  x^2+sy^2+z(bx+cy)=x^2+s\alpha_0^2x^2-\frac{1+s\alpha_0^2}{b+c\alpha_0}x(bx+c\alpha_0x)=0;$$
$$xy+z(x+c'y)=\alpha_{0}x^2-\frac{1+s\alpha_0^2}{b+c\alpha_0}x(x+c'\alpha_0x)=\frac{x^2}{b+c\alpha_0}[\alpha_{0}(b+c\alpha_0)-(1+s\alpha_0^2)(1+c'\alpha_0)]=0.$$ Case (B).2.1.3. is complete, and so also Case (B).2.1. is complete.

\

\textbf{Case (B).2.2.} Assume $c=0$. Then, $I=\langle x^2\pm y^2+bxz,xy+xz+c'yz \rangle_2$ with some $c'\neq0$. 

\

\textbf{Case (B$^-$).2.2.} $I=\langle x^2- y^2+bxz,xy+xz+c'yz
\rangle_2$ with some $c'\neq0$. If $b\neq0$ we define the degree 3 polynomial $p_{b,c'}(\alpha):=b\alpha+(\alpha^2-1)+\alpha(\alpha^2-1)c'$. Fixing $\alpha_0$ such that $p_{b,c'}(\alpha_0)=0$ we get that any
point
in $S^2$ that satisfies $(y=\alpha_0x\text{ } \&\text{ } z=\frac{\alpha_0^2-1}{b}x)$
lies in $Allow(I)$,
which is a contradiction to our assumption that $\dim\text{span}_{\mathbb{R}}Allow(I)=1$. In order to see this we need to show that the 2 generators of $I$ vanish
at such a point. Indeed, $$   x^2- y^2+bxz= x^2- \alpha_0^2x^2+b\frac{\alpha_0^2-1}{b}x^{2}=0;$$
$$xy+xz+c'yz
=\alpha_0 x^2+\frac{\alpha_0^2-1}{b}x^2+c'\alpha_0 \frac{\alpha_0^2-1}{b}x^{2}
=\frac{x^2}{b}[b\alpha_0+(\alpha_0^2-1)+\alpha_0(\alpha_0^2-1)c']=0.$$ We conclude that $b=0$ and so $I=\langle x^2- y^2,xy+xz+c'yz
\rangle_2$ with some $c'\neq0$.  If $c'\neq-1$ then any
point
in $S^2$ that satisfies $(y=x\text{ } \&\text{ } z=-\frac{1}{1+c'}x)$
lies in $Allow(I)$,
which is a contradiction to our assumption that $\dim\text{span}_{\mathbb{R}}Allow(I)=1$. If $c'=-1$ then any
point
in $S^2$ that satisfies $(y=-x\text{ } \&\text{ } z=\frac{1}{2}x)$
lies in $Allow(I)$,
which is a contradiction to our assumption that $\dim\text{span}_{\mathbb{R}}Allow(I)=1$. Case (B$^-$).2.2 is complete.

\

\textbf{Case (B$^+$).2.2.} $I=\langle x^2+ y^2+bxz,xy+z(x+c'y)
\rangle_2$ with some $c'\neq0$. If $b\neq0$ then we define the degree 3 polynomial
$p_{b,c'}(\alpha):=b\alpha-(1+\alpha^2)(1+c'\alpha)$. Fixing $\alpha_0$
such that $p_{b,c'}(\alpha_0)=0$ we get that any
point
in $S^2$ that satisfies $(y=\alpha_0x\text{ } \&\text{ } z=-\frac{\alpha_0^2+1}{b}x)$
lies in $Allow(I)$,
which is a contradiction to our assumption that $\dim\text{span}_{\mathbb{R}}Allow(I)=1$. In order to see this we need to show that the 2 generators of $I$ vanish
at such a point. Indeed, $$    x^2+ y^2+bxz= x^2+ \alpha_0^2x^2-b\frac{\alpha_0^2+1}{b}x^{2}=0;$$
$$xy+z(x+c'y)=\alpha_0x^2-\frac{\alpha_0^2+1}{b}x(x+\alpha_0c'x)=\frac{x^2}{b}[b\alpha_0-(1+\alpha_0^2)(1+c'\alpha_0)]=0.$$
We conclude that $b=0$ and so $I=\langle x^2+ y^2,xy+z(x+c'y)
\rangle_2$ with some $c'\neq0$.
If $c'=1$ then $(x+y)(x+y+2z)\in I$ which is impossible by the observation above. We conclude that $c'\notin\{0,1\}$. Now, in the coordinate system $(X,Y,z)=(x+y,x+c'y,z)$ we get that $I=\langle
X^2+2zY,\gamma(X-Y)(Y-c'X)+zY\rangle_2$ with $c'\notin\{0,1\}$ and $\gamma:=(1-c')^{-2}>0$. Scaling $z$ we get $I=\langle
X^2+zY,\gamma(X-Y)(Y-c'X)+\frac{1}{2}zY\rangle_2$ with $c'\notin\{0,1\}$ and $\gamma:=(1-c')^{-2}>0$. This ideal is of the form (\ref{label_for_very_long_classification_red_star_of_david}), which we already showed  is never both closed and satisfies $\dim\text{span}_{\mathbb{R}}Allow(I)=1$ and $\dim I=2$.

Case (B$^+$).2.2 is complete, and so Case (B).2.2., Case (B).2. and Case (B) are complete too.

\

\textbf{Case (A).} Assume $b'\neq0$. In this case we can scale $y$ and get
$$I=\langle x(x+ay+bz)+cyz,y(y+a'x+b'z)+c'xz\rangle_2$$for some new $a,b,c,a',b',c'\in\mathbb{R}$.

\

\textbf{Case (A).1.} Assume $c=0$. By the observation $b=0$ so $I=\langle
x(x+ay),y(y+a'x+b'z)+c'xz\rangle_2$.

\

\textbf{Case (A).1.1.} Assume $c'=0$. By the observation $b'=0$ so $I=\langle
x(x+ay),y(y+a'x)\rangle_2$.

\

\textbf{Case (A).1.1.1.} Assume $a\neq0$. We scale $y$ to get  $I=\langle
x(x+y),y(y+Cx)\rangle_2$ with some $C\in\mathbb{R}$, which was already analyzed
in (\ref{new_small_observation_to_a_less_than_one_etc}).

\

\textbf{Case (A).1.1.2.} Assume $a=0$. Then, we have $I=\langle
x^{2},y(y+a'x)\rangle_2$. If $a'\neq0$ we scale $x$ to get $I=\langle
x^{2},y(y+x)\rangle_2$, and then rename $x\leftrightarrow y$ to get a special
case of case (\ref{new_small_observation_to_a_less_than_one_etc}). If $a'=0$
we are in case (\ref{model_for_2_3_with_1_dim_allow_1}). 

\

Case (A).1.1. is complete.

\

\textbf{Case (A).1.2.} Assume $c'\neq0$. We scale $z$ to get $I=\langle x(x+ay),y(y+a'x+b'z)+xz\rangle_2$,
with some new $b'\in\mathbb{R}$. 

\

\textbf{Case (A).1.2.1.} Assume $a=0$. Then, $I=\langle x^2,y^{2}+xz+y(a'x+b'z)\rangle_2$.
If $b'\neq0$ then any point
in $S^2$ that satisfies $(x=0\text{ } \&\text{ } y=-b'z)$ lies in $Allow(I)$,
which is a contradiction to our assumption that $\dim\text{span}_{\mathbb{R}}Allow(I)=1$.
Otherwise $I=\langle x^2,y^{2}+x(z+a'y)\rangle_2$, and in the coordinate
system $(X,Y,Z)=(x,y,-(z+a'y))$ we get $I=\langle X^2,Y^2-XZ\rangle_2$ which
is a contradiction to $I$ being closed by Example \ref{amazing_exaple_for_negligible_part2_SECOND_PAPER}.

\

\textbf{Case (A).1.2.2.} Assume $a\neq0$. Then, by scaling $y$ we get $I=\langle
x(x+y),y^{2}+txz+a'xy+b'yz\rangle_2$ with some new $t,a',b'\in\mathbb{R}$.
If $b'\neq t$ then the point
in $S^2$ that satisfies $(x=-y\text{ } \&\text{ } z=\frac{-1+a'}{b'-t}y)$
lies in $Allow(I)$,
which is a contradiction to our assumption that $\dim\text{span}_{\mathbb{R}}Allow(I)=1$.
Otherwise, $b'=t$ and $I=\langle x(x+y),y^{2}+b'z(x+y)+a'xy\rangle_2$. If
$b'\neq0$ then any point
in $S^2$ that satisfies $(x=0\text{ } \&\text{ } z=-\frac{1}{b'}y)$
lies in $Allow(I)$,
which is a contradiction to our assumption that $\dim\text{span}_{\mathbb{R}}Allow(I)=1$.
Otherwise, $b'=0$ and $I=\langle x(x+y),y(y+a'x)\rangle_2$, which was already
analyzed in (\ref{new_small_observation_to_a_less_than_one_etc}) above.

\

Case (A).1.2. is complete, and so Case (A).1. is complete as well.

\

\textbf{Case (A).2.} Assume $c\neq0$.

\

\textbf{Case (A).2.1.} Assume $c'=0$. Then rename $x\leftrightarrow y$ to
be in Case (A).1.2. that was analyzed.

\

\textbf{Case (A).2.2.} Assume $c'\neq0$. Scaling $z$ we get $$I=\langle x^{2}+yz+x(ay+bz),y^2+c'xz+y(a'x+b'z)\rangle_2,$$
with some new $b,b',c'\in\mathbb{R}$ and still $c'\neq0$. Further scaling
$x$ we get $$I=\langle k x^{2}+yz+x(ay+bz),y^2+xz+y(a'x+b'z)\rangle_2,$$
with some new $a,b,a'\in\mathbb{R}$ and some $k>0$. If $a'\neq0$ then in
the coordinate system $(X,Y,Z)=(y+a'x,y,z)$ the second generator becomes
$YX+Z(pX+qY)$ for some $p,q\in\mathbb{R}$ and so since $\dim\text{span}_{\mathbb{R}}Allow(I)=1$ we are in case (B) that
was already analyzed. Similarly if $a\neq0$ then in
the coordinate system $(X,Y,Z)=(x,kx+ay,z)$ the first generator becomes
$YX+Z(pX+qY)$ for some $p,q\in\mathbb{R}$ and so we are again, since $\dim\text{span}_{\mathbb{R}}Allow(I)=1$,
in case (B) that was already analyzed. We are left to check the case in which
$a=a'=0$: $$I=\langle k x^{2}+yz+bxz,y^2+xz+b'yz\rangle_2.$$ 

\

\textbf{Case (A).2.2.1.} Assume $bb'=1$. Then, $I=\langle
k x^{2}+z(y+bx),y^2+\frac{z}{b}(y+bx)\rangle_2$. In the coordinate system
$(x,Y,z)=(x,y+bx,z)$ we get $$I=\langle
k x^{2}+zY,(Y-bx)^{2}+\frac{z}{b}Y\rangle_2=\langle
k x^{2}+zY,Y^2+b^2x^2-2bxY+\frac{z}{b}Y\rangle_2$$ $$=\langle
k x^{2}+zY,Y\big(Y+(\frac{1}{b}-\frac{b^2}{k})z-2bx\big) \rangle_2.$$If $\frac{1}{b}-\frac{b^2}{k}\neq0$
then we get a contradiction by the observation above. Otherwise we have that
$I=\langle kx^2+zY,Y^2-2bxY\rangle_2$ with $b\neq0$, so any point
in $S^2$ that satisfies $(Y=2bx\text{ } \&\text{ } z=-\frac{k}{2b}x)$
lies in $Allow(I)$,
which is a contradiction to our assumption that $\dim\text{span}_{\mathbb{R}}Allow(I)=1$.

\

Case (A).2.2.1 is complete. 

\

\textbf{Case (A).2.2.2.} Assume $bb'\neq1$. Then we have $I=\langle
k x^{2}+yz+bxz,y^2+xz+b'yz\rangle_2$ with $k>0$. Define the polynomial $p_{k,b,b'}(\alpha):=(\alpha+b)\frac{\alpha^2}{k}-(b'\alpha+1)$.
This is a degree 3 polynomial so it has a real root. Moreover, $p_{k,b,b'}(-b)=bb'-1\neq0$,
so we can fix $\alpha_0\neq-b$ such that $p_{k,b,b'}(\alpha_0)=0$. Now, any
point
in $S^2$ that satisfies $(y=\alpha_0x\text{ } \&\text{ } z=-\frac{k}{\alpha_0+b}x)$
lies in $Allow(I)$,
which is a contradiction to our assumption that $\dim\text{span}_{\mathbb{R}}Allow(I)=1$.
In order to see this we need to show that the 2 generators of $I$ vanish
at such a point. Indeed, $$k x^{2}+yz+bxz=kx^2-\alpha_0\frac{k}{\alpha_0+b}x^2-b\frac{k}{\alpha_0+b}x^2=0;$$
$$y^2+xz+b'yz=\alpha_0^2x^2-\frac{k}{\alpha_0+b}x^2-b'\frac{k\alpha_0}{\alpha_0+b}x^2=\frac{kx^2}{\alpha_0+b}[(\alpha_{0}+b)\frac{\alpha_{0}^2}{k}-(b'\alpha_{0}+1)]=0.$$
 
\

Case (A).2.2.2. is complete, and so also Case (A).2.2., and so (A).2., and
(A) is complete too.

\

\textbf{\uline{Assume $\dim I=3$.}} Recall that $I\subset \text{span}_{\mathbb{R}}\{x^2,y^{2},xy,yz,xz\}$,
$\dim\text{span}_{\mathbb{R}}Allow(I)=1$ and $Allow(I)=\{(0,0,\pm1)\}$. We then
must have that $I$ contains a jet with a non-zero $x^2$ term (otherwise $(1,0,0)\in
Allow(I)$), and a jet with a non-zero $y^2$ term (otherwise $(0,1,0)\in
Allow(I)$). We start with the following observation: $I$ does not contain any jet of the form $z(\alpha x+\beta y)$ with $\alpha^2+\beta^2\neq0$. Indeed, assume $z(\alpha x+\beta y)\in I$ with $\alpha^2+\beta^2\neq0$. By possibly
renaming $x\leftrightarrow y$ we may assume $z(x+\beta y)\in I$. As $Allow(I)=\{(0,0,\pm1)\}$
we have that $S_1(x,y,z):=\frac{x}{z}$
satisfies the bounds in (\ref{second_cond_of_implied_poly_in_direction_SECOND_PAPER})
in a conic neighborhood of $Allow(I)$ and so $x(x+\beta y)= \frac{x}{z}\cdot
(x+\beta y)z\in
I$ by Corollary \ref{cor_strong_directional_implication_imply_implication_SECOND_PAPER} and similarly $y(x+\beta y)= \frac{y}{z}\cdot
(x+\beta y)z\in
I$. Altogether $x(x+\beta y),y(x+\beta y),z(x+\beta y)\in I$ and as $\dim
I=3$ we get $I=\langle x(x+\beta y),y(x+\beta y),z(x+\beta y) \rangle_2$.
But now we get that any
point
in $S^2$ that satisfies $(x=-\beta y\text{ } \&\text{ } z=0)$
lies in $Allow(I)$,
which is a contradiction to our assumption that $\dim\text{span}_{\mathbb{R}}Allow(I)=1$.

\

\textbf{Case (A). }Assume there exists a set of generators such that $I=\langle
ax^2+by^2+q_0,q_1,q_2\rangle_2$ with $q_0,q_1,q_2\in\text{span}\{xy,xz,yz\}$.
In that case $ab\neq0$. For $0\leq i\leq 2$ we can write $q_i=z(\alpha_i x+\beta_iy)+\gamma_ixy$
for some $\alpha_i,\beta_i,\gamma_i\in\mathbb{R}$. Since $\dim I=3$, by possibly
replacing $q_1,q_2$ by other two jets we may assume $\gamma_2=0$. So we have
$z(\alpha_2x+\beta_2y)\in I$ with $\alpha_2^2+\beta_2^2\neq0$. This is a contradiction by the observation.

\

\textbf{Case (B).} Assume there exists a set of generators such that $I=\langle
x^2+q_0,y^2+q_1,q_2\rangle_2$ with $q_0,q_1,q_2\in\text{span}\{xy,xz,yz\}$.
For $0\leq i\leq 2$ we can write $q_i=z(\alpha_i x+\beta_iy)+\gamma_ixy$
for some $\alpha_i,\beta_i,\gamma_i\in\mathbb{R}$. By the observation we must have $\gamma_2\neq0$ and so we may write $I=\langle
x^2+A_{1}xz+B_{1}yz,y^2+A_{2}xz+B_{2}yz,xy+A_{3}xz+B_{3}yz\rangle_2$ for some $A_1,A_2,A_3,B_1,B_2,B_3\in\mathbb{R}$. For $0\leq i\leq 2$ we define linear forms $l_i(x,y):=A_i x+B_i y$ and we have $I=\langle
x^2+zl_1,y^2+zl_2,xy+zl_3\rangle_2$. If $B_3=0$ but $A_3\neq0$ then $xy+A_3xz=x(y+A_3z)\in I$, and this implies (by the usual argument) that $x^2,xy\in I$, and so also $xz\in I$ and moreover $y^2+zl_2\in I$. This is a contradiction to $\dim I=3$. We conclude that if $B_3=0$
then $A_3=0$. Similarly, if $A_3=0$ then $B_3=0$ as well.

\

\textbf{Case (B).1.} Assume $l_3=0$. Then, $I=\langle
x^2+zl_1,y^2+zl_2,xy\rangle_2$. As $Allow(I)=\{(0,0,\pm1)\}$, by Example \ref{amazing_exaple_for_negligible_part1_SECOND_PAPER}
the function $\tilde F=-\frac{y^3}{z}$ in negligible for $Allow(I)$ in any allowed direction. So by
Definition \ref{implied_polynomial_definition_SECOND_PAPER}, setting $L=1$, $S_1=\frac{y}{z}$
and $Q_1=y^2+zl_2$ we have that $yl_2=S_1\cdot Q_1+\tilde F=(\frac{y}{z})\cdot(y^2+zl_2)-\frac{y^3}{z}$
is implied by $I$, and since $I$ is closed, $yl_2\in I$. So we have $yl_2,xy,x^2+zl_1,y^2+zl_2\in I$. As $\dim I=3$ these 4 jest
are linearly dependent, so there exist $k_1,k_2,k_3,k_4\in\mathbb{R}$ not
all zero such that $$k_1yl_2+k_2xy+k_3(x^2+zl_1)+k_4(y^2+zl_2)=0.$$Looking
at the coefficients of $x^2$ we get $k_3=0$ and so $$k_1yl_2+k_2xy+k_4(y^2+zl_2)=0.$$ Now, looking at the coefficients of $xy$ we must have $k_1A_2=-k_2$. Moreover we must have $k_{4}l_2=0$ by looking at the coefficient of $z$. If $l_2\neq0$ we have $k_4=0$ and so $B_2=0$ and $A_2\neq0$, by looking at the coefficients of $y^2$. So we proved that in any case $l_2=A_2x$ for some $A_2\in\mathbb{R}$. By repeating the argument after replacing $x\leftrightarrow y$ we also have $l_1=B_1y$ for some $B_1\in\mathbb{R}$. Altogether we have $I=\langle
x^2+B_{1}zy,y^2+A_{2}zx,xy\rangle_2$. Recall that $Allow(I)=\{(0,0,\pm1)\}$ and repeat the argument above while now noticing that we have that $-\frac{xy^2}{z}$ in negligible for $Allow(I)$ we get that $A_2x^2\in I$. If $A_2\neq0$ we will have $x^2,xy,y^2+A_2zx,B_1zy\in I$. As $\dim I=3$ we must then have $B_1=0$. Similarly, if $B_1\neq0$ we must have $A_2=0$. So without loss of generality (by possibly replacing $x\leftrightarrow y$) we may assume $B_1=0$ and so $I=\langle
x^2,y^2+A_{2}zx,xy\rangle_2$. If $A_2=0$ then $I=\langle x^2,y^2,xy\rangle_2$, i.e., we are
in case
(\ref{model_for_2_3_with_1_dim_allow_3}).  Otherwise by scaling $z$ we get $I=\langle
x^2,y^2+xz,xy\rangle_2$, i.e., we are in case (\ref{model_for_2_3_with_1_dim_allow_3'_the_one_we_missed}).

\

\textbf{Case (B).2.} Assume $l_3\neq0$. Then, $A_3B_3\neq0$ and by scaling
$z$ we can assume
that $l_3=x+B_3y$ for some $B_{3}\neq0$, i.e., $A_3=1$. Since $xy+zl_3 \in
I$ and $Allow(I)=\{(0,0,\pm1)\}$ we have (by a straightforward
adjustment of Example \ref{amazing_exaple_for_negligible_part2_SECOND_PAPER},
and now use the fact that both $\frac{x^2y}{z}$ and $\frac{xy^2}{z}$ are
negligible for $Allow(I)$) that also $xl_3,yl_3\in I$. So in particular,
as $\dim I=3$ the four jets $xl_3,yl_3,xy+zl_3,x^2+zl_1\in I$ are linearly
dependent. One easily sees that this is only possible if $l_1=\alpha l_3$
for some non-zero $\alpha\in\mathbb{R}$. Similarly we get that $l_2=\beta
l_3$ for some non-zero $\beta\in\mathbb{R}$. Recall
that $I=\langle
x^2+zl_1,y^2+zl_2,xy+zl_3\rangle_2$ we can now write $$I=\langle 
x^2+\alpha z(x+B_{3}y),y^2+\beta z(x+B_{3}y),xy+z(x+B_{3}y) \rangle_2\text{
for some non-zero }B_3,\alpha,\beta\in\mathbb{R}.$$ In particular $ 
x^2+\alpha z(x+B_{3}y)-\alpha [xy+z(x+B_{3}y)]=x^{2}-\alpha xy\in I$. We
already showed $xl_3=x(x+B_{3}y)=x^2+B_{3}xy\in I$ so also $(B_{3}+\alpha)xy\in
I$. If $B_{3}\neq-\alpha$ we will have immediately $xy,x^2,y^2,z(x+B_{3}y)\in
I$, which is impossible as $\dim I=3$. We conclude that $\alpha=-B_3$. Similarly,
we have $ 
y^2+\beta z(x+B_{3}y)-\beta [xy+z(x+B_{3}y)]=y^{2}-\beta xy\in I$. We
already showed $yl_3=y(x+B_{3}y)=B_{3}y^2+xy\in I$ so also $(1+\beta B_{3})xy\in
I$. If $B_{3}^{-1}\neq-\beta$ we will have immediately $xy,x^2,y^2,z(x+B_{3}y)\in
I$, which is impossible as $\dim I=3$. We conclude that $\beta=-B_3^{-1}$.
So we showed $$I=\langle 
x^2-B_3 z(x+B_{3}y),y^2-B_3^{-1} z(x+B_{3}y),xy+z(x+B_{3}y) \rangle_2\text{
for some non-zero }B_3\in\mathbb{R}.$$ We
already showed $xl_3=x(x+B_{3}y)\in I$ and $yl_3=y(x+B_{3}y)\in I$, and also
$xy+z(x+B_{3}y)\in I$. These 3 jets are linearly independent, and $\dim I=3$,
so $I=\langle x(x+B_{3}y),y(x+B_{3}y),xy+z(x+B_{3}y) \rangle_2$, with some
$B_3\neq0$. In the coordinate system $(X,Y,Z)=(x+B_{3}y,y,z)$ we have $I=\langle
(X-B_{3}Y)X,XY,(X-B_{3}Y)Y+XZ \rangle_2=\langle  X^{2},XY,Y^2-B_3^{-1}XZ\rangle_2$,
which we already  analyzed in Case (B).1. above.

\

\textbf{\uline{Assume $\dim I=4$.}} Recall that $I\subset \text{span}_{\mathbb{R}}\{x^2,y^{2},xy,yz,xz\}$, $\dim\text{span}_{\mathbb{R}}Allow(I)=1$ and $Allow(I)=\{(0,0,\pm1)\}$. We then must have that $I$ contains a jet with a non-zero $x^2$ term (otherwise $(1,0,0)\in Allow(I)$), and a jet with a non-zero $y^2$ term (otherwise $(0,1,0)\in
Allow(I)$). 

\

\textbf{Case (A). }Assume there exists a set of generators such that $I=\langle ax^2+by^2+q_0,q_1,q_2,q_3\rangle_2$ with $q_0,q_1,q_2,q_3\in\text{span}\{xy,xz,yz\}$. In that case $ab\neq0$. Moreover, as $\dim I=4$ we have $I=\langle
ax^2+by^2,xy,xz,yz\rangle_2$. As $Allow(I)=\{(0,0,\pm1)\}$ we have that $S_1(x,y,z):=\frac{x}{z}$
satisfies the bounds in (\ref{second_cond_of_implied_poly_in_direction_SECOND_PAPER}) in a conic neighborhood of $Allow(I)$ and so $x^2= \frac{x}{z}\cdot xz\in I$ by Corollary \ref{cor_strong_directional_implication_imply_implication_SECOND_PAPER}. This implies that $x^2,y^2,xy,xz,yz\in I$, which is a contradiction to $\dim I=4$.

\

\textbf{Case (B).} Assume there exists a set of generators such that $I=\langle
x^2+q_0,y^2+q_1,q_2,q_3\rangle_2$ with $q_0,q_1,q_2,q_3\in\text{span}\{xy,xz,yz\}$. For $0\leq i\leq 3$ we can write $q_i=z(\alpha_i x+\beta_iy)+\gamma_ixy$ for some $\alpha_i,\beta_i,\gamma_i\in\mathbb{R}$. Since $\dim I=4$, by possibly replacing $q_2,q_3$ by other two jets we may assume $\gamma_3=0$. So we have $z(\alpha_3x+\beta_3y)\in I$ with $\alpha_3^2+\beta_3^2\neq0$. By possibly renaming $x\leftrightarrow y$ and replacing $q_3$ by a scalar multiplication of it, we may assume $\alpha_3=1$ and so $z(x+\beta_3y)\in I$. As $I$ is closed and $Allow(I)=\{(0,0,\pm1)\}$, we have that also $x(x+\beta_3y),y(x+\beta_3y)\in I$. Altogether we have $x^2+q_{0},y^2+q_1,x(x+\beta_3y),y(x+\beta_3y),z(x+\beta_3y)\in I$ with $q_0,q_1\in\text{span}\{xy,xz,yz\}$. In the coordinate system $(X,y,z)=(x+\beta_3y,y,z)$ we get $X^2,Xy,Xz\in I$. As also in this new coordinate system $Allow(I)=\{(X,y,z)=(0,0,\pm1)\}$, we must also have a jet with a non-zero $y^2$ term in this new system, so $y^2+kyz\in I$ for some $k\in\mathbb{R}$. If $k\neq0$ then as $I$ is closed and $Allow(I)=\{(0,0,\pm1)\}$ we get $y^2=\frac{y}{y+kz}y(y+kz)\in I$ and also $yz\in I$. Altogether $X^2,Xy,Xz,y^2,yz\in I$ which is a contradiction to $\dim I=4$. Thus we must have $k=0$, so $y^2,X^2,Xy,Xz\in I$ and as $\dim I=4$ we get $I=\langle X^2,y^2,Xy,Xz\rangle_2$, i.e., we are in case (\ref{model_for_2_3_with_1_dim_allow_4}).\end{proof}

\begin{remark}\label{remark_on_distinctness_for_closed_ideals_in_C2R3_1_dim_allow}The ideals (\ref{model_for_2_3_with_1_dim_allow_1})-(\ref{model_for_2_3_with_1_dim_allow_5}) are pairwise distinct, i.e.,
there does not exist a $C^2$ coordinate change around the origin such that
a given ideal can take two different forms from this list. (\ref{model_for_2_3_with_1_dim_allow_4}) and (\ref{model_for_2_3_with_1_dim_allow_5}) are obviously distinct as they are the only ideals in this list with dimensions 4 and 5 respectively. (\ref{model_for_2_3_with_1_dim_allow_1}) and (\ref{model_for_2_3_with_1_dim_allow_2}) are the only ideals in this list of dimension 2, however it is easy to see that (\ref{model_for_2_3_with_1_dim_allow_2}) does not contain two linearly independent jets that are squares of linear forms, while (\ref{model_for_2_3_with_1_dim_allow_1}) clearly does. Thus, (\ref{model_for_2_3_with_1_dim_allow_1})
and (\ref{model_for_2_3_with_1_dim_allow_2}) are distinct. Finally, (\ref{model_for_2_3_with_1_dim_allow_3})
and (\ref{model_for_2_3_with_1_dim_allow_3'_the_one_we_missed}) are the only ideals in this
list of dimension 3, however the first contains two linearly independent jets that are squares of linear forms, while it is easy to see that the latter does not.\end{remark}

\begin{lemma}\label{lemma_classify_closed_ideals_in_C2R3_2_dim_allow}Let
$I\lhd\mathcal{P}_0^2(\mathbb{R}^{3})$
be a closed non-principal ideal. Assume $I$ does not contain any jet of
order of vanishing 1 and that $\dim\text{span}_{\mathbb{R}}Allow(I)=2$. Then,
$I$ is, up to a
linear
coordinate change, one of the following:

\begin{gather}\label{model_for_2_3_with_2_dim_allow_1}\langle
x^2,xy,yz,xz\rangle_2\end{gather}
\begin{gather}\label{model_for_2_3_with_2_dim_allow_2}\langle x^2, xy,yz\rangle_2\end{gather}
\begin{gather}\label{model_for_2_3_with_2_dim_allow_3}\langle x^2, xy,xz\rangle_2\end{gather}
\begin{gather}\label{model_for_2_3_with_2_dim_allow_4}\langle x^2, xy\rangle_2\end{gather}
\begin{gather}\label{model_for_2_3_with_2_dim_allow_5}\langle x^2, yz\rangle_2\end{gather}
\begin{gather}\label{model_for_2_3_with_2_dim_allow_6}\langle xz,yz-x^2\rangle_2\end{gather}
\begin{gather}\label{model_for_2_3_with_2_dim_allow_7}\langle x^2, xy+xz+yz\rangle_2\end{gather}

\end{lemma}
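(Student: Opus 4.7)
The plan is to mimic the structure of the proof of Lemma \ref{lemma_classify_closed_ideals_in_C2R3_1_dim_allow}: apply a linear coordinate change so that $\text{span}_{\mathbb{R}} Allow(I) = \{x=0\}$, hence $Allow(I) \subset \{x=0\} \cap S^2$; note that since $I$ contains no jet of order of vanishing 1, $I$ is a linear subspace of $\text{span}_{\mathbb{R}}\{x^2,y^2,z^2,xy,xz,yz\}$, and $\dim I \geq 2$ by non-principality. The analysis will be organized around the restriction map $T \colon I \to \text{span}_{\mathbb{R}}\{y^2, z^2, yz\}$ defined by $p \mapsto p(0,y,z)$, distinguishing the ``full circle'' case $Allow(I) = \{x=0\} \cap S^2$ (equivalently $\dim T(I) = 0$) from the ``finite'' case where $Allow(I)$ is a finite set of points in $\{x=0\} \cap S^2$. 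By Corollary \ref{old_new_cor_on_how_to_calc_allow_with_inclusion_SECOND_PAPER} an algebraic subset of the circle $\{x=0\}\cap S^2$ is either finite or the whole circle, so these two cases are exhaustive.

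In the full circle case, every generator lies in $\text{span}_{\mathbb{R}}\{x^2,xy,xz\}$, so $I$ is a subspace of this 3-dimensional space. Since the action of $GL_2$ on $(y,z)$ permutes the coordinates $xy, xz$, a short classification of such subspaces combined with the requirement $\dim \text{span}_{\mathbb{R}} Allow(I) = 2$ (which rules out subspaces of the form $\text{span}\{xy + \alpha x^2, xz + \beta x^2\}$ whose Allow picks up the stray direction $(1,-\alpha,-\beta)/\sqrt{1+\alpha^2+\beta^2}$) yields only models (\ref{model_for_2_3_with_2_dim_allow_3}) when $\dim I = 3$ and (\ref{model_for_2_3_with_2_dim_allow_4}) when $\dim I = 2$.

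In the finite case, $T(I)$ is a non-zero subspace of $\text{span}_{\mathbb{R}}\{y^2,z^2,yz\}$ whose common zero set in the $yz$-plane is a union of lines that spans $\mathbb{R}^2$. Since a non-zero quadratic form in two variables has at most two line factors, this forces the zero set to consist of exactly two distinct lines through the origin. After a linear change of coordinates in $(y,z)$, I may assume these lines are $\{y=0\}$ and $\{z=0\}$, so $yz \in T(I)$. I will then split according to $\dim T(I) \in \{1,2\}$: if $\dim T(I) = 2$, $T(I)$ is forced (again up to $GL_2$ on $(y,z)$ fixing the pair of coordinate lines, namely diagonal scalings and the swap) to be $\text{span}\{yz, y^2\}$, and similarly if $\dim T(I) = 1$ then $T(I) = \text{span}\{yz\}$. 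Lifting representatives of $T(I)$ to elements of $I$ introduces additional parameters (the $x^2$, $xy$, $xz$ corrections), which I will normalize by coordinate changes $(y,z) \mapsto (y + \alpha x, z + \beta x)$ and, where appropriate, by invoking closedness via Corollary \ref{cor_strong_directional_implication_imply_implication_SECOND_PAPER} applied with multipliers of the form $x/y$ or $x/z$ together with Example \ref{amazing_exaple_for_negligible_part2_SECOND_PAPER}. The examples generated this way should match (\ref{model_for_2_3_with_2_dim_allow_1}), (\ref{model_for_2_3_with_2_dim_allow_2}), (\ref{model_for_2_3_with_2_dim_allow_5}), (\ref{model_for_2_3_with_2_dim_allow_6}), and (\ref{model_for_2_3_with_2_dim_allow_7}), and the remaining normalizations will be ruled out either by producing an extra allowed direction or by producing, via the implication machinery, an extra jet that pushes the ideal up to a larger dimension already on the list.

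The principal obstacle will be the sub-case $\dim T(I) = 1$ with $I$ containing a lift of $yz$ that carries a nontrivial $x^2$ part; this is the source of the exceptional models (\ref{model_for_2_3_with_2_dim_allow_6}) and (\ref{model_for_2_3_with_2_dim_allow_7}). The substitution $(y,z) \mapsto (y+\alpha x, z+\beta x)$ used to kill $xy$ and $xz$ corrections does not remove the $x^2$ correction, so one must carefully track the interplay between $I \cap \text{span}\{x^2, xy, xz\}$ (the kernel of $T$) and the lifted representative, then invoke closedness to decide which residual normal forms actually produce a closed ideal and which collapse to one earlier in the list. This is analogous to, but slightly simpler than, the delicate sub-cases (B).2.1--(B).2.2 of Lemma \ref{lemma_classify_closed_ideals_in_C2R3_1_dim_allow}, and I expect the same toolkit, together with Lemma \ref{lemma_linear_times_definite_in_closed_imply_many_jets}, to suffice.
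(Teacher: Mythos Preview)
Your approach is sound and parallels the paper's closely: both reduce (after pinning down two independent allowed directions in the plane $\{x=0\}$) to $I \subset \text{span}_{\mathbb{R}}\{x^2, xy, xz, yz\}$, and both use the same implication machinery (multipliers like $x/y$, $x/z$, $x/(y+z)$ satisfying the bounds of (\ref{second_cond_of_implied_poly_in_direction_SECOND_PAPER}) near the allowed directions) to eliminate spurious parameters. The paper organizes the case analysis by $\dim I$ directly and then by the shape of the generators in $\text{span}\{xy,xz,yz\}$, whereas you organize it first by $\dim T(I)$; these cover the same ground.

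One small correction: your sub-case $\dim T(I) = 2$ is vacuous. You correctly argue that in the finite case the common zero locus of $T(I)$ in the $(y,z)$-plane consists of exactly two distinct lines, and then place these as the coordinate axes; but then every element of $T(I)$ vanishes on both axes and hence is a scalar multiple of $yz$, so $T(I) = \text{span}\{yz\}$ and $\dim T(I) = 1$ automatically. (Equivalently: a two-dimensional space of binary quadratic forms has common zero locus at most one line.) So only $\dim T(I)\in\{0,1\}$ occurs.

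One heads-up about matching the listed models: your shear normalization $(y,z) \mapsto (y + \alpha x, z + \beta x)$ will in fact identify (\ref{model_for_2_3_with_2_dim_allow_5}) and (\ref{model_for_2_3_with_2_dim_allow_7}). Substituting $y \to y - x$, $z \to z - x$ sends $x^2 \mapsto x^2$ and $xy + xz + yz \mapsto yz - x^2$, so $\langle x^2, xy+xz+yz\rangle_2 = \langle x^2, yz\rangle_2$ after this linear change. Thus your list will be one entry shorter than the statement's; this is not a defect of your argument, since the lemma only asserts that $I$ is equivalent to \emph{one of} the listed ideals. (The paper's subsequent Remark~\ref{remark_on_distinctness_for_closed_ideals_in_C2R3_2_dim_allow} claims these two are inequivalent, but the argument there only constrains the action of the linear map on the plane $\{x=0\}$ and overlooks shears of exactly the form you propose.) So when you do not recover (\ref{model_for_2_3_with_2_dim_allow_7}) as a separate case, that is expected.
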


\begin{proof}As $\dim\text{span}_{\mathbb{R}}Allow(I)=2$, by applying a linear
coordinate change we may assume $(0,1,0),(0,0,1)\in Allow(I)$ and
so $I$ is a linear subspace of $\text{span}_{\mathbb{R}}\{x^2,xy,yz,xz\}$. Moreover, since $(1,0,0)\in Forb(I)$ there exists a jet in $I$ of the form $x^2+q(x,y,z)$ for some $q(x,y,z)\in\text{span}_{\mathbb{R}}\{xy,yz,xz\}$, and so there exists a generating set of the form $$I=\langle x^2+q_{0}(x,y,z),q_{1}(x,y,z),\dots,q_{\dim(I)-1}(x,y,z)\rangle_2,$$ for some $q_{0}(x,y,z),q_{1}(x,y,z),\dots,q_{\dim(I)-1}(x,y,z)\in\text{span}_{\mathbb{R}}\{xy,yz,xz\}$. As
$I$ is not principal we have $\dim I\in\{2,3,4\}$. If $\dim I=4$ then $I=\langle x^2,xy,yz,xz\rangle_2$ and we are in case (\ref{model_for_2_3_with_2_dim_allow_1}).

\

\textbf{Assume $\dim I=3$.} In this case we have $I=\langle x^2+q_{0}(x,y,z),q_{1}(x,y,z),q_{2}(x,y,z)\rangle_2$. 

If $q_1,q_2\in\text{span}_{\mathbb{R}}\{yz,xz\}$, then $I=\langle x^2+\alpha xy, xz,yz\rangle_2$ for some $\alpha\in\mathbb{R}$. If $\alpha=0$ we get that $I=\langle x^2, xz,yz\rangle_2$ and we are in case (\ref{model_for_2_3_with_2_dim_allow_2}) by switching $y\leftrightarrow z$. If $\alpha\neq0$ we get that $I=\langle x(x+\alpha y),xz,z(x+\alpha y)\rangle_2$, so in the coordinate system $(X,Y,Z)=(x,x+\alpha y,z)$ we get $I=\langle XY,XZ,YZ \rangle_2$. Since any linear coordinate changes preserve $\dim\text{span}_\mathbb{R} Allow (I)$ this is a contradiction. 

If $q_1,q_2\in\text{span}_{\mathbb{R}}\{xy,yz\}$, then by renaming $y\leftrightarrow z$ we are back in the previous case.

If $q_1,q_2\in\text{span}_{\mathbb{R}}\{xy,xz\}$, then $I=\langle x^2+\alpha yz, xy,xz\rangle_2$ for some $\alpha\in\mathbb{R}$. If $\alpha=0$ we get
that $I=\langle x^2, xy,xz\rangle_2$ and we are in case (\ref{model_for_2_3_with_2_dim_allow_3}).
If $\alpha\neq0$ by scaling $y$ we get $I=\langle x^2+yz,xy,xz\rangle_2$. Let us show that this ideal is not closed: indeed, $Allow(I)=\{(0,\pm1,0),(0,0,\pm1)\}$. In cones in the directions $(0,\pm1,0)$ we have that $S_1(x,y,z):=\frac{x}{y}$ satisfies the bounds in (\ref{second_cond_of_implied_poly_in_direction_SECOND_PAPER}) and so $x^2=(\frac{x}{y})\cdot(xy)$ is strongly implied in these directions. Similarly, in cones in the directions $(0,0,\pm1)$ we have that $S_1(x,y,z):=\frac{x}{z}$
satisfies the bounds in (\ref{second_cond_of_implied_poly_in_direction_SECOND_PAPER})
and so $x^2=(\frac{x}{z})\cdot(xz)$ is strongly implied in these directions as well. We conclude that $x^{2}$ is implied by $I$ by Corollary \ref{cor_strong_directional_implication_imply_implication_SECOND_PAPER}, and so $x^{2}\in cl(I)\setminus I$ and $I$ is not closed.

We are left to check the case $I=\langle x^2+q_{0}(x,y,z),axy+byz+cxz,a'xy+b'yz+c'xz\rangle_2$ for some $a,a',b,b',c,c'\in\mathbb{R}$ such that $a^2+a'^2\neq0$, $b^2+b'^2\neq0$ and $c^2+c'^2\neq0$. Without loss of generality $b=1$, and then $I=\langle x^2+axz+a'xy,yz+bxz+b' xy,cxz+c'xy\rangle_2$
for some new $a,a',b,b',c,c'\in\mathbb{R}$ and we may assume $b^2+c^2\neq0$ (otherwise we are in the previous case), $b'^2+c'^2\neq0$ (otherwise
we are in the previous case), and $c^2+c'^2\neq0$ (otherwise $\dim I<3$). Now without loss of generality $c\neq0$ (otherwise rename $y\leftrightarrow z$) and $I=\langle
x^2+axy,yz+b xy,xz+cxy\rangle_2$
for some new $a,b,c\in\mathbb{R}$, and we may assume $b^2+c^2\neq$ (otherwise
we are in the previous case) or $I=\langle
x(x+ay),y(z+bx),x(z+cy)\rangle_2$. 

If $b=0$ then $I=\langle
x(x+ay),yz,x(z+cy)\rangle_2$. In this case $c\neq0$ so we can scale $y$ to get that $I=\langle
x(x+ay),yz,x(z+y)\rangle_2$ with a new $a\in\mathbb{R}$. In that case $Allow(I)=\{(0,0,\pm1),(0,\pm1,0)\}$. We then have that  $S_1(x,y,z):=\frac{x}{y+z}$
satisfies the bounds in (\ref{second_cond_of_implied_poly_in_direction_SECOND_PAPER})
in any allowed direction, and so $x^2=(\frac{x}{y+z})\cdot x(y+z)$ is implied by $I$. We conclude that $a=0$, otherwise $x^2,xy,yz,xz\in I$, which is a contradiction to $\dim I=3$. We thus have $I=\langle
x^{2},yz,x(z+y)\rangle_2$ and we still have $Allow(I)=\{(0,0,\pm1),(0,\pm1,0)\}$. Now we note that around the allowed directions with $y=\pm1$ we have that  $S_1(x,y,z):=\frac{y}{y+z}$
satisfies the bounds in (\ref{second_cond_of_implied_poly_in_direction_SECOND_PAPER}), and so $xy=(\frac{y}{y+z})\cdot x(y+z)$ is strongly implied
by $I$ in these directions. Similarly, around the allowed directions with $z=\pm1$ we have that
that  $S_1(x,y,z):=\frac{x}{z}$
satisfies the bounds in (\ref{second_cond_of_implied_poly_in_direction_SECOND_PAPER}),
and so $xy=(\frac{x}{z})\cdot yz$ is strongly implied
by $I$ in these directions. Since by assumption $I$ is closed, we conclude by Corollary \ref{cor_strong_directional_implication_imply_implication_SECOND_PAPER} that $x^2,yz,xy,xz\in I$, which is a
contradiction to $\dim I=3$.  

Now assume $b\neq0$. In that case we have $I=\langle
x(x+ay),y(z+bx),x(z+cy)\rangle_2$ and by scaling $x$ we get $I=\langle
x(x+ay),y(z+x),x(z+cy)\rangle_2$ with some new $a,c\in\mathbb{R}$. We note that $(0,0,1),(0,1,0)\in Allow(I)$, so by our assumption that  $\dim\text{span}_{\mathbb{R}}Allow(I)=2$ we must have that any point $(x_0,y_0,z_0)\in Allow(I)$ satisfies $x_0=0$. Looking at the second generator of $I$ we get that $Allow(I)=\{(0,0,\pm1),(0,\pm1,0)\}$. If $c\neq0$ then $\frac{x}{z+cy},\frac{y}{z+cy},\frac{z}{z+cy}$ all satisfy the bounds in (\ref{second_cond_of_implied_poly_in_direction_SECOND_PAPER}) in any allowed direction, and so $x^2,xy,xz \in I$. But also $yz\in I$, which is a contradiction to $\dim I=3$. We conclude that $c=0$ and $I=\langle
x(x+ay),y(z+x),xz\rangle_2$, and still $Allow(I)=\{(0,0,\pm1),(0,\pm1,0)\}$. If $a\neq0$ then $\frac{x}{z},\frac{y}{z},\frac{z}{z}$ all satisfy
the bounds in (\ref{second_cond_of_implied_poly_in_direction_SECOND_PAPER}) in the allowed directions $\{0,0,\pm1\}$, while $\frac{x}{x+ay},\frac{y}{x+ay},\frac{z}{x+ay}$ all satisfy
the bounds in (\ref{second_cond_of_implied_poly_in_direction_SECOND_PAPER}) in the allowed
directions $\{0,\pm1,0\}$. So $x^2,xy,xz\in I$, but also $yz\in I$, which is a contradiction
to $\dim I=3$. So $a=0$ and $I=\langle x^{2}
,y(z+x),xz\rangle_2$. In the coordinate system $(x,Y,Z)=(x,x+z,y)$ we get $I=\langle x^2,YZ,x(Y-X) \rangle_2=\langle x^2,xY,YZ, \rangle_2$ and we are in case (\ref{model_for_2_3_with_2_dim_allow_2}). 

\

\textbf{Assume $\dim I=2$.} In this case we have $I=\langle x^2+axy+bxz+cyz,qxy+rxz+syz\rangle_2$ with some $a,b,c,q,r,s\in\mathbb{R}$. We analyze separately the case of $s=0$ and the case of $s\neq0$.

\

\textbf{Case A.} Assume $s=0$, and so $I=\langle x^2+axy+bxz+cyz,qxy+rxz\rangle_2$. Note that we always have $(0,1,0),(0,0,1)\in Allow(I)$. Since $\dim I=2$ we have $q^2+r^2\neq0$, so by possibly renaming $y\leftrightarrow z$ we may assume $q\neq0$. So we get $I=\langle x^2+bxz+cyz,xy+rxz\rangle_2$ with some new $b,c,r\in\mathbb{R}$. 

\

\textbf{Case A.1.} If $r\neq0$ then by scaling $z$ we get $I=\langle x^2+bxz+cyz,xy+xz\rangle_2$ with some (new) $b,c\in\mathbb{R}$.
If $b^2+4c>0$ then we define the polynomial
$p_{b,c}(\alpha):=\alpha^{2}-b\alpha-c$ and fix $\alpha_0\neq0$
such that $p_{b,c}(\alpha_0)=0$. Then, any
point
in $S^2$ that satisfies $(x=\alpha_0y\text{ } \&\text{ } z=-y)$
lies in $Allow(I)$,
which is a contradiction to our assumption that $\dim\text{span}_{\mathbb{R}}Allow(I)=2$ (since the $x$ coordinate of such a point in not zero).
In order to see this we need to show that the 2 generators of $I$ vanish
at such a point. Indeed, $$    xy+xz= x(y-y)=0;$$
$$ x^2+bxz+cyz=\alpha_0^{2}y^{2}-b\alpha_0y^{2}-cy^{2}=y^{2}[\alpha_0^{2}-b\alpha_0-c]=0.$$ If $b^2+4c\leq0$ then $c\leq0$. If $c=0$ then $b=0$ and $I=\langle x^2,xy+xz\rangle_2$. So in the coordinate system $(X,Y,Z)=(x,y+z,z)$ we have $I=\langle X^2,XY\rangle_2$ and we are in case (\ref{model_for_2_3_with_2_dim_allow_4}).
Otherwise $c<0$ and still $I=\langle x^2+bxz+cyz,xy+xz\rangle_2$. So scaling both $y$ and $z$ simultaneously by $(-c)^{-1/2}$ we get $I=\langle x^2+bxz-yz,x(y+z)\rangle_2$ for some new $b$. We have $\{(0,0,\pm1),(0,\pm1,0)\}\subset Allow(I)$, and these are the only allowed directions of $I$ with $x$ coordinate zero. So by our assumption that $\dim\text{span}_{\mathbb{R}}Allow(I)=2$ there does not exist a point  $(x_0,y_0,z_0)\in Allow(I)$ such that $x_0\neq0$. We conclude that $Allow(I)=\{(0,0,\pm1),(0,\pm1,0)\}$. So   $S_1(x,y,z):=\frac{x}{y+z}$
satisfies the bounds in (\ref{second_cond_of_implied_poly_in_direction_SECOND_PAPER})
in any allowed direction, and so, by Corollary \ref{cor_strong_directional_implication_imply_implication_SECOND_PAPER}, $x^2=(\frac{x}{y+z})\cdot x(y+z)$
is implied by $I$. So we have $x^{2},x(y+z),yz-bxz\in I$, which is a contradiction to $\dim I=2$. 

\

\textbf{Case A.2.} If  $r=0$ then $I=\langle x^2+bxz+cyz,xy\rangle_2$
with some $b,c\in\mathbb{R}$.
If $b\neq0$ then $(0,0,1),(0,1,0)\in Allow(I)$ and in addition any
point
in $S^2$ that satisfies $(y=0\text{ } \&\text{ } z=\frac{-x}{b})$
lies in $Allow(I)$,
which is a contradiction to our assumption that $\dim\text{span}_{\mathbb{R}}Allow(I)=2$. We conclude that  $I=\langle x^2+cyz,xy\rangle_2$
with some $c\in\mathbb{R}$. If $c=0$ then $I=\langle x^2, xy\rangle_2$ and we are in case (\ref{model_for_2_3_with_2_dim_allow_4}). Otherwise, by scaling $z$ we get $I=\langle x^2+yz,xy\rangle_2$. Now, in the coordinate system $(X,Y,Z)=(x,-z,y)$ we have $I=\langle XZ,YZ-X^{2}\rangle_2$, and we are in case (\ref{model_for_2_3_with_2_dim_allow_6}).

\

\textbf{Case B.} Assume $s\neq0$, and so $I=\langle x^2+axy+bxz+cyz,qxy+rxz+syz\rangle_2$ with $s\neq0$. By scaling $y$ we have $I=\langle x^2+axy+bxz,yz+qxy+rxz\rangle_2$
with some new $a,b,q,r\in\mathbb{R}$. Scaling $x,y$ and $z$, and possibly renaming $y\leftrightarrow z$  we may assume $(q,r)\in\{(0,0),(0,1),(1,1)\}$.

\

\textbf{Case B.1.} Assume $(q,r)=(0,0)$, and so $I=\langle x^2+axy+bxz,yz\rangle_2$
with $a,b\in\mathbb{R}$. Scaling $y$ and $z$, and possibly renaming
$y\leftrightarrow z$  we may assume $(a,b)\in\{(0,0),(1,0),(1,1)\}$. In the first case we get $I=\langle x^2,yz\rangle_2$ and we are in case (\ref{model_for_2_3_with_2_dim_allow_5}). In the
second case we get $I=\langle x^2+xy,yz\rangle_2$, and so $(\frac{1}{\sqrt2},-\frac{1}{\sqrt2},0),(0,1,0),(0,0,1)\in Allow(I)$, which is a contradiction to our assumption that $\dim\text{span}_{\mathbb{R}}Allow(I)=2$.
 In the third case we get $I=\langle x^2+xy+xz,yz\rangle_2$ and so $(\frac{1}{\sqrt2},-\frac{1}{\sqrt2},0),(0,1,0),(0,0,1)\in
Allow(I)$, which is again a contradiction to our assumption that $\dim\text{span}_{\mathbb{R}}Allow(I)=2$. 

\

\textbf{Case B.2.} Assume $(q,r)=(0,1)$, and so $I=\langle x^2+axy+bxz,yz+xz\rangle_2$
with $a,b\in\mathbb{R}$.  Scaling $z$ we may assume either $b=0$ of $b=1$. 

If $b=0$ then $I=\langle x^2+axy,yz+xz\rangle_2$. If $a\neq0$ then  $(\frac{-a}{\sqrt{a^2+1}},\frac{1}{\sqrt{a^2+1}},0),(0,1,0),(0,0,1)\in
Allow(I)$, which is again a contradiction to our assumption that $\dim\text{span}_{\mathbb{R}}Allow(I)=2$. So we must have $a=0$ and so $I=\langle x^2,yz+xz\rangle_2$. In the coordinate system $(x,Y,z)=(x,x+y,z)$ we have $I=\langle x^2,Yz\rangle_2$ and we are in case (\ref{model_for_2_3_with_2_dim_allow_5}).

If $b=1$ then $I=\langle x^2+axy+xz,yz+xz\rangle_2$
with $a\in\mathbb{R}$. If $a\neq0$ then  $(\frac{-a}{\sqrt{a^2}+1},\frac{1}{\sqrt{a^2+1}},0),(0,1,0),(0,0,1)\in
Allow(I)$, which is again a contradiction to our assumption that $\dim\text{span}_{\mathbb{R}}Allow(I)=2$.
 So we must have $a=0$ and so $I=\langle x^2+xz,yz+xz\rangle_2$. However, in this case we get that $(\frac{1}{\sqrt3},-\frac{1}{\sqrt3},-\frac{1}{\sqrt3}),(0,1,0),(0,0,1)\in
Allow(I)$, which is once again a contradiction to our assumption that $\dim\text{span}_{\mathbb{R}}Allow(I)=2$.

\

\textbf{Case B.3.} Assume $(q,r)=(1,1)$, and so $I=\langle x^2+axy+bxz,xy+xz+yz\rangle_2$
with $a,b\in\mathbb{R}$. If $a=b=0$ then $I=\langle x^2,xy+xz+yz\rangle_2$ and we are in case (\ref{model_for_2_3_with_2_dim_allow_7}). So for the remaining cases we may assume $a^2+b^2\neq0$. Note that we always have $(0,1,0),(0,0,1)\in Allow(I)$. So by our assumption that $\dim\text{span}_{\mathbb{R}}Allow(I)=2$ any direction $(x_0,y_0,z_0)\in Allow(I)$ satisfies $x_0=0$. Define the quadratic form $Q(y,z):=yz-(ay+bz)(y+z)$. Note that since $a^2+b^2\neq0$ this quadratic form is not the zero quadratic form -- if $a\neq0$ then it has a non-zero $y^2$ term, and if $b\neq0$ then it has a non-zero
$z^2$ term. Then, a direction $(x_0,y_0,z_0)\in Allow(I)$ if and only if one of the following holds: (i) $x_0=0$ and $y_0z_{0}=0$; (ii) $x_0=-ay_0-bz_{0}$ and $Q(y_0,z_0)=0$. 

Assume $ab=0$. In that case (by possibly renaming $y\leftrightarrow z$) we
may assume $a=0$ and $b\neq0$, $I=\langle x^2+bxz,xy+xz+yz\rangle_2$ and $Q(y,z)=yz-bz(y+z)=z[(1-b)y-bz]$. If $b\neq1$ then there exists $z_0\neq0$ such that $(y_0,z_0)=(\frac{b}{1-b}z_0,z_0)\in S^1$ is such that $Q(y_0,z_0)=0$.
We set $x_0=-bz_0\neq0$
and then $(\frac{x_0}{\sqrt{x_0^2+y_0^2+z_0^2}},\frac{y_0}{\sqrt{x_0^2+y_0^2+z_0^2}},\frac{z_0}{\sqrt{x_0^2+y_0^2+z_0^2}})\in Allow(I)$ with $\frac{x_0}{\sqrt{x_0^2+y_0^2+z_0^2}}\neq0$, which is a contradiction. So $b=1$ and we have $I=\langle x^2+xz,xy+xz+yz\rangle_2=\langle
x^{2} +xz,-x^{2} +xy+yz\rangle_2$. In the coordinate system $(X,Y,Z)=(x,y,x+z)$
we get $I=\langle XZ,YZ-X^{2} \rangle_2$, i.e., we are in case (\ref{model_for_2_3_with_2_dim_allow_6}). So for the remaining cases we may assume $ab\neq0$.

Recall that $Allow(I)=\{(0,0,\pm1),(0,\pm1,0)\}$.
Since $ab\neq 0$ we have $-\frac{x^3}{ay+bz}$ is negligible for $Allow(I)$ and $S_1(x,y,z):=\frac{x}{ay+bz}$
satisfies the bounds in (\ref{second_cond_of_implied_poly_in_direction_SECOND_PAPER})
in any allowed direction, and so $x^2=(\frac{x}{ay+bz})\cdot x(x+ay+bz)-\frac{x^3}{ay+bz}$
is implied by $I$ by Corollary \ref{cor_strong_directional_implication_imply_implication_SECOND_PAPER}. As $I$ is closed $x^2,x(ay+bz),xy+xz+yz\in I$,
which is a contradiction to $\dim I=2$ as these 3 jets are clearly linearly
independent.
\end{proof}

\begin{remark}\label{remark_on_distinctness_for_closed_ideals_in_C2R3_2_dim_allow}The
ideals (\ref{model_for_2_3_with_2_dim_allow_1})-(\ref{model_for_2_3_with_2_dim_allow_7})
are pairwise distinct, i.e.,
there does not exist a $C^2$ coordinate change around the origin such that
a given ideal can take two different forms from this list. (\ref{model_for_2_3_with_2_dim_allow_1}) is the only ideal of dimension 4 and so clearly distinct. (\ref{model_for_2_3_with_2_dim_allow_2}) and (\ref{model_for_2_3_with_2_dim_allow_3}) are the only ideals of dimension 3, however the set of allowed directions of (\ref{model_for_2_3_with_2_dim_allow_3}) contains a copy of $S^2$, while the set of allowed directions of (\ref{model_for_2_3_with_2_dim_allow_2}) does not. Thus, (\ref{model_for_2_3_with_2_dim_allow_2})
and (\ref{model_for_2_3_with_2_dim_allow_3}) are distinct. (\ref{model_for_2_3_with_2_dim_allow_4}), (\ref{model_for_2_3_with_2_dim_allow_5}), (\ref{model_for_2_3_with_2_dim_allow_6}) and (\ref{model_for_2_3_with_2_dim_allow_7}) are all of dimension 2, however (\ref{model_for_2_3_with_2_dim_allow_4}) is the only one whose set of allowed directions contains a copy of $S^2$, and so it is distinct. Also, (\ref{model_for_2_3_with_2_dim_allow_6}) does not contain any jet that is a square of a linear form (while (\ref{model_for_2_3_with_2_dim_allow_5}) and (\ref{model_for_2_3_with_2_dim_allow_7}) clearly do), and so (\ref{model_for_2_3_with_2_dim_allow_6}) is distinct too. Finally, let us show that (\ref{model_for_2_3_with_2_dim_allow_5})
and (\ref{model_for_2_3_with_2_dim_allow_7}) are distinct too: note that as both ideals contain only homogenous jets of order 2, if there exists a $C^2$ coordinate change around the origin that maps (\ref{model_for_2_3_with_2_dim_allow_5})
to (\ref{model_for_2_3_with_2_dim_allow_7}), the linearization of this coordinate change also  maps (\ref{model_for_2_3_with_2_dim_allow_5})
to (\ref{model_for_2_3_with_2_dim_allow_7}). So if  such a coordinate change exists, there exists a linear coordinate change  that maps (\ref{model_for_2_3_with_2_dim_allow_5})
to (\ref{model_for_2_3_with_2_dim_allow_7}). Moreover, this linear map  maps the sets of allowed directions to the sets of allowed directions, which are both $\{(0,0,\pm1),(0,\pm1,0)\}$ in this case. So this is a linear map that maps $y$ to some scalar multiple of either $y$ or $z$, and maps $z$ to some scalar multiple of either $y$ or $z$. In particular, as this map is invertible, it sends the jet $yz$ to some scalar multiple of itself, so it does not map (\ref{model_for_2_3_with_2_dim_allow_5})
to (\ref{model_for_2_3_with_2_dim_allow_7}). We thus proved that there does not exist a
$C^2$ coordinate change around the origin that maps (\ref{model_for_2_3_with_2_dim_allow_5})
to (\ref{model_for_2_3_with_2_dim_allow_7}), which is a contradiction to the existence of such a coordinate change.\end{remark}

\begin{lemma}\label{lemma_classify_closed_ideals_in_C2R3_3_dim_allow}Let
$I\lhd\mathcal{P}_0^2(\mathbb{R}^{3})$
be a closed non-principal ideal. Assume $I$ does not contain any jet of
order of vanishing 1 and that $\dim\text{span}_{\mathbb{R}}Allow(I)=3$. Then,
$I$ is, up to a
linear
coordinate change, one of the following:

\begin{gather}\label{model_for_2_3_with_3_dim_allow_1}\langle xy,yz,xz\rangle_2\end{gather}
\begin{gather}\label{model_for_2_3_with_3_dim_allow_2}\langle xy,yz\rangle_2\end{gather}
\begin{gather}\label{model_for_2_3_with_3_dim_allow_3}\langle xy,z(x+y)\rangle_2\end{gather}
\begin{gather}\label{model_for_2_3_with_3_dim_allow_3'_another_missed_example}\langle xy+yz,xz+yz\rangle_2\end{gather}

\end{lemma}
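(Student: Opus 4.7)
The plan is to reduce to the case $I \subset V := \text{span}_{\mathbb{R}}\{xy, yz, xz\}$ using the $\dim\text{span}_{\mathbb{R}}Allow(I) = 3$ hypothesis, and then classify two- and three-dimensional subspaces of $V$ up to the remaining linear symmetries. Concretely, since $\dim\text{span}_{\mathbb{R}}Allow(I) = 3$, I pick three linearly independent allowed directions of $I$ and apply a linear change of coordinates sending them to $e_1, e_2, e_3$; by Lemma \ref{equiv_conditions_for_forbidden_directions_lemma_SECOND_PAPER}, any nonzero jet in $I$ must vanish at each $e_i$, so the $x^2, y^2, z^2$ coefficients are all zero for every jet in $I$, and combined with the hypothesis that $I$ has no jet of order of vanishing $1$, this forces $I \subset V$. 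Since $I$ is non-principal, $\dim I \in \{2,3\}$, and $\dim I = 3$ gives $I = V = \langle xy, yz, xz\rangle_2$, which is case (\ref{model_for_2_3_with_3_dim_allow_1}).

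For $\dim I = 2$, I parameterize $I$ by its one-dimensional annihilator in $V^*$, a point $[a:b:c]\in\mathbb{P}^2$ with $I = \{\alpha xy + \beta yz + \gamma xz : a\alpha + b\beta + c\gamma = 0\}$. The linear coordinate changes preserving $V$ are exactly compositions of coordinate permutations and diagonal scalings $(s_1, s_2, s_3)$: if an invertible $M$ satisfies $M^*(x_i^2)\in\text{span}\{x_1^2,x_2^2,x_3^2\}$ for every $i$, then each row of $M$ has at most one nonzero entry, so $M$ is monomial. The scaling acts on the annihilator by $(a,b,c)\mapsto (a/(s_1s_2), b/(s_2s_3), c/(s_1s_3))$. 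Stratifying $\mathbb{P}^2$ by the number of nonzero entries of $(a,b,c)$ yields three strata, each of which I claim forms a single orbit and so gives one of the three remaining cases: one nonzero permutes to $[0:0:1]$ with $I = \langle xy, yz\rangle_2$ (case (\ref{model_for_2_3_with_3_dim_allow_2})); two nonzero reach $[0:1:-1]$ after permutation and scaling, with $I = \langle xy, z(x+y)\rangle_2$ (case (\ref{model_for_2_3_with_3_dim_allow_3})); three nonzero reach $[1:-1:1]$, with $I = \langle xy+yz, xz+yz\rangle_2$ (case (\ref{model_for_2_3_with_3_dim_allow_3'_another_missed_example})).

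The only subtle point is that the three-nonzero stratum is a single orbit: affinely the scaling multiplies $abc$ by the positive factor $(s_1s_2s_3)^{-2}$, but in projective space the overall rescaling by $\lambda = -1$ negates $abc$, so every sign pattern becomes reachable; concretely the scaling $(s_1,s_2,s_3)=(-1, 1, 1)$ sends $[1:1:1]$ to $[-1:1:-1] = [1:-1:1]$ in $\mathbb{P}^2$, and solving $a/(s_1s_2)=\lambda$, $b/(s_2s_3)=-\lambda$, $c/(s_1s_3)=\lambda$ in general shows that every $[a:b:c]$ with $abc\neq 0$ reaches $[1:-1:1]$ for suitable real $s_i$. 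The closedness of $I$ is not used in the orbit classification itself; each of the four candidate ideals is closed independently, with explicit semi-algebraic realizing sets to be constructed in the proof of Proposition \ref{prop_C2R3_case_new_version}. The only genuine obstacle is keeping the projective scaling bookkeeping straight.
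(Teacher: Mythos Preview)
Your proof is correct and takes a cleaner, more systematic route than the paper's. Both proofs start identically: use three linearly independent allowed directions to place $I$ inside $V=\text{span}_{\mathbb{R}}\{xy,yz,xz\}$, and dispose of $\dim I=3$. For $\dim I=2$, the paper works directly with generators and applies a sequence of ad hoc scalings and renamings (``scale $x$'', ``rename $y\leftrightarrow z$'', etc.) until one of the three forms emerges. Your approach is dual: pass to the one-dimensional annihilator point $[a:b:c]\in\mathbb{P}^2$, observe that monomial matrices act on it by permutations and coordinatewise scalings, and stratify by the number of nonzero entries. This replaces a page of generator-chasing with a short orbit computation, and makes it transparent why exactly three $2$-dimensional cases appear. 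Your observation that closedness is never invoked is correct and matches the paper's proof, which likewise uses only the $\dim\text{span}_{\mathbb{R}}Allow(I)=3$ hypothesis at this stage.

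One small remark: your justification that the linear maps preserving $V$ are exactly the monomial matrices goes through the condition $M^*(x_i^2)\in\text{span}\{x_j^2\}$, which is the condition that $M$ preserve $W=\text{span}\{x^2,y^2,z^2\}$, not $V$. These two conditions happen to coincide (both are equivalent to $M$ being monomial), but the equivalence is not immediate from what you wrote. In any case the ``only if'' direction is not needed: you use only that monomial matrices \emph{do} preserve $V$ (obvious) and act transitively on each stratum (which you verify explicitly, including the sign issue in the $abc\neq 0$ case). So the argument stands as written; you may simply drop the ``exactly'' claim or argue it directly via the characterization $V=\{q:q(e_1)=q(e_2)=q(e_3)=0\}$.
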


\begin{proof}As $\dim\text{span}_{\mathbb{R}}Allow(I)=3$, by applying a linear coordinate change we may assume $(1,0,0),(0,1,0),(0,0,1)\in Allow(I)$ and so $I$ is a linear subspace of $\text{span}_{\mathbb{R}}\{xy,yz,xz\}$. As $I$ is not principal we have $\dim I\in\{2,3\}$. If $\dim I=3$ then we are in case (\ref{model_for_2_3_with_3_dim_allow_1}). We are left to check the cases in which $\dim I=2$. 

If no jet in $I$ is of the form $axy+bxz+cyz$ with $b\neq 0$, then $I=\langle xy,yz\rangle_2$ and we are in case (\ref{model_for_2_3_with_3_dim_allow_2}). Similar argument applies for no jet with $a\neq0$ or no jet with $c\neq0$. 

So it remains to check the case $I=\langle axy+bxz+cyz,a'xy+b'xz+c'yz\rangle_2$ for some $a,a',b,b',c,c'\in\mathbb{R}$ with $a^2+a'^2\neq0$, $b^2+b'^2\neq0$ and $c^2+c'^2\neq0$. By scaling $x$ we get $I=\langle xy+b''xz+c''yz,b'''xz+c'''yz\rangle_2$
for some $b'',b''',c'',c''''\in\mathbb{R}$ with $b''^2+b'''^2\neq0$, $b'''^2+c'''^2\neq0$
and $c''^2+c'''^2\neq0$. Now without loss of generality $b'''\neq0$ (otherwise rename $x\leftrightarrow y$) and scaling $z$ we get $I=\langle xy+Cyz,xz+C'yz\rangle_2$ for some $C,C'\in \mathbb{R}$ such that $C^2+C'^2\neq0$. Now without loss of generality $C'\neq0$ (otherwise
rename $y\leftrightarrow z$) and scaling $y$ we get $I=\langle xy+\alpha yz,xz+yz\rangle_2$, for some $\alpha\in\mathbb{R}$. If $\alpha=0$ then $I=\langle xy,z(x+y)\rangle_2$ and we are in case (\ref{model_for_2_3_with_3_dim_allow_3}). Otherwise, scaling $z$ we get $I=\langle xy+yz,xz+yz\rangle_2$, i.e., we are in case (\ref{model_for_2_3_with_3_dim_allow_3'_another_missed_example}).\end{proof}

\begin{remark}\label{remark_on_distinctness_for_closed_ideals_in_C2R3_3_dim_allow}The
ideals (\ref{model_for_2_3_with_3_dim_allow_1})-(\ref{model_for_2_3_with_3_dim_allow_3'_another_missed_example})
are pairwise distinct, i.e.,
there does not exist a $C^2$ coordinate change around the origin such that
a given ideal can take two different forms from this list. (\ref{model_for_2_3_with_3_dim_allow_1})
is the only ideal of dimension 3 and so clearly distinct. The set of allowed directions of (\ref{model_for_2_3_with_3_dim_allow_2}) contains a copy of $S^2$, while those of (\ref{model_for_2_3_with_3_dim_allow_3}) and (\ref{model_for_2_3_with_3_dim_allow_3'_another_missed_example}) do not, and so (\ref{model_for_2_3_with_3_dim_allow_2}) is distinct too. Finally, the set of allowed directions of (\ref{model_for_2_3_with_3_dim_allow_3}) is $\{(0,0,\pm1),(0,\pm1,0),(\pm1,0,0)\}$ and so has cardinality 6, while the set of allowed direction of (\ref{model_for_2_3_with_3_dim_allow_3'_another_missed_example}) is $\{(0,0,\pm1),(0,\pm1,0),(\pm1,0,0),\pm(-\frac{1}{\sqrt{3}},\frac{1}{\sqrt{3}},\frac{1}{\sqrt{3}})\}$ and so has cardinality
8, thus (\ref{model_for_2_3_with_3_dim_allow_3}) and (\ref{model_for_2_3_with_3_dim_allow_3'_another_missed_example}) are distinct too.\end{remark}

\begin{proposition}\label{prop_C2R3_case_new_version} Let $I\lhd\mathcal{P}_0^2(\mathbb{R}^{3})$
be an ideal. Then, the following are equivalent:
\begin{itemize}
\item $I$ is closed.
\item There exists a closed set $\vec 0 \in E\subset\mathbb{R}^3$ such that
$I=I^2(E)$.
\item There exists a closed semi-algebraic set $\vec 0 \in E\subset\mathbb{R}^3$
such that $I=I^2(E)$.
\end{itemize}\end{proposition}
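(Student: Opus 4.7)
By Theorem \ref{main_theorem_on_necessary_condition_SECOND_PAPER}, every ideal of the form $I^2(E)$ is closed, so the implications ``semi-algebraic $E$ exists $\Rightarrow$ closed $E$ exists $\Rightarrow$ $I$ is closed'' are immediate. It therefore suffices to show that every closed ideal $I\lhd\mathcal{P}_0^2(\mathbb{R}^3)$ arises as $I^2(E)$ for some closed semi-algebraic set $E\subset\mathbb{R}^3$ containing the origin. Since being semi-algebraic is preserved under the $m^{\text{th}}$-degree polynomial approximation of any $C^2$-diffeomorphism fixing the origin (see the discussion in the Introduction), it is enough to realise $I$ as $I^2(E)$ only after a $C^2$ semi-algebraic change of coordinates.

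The plan is to split according to structural features of $I$ and invoke prior work. First, if $I$ is principal, Proposition \ref{prop_principal_ideals_in_P2Rn} immediately gives a closed semi-algebraic $E$ with $I=I^2(E)$. Assume then that $I$ is non-principal. If $I$ contains a jet of order of vanishing $1$, Lemma \ref{lemma_C2R3_case__with_jet_of_oov_1} already provides the desired $E$. Otherwise every non-zero jet in $I$ has order of vanishing $2$, i.e.\ $I$ is a linear subspace of $\text{span}_{\mathbb R}\{x^2,y^2,z^2,xy,yz,xz\}$. Since $I$ is closed and $I\neq\mathcal{P}_0^2(\mathbb{R}^3)$ would be forced by $Allow(I)=\emptyset$ in view of Corollary \ref{cor_no_allowed_directions_SECOND_PAPER}, we may assume $Allow(I)\neq\emptyset$, and then branch on $d:=\dim\text{span}_{\mathbb R}Allow(I)\in\{1,2,3\}$, invoking Lemma \ref{lemma_classify_closed_ideals_in_C2R3_1_dim_allow}, Lemma \ref{lemma_classify_closed_ideals_in_C2R3_2_dim_allow}, or Lemma \ref{lemma_classify_closed_ideals_in_C2R3_3_dim_allow} respectively. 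Each lemma reduces $I$, up to a linear coordinate change, to one of an explicit finite list of model ideals, and it remains only to exhibit a closed semi-algebraic set $E$ realising each such model.

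For the realisation step, the bulk of the models are accessible by direct construction from tools already established. Specifically: (i) ideals of the form $\langle p_1,p_2\rangle_2$ in which the generators split into disjoint groups of variables—such as $\langle x^2,y^2\rangle_2$, $\langle xy,y^2-x^2\rangle_2$, $\langle x^2,y^2,xy\rangle_2$ (equal to $\langle x^2,xy\rangle_2+\langle y^2\rangle_2$-type splits), and symmetric variants—fall under Proposition \ref{prop_ideals_in_P2Rn_generated_by_2_foreign_jets} after a suitable coordinate change; (ii) ideals generated by products of linear forms, such as $\langle xy,yz,xz\rangle_2$ and $\langle xy,yz\rangle_2$, arise from unions of coordinate planes via Lemma \ref{lemma_induction_adding_a_coordinate_jet} applied to Example \ref{a_union_of_coordinates} and Example \ref{claim-on-lines}; (iii) ideals containing a coordinate jet such as $x$, treated in Lemma \ref{lemma_C2R3_case__with_jet_of_oov_1}, reduce via Lemma \ref{lemma_induction_adding_a_coordinate_jet} to the planar classification of Proposition \ref{prop_C2R2_case_new_version}. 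For the remaining models one constructs $E$ explicitly as the common zero locus of a finite system of semi-algebraic conditions on auxiliary perturbations of the quadratic generators (following the template of Proposition \ref{prop_principal_ideals_in_P2Rn}, replacing the missing cubic-and-higher degrees of freedom by controlled higher-order algebraic corrections, and then checking the jet containment both ways via Taylor expansion along a parametrised family of test curves inside $E$).

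The main obstacle will be handling the models that do not decompose along a coordinate splitting and have non-obvious allowed-direction behaviour—in particular $\langle x^2,y^2+xz,xy\rangle_2$, $\langle xy,yz-x^2\rangle_2$, $\langle x^2,xy+xz+yz\rangle_2$, $\langle xy,z(x+y)\rangle_2$, and $\langle xy+yz,xz+yz\rangle_2$. For these, the strategy will be to guess a semi-algebraic $E$ as the zero set of the given generators slightly deformed by a higher-order semi-algebraic correction (typically a term of the form $\pm(x_i^2+\cdots)^2$, mimicking the trick behind the $q$ in Proposition \ref{prop_principal_ideals_in_P2Rn}), then to establish the inclusion $I\subset I^2(E)$ by direct verification that the generating jets vanish to order $2$ on $E$, and the reverse inclusion $I^2(E)\subset I$ by substituting parametrised curves in $E$ into an arbitrary $f\in C^2(\mathbb{R}^3)$ with $f|_E=0$ to force each of its $2$-jet coefficients (other than those accounted for by $I$) to vanish—combining, as in Examples \ref{example_xx-yyy_xyy_in_P3R2}--\ref{xxx-xxy-xyy-example-in-P3}, the use of Rolle's theorem, Lemma \ref{lemma_vanishes_on_corners_of_rectangles}, and (where singularities appear) the Morse-lemma type parametrisations already exploited in Lemmas \ref{calc_lemma_1_on_non_isolated_point}--\ref{calc_lemma_9_on_non_isolated_point}. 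Once each model is realised, Theorem \ref{main_theorem_on_necessary_condition_SECOND_PAPER} closes the equivalence.
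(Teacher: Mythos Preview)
Your high-level architecture is exactly the paper's: reduce to the principal case via Proposition~\ref{prop_principal_ideals_in_P2Rn}, handle the presence of an order-$1$ jet via Lemma~\ref{lemma_C2R3_case__with_jet_of_oov_1}, and otherwise branch on $\dim\operatorname{span}_{\mathbb R}Allow(I)$ using Lemmas~\ref{lemma_classify_closed_ideals_in_C2R3_1_dim_allow}--\ref{lemma_classify_closed_ideals_in_C2R3_3_dim_allow}, then realise each model ideal by an explicit semi-algebraic $E$. So the skeleton is right.

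However, your realisation step contains genuine errors and a substantial gap. In item~(i) you assert that $\langle xy,\,y^2-x^2\rangle_2$ and $\langle x^2,y^2,xy\rangle_2$ fall under Proposition~\ref{prop_ideals_in_P2Rn_generated_by_2_foreign_jets} ``after a suitable coordinate change''. They do not: that proposition requires \emph{two} generators depending on \emph{disjoint} groups of variables, and $\langle xy,\,y^2-x^2\rangle_2$ cannot be brought to such a form (it is not even equivalent to $\langle x^2,y^2\rangle_2$; cf.\ Remark~\ref{remark_on_distinctness_for_closed_ideals_in_C2R3_1_dim_allow}), while $\langle x^2,y^2,xy\rangle_2$ is three-dimensional and hence outside the scope of that proposition altogether. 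In item~(ii), Lemma~\ref{lemma_induction_adding_a_coordinate_jet} adjoins a \emph{coordinate jet}, not a product of coordinates, so it does not yield $\langle xy,yz,xz\rangle_2$ or $\langle xy,yz\rangle_2$ as stated; these are handled by direct computation on the zero-loci $\{xy=yz=xz=0\}$ and $\{xy=yz=0\}$.

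More broadly, the paper does not use a single template for the remaining models: it supplies seventeen separate constructions (Examples~\ref{example_for_2_3_xx_yy}--\ref{example_for_2_3_xy+yz_xz+yz}), and many of them---e.g.\ for $\langle xy,\,y^2-x^2\rangle_2$, $\langle x^2,y^2+xz,xy\rangle_2$, $\langle xz,\,yz-x^2\rangle_2$, $\langle x^2,xy\rangle_2$, $\langle x^2,xy+xz+yz\rangle_2$---use curves with carefully tuned fractional exponents, piecewise definitions glued by cutoff partitions of unity, and bespoke Rolle/Vandermonde arguments, rather than the uniform ``add $\pm(\sum x_i^2)^2$'' deformation you propose. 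Your final paragraph correctly identifies the hard cases and the general flavour of the arguments, but as written it is a plan, not a proof; the actual work lies precisely in those seventeen verifications, and several of them do not fit the mould you sketch.
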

\begin{proof}
If $I$ is principal then the Proposition follows from Proposition \ref{prop_principal_ideals_in_P2Rn}
. Otherwise, by Theorem \ref{main_theorem_on_necessary_condition_SECOND_PAPER}, Lemmas \ref{lemma_C2R3_case__with_jet_of_oov_1}, \ref{lemma_classify_closed_ideals_in_C2R3_1_dim_allow}, \ref{lemma_classify_closed_ideals_in_C2R3_2_dim_allow} and \ref{lemma_classify_closed_ideals_in_C2R3_3_dim_allow}
it is enough to show
that each of the ideals in (\ref{model_for_2_3_with_1_dim_allow_1}), (\ref{model_for_2_3_with_1_dim_allow_2}), (\ref{model_for_2_3_with_1_dim_allow_3}), (\ref{model_for_2_3_with_1_dim_allow_3'_the_one_we_missed}), (\ref{model_for_2_3_with_1_dim_allow_4}), (\ref{model_for_2_3_with_1_dim_allow_5}), (\ref{model_for_2_3_with_2_dim_allow_1}), (\ref{model_for_2_3_with_2_dim_allow_2}), (\ref{model_for_2_3_with_2_dim_allow_3}), (\ref{model_for_2_3_with_2_dim_allow_4}), (\ref{model_for_2_3_with_2_dim_allow_5}), (\ref{model_for_2_3_with_2_dim_allow_6}), (\ref{model_for_2_3_with_2_dim_allow_7}), (\ref{model_for_2_3_with_3_dim_allow_1}), (\ref{model_for_2_3_with_3_dim_allow_2}), (\ref{model_for_2_3_with_3_dim_allow_3}) and (\ref{model_for_2_3_with_3_dim_allow_3'_another_missed_example}) arises as $I^2(E)$ for some closed
semi-algebraic set $E\subset\mathbb{R}^3$ containing the origin. This is
established in Examples \ref{example_for_2_3_xx_yy}, \ref{example_for_2_3_fancy_with_parameter}, \ref{example_for_2_3_xy_xx_yy}, \ref{example_for_2_3_xy_xx_yy+xz}, \ref{example_for_2_3_xz_xy_xx_yy}, \ref{example_for_2_3_yz_xz_xy_xx_yy}, \ref{example_for_2_3_yz_xz_xy_xx}, \ref{example_for_2_3_yz_xz_xx}, \ref{example_for_2_3_xz_xy_xx}, \ref{example_for_2_3_xy_xx}, \ref{example_for_2_3_yz_xx}, \ref{example_for_2_3_xz_yz-xx}, \ref{example_for_2_3_xx_xy+xz+yz}, \ref{example_for_2_3_xy_yz_xz}, \ref{example_for_2_3_xy_yz}, \ref{example_for_2_3_xy_z(x+y)} and \ref{example_for_2_3_xy+yz_xz+yz} below.\end{proof}

\begin{example}\label{example_for_2_3_xx_yy}There exists a semi-algebraic
set $\vec 0\in E\subset\mathbb{R}^3$ such that $\langle x^2,y^{2}\rangle_2=I^2(E)$. 

Indeed, this follows immediately from the fact that $ Allow(\langle x^2,y^{2}\rangle_2)\neq\emptyset$
and Proposition \ref{prop_ideals_in_P2Rn_generated_by_2_foreign_jets} (the set $E=\{x^2-(y^2+z^2)^2=y^2-(x^2+z^2)^2=0\}$ works; see the proof of Proposition \ref{prop_ideals_in_P2Rn_generated_by_2_foreign_jets}).\end{example}

\begin{example}\label{example_for_2_3_fancy_with_parameter}There exists a semi-algebraic
set $\vec 0\in E\subset\mathbb{R}^3$ such that $\langle xy,y^2-x^2\rangle_2=I^2(E)$. 

Indeed, by applying the linear coordinate change $(X,Y)=(x,y-x)$ the above ideal takes the form $\langle X(Y+X),X^2+Y^2+2XY-X^{2} \rangle_2=\langle X(X+Y), Y(Y+2X)\rangle_2$. Thus, it is enough to show that there exists a
semi-algebraic
set $\vec 0\in E\subset\mathbb{R}^3$ such that $\langle x(x+y),y(y+2x)\rangle_2=I^2(E)$. We will show that $$E=\{(x,y,z)\in\mathbb{R}^2|(x-z^{10})(x+y-z^{10})=(y-z^{10})(x+\frac{1}{2}y-2z^{10})=0\}$$ satisfies $\langle x(x+y),y(y+2x)\rangle_2=I^2(E)$. We start by understanding how $E$ looks like, by writing $E$ as a union of four sets: $E=E_1\cup E_2\cup E_3\cup E_4$ where $$E_1:=\{x=z^{10},y=z^{10}\};$$ $$E_2:=\{x=z^{10},x+\frac{1}{2}y=2z^{10}\}=\{x=z^{10},y=2z^{10}\};$$ $$E_3:=\{x+y=z^{10},y=z^{10}\}=\{x=0,y=z^{10}\};$$ $$E_4:=\{x+y=z^{10},x+\frac{1}{2}y=2z^{10}\}=\{x=3z^{10},y=-2z^{10}\}.$$So fixing 4 distinct points in the $x-y$ plane $$(x_1^0,y_1^0):=(1,1);\text{ }(x_2^0,y_2^0):=(1,2);\text{
}(x_3^0,y_3^0):=(0,1);\text{
}(x_4^0,y_4^0):=(3,-2),$$ we can write \begin{gather}\label{new_presentation_of_E_very_cool}E=\{(x_\nu^0 z^{10},y_\nu^0 z^{10},z)|\nu\in\{1,2,3,4\},z\in\mathbb{R}\}.\end{gather} For $F\in
C^2(\mathbb{R}^2)$ we set the notation $\abs{\nabla^2F(x,y)}:=\max\{\abs{\frac{\partial^2F}{\partial x^2}(x,y)},\abs{\frac{\partial^2F}{\partial
y^2}(x,y)},\abs{\frac{\partial^2F}{\partial
x\partial y}(x,y)}\}$ (the maximum norm of the Hessian matrix). We now prove the following: \begin{multline}\label{lemma_inside_the_proof_label_bounding_nabla_square}\text{There exists a positive constant }\tilde c>0\text{ such that} \\ \text{if }F\in C^2(\mathbb{R}^2)\text{ is such that }F(x_\nu^0,y_\nu^0)=x_\nu^0y_\nu^0\text{ for all }\nu\in\{1,2,3,4\}\\ \text{then }\max_\Omega\abs{\nabla^2F}>\tilde c,\text{ where }\Omega\text{  is the convex hull of }\{(x_\nu^0,y_\nu^0)\}_{\nu=1}^4.\end{multline}Indeed, let $F$ be as assumed, denote $M:=\max_\Omega\abs{\nabla^2F}$, and let $L(x,y)=\alpha+\beta x+\gamma y$ be the first order Taylor polynomial of $F$ at $(x_1^0,y_1^0)$. By Taylor's Theorem $$\max_\Omega\abs{F(x,y)-L(x,y)}\leq c_0 M,$$for some constant $c_0$. In particular, for $\nu\in\{1,2,3,4\}$ we have $$x_\nu^0 y_\nu^0=F(x_\nu^0,y_\nu^0)=L(x_\nu^0,y_\nu^0)+\epsilon_\nu,\text{ with }\abs{\epsilon_\nu}\leq c_0M,$$or explicitly, \begin{gather}\label{evaluation_at_a_point_in_a_secret_lemma_1}\alpha+\beta+\gamma=1+\epsilon_1\end{gather}
\begin{gather}\label{evaluation_at_a_point_in_a_secret_lemma_2}\alpha+\beta+2\gamma=2+\epsilon_2\end{gather}
\begin{gather}\label{evaluation_at_a_point_in_a_secret_lemma_3}\alpha+\gamma=\epsilon_3\end{gather}
\begin{gather}\label{evaluation_at_a_point_in_a_secret_lemma_4}\alpha+3\beta-2\gamma=-6+\epsilon_4.\end{gather}
Now, (\ref{evaluation_at_a_point_in_a_secret_lemma_1}) and (\ref{evaluation_at_a_point_in_a_secret_lemma_3}) imply that $\beta=1+[\epsilon_1-\epsilon_3]$; (\ref{evaluation_at_a_point_in_a_secret_lemma_1}) and (\ref{evaluation_at_a_point_in_a_secret_lemma_2})
imply that $\gamma=1-[\epsilon_1-\epsilon_2]$; (\ref{evaluation_at_a_point_in_a_secret_lemma_3})
implies that $\alpha=-\gamma+\epsilon_3=-1+[\epsilon_1-\epsilon_2+\epsilon_3]$. Substituting into the LHS of (\ref{evaluation_at_a_point_in_a_secret_lemma_4}) we get $-1+[\epsilon_1-\epsilon_2+\epsilon_3]+3(1+[\epsilon_1-\epsilon_3])-2(1-[\epsilon_1-\epsilon_2])=-1+3-2+[6\epsilon_1-3\epsilon_2-2\epsilon_3]=-6+\epsilon_4$, or \begin{gather}\label{epsilon_comparison_game_to_get_lower_bound}6\epsilon_1-3\epsilon_2-2\epsilon_3-\epsilon_4=-6.\end{gather} Recalling that $\abs{\epsilon_\nu}\leq c_0M$, we see that (\ref{epsilon_comparison_game_to_get_lower_bound}) implies that $6=\abs{6\epsilon_1-3\epsilon_2-2\epsilon_3-\epsilon_4}\leq12\cdot\max\{\abs{\epsilon_\nu}\}_{\nu=1,2,3,4}\leq12 c_0 M$, and so $M\geq \frac{1}{2c_0}$, i.e., (\ref{lemma_inside_the_proof_label_bounding_nabla_square}) holds. Note that (\ref{lemma_inside_the_proof_label_bounding_nabla_square}) implies the following:
\begin{multline}\label{corollary_from_secret_lemma_bounding_nabla_square}\text{There
exists a positive constant }\check c>0\text{
such that} \\ \text{if we fix }z\in\mathbb{R}^\times\text{ and if }\check F\in C^2(\mathbb{R}^2)\text{ is such that }\check F(x_\nu^0z^{10},y_\nu^0z^{10})=z^{20}x_\nu^0y_\nu^0\text{
for all }\nu\in\{1,2,3,4\}\\ \text{then }\max_\Omega\abs{\nabla^2F}>\check
c,\text{ where }\Omega\text{  is the convex hull of }\{(x_\nu^0z^{10},y_\nu^0z^{10})\}_{\nu=1}^4.\end{multline}
To see this, apply (\ref{lemma_inside_the_proof_label_bounding_nabla_square}) to $F(x,y):=z^{-20}\check F(xz^{10},yz^{10})$ and so (\ref{corollary_from_secret_lemma_bounding_nabla_square}) holds. 

We are now ready to calculate $I^2(E)$. First note that we have
$$(x-z^{10})(x+y-z^{10})=(y-z^{10})(x+\frac{1}{2}y-2z^{10})\in
C^2(\mathbb{R}^3)$$ and both vanish on $E$. So $x(x+y),y(y+2x)\in
I^2(E)$. So it is enough to show that if $f\in C^2(\mathbb{R}^3)$ is such that
$$f(x,y,z)=\tilde A x+\tilde B y+\tilde C z+\tilde Dxy+\tilde E z^2+\tilde R xz+\tilde S yz +\tilde f(x,y,z),$$ for some $\tilde A,\tilde B,\tilde C,\tilde D,\tilde
E,\tilde R,\tilde S\in\mathbb{R}$ and some $\tilde f(x,y,z)=o(x^2+y^2+z^2)$, and moreover $f|_{E}=0$, then $\tilde A=\tilde B=\tilde C=\tilde D=\tilde E=\tilde R=\tilde S=0$. Let $f$ be as assumed. Recalling (\ref{new_presentation_of_E_very_cool}) and $(x_1^0,y_1^0):=(1,1)$ we get that for any $z\in\mathbb{R}$ $$0=f(z^{10},z^{10},z)=\tilde A z^{10}+\tilde B z^{10}+\tilde C z+\tilde Dz^{20}+\tilde E z^2+\tilde
R z^{10}z+\tilde S z^{10}z +o(z^{2}),$$which implies $\tilde C=\tilde E=0$, and therefore $f(x,y,z)=\tilde A x+\tilde B y+\tilde Dxy+\tilde
R xz+\tilde S yz +\tilde f(x,y,z)$. Assume $\tilde D\neq 0$. For any fixed $z\in\mathbb{R}^\times$ we then define $$\check F(x,y):=\tilde D^{-1}\big(-[\tilde A+\tilde Rz]x-[\tilde B+\tilde Sz]y-\tilde f(x,y,z)\big)\in C^2(\mathbb{R}^2).$$ Since $f|_{E}=0$, we have that $\check F(x_\nu^0z^{10},y_\nu^0z^{10})=z^{20}x_\nu^0y_\nu^0$ for all $\nu\in\{1,2,3,4\}$. By (\ref{corollary_from_secret_lemma_bounding_nabla_square}) there exists a point $(x(z),y(z))$ in the convex hull of $\{(x_\nu^0z^{10},y_\nu^0z^{10})\}_{\nu=1}^4$ satisfying $\abs{\nabla^2\check F(x(z),y(z))}>\tilde c$. Note that $(x(z),y(z),z)\to\vec0$ as $z\to0$. On the other hand $\abs{\nabla^2\check F(x,y)}=\tilde D^{-1}\abs{\nabla_{(x,y)}^2\tilde f(x,y,z)}=o(1)$, which is a contradiction (here $\nabla_{(x,y)}^2$ is the partial Hessian with respect
to the first two coordinates only). We conclude that $\tilde D=0$ and therefore $f(x,y,z)=\tilde A x+\tilde B y+\tilde
R xz+\tilde S yz +\tilde f(x,y,z)$. For any $z\in\mathbb{R}$ and $\nu\neq\nu'\in\{1,2,3,4\}$ by (\ref{new_presentation_of_E_very_cool})
we have $f(x_\nu^0
z^{10},y_\nu^0 z^{10},z)=f(x_{\nu'}^0
z^{10},y_{\nu'}^0 z^{10},z)=0$. For $t>0$
 define $$g(t)=f(tx_\nu^0
z^{10}+(1-t)x_{\nu'}^0
z^{10},ty_\nu^0 z^{10}+(1-t)y_{\nu'}^0 z^{10},z).$$
By Rolle's Theorem there exists $t'\in[0,1]$ such that $$0=\frac{d}{dt}g(t')=[(x_\nu^0
z^{10}-x_{\nu'}^0
z^{10})f_{x}+(y_\nu^0
z^{10}-y_{\nu'}^0
z^{10})f_{y}](t'x_\nu^0
z^{10}+(1-t')x_{\nu'}^0
z^{10},t'y_\nu^0 z^{10}+(1-t')y_{\nu'}^0 z^{10},z).$$
Setting  $(\tilde x(z),\tilde y(z),z):=(t'x_\nu^0
z^{10}+(1-t')x_{\nu'}^0
z^{10},t'y_\nu^0 z^{10}+(1-t')y_{\nu'}^0 z^{10},z)$
we found that\begin{multline}\label{another_label_for_fancy_rolle_analysis_1}\text{for any }z>0\text{ and any }\nu\neq\nu'\in\{1,2,3,4\}\\\text{ there exist }\tilde x(z),\tilde y(z)\text{ such that }[(x_\nu^0
-x_{\nu'}^0
)f_{x}+(y_\nu^0
-y_{\nu'}^0
)f_{y}](\tilde
x(z),\tilde y(z),z)=0\\\text{ and moreover both }\tilde
x(z)=o(z)\text{ and }\tilde
y(z)=o(z).\end{multline}Taking $z\to0$ we in particular get  $$[(x_\nu^0
-x_{\nu'}^0
)f_{x}+(y_\nu^0
-y_{\nu'}^0
)f_{y}](\vec0)=0\text{ for any }\nu,\nu'\in\{1,2,3,4\}.$$Substituting $\nu=1$ and $\nu'=2$ we get $(1-1)f_x(\vec 0)+(1-2)f_y(\vec 0)=0$, which implies $f_y(\vec0)=0$, i.e., $\tilde B=0$.
Similarly, substituting
$\nu=1$ and $\nu'=3$ we get $(1-0)f_x(\vec 0)+(1-1)f_y(\vec 0)=0$, which
implies $f_x(\vec0)=0$, i.e., $\tilde A=0$. Therefore, $f(x,y,z)=\tilde R xz+\tilde S yz +\tilde f(x,y,z)$. In particular, $f_x(x,y,z)=\tilde Rz+o((x^2+y^2+z^2)^{1/2})$ and $f_y(x,y,z)=\tilde Sz+o((x^2+y^2+z^2)^{1/2})$. Substituting these expressions in (\ref{another_label_for_fancy_rolle_analysis_1}) we get that for any $z>0$ $$[(x_\nu^0
-x_{\nu'}^0
)\tilde Rz+(y_\nu^0
-y_{\nu'}^0
)\tilde Sz]+o(z)=0,$$ which is only possible if $(x_\nu^0
-x_{\nu'}^0
)\tilde R+(y_\nu^0
-y_{\nu'}^0
)\tilde S=0$. Substituting $\nu=1$
and $\nu'=2$ we get $(1-1)\tilde R+(1-2)\tilde S=0$, which implies
$\tilde S=0$.
Similarly, substituting
$\nu=1$ and $\nu'=3$ we get $(1-0)\tilde R+(1-1)\tilde S=0$, which
implies $\tilde R=0$. We have shown that indeed $\tilde A=\tilde B=\tilde C=\tilde D=\tilde
E=\tilde R=\tilde S=0$ and so $\langle x(x+y),y(y+2x)\rangle_2=I^2(E)$.
\end{example}

\begin{example}\label{example_for_2_3_xy_xx_yy}Set $\delta=10^{-10}$ and $E=E_1\cup E_2\cup E_3$, with $$E_1=\{(0,0,t)|t\in[0,\delta]\};$$ $$E_2=\{(t^{3/2},t^{3/2},t)|t\in[0,\delta]\};$$ $$E_3=\{(0,t^{3/2},t)|t\in[0,\delta]\}.$$
Then, $\langle x^2,y ^2,xy\rangle_2=I^2(E)$.

Indeed, let $\theta(x,y,z)\in
C^\infty(\mathbb{R}^3\setminus\{\vec 0\})$ be a homogenous function of degree $0$ supported in $\{(x,y,z)\in\mathbb{R}^3|\abs{(x,y)}\leq\abs{z}\}$
such that $\theta|_{\{(x,y,z)\in\mathbb{R}^3|\abs{(x,y)}\leq\frac{1}{2}\abs{z}\}}\equiv1$.
We have
$$x(x-\abs{z}^{3/2}\cdot\theta),y(y-\abs{z}^{3/2}\cdot\theta),x(y-\abs{z}^{3/2}\cdot\theta)\in
C^2(\mathbb{R}^3)$$ and all three vanish on $E$ (see footnote \ref{footnote_on_function_vanishes_on_a_set}). So $x^{2},y^{2},xy\in
I^2(E)$. So it is enough to show that if $f\in C^2(\mathbb{R}^3)$ is such that
$$f(x,y,z)=ax+by+cz+dxz+eyz+gz^{2}+o(x^2+y^2+z^2)$$
for some $a,b,c,d,e,g\in \mathbb{R}$ and $f|_{E}=0$, then $a=b=c=d=e=g=0$.
Let $f$ be as assumed. Since $f|_{E_1}=0$ we have for any $t\in[0,\delta]$: $$0=f(0,0,t)=ct+gt^{2}+o(t^2),$$which
implies $c=g=0$, and so $f(x,y,z)=ax+by+dxz+eyz+o(x^2+y^2+z^2)$. Since $f|_{E_3}=0$ we have for any $t\in[0,\delta]$:
$$0=f(0,t^{3/2},t)=bt^{3/2}+et^{5/2}+o(t^2)=bt^{3/2}+o(t^2),$$which
implies $b=0$, and so $f(x,y,z)=ax+dxz+eyz+o(x^2+y^2+z^2)$. Since $f|_{E_2}=0$ we have for any $t\in[0,\delta]$:
$$0=f(t^{3/2},t^{3/2},t)=at^{3/2}+dt^{5/2}+et^{5/2}+o(t^2)=at^{3/2}+o(t^2),$$which
implies $a=0$, and so $f(x,y,z)=dxz+eyz+o(x^2+y^2+z^2)$. We have for any $t\in(0,\delta]$, $f(0,t^{3/2},t)=f(t^{3/2},t^{3/2},t)=0$.
By Rolle's Theorem there exists $\tilde {x}(t)$ with $\abs{\tilde {x}(t)}\leq
t^{3/2}$
such that $f_{x}(\tilde {x}(t),t^{3/2},t)=0$. On the other hand we
have $f_x(x,y,z)=dz+o((x^2+y^2+z^{2})^{1/2})$. So in particular we have for
any $t\in(0,\delta]$:$$0=f_x(\tilde {x}(t),t^{3/2},t)=dt+o(t),$$which
is only possible if $d=0$, and so $f(x,y,z)=eyz+o(x^2+y^2+z^2)$. Similarly, we have
for any $t\in(0,\delta]$, $f(0,0,t)=f(0,t^{3/2},t)=0$.
By Rolle's Theorem there exists $\tilde {y}(t)$ with $\abs{\tilde {y}(t)}\leq
t^{3/2}$
such that $f_{y}(0,\tilde {y}(t),t)=0$. On the other hand we
have $f_y(x,y,z)=ez+o((x^2+y^2+z^{2})^{1/2})$. So in particular we have for
any $t\in(0,\delta]$:$$0=f_y(0,\tilde {y}(t),t)=et+o(t),$$which
is only possible if $e=0$. We 
conclude that indeed $\langle x^2,y^{2},xy\rangle_2=I^2(E)$.
\end{example}

\begin{example}\label{example_for_2_3_xy_xx_yy+xz}Set $a=10^{-10}$, $c_0=-10^{-1}$ and $$E=\{(x,y,z)\in\mathbb{R}^3|z>0,y^2+xz=z^{2+2a},xy=c_0z^{2+3a}\}\cup\{\vec 0\}.$$
Then, $\langle x^2,y ^2+xz,xy\rangle_2=I^2(E)$.

Indeed, we start by analyzing $E$. The conditions $z>0$ and $xy=c_0z^{2+3a}$ imply that any point $(x,y,z)\in E\setminus\{\vec 0\}$ satisfies $xyz\neq0$. So we can rewrite 
$$E=\{(x,y,z)\in\mathbb{R}^3|z>0,y^2+xz=z^{2+2a},x=\frac{c_0z^{2+3a}}{y}\}\cup\{\vec
0\}$$ $$=\{(x,y,z)\in\mathbb{R}^3|z>0,y^2+\frac{c_0z^{3+3a}}{y}=z^{2+2a},x=\frac{c_0z^{2+3a}}{y}\}\cup\{\vec
0\}$$ $$=\{(x,y,z)\in\mathbb{R}^3|z>0,y^3+c_0z^{3+3a}=z^{2+2a}y,x=\frac{c_0z^{2+3a}}{y}\}\cup\{\vec
0\}$$ $$=\{(x,y,z)\in\mathbb{R}^3|z>0,\big(\frac{y}{z^{1+a}}\big)^{3}-\big(\frac{y}{z^{1+a}}\big)+c_0=0,x=\frac{c_0z^{2+3a}}{y}\}\cup\{\vec
0\}.$$
One can easily see that the
monic polynomial $p(t)=t^3-t+c_{0}$ has 3 distinct non-zero real roots, which we
denote by $\alpha_0<\alpha_1<\alpha_2$. So on $E$ we have $y=\alpha_{\nu}z^{1+a}$ for some $\nu\in\{0,1,2\}$ and $x=\frac{c_0z^{2+3a}}{y}=(\frac{c_0}{\alpha_\nu})z^{1+2a}$ for some $\nu\in\{0,1,2\}$. Combining the above we get \begin{gather}\label{new_descr_of_forgotten_E}E=\{([\frac{c_0}{\alpha_\nu}]
z^{1+2a},\alpha_\nu z^{1+a},z)|\nu\in\{0,1,2\},z\geq0\}.\end{gather}
We start constructing $C^2$ functions that vanish on $E$. First, note that $$y^2+xz-\abs{z}^{2+2a},xy-c_0\abs{z}^{2+3a}\in C^2(\mathbb{R}^3)$$ and both vanish on $E$. So $y^2+xz,xy\in
I^2(E)$. We will now show that $x^2\in I^2(E)$ as well. Let $\gamma_0,\gamma_1,\gamma_2$ be the (unique) solution of the linear (Vandermonde, hence invertible) system \begin{gather}\label{nice_label_forgotten_example}[\frac{c_0}{\alpha_\nu}]^2=\gamma_0+\alpha_{\nu}\gamma_1+\alpha_\nu^2\gamma_2\text{ };\text{
}\nu\in\{0,1,2\}.\end{gather} Let $\theta(x,y,z)\in
C^\infty(\mathbb{R}^3\setminus\{\vec 0\})$ be a homogenous function of degree $0$ supported in $\{(x,y,z)\in\mathbb{R}^3:\abs{(x,y)}\leq\abs{z}\}$
such that $\theta|_{\{(x,y,z)\in\mathbb{R}^3|\abs{(x,y)}\leq\frac{1}{2}\abs{z}\}}\equiv1$.
We have
$$g(x,y,z):=x^2-(\gamma_0\abs{z}^{2+4a}+\gamma_1\abs{z}^{1+3a}y+\gamma_2\abs{z}^{2a}y^2) \cdot \theta\in
C^2(\mathbb{R}^3).$$ We claim that $g$ vanishes on $E\cap B(\epsilon)$ for some small $\epsilon>0$. Indeed, by (\ref{new_descr_of_forgotten_E}) for some small $\epsilon>0$ we have $\theta|_{E\cap B(\epsilon)}\equiv1$. We moreover have on $E$ that $x^2=[\frac{c_0}{\alpha_\nu}]^2z^{2+4a}$ and $y=\alpha_\nu z^{1+a}$ for some $\nu\in\{0,1,2\}$. So on $E\cap B(\epsilon)$ we have by (\ref{nice_label_forgotten_example}) that for any $\nu\in\{0,1,2\}$: $$x^2=[\frac{c_0}{\alpha_\nu}]^2z^{2+4a}=\big(\gamma_0+\alpha_{\nu}\gamma_1+\alpha_\nu^2\gamma_2\big)z^{2+4a}$$ $$=\gamma_0z^{2+4a}+\gamma_1z^{1+3a}y+\gamma_2z^{2a}y^2=(\gamma_0\abs{z}^{2+4a}+\gamma_1\abs{z}^{1+3a}y+\gamma_2\abs{z}^{2a}y^2)
\cdot \theta,$$so indeed $g$ vanishes on $E\cap B(\epsilon)$. We conclude that $x^2\in I^2(E)$. It is thus enough to show that if $f\in C^2(\mathbb{R}^3)$ is such that
$$f(x,y,z)=Ax+by+cz+dy^{2}+eyz+gz^{2}+o(x^2+y^2+z^2)$$
for some $A,b,c,d,e,g\in \mathbb{R}$ and $f|_{E}=0$, then $A=b=c=d=e=g=0$.
Let $f$ be as assumed. By (\ref{new_descr_of_forgotten_E}), for any $z>0$ we have $$0=f([\frac{c_0}{\alpha_0}]
z^{1+2a},\alpha_0 z^{1+a},z)=A[\frac{c_0}{\alpha_0}]
z^{1+2a}+b\alpha_0 z^{1+a}+cz+gz^{2}+o(z^2),$$ which implies $A=b=c=g=0$ (since $c_0,\alpha_0\neq0$), and so $f(x,y,z)=dy^{2}+eyz+o(x^2+y^2+z^2)$. We fix
a $C^\infty$-smooth compactly supported one-variable function $\psi(t)$,
such that $\psi(\alpha_\nu)=\frac{c_0}{\alpha_\nu}$ for $\nu\in\{0,1,2\}$ and set for $z>0$ $$F(t):=f(\psi(t)z^{1+2a},tz^{1+a},z).$$ By (\ref{new_descr_of_forgotten_E}) we have $F(\alpha_0)=F(\alpha_1)=F(\alpha_2)=0$. Recalling $\alpha_0<\alpha_1<\alpha_2$, by Rolle's Theorem there exist $t_1\in(\alpha_0,\alpha_1)$ and $t_2\in(\alpha_0,\alpha_2)$
such that $F'(t_1)=0$ and $F''(t_2)=0$. We have $$F'(t)=\psi'(t)z^{1+2a}\partial_xf(\psi(t)z^{1+2a},tz^{1+a},z)+z^{1+a}\partial_yf(\psi(t)z^{1+2a},tz^{1+a},z);$$ $$F''(t):=\psi''(t)z^{1+2a}\partial_xf(\psi(t)z^{1+2a},tz^{1+a},z)+\big(\psi'(t)\big)^2z^{2+4a}\partial^{2}_{xx}f(\psi(t)z^{1+2a},tz^{1+a},z)$$ $$+2\psi'(t)z^{2+3a}\partial^{2}_{xy}f(\psi(t)z^{1+2a},tz^{1+a},z)+z^{2+2a}\partial^2_{yy}f(\psi(t)z^{1+2a},tz^{1+a},z).$$ Recall $f(x,y,z)=dy^{2}+eyz+o(x^2+y^2+z^2)$, so for $z>\sqrt{x^2+y^2}$ we have $$\partial_xf=o(z);\text{ }\partial_yf=2dy+ez+o(z);\text{ }\partial_{xx}^2f=o(1);\text{
} \partial_{xy}^2f=o(1);\text{ } \partial_{yy}^2f=2d+o(1).$$Substituting and
using the fact that $\abs{t_1}<\max\{\abs{\alpha_0},\abs{\alpha_1}\}$ and
$\abs{\psi'(t)}<\tilde C$, where $\tilde C$ is a constant depending only
on $\alpha_0,\alpha_1$ and $\alpha_2$, we get that for $z>\sqrt{x^2+y^2}$
and $y=o(z)$ we have $$0=F'(t_1)=\psi'(t_{1})z^{1+2a}\cdot o(z)+z^{1+a}(2dy+ez+o(z))=ez^{2+a}+o(z^{2+a}),$$ which is only possible if $e=0$. Similarly,
we now also use the facts that both $t_2$ and $\psi''(t)$ are bounded by
constants depending only on $\alpha_0,\alpha_1$ and $\alpha_2$, to get $$0=F''(t_{2})=\psi''(t_{2})z^{1+2a}\cdot o(z)+\big(\psi'(t_{2})\big)^2z^{2+4a}\cdot
o(1)$$
$$+2\psi'(t_{2})z^{2+3a}\cdot
o(1)+z^{2+2a}\cdot(2d+o(1))=2dz^{2+2a}+o(z^{2+2a}),$$
which is only possible if $d=0$. We conclude that indeed $\langle x^2,y ^2+xz,xy\rangle_2=I^2(E)$.

\end{example}

\begin{example}\label{example_for_2_3_xz_xy_xx_yy}Set $\delta=10^{-10}$ and $E=E_1\cup
E_2$, with $$E_1=\{(t^{3/2},0,t)|t\in[0,\delta]\};$$
$$E_2=\{(t^{3/2},t^{3/2},t)|t\in[0,\delta]\}.$$
Then, $\langle x^2,y ^2,xy,xz\rangle_2=I^2(E)$.

Indeed, let $\theta(x,y,z)\in
C^\infty(\mathbb{R}^3\setminus\{\vec 0\})$ be a homogenous function of degree $0$ supported in $\{(x,y,z)\in\mathbb{R}^3|\abs{(x,y)}\leq\abs{z}\}$
such that $\theta|_{\{(x,y,z)\in\mathbb{R}^3|\abs{(x,y)}\leq\frac{1}{2}\abs{z}\}}\equiv1$.
We have
$$x(x-\abs{z}^{3/2}\cdot\theta),y(y-\abs{z}^{3/2}\cdot\theta),y(x-\abs{z}^{3/2}\cdot\theta),z(x-\abs{z}^{3/2}\cdot\theta)\in
C^2(\mathbb{R}^3)$$ and all four vanish on $E$ (see footnote \ref{footnote_on_function_vanishes_on_a_set}). So $x^{2},y^{2},xy,xz\in
I^2(E)$. It is thus enough to to show that if $f\in C^2(\mathbb{R}^3)$ is such that
$$f(x,y,z)=ax+by+cz+dyz+ez^{2}+o(x^2+y^2+z^2)$$
for some $a,b,c,d,e\in \mathbb{R}$ and $f|_{E}=0$, then $a=b=c=d=e=0$.
Let $f$ be as assumed. Since $f|_{E_1}=0$ we have for any $t\in[0,\delta]$:
$$0=f(t^{3/2},0,t)=at^{3/2}+ct+et^{2}+o(t^2),$$which
implies $a=c=e=0$, and so $f(x,y,z)=by+dyz+o(x^2+y^2+z^2)$.
Since $f|_{E_2}=0$ we have for any $t\in[0,\delta]$:
$$0=f(t^{3/2},t^{3/2},t)=bt^{3/2}+dt^{5/2}+o(t^2)=bt^{3/2}+o(t^2),$$which
implies $b=0$, and so $f(x,y,z)=dyz+o(x^2+y^2+z^2)$. We have
for any $t\in(0,\delta]$, $f(t^{3/2},0,t)=f(t^{3/2},t^{3/2},t)=0$.
By Rolle's Theorem there exists $\tilde {y}(t)$ with $\abs{\tilde {y}(t)}\leq
t^{3/2}$
such that $f_{y}(t^{3/2},\tilde {y}(t),t)=0$. On the other hand we
have $f_y(x,y,z)=dz+o((x^2+y^2+z^{2})^{1/2})$. So in particular we have for
any $t\in(0,\delta]$:$$0=f_y(t^{3/2},\tilde {y}(t),t)=dt+o(t),$$which
is only possible if $d=0$. We 
conclude that indeed $\langle x^2,y^{2},xy,xz\rangle_2=I^2(E)$.
\end{example}

\begin{example}\label{example_for_2_3_yz_xz_xy_xx_yy}Set $\alpha=5/3$, $\beta=3/2$, $\delta=10^{-10}$ and $$E=\{(t^\alpha,t^\beta,t)|t\in[0,\delta]\}.$$
Then, $\langle x^2,y^{2},xy,xz,yz\rangle_2=I^2(E)$.

Indeed, we first present some $C^2$ functions that vanish on $E$. Let $\theta(x,y,z)\in
C^\infty(\mathbb{R}^3\setminus\{\vec 0\})$ be a homogenous function of degree $0$ supported in $\{(x,y,z)\in\mathbb{R}^3|\abs{(x,y)}\leq\abs{z}\}$
such that $\theta|_{\{(x,y,z)\in\mathbb{R}^3|\abs{(x,y)}\leq\frac{1}{2}\abs{z}\}}\equiv1$.
We have
$$(x-\abs{z}^\alpha\cdot\theta)^2,(y-\abs{z}^\beta\cdot\theta)^2,(x-\abs{z}^\alpha\cdot\theta) \cdot (y-\abs{z}^\beta\cdot\theta),(x-\abs{z}^\alpha\cdot\theta)\cdot z,(y-\abs{z}^\beta\cdot\theta)\cdot z\in
C^2(\mathbb{R}^3)$$ and all five vanish on $E$ (see footnote \ref{footnote_on_function_vanishes_on_a_set}). So $x^{2},y^{2},xy,xz,yz\in
I^2(E)$. It is left to show that if $f\in C^2(\mathbb{R}^3)$ is such that
$$f(x,y,z)=ax+by+cz+dz^2+o(x^2+y^2+z^2)$$
for some $a,b,c,d\in \mathbb{R}$ and $f|_{E}=0$, then $a=b=c=d=0$.
Let $f$ be as assumed. Since $f|_E=0$ we have for any $t\in[0,\delta]$: $$0=f(t^\alpha,t^\beta,t)=at^\alpha+bt^\beta+ct+dt^2+o(t^2),$$which
implies $a=b=c=d=0$. We 
conclude that indeed $\langle x^2,y^{2},xy,xz,yz\rangle_2=I^2(E)$.
\end{example}

\begin{example}\label{example_for_2_3_yz_xz_xy_xx}Set $\gamma=0.5005$, $\delta=0.005$ and $$E=\{(x,y,z)\in\mathbb{R}^3|y\geq0,z\geq0,x=(x^2+y^2+z^2)^\gamma,2yz=(x^2+y^2+z^2)^{1+\delta}\}.$$ Then, $\langle x^2,xy,yz,xz\rangle_2=I^2(E)$.

Indeed, we first note that $1<2\gamma<1+\delta$. We start by finding points in $E$. Setting $r=(x^2+y^2+z^2)^{1/2}$, we can write $$E=\{(x,y,z)\in\mathbb{R}^3|y\geq0,z\geq0,x=r^{2\gamma},2yz=r^{2+2\delta}\}.$$
So on $E$ we have $r^2=x^2+y^2+z^2=r^{4\gamma}+y^2+z^2$, or $y^2+z^2=r^2-r^{4\gamma}$.
Additionally we have on $E$, $yz=\frac{1}{2}r^{2+2\delta}$, so $(y+z)^2=r^2-r^{4\gamma}+r^{2+2\delta}$
and $(y-z)^2=r^2-r^{4\gamma}-r^{2+2\delta}$. Since on $E$ both $y$ and $z$
are non negative, we have $y+z=(r^2-r^{4\gamma}+r^{2+2\delta})^{1/2}$, and (for $r$ small enough) $\abs{y-z}=(r^2-r^{4\gamma}-r^{2+2\delta})^{1/2}$.
We can now define $$\phi_{\pm}(r):=\frac{1}{2}(y+z\pm\abs{y-z})=\frac{1}{2}\big[(r^2-r^{4\gamma}+r^{2+2\delta})^{1/2}\pm(r^2-r^{4\gamma}-r^{2+2\delta})^{1/2}\big].$$Note
that $\phi_+(r)=\Theta(r)$ and  $\phi_-(r)=\Theta(r^{1+2\delta})$. Setting $x(r)=r^{2\gamma}$, $y(r)=\phi_+(r)$ and finally $z(r)=\phi_-(r)$
we conclude that \begin{multline}\label{nice_example_label_parametrization_of_a_part_of_E}\text{for any }0<r<10^{-50}\text{ there exists }x(r),y(r),z(r)\text{ such that }(x(r),y(r),z(r))\in E, \\ \text{and moreover  }x(r)=\Theta(r^{2\gamma}),y(r)=\Theta(r)\text{ and }z(r)=\Theta(r^{1+2\delta}).\end{multline}
Recall that $1<2\gamma<1+\delta$, we now note that 
$$x^2-(x^2+y^2+z^2)^{2\gamma},xy-(x^2+y^2+z^2)^{\gamma}y,xz-(x^2+y^2+z^2)^{\gamma}z,zy-\frac{1}{2}(x^2+y^2+z^2)^{1+\delta}\in
C^2(\mathbb{R}^3)$$ and all four vanish on $E$. So $x^{2},xy,xz,yz\in I^2(E)$. It is thus enough to show that if $f\in C^2(\mathbb{R}^3)$ is such that $$f(x,y,z)=ax+by+cz+dy^2+ez^2+o(x^2+y^2+z^2)$$
for some $a,b,c,d,e\in \mathbb{R}$ and $f|_{E}=0$, then $a=b=c=d=e=0$.
Let $f$ be as assumed. By (\ref{nice_example_label_parametrization_of_a_part_of_E}) we have for any $0<r<10^{-50}$: $$0=f(x(r),y(r),z(r))=a\Theta(r^{2\gamma})+b\Theta(r)+c\Theta(r^{1+2\delta})+d\Theta(r^{2})+e\Theta(r^{2+4\delta})+o(r^2),$$ which implies $a=b=c=d=0$, and so $f(x,y,z)=ez^2+o(x^2+y^2+z^2)$. Note that $E$ is symmetric with respect to $y\leftrightarrow z$, so we also have for any $0<r<10^{-50}$: $$0=f(x(r),z(r),y(r))=e(y(r))^{2}+o(r^2)=e\Theta(r^{2})+o(r^2),$$which implies $e=0$. We 
conclude that indeed $\langle x^2,xy,yz,xz\rangle_2=I^2(E)$.

\end{example}

\begin{example}\label{example_for_2_3_yz_xz_xx}Set $\alpha=1+10^{-5}$, $\beta=1+10^{-15}$, $p=1+10^{-25}$, $q=1+10^{-35}$, $\delta=10^{-10^{45}}$ 
and $$E_+=\{(t^p,t^q,t)|t\in[0,\delta]\};$$ $$E_-=\{(-t^p,t^q,t)|t\in[0,\delta]\};$$ $$E_0=\{(t^\alpha,t,t^\beta)|t\in[0,\delta]\};$$ $$E=E_+\cup E_-\cup E_0.$$
Then, $\langle x^2,xy,yz\rangle_2=I^2(E)$.

Indeed, we first present some $C^2$ functions that vanish on $E$. Let $\theta_1(x,y,z)\in C^\infty(\mathbb{R}^3\setminus\{\vec 0\})$ be a homogenous function of degree $0$ supported in $\{(x,y,z)\in\mathbb{R}^3|\abs{(x,y)}\leq\frac{1}{10}\abs{z}\}$ such that $\theta_1|_{\{(x,y,z)\in\mathbb{R}^3|\abs{(x,y)}\leq\frac{1}{20}\abs{z}\}}\equiv1$. Let $\theta_0(x,y,z)\in C^\infty(\mathbb{R}^3\setminus\{\vec 0\})$ be a homogenous
function of degree $0$ supported in $\{(x,y,z)\in\mathbb{R}^3|\abs{(x,z)}\leq\frac{1}{10}\abs{y}\}$
such that $\theta_0|_{\{(x,y,z)\in\mathbb{R}^3|\abs{(x,z)}\leq\frac{1}{20}\abs{y}\}}\equiv1$.
Note that on $E_+\setminus\{\vec 0\}$ and on $E_-\setminus\{\vec 0\}$ we have $\theta_1=1$ and $\theta_0=0$, while on $E_0\setminus\{\vec 0\}$ we have $\theta_1=0$ and $\theta_0=1$. Formally set $\theta_0(\vec 0)=\theta_1(\vec 0)=0$; we have
$$x^2-\abs{z}^{2p}\theta_1-\abs{y}^{2\alpha}\theta_0,xy-x\abs{z}^{q}\theta_1-\abs{y}^{\alpha+1}\theta_0,yz-\abs{z}^{q+1}\theta_1-\abs{y}^{\beta+1}\theta_0\in
C^2(\mathbb{R}^3)$$ and all three vanish on $E$. So $x^{2},xy,yz\in I^2(E)$. It is thus enough to show that if $f\in C^2(\mathbb{R}^3)$ is such that
$$f(x,y,z)=ax+by+cz+dy^2+ez^2+gxz+o(x^2+y^2+z^2)$$
for some $a,b,c,d,e,g\in \mathbb{R}$ and $f|_{E}=0$, then $a=b=c=d=e=g=0$.
Let $f$ be as assumed. Since $f|_{E_+}=0$ we have for any $t\in[0,\delta]$: $$0=f(t^p,t^q,t)=at^p+bt^q+ct+dt^{2q}+et^2+gt^{1+p}+o(t^{2}),$$
which implies $a=b=c=e=0$, and so $f(x,y,z)=dy^2+gxz+o(x^2+y^2+z^2)$. Since $f|_{E_0}=0$ we have for any $t\in[0,\delta]$: $$0=f(t^\alpha,t,t^\beta)=dt^2+gt^{\alpha+\beta}+o(t^{2}),$$which implies $d=0$, and so $f(x,y,z)=gxz+o(x^2+y^2+z^2)$.
As $f|_{E_+\cup E_-}=0$ we have for any $t\in(0,\delta]$, $f(-t^p,t^q,t)=f(t^p,t^q,t)=0$. By Rolle's Theorem there exists $\tilde {x}(t)$ with $\abs{\tilde {x}(t)}\leq t^p$
such that $f_{x}(\tilde {x}(t),t^{q},t)=0$. On the other hand we
have $f_x(x,y)=gz+o((x^2+y^2+z^{2})^{1/2})$. So in particular we have for any $t\in(0,\delta]$:$$0=f_x(\tilde {x}(t),t^{q},t)=gt+o(t),$$which
is only possible if $g=0$. We 
conclude that indeed $\langle x^2,xy,yz\rangle_2=I^2(E)$.
\end{example}

\begin{example}\label{example_for_2_3_xz_xy_xx}$ 
\langle x^2,xy,xz\rangle_2=I^2(\{(x,y,z)\in\mathbb{R}^2|x=(y^2+z^2)^{3/4}\})$.

Indeed, setting $E=\{(x,y,z)\in\mathbb{R}^2|x=(y^2+z^2)^{3/4}\}$ we have $$x^2-(y^2+z^2)^{3/2},xy-y(y^2+z^2)^{3/4},xz-z(y^2+z^2)^{3/4}\in
C^2(\mathbb{R}^3)$$ and all three vanish on $E$. So $x^{2},xy,xz\in I^2(E)$.
It is thus enough to show that if $f\in C^2(\mathbb{R}^3)$ is such that $$f(x,y,z)=ax+by+cz+dyz+ey^2+gz^2+o(x^2+y^2+z^2)$$
for some $a,b,c,d,e,g\in \mathbb{R}$ and $f|_{E}=0$, then $a=b=c=d=e=g=0$.
Let $f$ be as assumed. Then, for any $(y,z)\in\mathbb{R}^2$ we have $$0=f((y^2+z^2)^{3/4},y,z)$$ $$=a(y^2+z^2)^{3/4}+by+cz+dyz+ey^2+gz^2+o((y^2+z^2)^{3/2}+y^2+z^2)$$ $$=a(y^2+z^2)^{3/4}+by+cz+dyz+ey^2+gz^2+o(y^2+z^2),$$which
implies $a=b=c=d=e=g=0$. We  conclude that indeed $\langle x^2,xy,xz\rangle_2=I^2(E)$.
\end{example}

\begin{example}\label{example_for_2_3_xy_xx}Recall the cones and domes of the forms $\Gamma(\Omega,\delta,r)$ and $D(\Omega,\delta)$ as appear Section \ref{section_settings}. Set $a=10^{-15}$, $b=10^{-5}$
and $E=E_y\cup E_z\cup E_{1}$, with $$E_y=\{(x,y,z)\in\mathbb{R}^3|x=\abs{y}^{1+a},z=\abs{y}^{1+a}\}\cap\overline{\Gamma((0,1,0),10^{-10},1)};$$
$$E_z=\{(x,y,z)\in\mathbb{R}^3|x^2=\abs{z}^{2+2a},xy=\abs{z}^{2+b}\}\cap\overline{\Gamma((0,0,1),10^{-10},1)};$$
$$E_1=\{(x,y,z)\in\mathbb{R}^3|x=\abs{z}^{1+a},y=z\}\cap\overline{\Gamma((0,2^{-1/2},2^{-1/2}),10^{-10},1)}.$$

Then, $\langle x^2,xy\rangle_2=I^2(E)$.

Indeed, we start by noting that both $x^2-\abs{y}^{2+2a}$ and $xy-\abs{y}^{2+a}$ vanish on $E_y$; both $x^2-\abs{z}^{2+2a}$ and $xy-\abs{z}^{2+b}$
vanish on $E_z$; both $x^2-\abs{z}^{2+2a}$ and $xy-\abs{z}^{2+a}$
vanish on $E_1$. We define subsets of $S^2$: $$U_y:=D((0,1,0),10^{-9})\text{ };\text{ }U_z:=D((0,0,1),10^{-9})\text{ };\text{ }U_1:=D((0,2^{-1/2},2^{-1/2}),10^{-9})\text{ };$$ $$U_0=S^2\setminus \big(\overline{D((0,1,0),10^{-10})} \cup \overline{D((0,0,1),10^{-10})} \cup \overline{D((0,2^{-1/2},2^{-1/2}),10^{-10})}\big).$$Let
$\tilde\theta_y,\tilde\theta_z,\tilde\theta_1,\tilde\theta_0$ be a $C^\infty$ partition
of unity subordinate to the open cover $U_y,U_z,U_{1},U_0$, i.e., for all $\nu\in\{y,z,1,0\}$:
$\tilde\theta_\nu\in C^{\infty}(S^2)$, $\text{support}(\tilde\theta_\nu)\subset
U_\nu$ and $\tilde\theta_y(\omega)+\tilde\theta_z(\omega)+\tilde\theta_1(\omega)+\tilde\theta_0(\omega)=1$
for any $\omega\in S^2$. For all $\nu\in\{y,z,1,0\}$ define a function $\theta_\nu:\mathbb{R}^3\to\mathbb{R}$
by $\theta_\nu(\vec0)=0$ and $\theta_\nu(x,y,z)=\tilde\theta_\nu\big(\frac{x}{\sqrt{x^2+y^2+z^2}},\frac{y}{\sqrt{x^2+y^2+z^2}},\frac{z}{\sqrt{x^2+y^2+z^2}}\big)$
for any $(x,y,z)\neq\vec0$. Then, we have
$$(x^2-\abs{y}^{2+2a})\theta_y+(x^2-\abs{z}^{2+2a})\theta_z+(x^2-\abs{z}^{2+2a}) \theta_1+x^2\theta_0\in
C^2(\mathbb{R}^3)$$ $$(xy-\abs{y}^{2+a})\theta_y+(xy-\abs{z}^{2+b})\theta_z+(xy-\abs{z}^{2+a})
\theta_1+xy\theta_0\in
C^2(\mathbb{R}^3)$$and both functions vanish on $E$. Their 2-jets are $x^2$ and $xy$
and so $x^{2},xy\in I^2(E)$. It is thus left to show that if $f\in C^2(\mathbb{R}^3)$
is such that $$f(x,y,z)=Ax+By+Cz+Py^{2}+Qz^{2}+Sxz+Tyz+o(x^2+y^2+z^2)$$
for some $A,B,C,P,Q,S,T\in \mathbb{R}$ and $f|_{E}=0$, then $A=B=C=P=Q=S=T=0$.
Let $f$ be as assumed. We present some points in $E$:
\begin{gather}\label{correcting_example_9_15_label_1}\text{for }\sigma\in\{\pm1\}\text{ and for any }0<z<10^{-25},\text{ } (x_\sigma,y_\sigma,z_\sigma):=(\sigma z^{1+a},\sigma z^{1+(b-a)},z)\in E_z\subset E;\end{gather} \begin{gather}\label{correcting_example_9_15_label_2}\text{for any }0<y<10^{-25},\text{ } (\tilde x,\tilde y,\tilde z):=(y^{1+a},y,y^{1+a})\in E_y\subset E;\end{gather} \begin{gather}\label{correcting_example_9_15_label_3}\text{for
any }0<z<10^{-25},\text{ } (x_\#,y_\#,z_\#):=(z^{1+a},z,z)\in
E_1\subset E.\end{gather}
Substituting (\ref{correcting_example_9_15_label_1}) we get that for any $0<z<10^{-25}$ $$0=f(x_{+1},y_{+1},z_{+1})=f(
z^{1+a},z^{1+(b-a)},z)=A
z^{1+a}+Bz^{1+(b-a)}+Cz+Qz^{2}+o(z^2),$$which implies $A=B=C=Q=0$ and so $f(x,y,z)=Py^{2}+Sxz+Tyz+o(x^2+y^2+z^2)$. Substituting (\ref{correcting_example_9_15_label_2}) we get that for any $0<y<10^{-25}$ $$0=f(\tilde x,\tilde y,\tilde z)=f(y^{1+a},y,y^{1+a})=Py^{2}+o(y^2),$$
which implies $P=0$ and so
$f(x,y,z)=Sxz+Tyz+o(x^2+y^2+z^2)$. Substituting (\ref{correcting_example_9_15_label_3})
we get that for any $0<z<10^{-25}$ $$0=f(x_\#,y_\#,z_\#)=f(z^{1+a},z,z)=Tz^{2}+o(z^2),$$which implies $T=0$ and so
$f(x,y,z)=Sxz+o(x^2+y^2+z^2)$. We in particular have \begin{gather}\label{correcting_example_9_15_label_4}f_x(x,y,z)=Sz+o((x^2+y^2+z^2)^{1/2})\text{ };\text{ } f_y(x,y,z)=o((x^2+y^2+z^2)^{1/2}).\end{gather}

Note that (\ref{correcting_example_9_15_label_1}) implies that for any fixed $0<z<10^{-25}$ we have $f(\sigma
z^{1+a},\sigma z^{1+(b-a)},z)=0$ for both $\sigma\in\{\pm1\}$. Define $F(t):=f(tz^{1+a},t z^{1+(b-a)},z)$ and apply Rolle's Theorem; there exists $-1<t(z)<1$ such that $F'(t)=z^{1+a}f_x+z^{1+(b-a)}f_y$ vanishes at $(t(z)z^{1+a},t(z)z^{1+(b-a)},z)$. Since $b>2a$, we showed that \begin{multline}\label{correcting_example_9_15_label_5}\text{for any }0<z<10^{-25}\text{ there exists }t(z)\in(-1,1)\text{ such that }\\(f_x+z^{b-2a}f_y)(t(z)z^{1+a},t(z)z^{1+(b-a)},z)=0.\end{multline}
Finally, combining (\ref{correcting_example_9_15_label_4}) and (\ref{correcting_example_9_15_label_5}) we see that for any $0<z<10^{-25}$ $$0=f_x(t(z)z^{1+a},t(z)z^{1+(b-a)},z)+o((x^2+y^2+z^2)^{1/2})=Sz+o(z),$$which
implies $S=0$ and we conclude that indeed $\langle x^2,xy\rangle_2=I^2(E)$.\end{example}

\begin{example}\label{example_for_2_3_yz_xx}There exists a semi-algebraic
set $\vec 0\in E\subset\mathbb{R}^3$ such that $\langle x^2,yz\rangle_2=I^2(E)$.

Indeed, this follows immediately from the fact that $ Allow(\langle x^2,yz\rangle_2)\neq\emptyset$ and Proposition \ref{prop_ideals_in_P2Rn_generated_by_2_foreign_jets} (the reader is referred to the proof of Proposition \ref{prop_ideals_in_P2Rn_generated_by_2_foreign_jets}
for an easy construction of such an $E$).
\end{example}

\begin{example}\label{example_for_2_3_xz_yz-xx}Set $\gamma=10^{-5}$
and $E=E_1\cup E_2$, with $$E_1=\{(x,y,z)\in\mathbb{R}^3|xz=\abs{z}^{2+\gamma},yz-x^2=\abs{z}^{2+2\gamma},z\geq10^{2}\sqrt{x^2+y^2}\};$$
$$E_2=\{(x,y,z)\in\mathbb{R}^3|xz=\abs{y}^{2+3\gamma},yz-x^2=-10\abs{y}^{2+2\gamma},y\geq10^{2}\sqrt{x^2+z^2}\}.$$
Then, $\langle xz,yz-x^2\rangle_2=I^2(E)$. 

Indeed, we start by analyzing points in $E$. One can easily see that the monic polynomial $p(y)=y^3-10y-1$ has 3 distinct non-zero real roots. We denote them by $\alpha_0<\alpha_1<\alpha_2$. Now, for $0<y<10^{-25}$ and $i\in\{0,1,2\}$
we set $$x_i(y):=\alpha_i y^{1+\gamma}\text{ and }z_i(y):=\alpha_i^{-1}y^{1+2\gamma}.$$We
then have $x_i(y)z_i(y)=\alpha_i y^{1+\gamma}\alpha_i^{-1}y^{1+2\gamma}=y^{2+3\gamma}$; $y\geq10^{2}\sqrt{x_i(y)^2+z_i(y)^2}$
and moreover $$yz_{i}(y)-x_{i}(y)^2+10y^{2+2\gamma}=y\alpha_i^{-1}y^{1+2\gamma}-(\alpha_i y^{1+\gamma})^2+10y^{2+2\gamma}$$ $$=-\alpha_i^{-1}
y^{2+2\gamma}[\alpha_i^3-10\alpha_i-1]=-\alpha_i^{-1}
y^{2+2\gamma}p(\alpha_i)=0.$$Thus, we have that $(x_i(y),y,z_i(y))\in E_2$, and explicitly \begin{gather}\label{label_how_does_E_2_look_like}\bigcup\limits_{i=1}^{3}\{(\alpha_i y ^{1+\gamma},y,\alpha_i^{-1}y^{1+2\gamma})|0<y<10^{-25}\}\subset
E_2.\end{gather} Similarly, for $0<z<10^{-25}$ we set $$\tilde x(z):= z^{1+\gamma}\text{ and }\tilde y(z):=2z^{1+2\gamma}.$$We
then have $\tilde x(z)z=z^{2+\gamma}$;
$z\geq10^{2}\sqrt{\tilde x(z)^2+\tilde y(z)^2}$
and moreover $$\tilde y(z)z-\tilde x(z)^2-z^{2+2\gamma}= 2z^{1+2\gamma}z-(z^{1+\gamma})^2-z^{2+2\gamma}=0.$$Thus, we have that $(\tilde x(z),\tilde y(z),z)\in E_1$,
and explicitly \begin{gather}\label{label_how_does_E_1_look_like}\{(z^{1+\gamma},2z^{1+2\gamma},z)|0<z<10^{-25}\}\subset
E_1.\end{gather} 

We now construct some $C^2$ functions that vanish on $E$ and start computing
$I(E)$. Let $$U_1:=\{(x,y,z)\in S^2:z>10\sqrt{x^2+y^2}\};$$
$$U_2:=\{(x,y,z)\in S^2:y>10\sqrt{x^2+z^2}\};$$
$$U_0:=\{(x,y,z)\in S^2:z<10^{2}\sqrt{x^2+y^2}\text{ and }y<10^{2}\sqrt{x^2+z^2}\}.$$Let
$\tilde\theta_0,\tilde\theta_1,\tilde\theta_2$ be a $C^\infty$ partition
of unity subordinate to the open cover $U_0,U_1,U_2$, i.e., for all $\nu\in\{0,1,2\}$:
$\tilde\theta_\nu\in C^{\infty}(S^2)$, $\text{support}(\tilde\theta_\nu)\subset
U_\nu$ and $\tilde\theta_0(\omega)+\tilde\theta_1(\omega)+\tilde\theta_2(\omega)=1$
for any $\omega\in S^2$. For all $\nu\in\{0,1,2\}$ define a function $\theta_\nu:\mathbb{R}^3\to\mathbb{R}$
by $\theta_\nu(\vec0)=0$ and $\theta_\nu(x,y,z)=\tilde\theta_\nu\big(\frac{x}{\sqrt{x^2+y^2+z^2}},\frac{y}{\sqrt{x^2+y^2+z^2}},\frac{z}{\sqrt{x^2+y^2+z^2}}\big)$
for any $(x,y,z)\neq\vec0$. Then, we have
$$(xz-\abs{z}^{2+\gamma})\theta_1+(xz-\abs{y}^{2+3\gamma})\theta_2+xz\theta_0\in
C^2(\mathbb{R}^3)$$ $$(yz-x^2-\abs{z}^{2+2\gamma})\theta_1+(yz-x^2+10\abs{y}^{2+2\gamma})\theta_2+(yz-x^2)\theta_0\in
C^2(\mathbb{R}^3)$$and both functions vanish on $E$. Their 2-jets are $xz$ and $yz-x^2$
and so $xz,yz-x^2\in I^2(E)$. It is thus enough to show that if $f\in C^2(\mathbb{R}^3)$
is such that $$f(x,y,z)=Ax+By+Cz+Pxy+Qx^{2}+Sy^2+Tz^2+o(x^2+y^2+z^2)$$
for some $A,B,C,P,Q,S,T\in \mathbb{R}$ and $f|_{E}=0$, then $A=B=C=P=Q=S=T=0$.
Let $f$ be as assumed. Then, for any $0<y<10^{-25}$ we have from (\ref{label_how_does_E_2_look_like}) that $$0=f(\alpha_0
y ^{1+\gamma},y,\alpha_0^{-1}y^{1+2\gamma})$$ $$=A\alpha_0
y ^{1+\gamma}+By+C\alpha_0^{-1}y^{1+2\gamma}+P\alpha_0
y ^{1+\gamma}y+Q(\alpha_0
y ^{1+\gamma})^{2}+Sy^2+T(\alpha_0^{-1}y^{1+2\gamma})^2+o(y^2)$$ $$=A\alpha_0
y ^{1+\gamma}+By+C\alpha_0^{-1}y^{1+2\gamma}+Sy^2+o(y^2),$$which implies $A=B=C=S=0$, and so $f(x,y,z)=Pxy+Qx^{2}+Tz^2+o(x^2+y^2+z^2)$. Similarly, for any $0<z<10^{-25}$ we have from (\ref{label_how_does_E_1_look_like})
that $$0=f(z^{1+\gamma},2z^{1+2\gamma},z)=2Pz^{1+\gamma}z^{1+2\gamma}+Q(z^{1+\gamma})^{2}+Tz^2+o(z^2)=Tz^2+o(z^2),$$which implies
$T=0$, and so $f(x,y,z)=Pxy+Qx^{2}+o(x^2+y^2+z^2)$. We fix a $C^\infty$-smooth compactly supported one-variable function $\psi(t)$, such that $\psi(\alpha_i)=\alpha_i^{-1}$ for $i\in\{0,1,2\}$ and set for $y>0$ $$F(t):=f(y^{1+\gamma} t,y,y^{1+2\gamma}\psi(t)).$$ Then, for any $0<y<10^{-25}$ we have from (\ref{label_how_does_E_2_look_like})
that  $F(\alpha_i)=f(\alpha_iy^{1+\gamma}
,y,\alpha_i^{-1}y^{1+2\gamma})=0$ for $i\in\{0,1,2\}$. Recalling $\alpha_0<\alpha_1<\alpha_2$, by Rolle's Theorem there exist $t_1\in(\alpha_0,\alpha_1)$ and $t_2\in(\alpha_0,\alpha_2)$ such that $F'(t_1)=0$ and $F''(t_2)=0$. We have $$F'(t)=y^{1+\gamma}\partial_x f(y^{1+\gamma}
t,y,y^{1+2\gamma}\psi(t))+y^{1+2\gamma}\psi'(t)\partial_z
f(y^{1+\gamma}
t,y,y^{1+2\gamma}\psi(t));$$ $$F''(t):=(y^{1+\gamma})^{2}\partial_{xx}^2
f(y^{1+\gamma}
t,y,y^{1+2\gamma}\psi(t))+2y^{1+\gamma}y^{1+2\gamma}\psi'(t)\partial_{xz}^2
f(y^{1+\gamma}
t,y,y^{1+2\gamma}\psi(t))$$ $$+(y^{1+2\gamma}\psi'(t))^{2}\partial_{zz}^2
f(y^{1+\gamma}
t,y,y^{1+2\gamma}\psi(t))+y^{1+2\gamma}\psi''(t)\partial_zf(y^{1+\gamma}
t,y,y^{1+2\gamma}\psi(t)).$$ Recall $f(x,y,z)=Pxy+Qx^{2}+o(x^2+y^2+z^2)$. For any $0<y<10^{-25}$ and while $y\geq10^{2}\sqrt{x^2+z^2}$ we calculate explicitly $$\partial_xf=Py+2Qx+o(y);\text{ }\partial_zf=o(y); \text{ }\partial_{xx}^2f=2Q+o(1);\text{ } \partial_{xz}^2f=o(1);\text{ } \partial_{zz}^2f=o(1).$$Substituting and using the fact that $\abs{t_1}<\max\{\abs{\alpha_0},\abs{\alpha_1}\}$ and $\abs{\psi'(t)}<\tilde C$, where $\tilde C$ is a constant depending only on $\alpha_0,\alpha_1$ and $\alpha_2$, we get $$0=F'(t_1)=y^{1+\gamma}(Py+2Qy^{1+\gamma}
t_{1}+o(y))+y^{1+2\gamma}\psi'(t_{1})\cdot o(y)$$ $$=y^{1+\gamma}(Py+2Qy^{1+\gamma}
t_1+o(y))=y^{1+\gamma}(Py+o(y)),$$which is only possible if $P=0$. Similarly, we now also use the facts that both $t_2$ and $\phi''(t)$ are bounded by constants depending only on $\alpha_0,\alpha_1$ and $\alpha_2$, to get $$0=F''(t_{2})=(y^{1+\gamma})^{2}(2Q+o(1))+2y^{1+\gamma}y^{1+2\gamma}\psi'(t_{2})(o(1))$$ $$+(y^{1+2\gamma}\psi'(t_{2}))^{2}(o(1))+y^{1+2\gamma}\psi''(t_{2})(o(y))$$ $$=y^{2+2\gamma}(2Q+o(1)),$$
which is only possible if $Q=0$. We conclude that indeed $\langle xz,yz-x^2\rangle_2=I^2(E)$.
\end{example}

\begin{example}\label{example_for_2_3_xx_xy+xz+yz}Set $a=10^{-5}$, $b=10^{-15}$
and $E=E_y\cup E_z$, with $$E_y=\{(x,y,z)\in\mathbb{R}^3|x^2=\abs{y}^{2+2a},xy+xz+yz=\abs{y}^{2+2b},y\geq10^{3}\sqrt{x^2+z^2}\};$$
$$E_z=\{(x,y,z)\in\mathbb{R}^3|x^2=\abs{z}^{2+2a},xy+xz+yz=\abs{z}^{2+2b},z\geq10^{3}\sqrt{x^2+y^2}\}.$$
Then, $\langle x^2,xy+xz+yz\rangle_2=I^2(E)$.

Indeed, we start by analyzing points in $E$. For $0<z<10^{-25}$ and $\sigma\in\{\pm1\}$ we set $$x_\sigma(z):=\sigma z^{1+a}\text{ and }y_\sigma(z):=\frac{z^{2+2b}-\sigma z^{2+a}}{z+\sigma z^{1+a}}=z^{1+2b}(\frac{1-\sigma z^{a-2b}}{1+\sigma z^a}).$$We then have $x_\sigma(z)^2=z^{2+2a}$; $z\geq10^{3}\sqrt{x_\sigma(z)^2+y_\sigma(z)^2}$ and moreover $$x_\sigma(z)y_\sigma(z)+x_\sigma(z)z+y_\sigma(z)z=\frac{z^{2+2b}-\sigma
z^{2+a}}{z+\sigma z^{1+a}}(\sigma
z^{1+a}+z)+\sigma
z^{1+a}z=z^{2+2b}.$$Thus, we have that $(x_\sigma(z),y_\sigma(z),z)\in E_z$. Note that $$y_\sigma(z)=z^{1+2b}(1-\sigma z^{a-2b})(1-\sigma z^a+O(z^{2a}))$$ $$=z^{1+2b}(1-\sigma z^{a-2b}-\sigma z^a+O(z^{2a-2b}))=z^{1+2b}-\sigma z^{1+a}+O(z^{1+a+2b}).$$So we showed that \begin{multline}\label{fancy_label_points_on_E_for_J5}\text{for any }0<z<10^{-25}\text{ and any }\sigma\in\{\pm1\}\text{ there exist }x_\sigma(z):=\sigma z^{1+a}\text{ and }y_\sigma(z)\\\text{such that } (x_\sigma(z),y_\sigma(z),z)\in E\text{ and moreover }y_\sigma(z)=\Theta(z^{1+2b}).\end{multline}

We now construct some $C^2$ functions that vanish on $E$ and start computing $I(E)$. Let $$U_y:=\{(x,y,z)\in S^2:y>10\sqrt{x^2+z^2}\};$$
$$U_z:=\{(x,y,z)\in S^2:z>10\sqrt{x^2+y^2}\};$$
$$U_0:=\{(x,y,z)\in S^2:y<10^{2}\sqrt{x^2+z^2}\text{ and }z<10^{2}\sqrt{x^2+y^2}\}.$$Let $\tilde\theta_y,\tilde\theta_z,\tilde\theta_0$ be a $C^\infty$ partition of unity subordinate to the open cover $U_y,U_z,U_0$, i.e., for all $\nu\in\{y,z,0\}$: $\tilde\theta_\nu\in C^{\infty}(S^2)$, $\text{support}(\tilde\theta_\nu)\subset U_\nu$ and $\tilde\theta_y(\omega)+\tilde\theta_z(\omega)+\tilde\theta_0(\omega)=1$ for any $\omega\in S^2$. For all $\nu\in\{y,z,0\}$ define a function $\theta_\nu:\mathbb{R}^3\to\mathbb{R}$ by $\theta_\nu(\vec0)=0$ and $\theta_\nu(x,y,z)=\tilde\theta_\nu\big(\frac{x}{\sqrt{x^2+y^2+z^2}},\frac{y}{\sqrt{x^2+y^2+z^2}},\frac{z}{\sqrt{x^2+y^2+z^2}}\big)$ for any $(x,y,z)\neq\vec0$. Then, we have
$$(x^2-y^{2+2a})\theta_y+(x^2-z^{2+2a})\theta_z+x^2\theta_0\in
C^2(\mathbb{R}^3)$$ $$(xy+xz+yz-y^{2+2b})\theta_y+(xy+xz+yz-z^{2+2b})\theta_z+(xy+xz+yz)\theta_0\in
C^2(\mathbb{R}^3)$$and both functions vanish on $E$. Their 2-jets are $x^2$ and $xy+xz+yz$ and so $x^{2},xy+xz+yz\in I^2(E)$. It is thus enough to show that if $f\in C^2(\mathbb{R}^3)$ is such that $$f(x,y,z)=Ax+By+Cz+Py^{2}+Qz^{2}+Sxy+Txz+o(x^2+y^2+z^2)$$
for some $A,B,C,P,Q,S,T\in \mathbb{R}$ and $f|_{E}=0$, then $A=B=C=P=Q=S=T=0$.
Let $f$ be as assumed. Then, for $\sigma\in\{\pm1\}$ and $0<z<10^{-25}$ we have by (\ref{fancy_label_points_on_E_for_J5})$$0=f(x_\sigma(z),y_\sigma(z),z)$$ $$=A\sigma
z^{1+a}+B\Theta(z^{1+2b})+Cz+P\Theta(z^{1+2b})^{2}+Qz^{2}+S\sigma
z^{1+a}\Theta(z^{1+2b})+T\sigma
z^{1+a}z+o(z^2)$$ $$=A\sigma\Theta(z^{1+a})+B\Theta(z^{1+2b})+C\Theta(z)+Q\Theta(z^{2})+o(z^{2}),$$which implies $A=B=C=Q=0$. Since $E$ is symmetric with respect to $y\leftrightarrow z$, we also have $P=0$, and so $f(x,y,z)=Sxy+Txz+o(x^2+y^2+z^2)$. 

Recall the construction of (\ref{fancy_label_points_on_E_for_J5}) and for the sake of convenience denote $\sigma\in\{\pm\}$ instead of $\sigma\in\{\pm1\}$. Fixing $0<z<10^{-25}$ we have that $y_+(z)- y_-(z)=-2z^{1+a}+o(z^{1+a})\neq0$ so we may define a linear one-variable real valued function $\psi(t):=z^{1+a}+\frac{2z^{1+a}}{y_+-y_-}(t-y_+)$. We note that $\psi(y_+)=z^{1+a}=x_+$ and $\psi(y_-)=z^{1+a}-2z^{1+a}=-z^{1+a}=x_-$, i.e., $\psi(y_\sigma)=x_\sigma$ for $\sigma\in\{\pm1\}$. We also note that $\frac{2z^{1+a}}{y_+-y_-}=\frac{2z^{1+a}}{-2z^{1+a}+o(z^{1+a})}=-1+o(1)$, so we can write $\psi(t)=z^{1+a}+\lambda(z)(t- y_+)$ with $\lambda(z)=-1+o(1)$. For $t\in\mathbb{R}$
 define $F(t)=f(\psi(t),t,z)$. By (\ref{fancy_label_points_on_E_for_J5})
we have $F(t)=f(\psi(t),t,z)=0$ for $t\in\{y_-,y_+\}$, so by Rolle's Theorem there exists $t'\in(y_+,y_-)$ such that $$0=\frac{d}{dt}F(t')=(\psi'(t')\partial_x f+\partial_y f)(\psi(t'),t',z).$$ Note that both $t'=o(z)$ and $\psi(t')=o(z)$. Recalling that $f(x,y,z)=Sxy+Txz+o(x^2+y^2+z^2)$, we have for points of the form $\{(\psi(t'),t',z)\}_{0<z<10^{-25}}$ that $\partial_x f=Sy+Tz+o((x^2+y^2+z^2)^{1/2})=Tz+o(z)$ and $\partial_y f=Sx+o((x^2+y^2+z^2)^{1/2})=o(z)$. Since $\psi'(t)=\lambda(z)=-1+o(z)$ we found that for arbitrary small $z$ there exists a point $(\psi(t'),t',z)$ such that $$0=\frac{d}{dt}F(t')=\lambda(z)(Tz+o(z))+o(z)=-Tz+o(z),$$which is only possible if $T=0$. Since $E$ is symmetric with respect to $y\leftrightarrow
z$, we also have $S=0$, and we conclude that indeed $\langle x^2,xy+xz+yz\rangle_2=I^2(E)$.
\end{example}

\begin{example}\label{example_for_2_3_xy_yz_xz}$\langle xy,yz,xz\rangle_2=I^2(\{(x,y,z)\in\mathbb{R}^2|xy=yz=xz=0\})$.

Indeed, setting $E=\{(x,y,z)\in\mathbb{R}^2|xy=yz=xz=0\}$ we have $xy,yz,xz\in
C^2(\mathbb{R}^3)$ and all three vanish on $E$. So $xy,yz,xz\in I^2(E)$.
It is thus enough to show that if $f\in C^2(\mathbb{R}^3)$ is such that $$f(x,y,z)=ax+by+cz+dx^2+ey^2+gz^2+o(x^2+y^2+z^2)$$
for some $a,b,c,d,e,g\in \mathbb{R}$ and $f|_{E}=0$, then $a=b=c=d=e=g=0$.
Let $f$ be as assumed. Then, for any $x>0$ we have $$0=f(x,0,0)=ax+dx^2+o(x^2),$$which
is only possible if $a=d=0$. As $E$ is symmetric with respect to permutations of the coordinates we also have $b=c=e=g=0$. We  conclude that indeed $\langle xy,yz,xz\rangle_2=I^2(E)$.
\end{example}

\begin{example}\label{example_for_2_3_xy_yz}$\langle xy,yz\rangle_2=I^2(\{(x,y,z)\in\mathbb{R}^2|xy=yz=0\})$.

Indeed, setting $E=\{(x,y,z)\in\mathbb{R}^2|xy=yz=0\}$ we have $xy,yz\in C^2(\mathbb{R}^3)$ and both vanish on $E$. So $xy,yz\in I^2(E)$. It is thus enough to show that if $f\in C^2(\mathbb{R}^3)$ is such that $$f(x,y,z)=ax+by+cz+dxz+ex^2+gy^2+hz^2+o(x^2+y^2+z^2)$$ for some $a,b,c,d,e,g,h\in \mathbb{R}$ and $f|_{E}=0$, then $a=b=c=d=e=g=h=0$. Let $f$ be as assumed. Then, for any $x>0$ we have $$0=f(x,0,0)=ax+ex^2+o(x^2),$$which is only possible if $a=e=0$. As $E$ is symmetric with respect to $x\leftrightarrow z$ we also have  $c=h=0$, and $f(x,y,z)=by+dxz+gy^2+o(x^2+y^2+z^2)$. For any $y>0$ we have $$0=f(0,y,0)=by+gy^2+o(y^2),$$which
is only possible if $b=g=0$ and so $f(x,y,z)=dxz+o(x^2+y^2+z^2)$. Now, for any $x>0$ we have $$0=f(x,0,x)=dx^2+o(x^2),$$which is only possible if $d=0$. We conclude that indeed $\langle xy,yz\rangle_2=I^2(E)$.
\end{example}

\begin{example}\label{example_for_2_3_xy_z(x+y)}Set $\delta=10^{-5}$, $\gamma=10^{-15}$ and $E=E_1\cup E_2\cup E_3$, with $$E_1=\{x=z=0\};$$ $$E_2=\{y=z=0\};$$
$$E_3=\{z>0\}\cap\{ xy=\abs{z}^{2+2\delta}\}\cap\{x+y=\abs{z}^{1+\gamma}\}.$$ Then, $\langle xy,z(x+y)\rangle_2=I^2(E)$.

Indeed,
we first note that
$$z(x+y-\abs{z}^{1+\gamma}), xy-\abs{z}^{2+2\gamma}\in
C^2(\mathbb{R}^3)$$ and both vanish on $E$. So $z(x+y),xy\in I^2(E)$.
It is thus enough to show that if $f\in C^2(\mathbb{R}^3)$ is such that $$f(x,y,z)=ax+by+cz+d(x+y)^2+ex^2+gz^2+hxz+o(x^2+y^2+z^2)$$
for some $a,b,c,d,e,g,h\in \mathbb{R}$ and $f|_{E}=0$, then $a=b=c=d=e=g=h=0$.
Let $f$ be as assumed. Since $f|_{E_1}=0$ we have for any $y>0$:
$$0=f(0,y,0)=by+dy^{2}+o(y^{2}),$$
which implies $b=d=0$, and so $f(x,y,z)=ax+cz+ex^2+gz^2+hxz+o(x^2+y^2+z^2)$. Since $f|_{E_2}=0$ we have for any $x>0$:
$$0=f(x,0,0)=ax+ex^{2}+o(x^{2}),$$
which implies $a=e=0$, and so $f(x,y,z)=cz+gz^2+hxz+o(x^2+y^2+z^2)$. 

We now find some points in $E_3=\{z>0\}\cap\{ xy=\abs{z}^{2+2\delta}\}\cap\{x+y=\abs{z}^{1+\gamma}\}$. On $E_3$ we have $(x-y)^2=(x+y)^2-4xy=\abs{z}^{2+2\gamma}-4\abs{z}^{2+2\delta}=\abs{z}^{2+2\gamma}(1-4\abs{z}^{2\delta-2\gamma})$, so when $0<z<10^{-50}$ we have $\abs{x-y}=(\abs{z}^{2+2\gamma}(1-4\abs{z}^{2\delta-2\gamma}))^{1/2}=z^{1+\gamma}(1-4z^{2\delta-2\gamma})^{1/2}$. We also have $x+y=z^{1+\gamma}$. We can now define $$\phi_{\pm}(z):=\frac{1}{2}(x+y\pm\abs{x-y})=\frac{1}{2}\big[z^{1+\gamma}\pm z^{1+\gamma}(1-4z^{2\delta-2\gamma})^{1/2}\big].$$Note
that $\phi_+(z)=o(\abs{z})$ and  $\phi_-(z)=o(\abs{z})$ and moreover, for $0<z<10^{-50}$ we have that $\phi_+(z)\neq\phi_-(z)$. Tracing these steps back one can easily verify that for
$0<z<10^{-50}$ we have $(\phi_+(z),\phi_-(z),z)\in E_3$. Also note that $E_3$ is symmetric with respect to $x\leftrightarrow y$. We conclude that \begin{multline}\label{nice_example_label_parametrization_of_a_part_of_E_new_in_a_different_case}\text{for
any }0<z<10^{-50}\text{ there exists  }\phi_+(z)\neq \phi_-(z)\text{ such that both } \\ (\phi_+(z),\phi_-(z),z)\in E_{3}\text{ and }(\phi_-(z),\phi_+(z),z)\in
E, \\ \text{and moreover both }\phi_+(z)=o(z)\text{ and }\phi_-(z)=o(z).\end{multline}Recall we have $f(x,y,z)=cz+gz^2+hxz+o(x^2+y^2+z^2)$ for some $c,g,h\in\mathbb{R}$ such that $f|_E=0$ and we need to show that $c=g=h=0$. By (\ref{nice_example_label_parametrization_of_a_part_of_E_new_in_a_different_case}) we have for any $0<z<10^{-50}$ $$0=f(\phi_+(z),\phi_-(z),z)=cz+gz^2+h\phi_+(z)z+o(z^2)=cz+gz^2+o(z^2),$$which implies $c=g=0$, and so $f(x,y,z)=hxz+o(x^2+y^2+z^2)$.
Fixing any $0<z<10^{-50}$, by (\ref{nice_example_label_parametrization_of_a_part_of_E_new_in_a_different_case}) we have $f(\phi_+(z),\phi_-(z),z)=f(\phi_-(z),\phi_+(z),z)=0$. For $t\geq0$  define $$F(t)=f(t\phi_+(z)+(1-t)\phi_-(z),t\phi_-(z)+(1-t)\phi_+(z),z).$$ By Rolle's Theorem there exists $t'\in[0,1]$ such that $$0=\frac{d}{dt}F(t')=(\phi_+(z)-\phi_-(z))(f_x-f_y)(t'\phi_+(z)+(1-t')\phi_-(z),t'\phi_-(z)+(1-t')\phi_+(z),z).$$ We thus found a point $(\tilde x(z),\tilde y(z),z):=(t'\phi_+(z)+(1-t')\phi_-(z),t'\phi_-(z)+(1-t')\phi_+(z),z)$ with $\tilde x(z)=o(z)$ and $\tilde y(z)=o(z)$ such that $(f_x-f_y)(\tilde x(z),\tilde y(z),z)=0$. Recall we also have $f(x,y,z)=hxz+o(x^2+y^2+z^2)$, so $(f_x-f_y)(x,y,z)=hz+o((x^2+y^2+z^2)^{1/2})$. Consequently, $$0=(f_x-f_y)(\tilde x(z),\tilde y(z),z)=hz+o(\abs{z}),$$which
is only possible if $h=0$. We 
conclude that indeed $\langle xy,z(x+y)\rangle_2=I^2(E)$.
\end{example}

\begin{example}\label{example_for_2_3_xy+yz_xz+yz}Set $E=E_x\cup E_y\cup E_z\cup E_0$, with $$E_x=\{(x,y,z)\in\mathbb{R}^3|y=z=0\};$$ $$E_y=\{(x,y,z)\in\mathbb{R}^3|x=z=0\};$$ $$E_z=\{(x,y,z)\in\mathbb{R}^3|x=y=0\};$$ $$E_0=\{(x,y,z)\in\mathbb{R}^3|x+y=x+z=0\}.$$
Then, $\langle
xy+yz,xz+yz\rangle_2=I^2(E)$.

Indeed, we have $xy+yz,xz+yz\in
C^2(\mathbb{R}^3)$ and both vanish on $E$. So $xy+yz,xz+yz\in I^2(E)$. It is thus enough
to show that if $f\in C^2(\mathbb{R}^3)$ is such that $$f(x,y,z)=ax+by+cz+dyz+ex^2+gy^2+hz^2+o(x^2+y^2+z^2)$$
for some $a,b,c,d,e,g,h\in \mathbb{R}$ and $f|_{E}=0$, then $a=b=c=d=e=g=h=0$.
Let $f$ be as assumed. Then, for any $x>0$ we have $$0=f(x,0,0)=ax+ex^2+o(x^2),$$which
is only possible if $a=e=0$. Similarly, for any $y>0$ we have $$0=f(0,y,0)=by+gy^2+o(y^2),$$which
is only possible if $b=g=0$.
Lastly, for any $z>0$ we have $$0=f(0,0,z)=cz+hz^2+o(z^2),$$which
is only possible if $c=h=0$, and so $f(x,y,z)=dyz+o(x^2+y^2+z^2)$. Now, for any $x>0$ we have $$0=f(x,-x,-x)=dx^{2}+o(x^2),$$which
is only possible if $d=0$. We conclude that indeed $\langle
xy+yz,xz+yz\rangle_2=I^2(E)$.
\end{example}

\end{document}